\documentclass[a4paper,reqno,12pt]{amsart} % draft, oneside
\usepackage{amsmath}
\usepackage{graphicx}
\usepackage{enumitem}
\usepackage{mathrsfs}
\usepackage{amssymb,amsmath, amsfonts, amsthm}
\usepackage{color}
\usepackage[colorlinks]{hyperref}
\hypersetup{citecolor=red, linkcolor=blue}
\usepackage[margin=2.73cm]{geometry}
\usepackage{nccmath}

\usepackage{mathtools}

\newtheorem{theorem}{Theorem}[section]
\newtheorem{proposition}[theorem]{Proposition}

\newtheorem{lemma}[theorem]{Lemma}

\newtheorem*{theoremAone}{Theorem A}

\theoremstyle{definition}
\newtheorem{remark}[theorem]{Remark}

\newcommand{\ep}{\epsilon}

\newcommand{\R}{\mathbb{R}}

\newcommand{\N}{\mathbb{N}}

\newcommand{\mcp}{\mathcal{P}}

\newcommand{\wtb}{\widetilde{B}}

\newcommand{\Tau}{\boldsymbol{\tau}}

\newcommand{\dx}{\textup{d}x}
\newcommand{\dy}{\textup{d}y}
\newcommand{\dz}{\textup{d}z}

\numberwithin{equation}{section}
\allowdisplaybreaks

\makeatletter
\@namedef{subjclassname@2020}{%
\textup{2020} Mathematics Subject Classification}
\makeatother

\begin{document}
\title[Sign-changing solutions for critical Hamiltonian systems]%
{Infinitely many sign-changing solutions for critical Hamiltonian systems with linear perturbation}

\author{Yuxia Guo and Congzheng Xuanyuan}

\address[Yuxia Guo]{Department of Mathematical Science, Tsinghua University, Beijing 100084, P. R. China}
\email{yguo@tsinghua.edu.cn}

\address[Congzheng Xuanyuan]{Department of Mathematical Science, Tsinghua University, Beijing 100084, P. R. China}
\email{xycz23@mails.tsinghua.edu.cn}

\begin{abstract}
    In this paper, we study the following elliptic system
\begin{equation}\label{main_1}
\begin{cases}
-\Delta u = |v|^{p-1} v + \epsilon (\alpha u + \beta_1 v), & \text{in } \Omega, \\
-\Delta v = |u|^{q-1} u + \epsilon (\beta_2 u + \alpha v), & \text{in } \Omega, \\
u = v = 0, & \text{on } \partial \Omega,
\end{cases}
\tag{*}
\end{equation}
where \(\Omega\) is the unit ball in $\R^N$, \(\epsilon\) is a small parameter, \(\alpha\), \(\beta_1\) and \(\beta_2\) are real numbers, \((p, q)\) is a pair of positive numbers lying on the critical hyperbola
\begin{equation}
\frac{1}{p+1} + \frac{1}{q+1} = \frac{N-2}{N}.\nonumber
\end{equation}
Under suitable assumptions and suitable restrictions on $(p,q)$ and $N$, we construct infinitely many sign-changing solutions to \eqref{main_1} which look like a positive radial solution to \eqref{main_1} crowned by $k$ negative bubbles arranged on a regular polygon of a suitable radius, whose energy can be arbitrarily large.

\end{abstract}

\maketitle

{\textbf{Keywords:} Hamiltonian system, Brezis-Nirenberg problem, Infinitely many solutions, Lyapunov-Schmidt reduction.}

{\textbf {2020 MSC} 35A01, 35B33, 35J57.}

\section{Introduction}

We consider the following elliptic system
\begin{equation}\label{mainsystem}
\begin{cases}
-\Delta u = |v|^{p-1} v + \epsilon (\alpha u + \beta_1 v), & \text{in } \Omega, \\
-\Delta v = |u|^{q-1} u + \epsilon (\beta_2 u + \alpha v), & \text{in } \Omega, \\
u = v = 0, & \text{on } \partial \Omega,
\end{cases}
\end{equation}
where \(\Omega\) is a smooth bounded domain in \(\mathbb{R}^N\), \(N \geq 3\), \(\epsilon\) is a small parameter, \(\alpha\), \(\beta_1\) and \(\beta_2\) are real numbers, \((p, q)\) is a pair of positive numbers lying on the critical hyperbola
\begin{equation}\label{c-hyperbola}
\frac{1}{p+1} + \frac{1}{q+1} = \frac{N-2}{N}.
\end{equation}
Without loss of generality, we may assume that $p\leq \frac{N+2}{N-2}\leq q$.

If $u=v$ and $p=q=\frac{N+2}{N-2}$, system \eqref{mainsystem} reduces to the classical Brezis--Nirenberg problem \cite{BN}
\begin{equation}\label{BN-problem}
\begin{cases}
-\Delta u = |u|^{\frac{4}{N-2}}u+\lambda u, & \text{in } \Omega, \\
u = 0, & \text{on } \partial\Omega .
\end{cases}
\end{equation}
It is well-known that the classical Pohozaev’s identity \cite{Pohozaev} implies that if \(\lambda \leq 0\) and \(\Omega\) is star-shaped, then \eqref{BN-problem} has no solution. On the other hand, the existence of a positive solution was established in \cite{BN} provided
$\lambda\in(0,\lambda_1(\Omega))$ and $N\geq 4$. Here and after,
$\lambda_n(\Omega)$ denotes the $n$-th eigenvalue of $-\Delta$ with Dirichlet boundary condition. Devillanova and Solimini \cite{Solimini2002} (see also \cite{Fortunato1985,Solimini1986}) proved that if $N\geq 7$, then for any $\lambda>0$, problem \eqref{BN-problem} admits infinitely many solutions by using a compactness result which fails when $N\leq 6$. Moreover, Schechter and Zou showed in \cite{Zou} that \eqref{BN-problem} has infinitely many sign-changing solutions if $N\geq7$. When $\Omega$ is a ball, Atkinson, Brezis and Peletier \cite{ABP} proved the nonexistence of sign-changing radial solutions in lower dimensions $N\leq 6$, whereas Fortunato and Jannelli \cite{Fortunato1987} obtained infinitely many non-radial sign-changing solutions.

For system \eqref{mainsystem}, Mitidieri \cite{Mitidieri1993} and Van der Vorst \cite{Vorst1992} proved that if $\Omega$ is star-shaped and
$$\begin{pmatrix}
-\dfrac{\beta_2(q-1)}{2(q+1)} & -\dfrac{\alpha}{N} \\[0.6em]
-\dfrac{\alpha}{N} & -\dfrac{\beta_1(p-1)}{2(p+1)}
\end{pmatrix}$$
is positive semi-definite, then system \eqref{mainsystem} has no positive solution. In particular, this gives non-existence when $p,q>1$, $\alpha=0$, and $\beta_1,\beta_2\leq0$.
Conversely, using the dual variational formulation of
Clarke and Ekeland \cite{Ekeland1980}, Hulshof, Mitidieri and Van der Vorst \cite{Vorst1998} proved that if $p,q>1$, $\alpha\geq0$, and either $\beta_1>0$ or $\beta_1=0,\beta_2>0$, then system \eqref{mainsystem} admits
a solution for $N$ sufficiently large, provided that $\varepsilon^2\beta_1\beta_2\neq\lambda_n^2(\Omega)$ for all $n\in\mathbb N$. Moreover, the solution is positive when $\beta_1,\beta_2>0$ and $\varepsilon^2\beta_1\beta_2<\lambda_1^2(\Omega)$. More recently, Kim and Pistoia \cite{KP21} proved the existence of blowing-up solutions for system \eqref{mainsystem} with $\epsilon$ small.

 In this paper, we prove the existence of infinitely many non-radial sign-changing solutions to the Hamiltonian system whose energy can be arbitrarily large. Inspired by \cite{LBN}, we construct the solution which can be visualized as a superposition of a radial positive solution to \eqref{mainsystem} with a large number of negative bubbles of the form \eqref{lane-embden} arranged on a regular polygon.

 We point out that del Pino, Musso, Pacard and Pistoia in \cite{pino1, pino2} built such crown type sign-changing solutions to the Yamabe equation in $\R^N$. Subsequently, this idea was used in other critical elliptic problems; for instance, see \cite{GLPY} and \cite{LBN}.

 We also need the following property.
 \begin{align*}
     \textbf{(H)} \text{ there exists a positive radial solution } (u_\epsilon,v_\epsilon) \text{ to } \eqref{mainsystem} \text{ which is non-degenerate.}
 \end{align*}
where $(u_\ep,v_\ep)$ is non-degenerate in the sense that if $(\eta,\xi)\in \left(H_0^1(\Omega)\right)^2$ is a solution pair of the following linearized problem:
\begin{equation*}
    \begin{aligned}
        \begin{cases}
            -\Delta \eta =pv_\ep^{p-1}\xi+\ep \beta_1 \xi+\ep \alpha \eta,\qquad &\text{in }\Omega,\\
            -\Delta \xi =q u_\ep^{q-1}\eta+\ep \alpha \xi+\ep \beta_2 \eta,\qquad &\text{in }\Omega,\\
            \eta=\xi=0,\qquad &\text{on }\partial\Omega.
        \end{cases}
    \end{aligned}
\end{equation*}
 Then $(\eta,\xi)=(0,0)$.

 In a suitable range of $p$, $N$, $\alpha$, $\beta_1$ and $\beta_2$, Kim and Pistoia proved in \cite{KP21} the existence of single-bubble solutions to system \eqref{mainsystem}, and Guo, Hu, and Peng later established in \cite{GPH} the non-degeneracy of such solutions. Their results can be stated as follows.
\begin{theoremAone}[Theorem 1.1 in \cite{KP21}, Theorem 1.1 in \cite{GPH}]
 Assume that $N\ge 8$, $p\in (1,\dfrac{N-1}{N-2})$, and $(p,q)$ satisfies \eqref{c-hyperbola}, if one of the following conditions is satisfied:
\begin{equation}
\begin{aligned}
&\text{(i)}\ \beta_1>0, \qquad
\text{(ii)}\ \beta_1=0 \ \text{and}\ \alpha>0, \qquad
\text{(iii)}\ \beta_1=\alpha=0 \ \text{and}\ \beta_2>0,
\end{aligned}
\end{equation} then there exists a small number \(\epsilon_0 > 0\) depending only on \(N, p, \Omega, \alpha, \beta_1\) and \(\beta_2\) such that for any \(\epsilon \in (0, \epsilon_0)\), system (1.1) has a solution in \((C^2(\Omega))^2\) which blows up at $\xi_0$ ($\xi_0\in \Omega$ and $\nabla \tau(\xi_0)=0$, $\tau$ is defined in Lemma~\ref{lemma:H}) as $\ep\to 0$.

Moreover, if $\beta_1=\alpha=0$, $\beta_2>0$ and $\xi_0$ is the non-degenerate critical point of $\tau(x)$, then there exists a small number $\tilde{\epsilon}_0>0$ depending on $N,p,\Omega$ and $\beta_2$ such that for any $\epsilon\in (0,\tilde{\epsilon}_0)$, the above solution $(u_\epsilon,v_\epsilon)$ is non-degenerate.
\end{theoremAone}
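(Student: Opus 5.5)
The statement is Theorem A, which combines two results: the existence part from Kim–Pistoia and the non-degeneracy part from Guo–Hu–Peng. Its proof therefore has two stages, and I describe the plan for each.

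\textbf{Existence.} I will use a Lyapunov--Schmidt reduction around a single bubble. Let $(U_{\delta,\xi},V_{\delta,\xi})$ be the ground state of the Lane--Emden system in $\R^N$, centred at $\xi$ and concentrated at scale $\delta$. Since $p<\frac{N-1}{N-2}$, the component $V$ decays like $|x|^{-(N-2)p+2}$ rather than like $|x|^{2-N}$. Hence the natural projection $(PU,PV)$ onto $H_0^1(\Omega)$ must be built with the regular part $H$ of the Green function and its iterated version. This is how the function $\tau$ of Lemma~\ref{lemma:H} enters.

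\begin{enumerate}[label=(\arabic*)]
\item \emph{Ansatz and linear theory.} I look for solutions $(PU+\phi,PV+\psi)$ with $(\phi,\psi)$ orthogonal to the kernel of the linearized Lane--Emden operator. By the known non-degeneracy of $(U,V)$, this kernel is spanned by the $\delta$-derivative and the $N$ translations. A contraction argument in weighted norms then gives $(\phi,\psi)$, with size controlled by the projection error and by the $\epsilon$-terms.
\item \emph{Energy expansion.} Next I expand the reduced energy:
\[
J(\delta,\xi)=c_0-c_1\,\tau(\xi)\,\delta^{a}-\epsilon\,c_2\,\delta^{b}+o(\cdot),
\]
with explicit exponents $a>b$. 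The term $c_2$ comes from $\beta_1\int V^2$, or from $\alpha\int UV$, or from $\beta_2\int U^2$, according to case (i), (ii) or (iii). Each of these integrals is finite because $N\ge 8$ and $p$ is in the stated range, and the sign conditions make $c_2>0$.
\item \emph{Critical point.} Choose $\delta=d\,\epsilon^{1/(a-b)}$. The leading term becomes $\epsilon^{a/(a-b)}\,\Phi(d,\xi)$, where
\[
\Phi(d,\xi)=-c_1\tau(\xi)d^{a}-c_2d^{b}.
\]
Since $\tau>0$, $\Phi$ has a non-degenerate critical point in $d$. A critical point $\xi_0$ of $\tau$ (for example, its minimum, which is $C^0$-stable) then yields a stable critical point of the reduced energy. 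This gives the solution, which blows up at $\xi_0$.
\end{enumerate}

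\textbf{Non-degeneracy.} Here I argue by contradiction, in the case $\beta_1=\alpha=0$, $\beta_2>0$ with $\xi_0$ non-degenerate for $\tau$. Suppose there are nontrivial kernel elements $(\eta_\epsilon,\xi_\epsilon)$, normalized in $L^\infty$ weighted norms.

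\begin{enumerate}[label=(\arabic*)]
\item \emph{Blow-up limit.} Rescale around the concentration point. The limit solves the linearized Lane--Emden system, so by its non-degeneracy it is a combination $\sum_j c_j\partial_j(U,V)+c_0\partial_\delta(U,V)$.
\item \emph{Killing $c_0$ and the $c_j$.} Use local Pohozaev identities (the translation and dilation identities) on a ball $B_\rho(\xi_\epsilon)$. This yields $c_0\,\partial_d^2\Phi=0$ and $\sum_j c_j\,\partial_{ij}\tau(\xi_0)=0$, so all $c_j=0$ and $c_0=0$.
\item \emph{Global vanishing.} Finally, Green representation estimates away from the point show $(\eta_\epsilon,\xi_\epsilon)\to0$ uniformly. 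This contradicts the normalization.
\end{enumerate}

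\textbf{Main obstacle.} The main obstacle is the asymmetric decay of $V$. The boundary terms in the Pohozaev identities must be expanded to the precise order at which $\nabla^2\tau$ appears. This requires sharp estimates of the kernel outside the bubble core through the Green function, since the coupling through $v^{p-1}$ with $p<\frac{N-1}{N-2}$ produces slowly decaying tails.
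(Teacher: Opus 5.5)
\textbf{The paper does not prove Theorem A; your plan has the right architecture but two concrete errors, and the first makes step (3) fail as written.} The paper imports Theorem A from Kim--Pistoia and Guo--Hu--Peng. Its only related argument is Lemma~\ref{lemma:H}, which specializes to the ball: $\mathcal{O}(N)$-invariance gives $\nabla_\xi J_\epsilon(d,0)=0$, and the expansion $\tau(x)=\tau(0)+(c_{12}+c_{22})|x|^2+O(|x|^4)$ shows that $0$ is a non-degenerate critical point. Your two-stage plan follows the architecture of those references.

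\textbf{Sign of the $\tau$-term.} With $c_1,c_2>0$ and $\tau>0$, your $\Phi(d,\xi)=-c_1\tau(\xi)d^a-c_2d^b$ satisfies $\partial_d\Phi=-ac_1\tau d^{a-1}-bc_2d^{b-1}<0$. So $\Phi$ has no critical point in $d$; the positivity of $\tau$ is exactly what rules one out.

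The boundary effect actually raises the energy, like the Robin-function term $+c\,H(\xi,\xi)\delta^{N-2}$ in Brezis--Nirenberg. Take $-\Delta PV=U^q$, $-\Delta PU=PV^p$ and $h:=U-PU$; then $h>0$ because $PV<V$. One gets $I_0(PU,PV)=\frac{p}{p+1}\int U^qPU-\frac1{q+1}\int PU^{q+1}=A+\frac1{p+1}\int U^qh+\cdots$. Here $\int U^qh\simeq c\,\tau(\xi)\delta^{a}$ with $a=p(N-2)-2=\frac{(p+1)N}{q+1}$.

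The $\epsilon$-term is $-\epsilon c_2\delta^b$, with $b=\frac{(p-1)N}{p+1}$, $2$, $\frac{(q-1)N}{q+1}$ in cases (i), (ii), (iii). In each case $a>b$; in case (iii) this is $q<p+2$, which holds for $N\ge 8$, $p>1$. Hence $\Phi(d,\xi)=c_1\tau(\xi)d^a-c_2d^b$, with unique non-degenerate minimum $d_0(\xi)=\big(bc_2/(ac_1\tau(\xi))\big)^{1/(a-b)}$. Since $\Phi(d,\cdot)$ is increasing in $\tau$, a minimum point $\xi_0$ of $\tau$ makes $(d_0(\xi_0),\xi_0)$ a global, hence $C^0$-stable, minimum of $\Phi$. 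This same correction is what gives $\partial_d^2\Phi(d_0,\xi_0)\neq 0$ in your non-degeneracy step.

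\textbf{Which component decays slowly.} By Lemma~\ref{L1}, for $p<\frac{N}{N-2}$ it is $U_{0,1}\sim a\,r^{-(p(N-2)-2)}$ that has the slow tail, while $V_{0,1}\sim b\,r^{2-N}$. The reason is that $U$ solves $-\Delta U=V^p$ and $V^p\sim r^{-p(N-2)}\notin L^1$ at infinity. You have this reversed, and it matters in two places:
\begin{itemize}
\item $\tau$ comes from the $u$-equation $-\Delta PU=PV^p$, through the iterated Green function $\int_\Omega G(x,z)G^p(z,y)\,dz$.
\item In case (iii), $c_2\propto\beta_2\int_{\R^N}U_{0,1}^2$, which is finite iff $2(p(N-2)-2)>N$. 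This is exactly where $N\ge 8$ is needed; for $N=7$ it fails when $p\le\frac{11}{10}$ (cf.\ the factor $\chi_{\{2(p(N-2)-2)>N\}}$ in Proposition~\ref{prop:Iexpan}). With your decay assignment, $\int U^2<\infty$ would only need $N>4$, so your setup does not explain the hypothesis $N\ge 8$.
\end{itemize}
Your ``main obstacle'' paragraph should likewise refer to $U$, not $V$.

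With these two corrections, the rest of your plan is the standard route of the cited works: the reduction, the stable critical point, and blow-up plus translation/dilation Pohozaev identities for the non-degeneracy.
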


In fact, when $\Omega$ is the unit ball, the above theorem yields a nondegenerate radial positive solution to \eqref{mainsystem}. More precisely, for $N\geq 8$ and $p\in (1,\dfrac{N-1}{N-2})$, assumption \textbf{(H)} follows from Theorem~A together with Lemma~\ref{lemma:H}. For other cases, we keep \textbf{(H)} as an assumption. The detailed argument is left to Lemma~\ref{lemma:H}.

 Let $\Omega$ be a unit ball. We impose the following condition:
 \begin{align*}
    \textbf{(P)}\qquad \begin{cases}
       p\in (\frac{11+\sqrt{57}}{16},\frac{7}{5}) ,\qquad&\text{if }N=7,\\
       p\in\left(1,\min\{\frac{N}{N-2},\frac{N+6}{2(N-3)}\}\right) ,\qquad&\text{if }8\leq N \leq 11.
    \end{cases}
 \end{align*}

    \begin{theorem}\label{Thm1}
    Assume that $7\leq N\leq 11$, $(p,q)$ satisfies \eqref{c-hyperbola} and \textnormal{(\textbf{P})}. If $\alpha=0,\ \beta_1=0,\ \beta_2>0$ and \textnormal{(\textbf{H})} holds, then problem~\eqref{mainsystem} admits infinitely many sign-changing solutions whose energy can be made arbitrarily large.
    \end{theorem}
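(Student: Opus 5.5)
We will prove Theorem~\ref{Thm1} by a Lyapunov--Schmidt reduction in a space of symmetric functions, with the number $k$ of bubbles as the large parameter and $\epsilon$ fixed small so that \textbf{(H)} supplies the nondegenerate positive radial solution $(u_\epsilon,v_\epsilon)$. We denote by $(U_{\lambda,y},V_{\lambda,y})$ the ground state of the limiting system $-\Delta U=V^p$, $-\Delta V=U^q$ in $\R^N$ (the functions in \eqref{lane-embden}), scaled by $\lambda$ and centred at $y$. Their decay is the key asymmetric feature: $V$ decays like $|x|^{-(N-2)}$, while for $p<\frac{N}{N-2}$ the component $U$ decays only like $|x|^{-((N-2)p-2)}$. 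We take the points $x_j=r(\cos\frac{2\pi (j-1)}{k},\sin\frac{2\pi (j-1)}{k},0,\dots,0)$, $j=1,\dots,k$, with $r\in(0,1)$ in a compact range, and the concentration parameter $\lambda=\mu k^{\gamma}$ with $\mu$ in a compact interval and $\gamma>0$ to be fixed.

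The approximate solution is
\[
(W_1,W_2)=(u_\epsilon,v_\epsilon)-\sum_{j=1}^k(PU_{\lambda,x_j},PV_{\lambda,x_j}),
\]
with $P$ the projection onto $H^1_0(\Omega)$. We look for a solution $(W_1+\phi,W_2+\psi)$ in the space $H_s$ of pairs that are even in $x_h$ for $h=2,\dots,N$ and invariant under rotation by $2\pi/k$ in the $(x_1,x_2)$-plane. This symmetry kills all translation kernels except the radial one in $r$ and the dilation one in $\lambda$.

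The steps are the following.
\begin{enumerate}[label=(\roman*)]
\item We prove invertibility of the linearized operator at $(W_1,W_2)$ on the orthogonal complement of $\partial_\lambda$ and $\partial_r$ of the bubbles, in weighted $L^\infty$ norms of the type $\|\cdot\|_*=\sup\big(\sum_j(1+\lambda|x-x_j|)^{-\frac{N-2}{2}-\tau}\big)^{-1}\lambda^{-\frac{N-2}{2}}|\cdot|$, with distinct weights for the two components adapted to their decay. Away from the bubbles we use \textbf{(H)}; near the bubbles we use the nondegeneracy of the ground state of the limiting system. The argument is by contradiction and blow-up, uniformly in $k$.
\item We estimate the error $\|E\|_{**}\le C k^{-\sigma}$ (in units of $\lambda$). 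The dominant error terms are the interaction of the bubbles with $(u_\epsilon,v_\epsilon)$, the bubble--bubble interactions, and the $\epsilon\beta_2 u$ linear term.
\item A contraction argument gives $(\phi,\psi)$ with the same bound.
\item We reduce to $F(r,\mu)=I(W_1+\phi,W_2+\psi)$ and expand it as
\[
F=k\Big(A+\frac{B_1\,u_\epsilon(r)\text{-type term}}{\lambda^{a}}-\frac{B_2\,\epsilon\beta_2}{\lambda^{b}}+\frac{B_3\,k^{c}}{\lambda^{d}(1-r)^{e}}+\cdots\Big)+O\big(k\,\lambda^{-b-\theta}\big).
\]
Here the interaction with the positive radial solution (the factor $v_\epsilon(x_j)$ or $u_\epsilon(x_j)$ times $\int V^p$ or $\int U^q$) enters with a sign favouring crowns, the bubble--bubble interaction behaves like $\sum_{j\ne1}(\lambda|x_1-x_j|)^{-\kappa}\sim (k/\lambda)^{\kappa}$, and $\gamma$ is chosen to balance these. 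Since $\beta_1=\alpha=0$, the only $\epsilon$-linear contribution is $\epsilon\beta_2\int U_{\lambda}^2$, which is finite exactly when $2((N-2)p-2)>N$; this is where the lower bounds in \textbf{(P)} enter. The upper restrictions $N\le 11$ and $p<\frac{N+6}{2(N-3)}$ ensure that the interaction with $(u_\epsilon,v_\epsilon)$ dominates the error, because the negative bubbles must not be too concentrated relative to $\lambda^{-\frac{N-2}{2}}$-type couplings.
\item The leading function of $(r,\mu)$ has a saddle or extremal point stable under $C^0$ perturbation: for instance, in $\mu$ a nondegenerate minimum coming from the balance of terms of opposite signs, and in $r$ a max-min structure coming from the behaviour of $u_\epsilon(r)$ and the interaction. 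Max--min arguments in the $C^1$ sense then give a critical point for every large $k$. The resulting solution is sign-changing, since it is negative near the $x_j$ and positive elsewhere, and its energy is $\sim kA\to\infty$.
\end{enumerate}

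The main obstacle is steps (ii) and (iv): obtaining precise expansions despite the slow decay of $U$. The interaction terms between bubbles and between a bubble and $(u_\epsilon,v_\epsilon)$ involve nonintegrable tails that must be controlled exponent by exponent. I would first compute the relevant integrals using the exact asymptotics $U(x)\sim a|x|^{-((N-2)p-2)}$ from the known results on the limiting system. I would then check case by case, in dimensions $7\le N\le 11$, that the exponents in \textbf{(P)} make the remainder strictly smaller than the three competing main terms; the threshold $\frac{11+\sqrt{57}}{16}$ for $N=7$ should arise as the root of the resulting quadratic inequality in $p$.
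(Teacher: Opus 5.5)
\paragraph{The gap.} It lies in your ansatz and in the precision you require of it in step (ii). Write $\mu$ for the concentration parameter (your $\lambda$). Balancing the main terms forces $k\sim\mu^{\tau}$ with $\tau=\frac{p}{p+1}$, and the main terms of the reduced energy then have size $k\mu^{-\frac{N}{q+1}}$. The reduction controls the energy only up to $O\big(k\|(\phi,\psi)\|_*^2+k\|E\|_{**}\|(\phi,\psi)\|_*\big)$. So you need $\|E\|_{**}=o\big(\mu^{-\frac{N}{2(q+1)}}\big)$; a bound $O(k^{-\sigma})$ is far too weak.

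Your choice $W_1=u_\epsilon-\sum_j PU_{\lambda,x_j}$ cannot meet this when $p<\frac{N}{N-2}$. The first equation then carries the bubble--bubble error $\big(\sum_j PV_j\big)^p-\sum_j V_j^p$. Since $V_j\approx\mu^{-\frac{N}{q+1}}|y-x_j|^{2-N}$, this error is of order $\mu^{-\frac{pN}{q+1}}k^{p(N-2)}$ at distance $\sim k^{-1}$ from $x_1$. Its weighted norm is therefore of order $\mu^{-(1-\tau)(\frac{pN}{q+1}-\tau)}$. This exceeds $\mu^{-\frac{N}{2(q+1)}}$ exactly when $(N-2)p^2-(N+2)p+2<0$, which holds on all of $(1,\frac{N}{N-2})$.

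Nor is this an artifact of the norm. Let $w$ solve $-\Delta w=(\sum_jPV_j)^p$ with $w=0$ on $\partial\Omega$. The correction the error forces on the $u$-component is $\varphi:=w-\sum_jPU_j\approx\frac{k^{p(N-2)-2}}{r^{p(N-2)-2}\mu^{pN/(q+1)}}\widetilde H(r^{-1}ky)$. It contributes $\int U_1^q\varphi\sim(k/\mu)^{p(N-2)-2}\sim\mu^{-\frac{N}{q+1}}$ per bubble, the same order as the bubble--bubble interaction. With your ansatz, this leading-order part of the reduced energy would sit inside $(\phi,\psi)$, where the contraction scheme cannot compute it. The paper itself points out that the scalar-type ansatz fails for this reason.

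\paragraph{The fix.} Absorb the interaction into the ansatz. Keep $PV=\sum_jPV_j$, but let $PU$ solve $-\Delta PU=PV^p$ in $\Omega$ with $PU=0$ on $\partial\Omega$, and take $(u_\epsilon,v_\epsilon)-(PU,PV)$. Then $l_1=|PV_*|^{p-1}PV_*-v_\epsilon^p+PV^p$ contains no $V$-interaction. The price is a sharp expansion of $\varphi$ and a pointwise bound for $PU$ with the slow tail $\mu^{-\frac{pN}{q+1}}k^{p(N-2)-2}(1+k|y-x_j|)^{-(p(N-3-\theta)-2)}$. These drive both the $l_2$ estimate and the extra term $B_2\widetilde H(\tilde x_1)$ in the energy.

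\paragraph{Corrections to step (iv).}
\begin{enumerate}[label=(\alph*)]
\item Condition (P) forces $q>2$, which is exactly $p<\frac{N+6}{2(N-3)}$. Hence $\epsilon\beta_2\int PU^2=o\big(k\mu^{-\frac{N}{q+1}}\big)$ is lower order and does not compete with the main terms.
\item With $\lambda:=\mu k^{-\frac{p+1}{p}}$, the leading function is $-\frac{B_1+B_2\widetilde H(\tilde x_1)}{(r\lambda)^{p(N-2)-2}}+\frac{B_4u_\epsilon(r)}{\lambda^{N/(q+1)}}$. It contains no $(1-r)$ boundary term. It has a strict maximum at $(r_0,\lambda_0)$, where $r_0$ maximizes $r^{\frac{N}{q+1}}u_\epsilon(r)$.
\item The $N=7$ lower bound does not come from integrability of $U^2$, which would only give $p>1.1$. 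It comes from the $l_2$ estimate at distance $\gtrsim k^{-1}$ from the $x_j$. There the tail of $PU$ requires $((N-2)p-2)(2p-1)>2p(p+1)$, which for $N=7$ reads $8p^2-11p+2>0$.
\end{enumerate}
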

        \begin{remark}
For $N\geq 8$, assume that $(p,q)$ satisfies \eqref{c-hyperbola} and
$p\in\left(1,\frac{N}{N-2}\right)$. Then
\[
p<\frac{N+6}{2(N-3)}
\quad\Longleftrightarrow\quad
q>2.
\]
Thus, for $8\leq N\leq 11$, condition \textnormal{(\textbf{P})} is equivalent to $p\in(1,\frac{N}{N-2})$, and $q>2$.
The condition $q>2$ is a technical restriction which comes from \eqref{restriction-1}. We also note that $\frac{N+6}{2(N-3)}> 1$ if and only if $N< 12$
which explains the restriction $N\leq 11$.

Moreover, we have
\[
\min\left\{\frac{N}{N-2},\frac{N+6}{2(N-3)}\right\}
=
\begin{cases}
\displaystyle \frac{N}{N-2}, & \text{if } N=7,8,\\[0.6em]
\displaystyle \frac{N+6}{2(N-3)}, & \text{if } N>8.
\end{cases}
\]
\end{remark}

Before outlining the main idea of the proof of Theorem \ref{Thm1}, we first introduce some notation and definitions. Let $N \geq 3$, $(p,q)$ satisfies \eqref{c-hyperbola} and let $(U,V)$ be a positive ground state solution of (see \cite{Lions1})
    \begin{equation}\label{lane-embden}
        \begin{cases}
            -\Delta U = |V|^{p-1}V, \quad \text{in } \R^N,\\
            -\Delta V = |U|^{q-1}U, \quad \text{in } \R^N,\\
            (U,V) \in \dot{W}^{2,\frac{p+1}{p}}(\R^N) \times \dot{W}^{2,\frac{q+1}{q}}(\R^N).
        \end{cases}
    \end{equation}
    It is known that $(U,V)$ is radially symmetric and decreasing after a suitable translation (see \cite{Lions2}). More precisely, the results of Wang \cite{Wang} and Hulshof and Van der Vorst \cite{HV} show that there exists a positive ground state solution $(U_{0,1}, V_{0,1})$ such that $U_{0,1}(0) = 1$, which is unique up to translation and scaling. The family $\{(U_{x,\mu}, V_{x,\mu})\}$ defined by
    \begin{equation}\label{bubble}
        (U_{x,\mu}, V_{x,\mu}) = \big(\mu^{\frac{N}{q+1}} U_{0,1}(\mu(y - x)), \ \mu^{\frac{N}{p+1}} V_{0,1}(\mu(y - x))\big), \quad \text{for any } x \in \R^N \text{ and } \mu > 0,
    \end{equation}
    exhausts all the positive ground state solutions of \eqref{lane-embden}.

Define the configuration space of parameters
\begin{equation}\label{mcp}
 \mcp :=\{(r,\lambda)\in(r_0-l_{00},r_0+l_{00}) \times (\lambda_{00}^{-1},\lambda_{00})\},
\end{equation}
where $r_0\in (0,1)$ is such that $r_0^{\frac{N}{q+1}}u_\ep(r_0)=\max\limits_{r\in[0,1]}r^{\frac{N}{q+1}}u_\ep(r)$, $l_{00}>0$ and $\lambda_{00}>0$ large enough.
For any $(r,\lambda) \in \mcp$ and $k \in \N$, we set
\[x_j := \left(r\cos\left(\tfrac{2(j-1)\pi}{k}\right), r\sin\left(\tfrac{2(j-1)\pi}{k}\right),{\bf{0}}\right) \in \R^2 \times \R^{N-2}, \quad j=1,2,\ldots,k,\]
and $\mu := \lambda k^{\frac{p+1}{p}}$ \big(i.e. $(k\mu^{-1})^{p(N-2)-2}\approx\mu^{-\frac{N}{q+1}}$, this is due to \eqref{bf_K}\big).

Given $h=1,\ldots,N$ and $j=1,\ldots,k$, let $\Phi_j$ be a rotation operator defined as
\begin{equation}\label{Aj}
\Phi_j(\rho\cos\phi, \rho\sin\phi, y'') := \left(\rho\cos\left(\phi+\tfrac{2(j-1)\pi}k\right), \rho\sin\left(\phi+\tfrac{2(j-1)\pi}k\right), y''\right)
\end{equation}
for $\rho > 0$, $\phi \in [0,2\pi)$, $y'' \in \R^{N-2}$, and $\Psi_h$ a reflection operator defined as
\begin{equation}\label{Bi}
\Psi_hy := (y_1,\ldots, y_{h-1}, -y_h, y_{h+1}, \ldots, y_N)
\end{equation}
for $y \in \R^N$.

We define the function spaces $L_s$ by
\begin{equation}\label{Ls}
 \begin{aligned}
       &L_s = \Big\{ (u,v)  \ : u, v\ \text{are measurable functions in} \,\,\Omega, \,(u,v)(\Psi_hy) = (u,v)(y)\\
       &(u,v)(\Phi_j(\rho\cos\phi, \rho\sin\phi, y'')) = (u,v)(\rho\cos\phi, \rho\sin\phi, y'') \text{ for } j = 1,\cdots,k;\, h=1,\cdots,N.
        \Big\}
 \end{aligned}
\end{equation}

For a fixed parameter $(r,\mu)$, we denote
$$(U_j,V_j)=(U_{x_j,\mu},V_{x_j,\mu})\qquad\text{for }j=1,\cdots,k.$$
Since $U_{x,\mu}(y)$ and $V_{x,\mu}(y)$ are not zero in $ B_1^c(0)$, we need to define $PU_{x,\mu}$ as the projection of $U_{x,\mu}$ and $PV_{x,\mu}$ as the projection of $V_{x,\mu}$, i.e., the solution of the following problem:
\begin{equation}\label{projection}
\begin{cases}
-\Delta PU_{x,\mu} = V_{x,\mu}^p, & \text{in } \Omega, \\
-\Delta PV_{x,\mu} = U_{x,\mu}^q, & \text{in } \Omega, \\
PU_{x,\mu} = PV_{x,\mu} = 0, & \text{on } \partial \Omega.
\end{cases}
\end{equation}
We also denote
$$(PU_j,PV_j)=(PU_{x_j,\mu},PV_{x_j,\mu})\qquad\text{for }j=1,\cdots,k.$$
Let 
\begin{equation}
      PV=\sum_{j=1}^k PV_j,
\end{equation}
and $PU$ be the unique solution of
\begin{equation}\label{eq:PU}
\begin{cases}
    -\Delta PU =PV ^p, & \text{in } \Omega,\\\
    PU=0, & \text{on } \partial \Omega.
\end{cases}
\end{equation}
%More details on $PU$ can be found in Appendix \ref{sec:est_PU}.

We will prove Theorem 1.1 by establishing the following result.
    \begin{theorem}\label{Thm2}
        Under the assumptions of Theorem \ref{Thm1}, there exists a positive integer $k_0 > 0$, such that for any integer $k \geq k_0$, \eqref{mainsystem} has a solution $(u_k,v_k)$ of the form
        \begin{align}\label{def:approx}
                        (u_k,v_k) \approx (PU_*,PV_*):=(u_\epsilon,v_\epsilon)-(PU,PV),
        \end{align}
        where $r_k\in (r_0-l_{00},r_0+l_{00})$, $\mu_k \in [\lambda^{-1}_{00} k^{\frac{p+1}{p}}, \lambda_{00} k^{\frac{p+1}{p}}]$. Moreover, as $k\to +\infty$, $r_k\to r_0$, $\mu_k\to+\infty$.
    \end{theorem}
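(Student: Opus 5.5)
We will prove Theorem~\ref{Thm2} by a finite-dimensional Lyapunov--Schmidt reduction carried out in the symmetric space $L_s$. We look for solutions of the form
\[
(u_k,v_k)=(u_\ep,v_\ep)-(PU,PV)+(\phi,\psi),\qquad (\phi,\psi)\in L_s,
\]
with $(r,\lambda)\in\mcp$ and $\mu=\lambda k^{\frac{p+1}{p}}$. The symmetry built into $L_s$, namely invariance under the rotations $\Phi_j$ and the reflections $\Psi_h$, removes all kernel directions of the linearized operator at the bubbles except two. These are the radial translation direction $\partial_r$ and the dilation direction $\partial_\mu$. The reduced problem is therefore two-dimensional in $(r,\lambda)$.

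We define weighted norms in the spirit of \cite{LBN,pino1}, with weights of the form $\sum_j (1+\mu|y-x_j|)^{-\tau}$ scaled by $\mu^{N/(q+1)}$ and $\mu^{N/(p+1)}$. The linear steps are as follows.

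\begin{itemize}
\item[(i)] We first estimate the error $E=(E_1,E_2)$ of the ansatz. It has three sources: the interaction between the positive radial solution and the bubbles, the interaction among the $k$ bubbles, and the linear term $\ep\beta_2 PU$. The definition \eqref{eq:PU} of $PU$ through $PV^p$ is chosen precisely so that the first equation has no bubble-bubble interaction error.
\item[(ii)] We prove invertibility of the linearized operator at the approximate solution, restricted to $L_s$ and projected orthogonally to $Z_{r},Z_{\mu}$. The proof is by contradiction and blow-up: we use the nondegeneracy of $(U_{0,1},V_{0,1})$ (Frank--Kim--Pistoia type) near the bubbles, and hypothesis \textbf{(H)} away from them, to exclude nonzero limits. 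A contraction mapping argument then yields $(\phi,\psi)$ with norm controlled by $\|E\|$.
\end{itemize}

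The reduced energy is then expanded as
\[
F(r,\lambda)=I(u_\ep,v_\ep)+k\Big(A-B\,r^{\frac{N}{q+1}}u_\ep(r)\,\mu^{-\frac{N}{q+1}}+C\frac{k^{p(N-2)-2}}{(r\mu)^{p(N-2)-2}}+\dots\Big)+o\big(k\mu^{-\frac{N}{q+1}}\big).
\]
Here $B,C>0$. The term with $B$ comes from the interaction of each negative bubble with the positive solution, which is the source of the $r^{\frac{N}{q+1}}u_\ep(r)$ weight. The term with $C$ comes from the bubble-bubble interaction \eqref{bf_K}, which decays like $|x_i-x_j|^{-(p(N-2)-2)}$ through the slower-decaying component $V$. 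The choice $\mu=\lambda k^{\frac{p+1}{p}}$ makes these two terms of the same order. At leading order,
\[
F\approx k\mu^{-\frac{N}{q+1}}\big(-B\,r^{\frac{N}{q+1}}u_\ep(r)\,\lambda^{-\frac{N}{q+1}}+C'(r)\,\lambda^{-(p(N-2)-2)}\big).
\]
In $\lambda$ this function has a nondegenerate critical point. In $r$ it has a strict maximum of $r^{\frac{N}{q+1}}u_\ep(r)$ at $r_0$. Hence $F$ has a stable critical point (max-min, or by degree) in the interior of $\mcp$, and this survives the $C^1$ perturbation. This gives $r_k\to r_0$ and $\mu_k\sim k^{\frac{p+1}{p}}\to\infty$, and the energy grows like $k$.

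The main obstacle is step (i) together with showing that all remainders are $o(k\mu^{-\frac{N}{q+1}})$. Three difficulties arise. First, the components have different decay rates, with $V_{0,1}\sim |y|^{-(p(N-2)-2)}$ when $p<\frac{N}{N-2}$. Second, $PU$ is defined nonlocally. Third, the nonlinearity $|u|^{q-1}u$ must be expanded around a sum of terms with opposite signs, which requires bounds like $\big||a+b|^{q-1}(a+b)-|a|^{q-1}a-|b|^{q-1}b\big|\lesssim |a|^{q-1}|b|+|a||b|^{q-1}$. This last bound is where $q>2$ enters: we need control of the second-order terms in the Taylor expansion. The same restriction appears when we compare $\ep\beta_2PU$ and the interaction error with the target order $\mu^{-\frac{N}{q+1}}$, and this comparison produces the dimensional and $p$-range restrictions in \textbf{(P)}. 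We would first try to bound each error term pointwise by the weight and then integrate region by region. The regions are the ball $B_{\frac{r}{k}}(x_j)$ around each bubble, the neighbourhood of the remaining bubbles, and the complement, and the bounds would be made sharp through the exponent inequalities encoded in \textbf{(P)}.
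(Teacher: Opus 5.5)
Your overall scheme matches the paper's. You perform the reduction in $L_s$ around $(u_\ep,v_\ep)-(PU,PV)$, using weighted sup norms, a blow-up a priori estimate, contraction, and a two-parameter reduced energy. The step that fails as written is the reduced-energy expansion, and this is exactly where $r_0$ is identified.

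For negative bubbles on a positive solution the signs are opposite to yours; see \eqref{energy_000}.
\begin{itemize}
\item The bubble--solution interaction enters through $-\frac1{q+1}J_2$ and is \emph{positive}: $+B_4u_\ep(r)\mu^{-\frac{N}{q+1}}$. It carries \emph{no} factor $r^{\frac{N}{q+1}}$, being just $u_\ep(x_j)=u_\ep(r)$ times a multiple of $\int_{\R^N}U_{0,1}^q$.
\item The interaction among the same-sign bubbles is \emph{negative}: $-(B_1+B_2\widetilde H(\tilde x_1))\bigl(\frac{k}{r\mu}\bigr)^{p(N-2)-2}$.
\end{itemize}

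Reversing both signs is harmless by itself, since you would get a minimum of $-\textbf{K}_\ep$ instead of a maximum of $\textbf{K}_\ep$. The spurious weight is not harmless. With $r^{\frac{N}{q+1}}$ on the first term and $r^{-(p(N-2)-2)}$ on the second, eliminating $\lambda$ gives a profile proportional to $\bigl(r^{\frac{2N}{q+1}}u_\ep(r)\bigr)^{\frac{p+1}{p}}$. Its maximizer is not $r_0$, so your conclusion $r_k\to r_0$ does not follow from your own expansion. Your second display also mixes $\mu$- and $\lambda$-powers.

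The correct bookkeeping is
\[
K_\ep=I_\ep(u_\ep,v_\ep)+kA+k^{1-\frac{(p+1)N}{p(q+1)}}\bigl(\textbf{K}_\ep(r,\lambda)+o(1)\bigr),
\]
with $\textbf{K}_\ep$ as in \eqref{bf_K}. Since $p(N-2)-2=(p+1)\frac{N}{q+1}$, one finds
\[
\max_{\lambda>0}\textbf{K}_\ep(r,\lambda)=c_p\bigl(B_1+B_2\widetilde H(\tilde x_1)\bigr)^{-\frac1p}\bigl(B_4\,r^{\frac{N}{q+1}}u_\ep(r)\bigr)^{\frac{p+1}{p}},
\]
attained at $\lambda_0$ of \eqref{lambda_0}. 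So the weight $r^{\frac{N}{q+1}}$ comes from the $r^{-(p(N-2)-2)}$ in the bubble--bubble term, not from the bubble--solution term. Since $(r_0,\lambda_0)$ is an interior maximum of $\textbf{K}_\ep$ on $\mcp$, it persists under $C^0$-small perturbations. Thus you only need $\sup_{\mcp}|K_\ep-\textbf{K}_\ep|$ small. You do not need, and have not supplied, $C^1$ control of the remainder.

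Three further points need correcting.
\begin{itemize}
\item For $p<\frac{N}{N-2}$ it is $U_{0,1}$, not $V_{0,1}$, that decays like $|y|^{-(p(N-2)-2)}$; by Lemma \ref{L1}, $V_{0,1}\sim b_{N,p}|y|^{2-N}$. The bubble--bubble term therefore comes from $\int U_1^qU_j$.
\item $q>2$ is not needed to control the Taylor expansion of $|u|^{q-1}u$. For $q>2$ that expansion actually creates the extra terms $u_\ep^{q-2}PU|\phi|$ and $|PU_*|^{q-2}\phi^2$, which the paper must handle separately. The restriction is forced by \eqref{restriction-1}: when $2(p(N-2)-2)>N$, the term $-\frac{\ep\beta_2}{2}\int_\Omega PU^2\approx-k\ep\beta_2B_3\mu^{\frac{2N}{q+1}-N}$ must be $o(k\mu^{-\frac{N}{q+1}})$. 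This is exactly $q>2$; otherwise that term dominates at the scale $\mu\sim k^{\frac{p+1}{p}}$.
\item In the linear theory the paper uses \textbf{(H)} not merely to exclude an outer limit. It builds Green's functions of $L^*$ with the bounds \eqref{G1k}--\eqref{G4k1}, and this representation shows the weighted quotient is essentially maximized near the $x_j$. If you keep your compactness use of \textbf{(H)}, you must supply the estimate in the intermediate region $M\mu^{-1}\le|y-x_j|$, close to the circle, separately.
\end{itemize}
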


    The proof of Theorem \ref{Thm2} is based on the finite-dimensional reduction method. We briefly outline it in the following. For a fixed $\ep$, problem \ref{mainsystem} is not a perturbation problem. Then the number $k$ of bubbles is used as the parameter to carry out the reduction procedure. This technique was first introduced by \cite{WY} and has been used successfully to study some non-compact elliptic problems. See \cite{WY1}--\cite{WY2}. Our analysis is carried out in a space with a weighted maximum norm which can capture the main pointwise estimate of the superposition of the bubbles when $k$ is large. Combining a priori estimate obtained by the blow up analysis and Fredholm alternative, we prove the invertibility of the linearized operator in a suitably chosen subspace of the above space. To derive a priori estimate for the linear theory, inspired by \cite{GKPY}, we first examine the existence and qualitative behavior of the Green’s function of a specific linear operator (the operator $L^*$ in \eqref{1-25-10n2}). Then the Lyapunov-Schmidt reduction allows us to reduce the problem to a 2-dimensional problem: finding critical points of the reduced energy $K_\ep(r,\lambda)$ in \eqref{eq:redene}.

    The main difficulties of this paper are as follows. 
    
    First, a crucial point is to construct a sufficiently accurate approximate solution. In the scalar case \cite{GLPY}, the ansatz is the sum of a positive radial solution and $k$ negative bubbles. Such an approximation does not work for the system when $p\in(1,\frac{N}{N-2})$. We must employ the approximate solution defined in \eqref{projection}--\eqref{def:approx}. This modification in the approximation of the u-part makes the estimates much more sophisticated. 
    
    Second, for the system when \(p\in\left(1,\frac{N}{N-2}\right)\), estimates for the approximate solution \(PU\), the energy expansion, and \(\|(l_1,l_2)\|_{**}\) in Section \ref{sec:L-S}, especially those terms involving \(PV\), differ from those in the case \(p\in\left(\frac{N}{N-2},\frac{N+2}{N-2}\right)\) studied in \cite{GKPY}, and therefore require new and more delicate analysis and computations. We believe that the ideas and the techniques we developed in this paper can be applied to solve other related problems in this area.

\vskip8pt
    Our paper is organized as follows. In Section 2, we derive the expansion of the energy functional near the approximate solutions. In Section 3, we prove the invertibility of the linearized operator in a suitable weighted functional space. In Section 4, we obtain the error estimate and employ the Lyapunov-Schmidt reduction to find the correction term $(\phi,\psi)$. In Section 5, we study the reduced finite-dimensional problem and prove Theorem \ref{Thm2}. In the Appendix, we collect all the  essential technical estimates needed throughout the paper. This includes the properties of the bubbles and the projection (Appendix A), sharp estimates of the approximate solution $PU$ (Appendix B), the Green's function (Appendix C),
    and various other algebra, integral and norm estimates (Appendix D).

    \medskip

    Throughout the paper, we use $C$ to denote a positive constant which may vary from line to line. In addition, we sometimes use $\int_A f$ to denote $\int_{A}f(x) \ \dx$ if there is no confusion.

\section{The energy expansion}

From now on, we assume that \(\alpha=\beta_1=0\) and \(\beta_2>0\). Set \(\frac{1}{p^*} = \frac{1}{q+1} + \frac{1}{N}\) and \(\frac{1}{q^*} = \frac{1}{p+1} + \frac{1}{N}\). We define the Banach space
\begin{equation*}
X_{p,q} := \left( W^{2,\frac{p+1}{p}} (\Omega) \cap W_0^{1,p^*}(\Omega) \right) \times \left( W^{2,\frac{q+1}{q}} (\Omega) \cap W_0^{1,q^*}(\Omega) \right)
\end{equation*}
equipped with the norm
\begin{equation*}
\|(u, v)\|_{X_{p,q}} := \|\Delta u\|_{L^{\frac{p+1}{p}}} + \|\Delta v\|_{L^{\frac{q+1}{q}}}.
\end{equation*}

Define an energy functional \( I_\ep : X_{p,q} \to \mathbb{R} \) by
\begin{equation}\label{def:I_ep}
\begin{aligned}
I_\epsilon(u, v) = \int_{\Omega} \nabla u \cdot \nabla v - \frac{1}{p+1} \int_{\Omega} |v|^{p+1} - \frac{1}{q+1} \int_{\Omega} |u|^{q+1} \\
-  \frac{\ep\beta_2}{2} \int_{\Omega} u^2 \quad \text{for } (u, v) \in X_{p,q}.
\end{aligned}
\end{equation}
Since \( q \geq p > 1 \), \( I_\epsilon \) is of class \( C^2(X_{p,q}) \). Also, \((u, v) \in X_{p,q} \) is a solution to \eqref{mainsystem} if and only if it is a critical point of \( I_\epsilon \).

In particular, we have
$$I_0(u, v) = \int_{\Omega} \nabla u \cdot \nabla v - \frac{1}{p+1} \int_{\Omega} |v|^{p+1} - \frac{1}{q+1} \int_{\Omega} |u|^{q+1}.$$

For later use, we define the following sets:
        \begin{equation}\label{Omega}
            \Omega_j = \left\{ y = (y',y'') \in \R^2 \times \R^{N-2}, \langle \dfrac{y'}{|y'|} , \dfrac{x_j'}{|x_j'|} \rangle \geq \cos \dfrac{\pi}{k}\right\}\cap \Omega, \;\; j=1,2,\cdots,k,
        \end{equation}
        and
\begin{equation}\label{eq:S}
    S := B_{\frac{\pi}{2}r_0^{-1}k^{-1}}(x_1) = \left\{y \in \Omega: |y-x_1| < \frac{\pi}{2}r_0^{-1}k^{-1}\right\}\subset \Omega_1,
\end{equation}
where $r_0>0$ is defined by \eqref{mcp}.

\begin{proposition}\label{prop:Iexpan}
Assume that $N \ge 7$ and $p \in (1,\frac{N}{N-2})$. For $k \in \N$ large enough, we have
\begin{equation}\label{energy_000}
    \begin{aligned}
        &I_\epsilon(PU_*, PV_*) \\
&=I_\epsilon(u_\epsilon,v_\epsilon)+ kA
+ k\left[-\frac{(B_1+B_2 \widetilde{H}(\tilde x_{1})) k^{p(N-2)-2}}{r^{p(N-2)-2} \mu^{p(N-2)-2}} + \frac{B_4 u_\epsilon(r)}{\mu^{\frac{N}{q+1}}}+O\left({\mu^{-\frac{N}{q+1}-\sigma}}\right)\right]\\
&\hspace{8em}-\chi_{\{2(p(N-2)-2)>N\}}\cdot k\left[\frac{\epsilon \beta_2 B_3}{\mu^{N-\frac{2N}{q+1}}}+O\left({\mu^{\frac{2N}{q+1}-N-\sigma}}\right)\right].
    \end{aligned}
\end{equation}
Here, $\widetilde{H}$ is defined in \eqref{H}, $A = I_0(U_{0,1}, V_{0,1})$, $B_1$, $B_2$, $B_3$ and $B_4$ are positive constants depending only on $N$ and $p$, and  $\sigma>0$ is a sufficiently small number.
\end{proposition}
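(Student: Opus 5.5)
We expand $I_\epsilon(u_\epsilon - PU, v_\epsilon - PV)$ by treating the quadratic part, the two nonlinear terms and the $\epsilon$-term separately, and we use the equations satisfied by $(u_\epsilon,v_\epsilon)$, by $PU$ (see \eqref{eq:PU}) and by $PV_j$ (see \eqref{projection}).

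\textbf{Step 1: bilinear term.} Integrating by parts gives
\[
\int_\Omega \nabla(u_\epsilon-PU)\cdot\nabla(v_\epsilon-PV)
=\int\nabla u_\epsilon\cdot\nabla v_\epsilon-\int u_\epsilon^q\, PU-\epsilon\beta_2\int u_\epsilon PU-\int v_\epsilon^p\, PV+\int PV^{p}\,PV.
\]
Here we use $-\Delta v_\epsilon=u_\epsilon^q+\epsilon\beta_2u_\epsilon$, $-\Delta u_\epsilon=v_\epsilon^p$, and $-\Delta PU=PV^p$.

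\textbf{Step 2: split the nonlinear terms.} We write
\[
\int|v_\epsilon-PV|^{p+1}\quad\text{and}\quad \int|u_\epsilon-PU|^{q+1}.
\]
By the $L_s$-symmetry it suffices to work in $\Omega_1$, and inside it to distinguish $S$, where the bubble dominates, from $\Omega_1\setminus S$, where $u_\epsilon$ and $v_\epsilon$ dominate.

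In $S$ we use the elementary inequality
\[
\bigl||a-b|^{s}-b^{s}+s b^{s-1}a\bigr|\le C\bigl(a^{s}+a b^{s-1}\bigr)\quad (b\ge a),
\]
together with its mirror image on the complement. The cross terms $\int u_\epsilon^q PU$ and $\int v_\epsilon^p PV$ then cancel against those produced in Step 1, up to remainders.

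The principal bubble contributions $\int PV^{p+1}$ and $\int PU\cdot PV^{p}$ are evaluated with the Appendix A/B estimates. The ingredients are $PV_j=V_j-\text{(regular part)}$, the decay $V_{0,1}(y)\sim |y|^{-(N-2)}$ and the slower decay $U_{0,1}(y)\sim|y|^{-(p(N-2)-2)}$ valid for $p<\frac N{N-2}$, and the sharp description of $PU$ (which collects the tails of all $k$ bubbles through the Green's function, with regular part $\widetilde H$). This yields
\[
kA-k\,\frac{(B_1+B_2\widetilde H(\tilde x_1))\,k^{p(N-2)-2}}{r^{p(N-2)-2}\mu^{p(N-2)-2}} .
\]
The interaction sum $\sum_{j\ne1}|x_1-x_j|^{-(p(N-2)-2)}$ is comparable to $k^{p(N-2)-2}r^{-(p(N-2)-2)}$ because $p(N-2)-2>1$. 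It produces $B_1$ together with the boundary/Robin correction $B_2\widetilde H$.

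The $u_\epsilon$ interaction term $\int_S u_\epsilon^{q}\,\cdot$ bubble, and more precisely $q\int U_1^{q-1}\,U_1\,u_\epsilon$-type expressions, combine to $B_4\,u_\epsilon(r)\,\mu^{-\frac N{q+1}}$ after Taylor expanding $u_\epsilon$ around $x_1$. The constant is $B_4=\int V_{0,1}^p$-like, which is finite since $p(N-2)>N$ fails... In fact we must pick the version that converges: the pairing $\int U_1^q\, u_\epsilon = \mu^{-N/(q+1)}u_\epsilon(x_1)\int U_{0,1}^q+\dots$ is the right one, since $U_{0,1}^q\in L^1$.

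\textbf{Step 3: the $\epsilon$-term.} We have
\[
-\frac{\epsilon\beta_2}2\int(u_\epsilon-PU)^2=-\frac{\epsilon\beta_2}{2}\int u_\epsilon^2+\epsilon\beta_2\int u_\epsilon PU-\frac{\epsilon\beta_2}2\int PU^2 .
\]
The middle term cancels the one from Step 1. The last term satisfies $\int PU_1^2\sim\mu^{-(N-\frac{2N}{q+1})}$ precisely when $U_{0,1}\in L^2$, that is when $2(p(N-2)-2)>N$. This explains the characteristic function; otherwise the term is absorbed into $O(\mu^{-\frac N{q+1}-\sigma})$.

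\textbf{Step 4: main obstacle.} The hardest part is controlling all remainders by $k\,\mu^{-\frac N{q+1}-\sigma}$. These include the cross interactions between distinct bubbles in the $(q+1)$-power term, where $PU$ is not a sum of individual projections, and the error between $PU$ and $\sum_j PU_j$. The latter is handled with the Appendix B pointwise bound on $PU$ and the choice $\mu=\lambda k^{\frac{p+1}{p}}$, under which $(k/\mu)^{p(N-2)-2}\approx\mu^{-\frac N{q+1}}$. We would first prove a pointwise bound of the form $PU\le C\sum_j(1+\mu|y-x_j|)^{-(p(N-2)-2)}\mu^{\frac N{q+1}}$, and then bound the resulting sums by the standard lattice-sum lemma, using $N\ge7$ to obtain the extra power $\sigma$.
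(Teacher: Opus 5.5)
Your overall architecture matches the paper's. You integrate the bilinear term by parts and cancel $\int u_\epsilon^qPU$, $\int v_\epsilon^pPV$ and $\epsilon\beta_2\int u_\epsilon PU$. You then read off $B_1, B_2\widetilde H$ from $\int_SU_1^q(\sum_{j\ge2}U_j+\varphi)$, $B_4$ from $\int_Su_\epsilon U_1^q$, and $B_3$ from $\int PU_1^2$. The gap is in Step~4: the remainder control rests on a false estimate. The bound $PU\le C\sum_j\mu^{\frac N{q+1}}(1+\mu|y-x_j|)^{-(p(N-2)-2)}$ fails because the interaction part $\varphi=PU-\sum_jPU_j$ decays more slowly than a sum of single-bubble tails. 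To see this, let $d$ be the distance from $y$ to the circle through the $x_j$, with $k^{-1}\ll d\ll1$. There $(\sum_jPV_j)^p\approx\mu^{-\frac{pN}{q+1}}k^pd^{-p(N-3)}$, which exceeds $\sum_jV_j^p\approx\mu^{-\frac{pN}{q+1}}kd^{1-p(N-2)}$ by the factor $(kd)^{p-1}$. Integrating the Green's function over $B_{d/2}(y)$ then gives $\varphi(y)\gtrsim\mu^{-\frac{pN}{q+1}}k^pd^{2-p(N-3)}$. Your right-hand side is only $\approx\mu^{-\frac{pN}{q+1}}kd^{3-p(N-2)}$, so the ratio is $(kd)^{p-1}$, which is unbounded. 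This is exactly why Lemma~\ref{lemma:U} carries the second term $\mu^{-\frac{pN}{q+1}}\sum_ik^{p(N-2)-2}(1+k|y-x_i|)^{-(p(N-3-\theta)-2)}$. Controlling that term's contribution to $\int_{\Omega_1\setminus S}PU^{q+1}$ and to $J_3=\int_\Omega PU^2$ is where the delicate case analysis lies: $\eta_3>N$ versus $\eta_3\le N$, and $\alpha_2$ versus $N$, with $N=7$ the dimension where $\eta_3\le N$ occurs. With your bound these checks disappear, so the hardest part of the remainder analysis is not done.

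Three smaller points also need repair.

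First, your elementary inequality has remainder $C(a^s+ab^{s-1})$. That is of the same order as the term $sb^{s-1}a$ you want to isolate, so as stated it cannot produce $B_4u_\epsilon(r)\mu^{-\frac N{q+1}}$. You need $O(a^s+a^{1+\delta}b^{s-1-\delta})$ when $b\ge a$, and $O(a^{s-2}b^2+b^s)$ in the mirror case.

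Second, the $v$-component does not follow the pattern ``bubble dominates in $S$, $v_\epsilon$ dominates outside''. On $\partial S$ one has $V_1\approx\mu^{\frac2{p+1}}\gg1$. The split therefore has to be pointwise, $\{v_\epsilon<PV\}$ versus $\{v_\epsilon\ge PV\}$, together with the larger set $S'=\Omega_1\cap B_{k^{\gamma-1}}(x_1)$, as in the paper's treatment of $J_1$.

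Third, you never explain why the $v$-cross term $\int v_\epsilon PV^p$ leaves no $v_\epsilon(r)$-contribution. Since $V_{0,1}^p\notin L^1(\R^N)$ for $p<\frac N{N-2}$, it is not a localized bubble integral. One has to compute $\int_SV_1^p\approx\mu^{-\frac N{q+1}-\frac2{p+1}}$, as in \eqref{J1_1}. The same non-integrability is why the paper never expands $\int PV^{p+1}$ directly, since its interaction part lives at scale $k^{-1}$ rather than $\mu^{-1}$. Instead it uses $\int PV^{p+1}=\int\nabla PU\cdot\nabla PV=\int PU\sum_jU_j^q$ to move the whole leading-order computation to the $U$-side, where $U_{0,1}^q\in L^1$. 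Your Step~2 should make that identity explicit.
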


To establish Proposition~\ref{prop:Iexpan}, we first prove the following lemma.

\begin{lemma}Assume that $N \ge 7$ and $p \in (1,\frac{N}{N-2})$. For $k \in \N$ large enough, we have
    \begin{equation}\label{energy_PUPV}
I_0(PU, PV) = k\Big[A-\frac{(B_1+B_2 \widetilde{H}(\tilde x_{1})) k^{p(N-2)-2}}{r^{p(N-2)-2} \mu^{p(N-2)-2}}
+ O\left({\mu^{-\frac{N}{q+1}-\sigma}}\right)\Big],
\end{equation}
where $A = I_0(U_{0,1}, V_{0,1})$, $B_1$ and $B_2$ are positive constants depending only on $N$ and $p$, and  $\sigma>0$ is a sufficiently small number.
\end{lemma}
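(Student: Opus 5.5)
We use the equations for $(PU,PV)$ to rewrite $I_0(PU,PV)$ in terms of the source terms, and then expand those integrals using the symmetry of the configuration. Since $-\Delta PU = PV^p$ with $PU=0$ on $\partial\Omega$, integration by parts gives
\begin{equation*}
\int_\Omega \nabla PU\cdot\nabla PV=\int_\Omega PV^{p+1},
\end{equation*}
so that
\begin{equation*}
I_0(PU,PV)=\frac{p}{p+1}\int_\Omega PV^{p+1}-\frac{1}{q+1}\int_\Omega |PU|^{q+1}.
\end{equation*}
By the invariance of $PU$ and $PV$ under $\Phi_j$, each integral equals $k$ times the corresponding integral over $\Omega_1$. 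We also use the Pohozaev-type identity $\int V_{0,1}^{p+1}=\int U_{0,1}^{q+1}$, which gives
\begin{equation*}
A=\Big(\tfrac{p}{p+1}-\tfrac{1}{q+1}\Big)\int_{\R^N} V_{0,1}^{p+1}.
\end{equation*}
The proof is then a matter of expanding $\int_{\Omega_1}PV^{p+1}$ and $\int_{\Omega_1}PU^{q+1}$ to first order beyond $A$.

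\emph{Expansion of the $PV$ term.} Since $p<\frac{N}{N-2}$, we have $V_{0,1}(y)\sim c|y|^{-(p(N-2)-2)}$ at infinity; this slow decay is the origin of the exponent $p(N-2)-2$. From Appendix A we take the projection estimate
\begin{equation*}
PV_j=V_j-\mu^{-\frac{N}{p+1}+p(N-2)-2}\,c\,H(\cdot,x_j)+\text{h.o.t.},
\end{equation*}
and, in $\Omega_1$, the interaction bound
\begin{equation*}
\sum_{j\ge2}PV_j\approx c\,\mu^{\frac{N}{p+1}}\sum_{j\ge 2}(\mu|y-x_j|)^{-(p(N-2)-2)}.
\end{equation*}
Because $p(N-2)-2>1$ for $N\ge7$ and $p>1$, the sum over the polygon is governed by nearest neighbours, with $|x_1-x_j|\approx 2r\sin((j-1)\pi/k)$. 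This yields
\begin{equation*}
\sum_{j\ge2}\frac{1}{(\mu|x_1-x_j|)^{p(N-2)-2}}=\frac{c_0\,k^{p(N-2)-2}}{(r\mu)^{p(N-2)-2}}\,(1+o(1)).
\end{equation*}
Expanding $(V_1+w)^{p+1}=V_1^{p+1}+(p+1)V_1^p w+O(\cdot)$ produces the $B_1$ interaction term. The boundary-correction term $-\int V_1^pH$ produces the term involving $\widetilde H(\tilde x_1)$, written at the same scale since $\mu\sim k^{\frac{p+1}{p}}$.

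\emph{Expansion of the $PU$ term.} $PU$ is not a sum of projected bubbles, so I would use Appendix B, which writes
\begin{equation*}
PU=\sum_j PU_j+R,\qquad -\Delta R=PV^p-\sum_j V_j^p.
\end{equation*}
Then $\int_{\Omega_1} PU^{q+1}=\int U_1^{q+1}+(q+1)\int U_1^q(PU-U_1)+\text{h.o.t.}$. Using $-\Delta PV_1=U_1^q$ and integrating by parts, the linear term becomes $\int(PU-U_1)(-\Delta PV_1)$, which is expressed through $PV^p-V_1^p$ tested against $PV_1$. This is exactly the same interaction and boundary quantity as in the $PV$ term.

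Combining the coefficients $\frac{p}{p+1}(p+1)$ and $-\frac{1}{q+1}(q+1)$, the linear interaction terms partially cancel, and what remains has a definite negative sign with coefficients $B_1,B_2>0$. I would verify this sign via $\int V_{0,1}^p=\int U_{0,1}^q$-type identities and the behaviour of $U$ at infinity.

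The main obstacle is controlling the remainders so that they are $O(\mu^{-\frac{N}{q+1}-\sigma})$. These include the quadratic terms in the expansions, the regions away from $S$ where $V_1$ is not dominant, and the tail of $R$. Here the choice $\mu=\lambda k^{\frac{p+1}{p}}$ gives
\begin{equation*}
(k/\mu)^{p(N-2)-2}\approx\mu^{-\frac{N}{q+1}},
\end{equation*}
so the main terms sit precisely at the threshold and every error must gain a power $\mu^{-\sigma}$. I would split $\Omega_1$ into $S$ and $\Omega_1\setminus S$. In $S$, I would use $|w|\le C V_1\,(k/\mu)^{\dots}$ and bound the quadratic terms by $\int V_1^{p-1}w^2$. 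Outside $S$, I would use the pointwise sup bounds from Appendix B. I expect the hardest part to be the $PU^{q+1}$ remainder. Since $U$ decays like $|y|^{-(N-2)}$ while $PU-\sum_j PU_j$ is nonlocal, I would rely on the sharp estimates of Appendix B and on the relation $\frac{N}{q+1}=(N-2)-\frac{N}{p+1}$ to check each exponent.
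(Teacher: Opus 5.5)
Your opening identity $I_0(PU,PV)=\frac{p}{p+1}\int PV^{p+1}-\frac1{q+1}\int PU^{q+1}$ and your formula for $A$ are correct. There is, however, a genuine gap, and it starts with the asymptotics. For $p<\frac{N}{N-2}$ it is $U_{0,1}$ that decays slowly, $U_{0,1}\sim a_{N,p}|y|^{-(p(N-2)-2)}$, while $V_{0,1}\sim b_{N,p}|y|^{-(N-2)}$ (Lemma~\ref{L1}). The slow decay of $U$ is forced by the non-integrability of $V^p\sim|y|^{-p(N-2)}$. So your expansions of $PV_j$ and of $\sum_{j\ge2}PV_j$ are wrong. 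The correct ones are $PV_j=V_j-\frac{b_{N,p}}{\gamma_N}\mu^{-\frac{N}{q+1}}H(\cdot,x_j)+\dots$ and $\sum_{j\ge2}V_j\approx b_{N,p}\mu^{-\frac{N}{q+1}}\sum_{j\ge2}|y-x_j|^{2-N}$. For the same reason $\int_{\R^N}V_{0,1}^p=+\infty$, so the identity you want for the sign check does not exist. Also, $U$ does not decay like $|y|^{2-N}$, as your last paragraph assumes.

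With the correct decay, the plan of expanding $\int_\Omega PV^{p+1}$ around $x_1$ fails, because this integral does not localize at the relevant order.

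\emph{The outer region contributes at main order.} Since $(p+1)(N-2)-N=p(N-2)-2$, you get $\int_{\Omega_1\setminus S}PV^{p+1}\ge c\int_{\Omega_1\setminus S}V_1^{p+1}\approx (k/\mu)^{p(N-2)-2}\approx\mu^{-\frac{N}{q+1}}$. That is the order of the main term, not $O(\mu^{-\frac{N}{q+1}-\sigma})$.

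\emph{The quadratic term in $S$ is also at main order.} At $|y-x_1|\sim k^{-1}$ the functions $V_1$ and $w=\sum_{j\ge2}PV_j$ are comparable. Hence $\int_S V_1^{p-1}w^2$ is of exact order $(k/\mu)^{p(N-2)-2}$ as well.

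\emph{The linear term is not concentrated at $x_1$.} Because $V_{0,1}^p\notin L^1$, $\int_S V_1^p w$ is dominated by the edge of $S$.

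Consequently the Taylor coefficient $p+1$ on your linear term is not the right one. Taken literally, your bookkeeping puts coefficient $p$ on the $PV$ side and $-1$ on ``the same quantity'' on the $PU$ side. That leaves $(p-1)$ times a positive interaction, which is the wrong sign.

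The missing idea is to integrate by parts the other way: $\int\nabla PU\cdot\nabla PV=\int PU\,(-\Delta PV)=\int PU\sum_jU_j^q$. This gives $I_0(PU,PV)=\frac p{p+1}\int PU\sum_jU_j^q-\frac1{q+1}\int PU^{q+1}$. Both integrands are now concentrated, because $q(p(N-2)-2)>N$ and $(q+1)(p(N-2)-2)-N=pN>\frac N{q+1}$. Writing $PU=\sum_jPU_j+\varphi$, the two linear terms are the same quantity $\int_SU_1^q\big(\sum_{j\ge2}U_j+\varphi\big)$, with coefficients $\frac p{p+1}$ and $-1$. The net coefficient is therefore $-\frac1{p+1}$, and that is where the negative sign comes from.

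The two constants then come out as follows. Using $U_j(x_1)\approx a_{N,p}\mu^{\frac N{q+1}}(\mu|x_1-x_j|)^{-(p(N-2)-2)}$ together with the polygon sum gives $B_1$. The value $\varphi(x_1)$ from Lemma~\ref{nnl2-19-1} gives $B_2\widetilde H(\tilde x_1)$. Note that $\widetilde H$ is not a Dirichlet boundary correction. It encodes the nonlinear cross term $\big(\sum_jPV_j\big)^p-\sum_jV_j^p$ in the equation for $PU$, whereas the projection corrections are of lower order. The remainders, including the $\int_{\Omega_1\setminus S}PU^{q+1}$ term you expected to be hardest, are then controlled by the pointwise bound of Lemma~\ref{lemma:U}.
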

\begin{proof}

By symmetry, it holds that
\begin{align*}
I_0(PU, PV) &= \frac1{p+1} \int_{\Omega} \nabla PU \cdot \nabla PV-\frac1{p+1} \int_{\Omega}PV^{p+1}\nonumber\\
&\quad + \left(1-\frac1{p+1}\right) \int_{\Omega} \nabla PU \cdot \nabla PV - \frac1{q+1}\int_{\Omega}PU^{q+1}\nonumber\\
&= \left(1-\frac1{p+1}\right) \int_{\Omega} PU \sum_{j=1}^k U_j^q -\frac1{q+1}\int_{\Omega}PU^{q+1}.
\end{align*}
From \eqref{pui}, we can compute
\begin{equation}\label{energy_01}
\begin{aligned}
\quad\int_{\Omega} PU \sum_{j=1}^k U_j^q
&= k \left[\int_{S} \bigg(\sum_{j=1}^k PU_j +\varphi\bigg)\sum_{j=1}^k U_j^q + \int_{\Omega_1 \setminus S} PU\sum_{j=1}^k U_j^q\right] \\
&= k \left[\int_{S} U_1^{q+1}+ \int_{S} U_1^q \bigg(\sum_{j=2}^k U_j+\varphi\bigg) + \int_{S} PU\sum_{j=2}^k U_j^q \right.\\
&\hspace{10em}\left.+\int_{\Omega_1 \setminus S} PU\sum_{j=1}^k U_j^q+O\left(\mu^{-\frac{N}{q+1}-\sigma}\right)\right],
\end{aligned}
\end{equation}
and
\begin{multline}\label{energy_02}
\int_{\Omega}PU^{q+1} = k \left[\int_{S} U_1^{q+1} + (q+1)\int_{S} U_1^q \bigg(\sum_{j=2}^k U_j+\varphi\bigg) \right.\\
\left. + O\left(\int_{S} PU_1^{q-\delta} \bigg(\sum_{j=2}^k PU_j + \varphi\bigg)^{1+\delta}\right) + \int_{\Omega_1 \setminus S} PU^{q+1}+O\left(\mu^{-\frac{N}{q+1}-\sigma}\right)\right].
\end{multline}

Hence, from \eqref{c-hyperbola}, we obtain
\begin{equation}\label{eq:IUV}
    \begin{aligned}
        I_0(PU, PV)&=k\left[\frac{2}{N} \int_{S} U_1^{q+1}-\frac{1}{p+1} \int_{S} U_1^q \bigg(\sum_{j=2}^k U_j+\varphi\bigg)\right.\\
        &\left.+(1-\frac{1}{p+1})\int_{ S} PU\sum_{j=2}^k U_j^q+ O\left(\int_{S} PU_1^{q-\delta} \bigg(\sum_{j=2}^k PU_j + \varphi\bigg)^{1+\delta}\right)\right.\\
        &\left.+(1-\frac{1}{p+1})\int_{\Omega_1 \setminus S} PU\sum_{j=1}^k U_j^q-\frac{1}{q+1}\int_{\Omega_1 \setminus S} PU^{q+1}+O\left(\mu^{-\frac{N}{q+1}-\sigma}\right)
        \right].
    \end{aligned}
\end{equation}

We will estimate each term on the right-hand side of \eqref{eq:IUV}.

First, using \eqref{bubble}, \eqref{U10est} and the fact that
$$(q+1)(p(N-2)-2)-N=(q+1)\frac{(p+1)N}{q+1}-N=pN>\frac{N}{q+1},$$
we can check that
\begin{equation}
    \begin{aligned}
        \int_{S} U_1^{q+1}
        &= \int_{\R^N}U_{0,1}^{q+1}+O\left( (\frac{k}{\mu})^{(q+1)(p(N-2)-2)-N}\right) \\
        &= \int_{\R^N}U_{0,1}^{q+1}+O\left( \mu^{-\frac{N}{q+1}-\sigma}\right).
    \end{aligned}
\end{equation}

Similar to the argument in Appendix A in \cite{MussoJMPA}, there exists a constant $\tilde{B}_{11}$ depending on $p$ and $N$, such that
\begin{align}\label{est:00}
    \sum_{j=2}^k \frac{1}{|x_j-x_1|^{p(N-2)-2}}= \wtb_{11}\cdot\frac{k^{p(N-2)-2}}{r^{p(N-2)-2}}+O(k^{p(N-2)-2-\sigma}).
\end{align}

Let $z=\mu(y-x_1)$, for any $y \in S$, we have
$$|z-\mu(x_j-x_1)|>C|\mu(x_j-x_1)|>>1$$
for some constant $C>0$ independent of $\mu$. Thus, it follows from \eqref{U10est}, \eqref{est:00} and Lemma \ref{A1} that
\begin{align*}
\int_{S} U_1^q \sum_{j=2}^k U_j &= \int_{\R^N} U_{0,1}^q \sum_{j=2}^k \frac{a_{N,p}}{(\mu|x_j-x_1|)^{p(N-2)-2}} + O\left(\mu^{-\frac{N}{q+1}-\sigma}\right) \\
&= \left(a_{N,p}\wtb_{11} \int_{\R^N} U_{0,1}^q\right) \frac{k^{p(N-2)-2}}{r^{p(N-2)-2}\mu^{p(N-2)-2}} + O\left(\mu^{-\frac{N}{q+1}-\sigma}\right).
\end{align*}
Note that
$$pq>1\Leftrightarrow q(p(N-2)-2)=\frac{qN(p+1)}{q+1}>N.$$
Hence,
$$\int_{\R^N} U_{0,1}^q<+\infty.$$

Moreover, Lemma~\ref{nnl2-19-1} implies that
\begin{align*}
\int_{S} U_1^q \varphi &= \int_{S} U_1^q(y) \left[\frac {k^{p(N-2)-2}}{r^{p(N-2)-2}\mu^{\frac {Np}{q+1}}}\widetilde{H}(r^{-1} k y)+O\Bigl(   \frac{k^{p(N-2)-2}}{\mu^{\frac{pN}{q+1}+\sigma}}\Bigr)\right]\dy \\
&=\frac {k^{p(N-2)-2}}{r^{p(N-2)-2}\mu^{p(N-2)-2}}\int_{B_{\frac{\pi\mu}{2r_0k}}(0)}U_{0,1}^q(z) \widetilde{H}(\frac{kz}{r\mu}+\tilde{x}_1)\dz+O\left(\mu^{-\frac{N}{q+1}-\sigma}\right)\\
&=\widetilde{H}(\tilde{x}_1)\int_{\R^N}U_{0,1}^q\cdot\frac {k^{p(N-2)-2}}{r^{p(N-2)-2}\mu^{p(N-2)-2}}+O\left(\mu^{-\frac{N}{q+1}-\sigma}\right).
\end{align*}
Thus, we have
\begin{equation}\label{energy_21}
    \int_{S} U_1^q \bigg(\sum_{j=2}^k U_j+\varphi\bigg)=\frac {k^{p(N-2)-2}}{r^{p(N-2)-2}\mu^{p(N-2)-2}}\left(a_{N,p}\wtb_{11}+\widetilde{H}(\tilde{x}_1)\right)\int_{\R^N}U_{0,1}^q+O\left(\mu^{-\frac{N}{q+1}-\sigma}\right) .
\end{equation}

Direct computation shows that
\begin{equation}\label{energy_31}
    \int_{S} PU_1^{q-\delta} \bigg(\sum_{j=2}^k PU_j + \varphi\bigg)^{1+\delta}\leq C (\frac{k^{p(N-2)-2}}{\mu^{p(N-2)-2}})^{1+\delta}\cdot \mu^{N}\int_{S} U_{0,1}(\mu(y-x_1))^{q-\delta}=O\left(\mu^{-\frac{N}{q+1}-\sigma}\right),
\end{equation}
where $\delta$ is chosen small enough such that $(q-\delta)(p(N-2)-2)>N$.

Also, we can compute
\begin{equation}\label{energy_32}
\int_{S} PU\sum_{j=2}^k U_j^q \le C\int_{S} U_1^{q-\delta} \cdot \sum_{j=2}^k U_j^{1+\delta} =O\left(\mu^{-\frac{N}{q+1}-\sigma}\right).
\end{equation}

On the other hand, it follows from Lemma \ref{lemma:U} that
\begin{equation}\label{eq:IUV41}
    \begin{aligned}
        \int_{\Omega_1 \setminus S} PU^{q+1}&\le C\mu^N\int_{\Omega_1 \setminus S} \big[\sum_{i=1}^k \frac{1}{(\mu|y-x_i|)^{p(N-2)-2}}\big]^{q+1}\dy\\
        &\hspace{4em}+\frac{C}{\mu^{pN}}\int_{\Omega_1 \setminus S}\big[ \sum_{j=1}^k \frac{k^{p(N-2)-2}}{(1+k|y-x_j|)^{p(N-3-\theta)-2}} \big]^{q+1}\dy.
    \end{aligned}
\end{equation}

To estimate \eqref{eq:IUV41}, we need some algebra inequalities as follows. We define
\begin{align*}
    \eta_1:&=(p(N-2)-3)(q+1),\\
    \eta_2:&=(p(N-3)-2)(q+1),\\
    \eta_3:&=(p(N-3)-3)(q+1).
\end{align*}
Then,
\begin{align}
    \eta_1>\eta_2>N,\qquad&\text{if}\quad N\geq 7,\, p\in(1,\,\frac{N}{N-2}),\label{alg:01}\\
    \eta_3>N,\qquad&\text{if}\quad N\geq 8,\, p\in(1,\,\frac{N}{N-2}).\label{alg:02}
\end{align}
However, when $N=7$, $\eta_3>N$ does not hold for all $p\in(1,\,\frac{N}{N-2})$.

Given any $\theta>0$ small enough, it holds that
\begin{align}\label{tip_1}
    \sum_{j=2}^k\frac{1}{|y-x_j|^{p(N-2)-2}}
    &\leq \frac{1}{|y-x_1|^{p(N-2)-3-\theta}}\sum_{j=2}^k\frac{1}{|y-x_j|^{1+\theta}} \nonumber\\
    &\leq \frac{C}{|y-x_1|^{p(N-2)-3-\theta}}\sum_{j=2}^k\frac{1}{|x_1-x_j|^{1+\theta}}\nonumber\\
    &\leq \frac{Ck^{1+\theta}}{|y-{x}_1|^{N-3-\theta}},\qquad y\in \Omega_1
\end{align}

Then, employing \eqref{alg:01} and \eqref{tip_1}, we compute
\begin{equation}\label{eq:IUV411}
    \begin{aligned}
        &\quad\mu^N\int_{\Omega_1 \setminus S} \big[\sum_{j=1}^k \frac{1}{(\mu|y-x_j|)^{p(N-2)-2}}\big]^{q+1}\dy\\
        &\le  C\int_{\Omega_1 \setminus S} \frac{1}{{\mu^{pN}}|y-x_1|^{(p+1)N}}+\frac{k^{(q+1)(1+\theta)}}{{\mu^{pN}}|y-x_1|^{(q+1)(p(N-2)-3-\theta)}}\dy \\
        &=O(\frac{k^{pN}}{\mu^{pN}})=O({\mu^{-\frac{N}{q+1}-\sigma}}).
    \end{aligned}
\end{equation}

If $\eta_3>N$, similar to \eqref{eq:IUV411}, the second integral on the RHS of \eqref{eq:IUV41} is bounded by
\begin{equation}
    \begin{aligned}
        &\quad  \frac{k^{(p+1)N}}{\mu^{pN}}\int_{\Omega_1 \setminus S}\frac{1}{(1+k|y-x_1|)^{(q+1)(p(N-3-\theta)-2)}}
        +\frac{1}{(1+k|y-x_1|)^{(q+1)(p(N-3-\theta)-3-\theta)}}\dy\\
        &\le \frac{k^{pN}}{\mu^{pN}} \int_{\R^N}\frac{1}{(1+|y|)^{(q+1)(p(N-3-\theta)-2)}}+\frac{1}{(1+|y|)^{(q+1)(p(N-3-\theta)-3-\theta)}}\dy\\
        &\le O(\frac{k^{pN}}{\mu^{pN}})=O({\mu^{-\frac{N}{q+1}-\sigma}}).
    \end{aligned}
\end{equation}

If $\eta_3\leq N$, the second integral on the RHS of \eqref{eq:IUV41} is bounded by
\begin{equation}
    \begin{aligned}
        & \quad \frac{1}{\mu^{pN}}\int_{\Omega_1 \setminus S}\frac{k^{(q+1)(p(N-2)-2)}}{(k|y-x_1|)^{(q+1)(p(N-3-\theta)-2)}}
        +\frac{k^{(q+1)(p(N-2)-2)}}{(k|y-x_1|)^{(q+1)(p(N-3-\theta)-3-\theta)}}\dy\\
        &\leq O(\frac{k^{pN}}{\mu^{pN}}+\frac{k^{(q+1)(p+1)(1+\theta)}}{\mu^{pN}})\leq O({\mu^{-\frac{N}{q+1}-\sigma}}).
    \end{aligned}
\end{equation}
The last inequality is due to
$$pN-\tau (p+1)(q+1)>\frac{N}{q+1},\quad\text{if  } N\geq 7,\, p\in(1,\,\frac{N}{N-2}).$$

Thus, we have
\begin{equation}\label{energy_41}
    \int_{\Omega_1 \setminus S} PU^{q+1}=O({\mu^{-\frac{N}{q+1}-\sigma}}),
\end{equation}
and
\begin{equation}\label{energy_42}
   \int_{\Omega_1 \setminus S} PU\sum_{j=1}^k U_j^q \leq C\int_{\Omega_1 \setminus S} PU^{q+1}=O({\mu^{-\frac{N}{q+1}-\sigma}}).
\end{equation}

Plugging \eqref{eq:IUV}, \eqref{energy_21}, \eqref{energy_31}, \eqref{energy_32}, \eqref{energy_41} and \eqref{energy_42} into \eqref{eq:IUV}, we establish \eqref{energy_PUPV}, that is
\begin{multline*}
        I_0(PU,PV)=k\left[\frac{2}{N}\int_{\R^N}U^{q+1}_{0,1}\right.\\
        \left.-\frac{1}{p+1}\frac {k^{p(N-2)-2}\left(a_{N,p}\wtb_{11}+\widetilde{H}(\tilde{x}_1)\right)\int_{\R^N}U_{0,1}^q}{r^{p(N-2)-2}\mu^{p(N-2)-2}}+O\left(\mu^{-\frac{N}{q+1}-\sigma}\right)\right].
\end{multline*}
\end{proof}

Now we are ready to prove Proposition~\ref{prop:Iexpan}.
\begin{proof}[Proof of Proposition \ref{prop:Iexpan}]

We have
\begin{align*}
&\quad \int_{\Omega} \nabla U_* \cdot \nabla V_*\\
&= \int_{\Omega} \nabla u_\epsilon \cdot \nabla v_\epsilon
+\int_{\Omega} \nabla PU \cdot \nabla PV
-\int_{\Omega} \nabla u_\epsilon \cdot \nabla PV
- \int_{\Omega} \nabla v_\epsilon \cdot \nabla PU\\
&= \int_{\Omega} \nabla u_\epsilon \cdot \nabla v_\epsilon
+\int_{\Omega} \nabla PU \cdot \nabla PV-\int_{\Omega} v_\epsilon^p PV
- \int_{\Omega} (u_\epsilon^q+\epsilon\beta_2 v_\epsilon)  PU.
\end{align*}
Hence, we can write
\begin{equation}\label{1-29-12}
\begin{aligned}
 I_{\epsilon}(PU_*, PV_*)
&= I_{\epsilon}(u_\epsilon,v_\epsilon) +I_0(PU, PV)-\frac{\epsilon\beta_2}{2}\int_{\Omega}PU^2
-\int_{\Omega}  v_\epsilon^pPV- \int_{\Omega} u_\epsilon^q  PU\\
&\qquad-\frac1{p+1}\int_{\Omega} \left(|v_\epsilon- PV|^{p+1}-|PV|^{p+1}-|v_\epsilon|^{p+1}\right)\\
&\qquad -\frac1{q+1}\int_{\Omega} \left(|u_\epsilon- PU|^{q+1}-|PU|^{q+1}-|u_\epsilon|^{q+1}\right)\\
&\ \\
&= I_\ep(u_\epsilon,v_\epsilon)+I_0(PU, PV) -\frac{1}{p+1} J_1- \frac{1}{q+1} J_2 -\frac{\epsilon\beta_2}{2} J_3,
\end{aligned}
\end{equation}
where
\begin{align}\label{J1J2}
  J_1 &:= k\int_{\Omega_1} \left(|v_\epsilon- PV|^{p+1}-|PV|^{p+1}-|v_\epsilon|^{p+1}+(p+1)v_\epsilon^pPV\right), \\
  J_2 &:= k\int\limits_{\Omega_1} \left(|u_\epsilon- PU|^{q+1}-|PU|^{q+1}-|u_\epsilon|^{q+1}+(q+1)u_\epsilon^qPU\right),\\
  J_3&:=\int_{\Omega}PU^2.
\end{align}

First, we estimate $J_1$.
\medskip

To this end, we divide $\Omega_1$ into two parts:
 $$\Omega_1^i:=\Omega_1\cap\{v_\epsilon< PV\} \hbox{ and } \Omega_1^o:=\Omega_1\cap\{v_\epsilon \geq PV\}.$$ Denote $S'=\Omega_1\cap B_{k^{\gamma-1}}(x_1)\supset S$, where $\gamma>0$ small enough.

It is easy to check that $S'\subset \Omega_1^i$.
Moreover, we have $V_1\geq C\sum\limits_{j=2}^k V_j\geq C v_\epsilon$ in $S'$.

We shall use the following inequalities
\begin{equation}\label{1-30-12}
    \begin{aligned}
        &\left\|v_\epsilon- PV|^{p+1}-PV^{p+1}-v_\epsilon^{p+1} +(p+1)v_\epsilon^p PV\right|
        \\
        &\hspace{10em}\leq
        \begin{cases}
            C\left(v_\epsilon^{p+1}+v_\epsilon^p PV+v_\epsilon PV^p \right)\leq
            Cv_\epsilon PV^p ,\qquad \text{if }v_\epsilon< PV,\\
            C\left( v_\epsilon^{p-1}PV^2+ PV^{p+1} \right),\hspace{8.5em}\text{if }v_\epsilon \geq PV.
        \end{cases}
    \end{aligned}
\end{equation}

On the one hand, we obtain
\begin{equation}\label{J1_1}
\begin{aligned}
        \int_{S'} v_\epsilon PV^p &\leq C\int_{S} V_1^p+ C\int_{S'\setminus S}\left(\sum\limits_{j=1}^k V_j \right)^{p}\\
    &\leq C \int_{B_{\frac{\mu}{k}}(0)} \frac{\mu^{\frac{pN}{p+1}-N}}{(1+|z|)^{p(N-2)}}+\frac{C}{\mu ^{\frac{pN}{q+1}}} \int_{S'\setminus S}\frac{1}{|y-x_1|^{p(N-2)}}+\frac{k^{p(1+\theta)}}{|y-x_1|^{p(N-3-\theta)}}\dy\\
    &\leq C(\frac{1}{\mu})^{\frac{p(N-2)}{p+1}}+\frac{C}{\mu ^{\frac{pN}{q+1}}}\left(k^{(N-p(N-2))(\gamma-1)}+k^{p(1+\theta)+(N-p(N-3-\theta))(\gamma-1)}\right)\\
    &=O({\mu^{-\frac{N}{q+1}-\sigma}}).
\end{aligned}
\end{equation}

On the other hand, since $(p+1)(N-3)>N$ and $p(N-2)-2>p+1$ holds for all $N\geq7$, $p\in(1, \frac{N}{N-2})$, we obtain
\begin{equation}\label{J1_2}
    \begin{aligned}
         \int_{\Omega_1 \setminus S'} PV^{p+1} &\leq C\int_{\Omega_1 \setminus S'} \left(\sum\limits_{j=1}^k V_j \right)^{p+1}\\
         &\leq C\int_{\Omega_1 \setminus S'} \left(\frac{1}{\mu ^{\frac{N}{q+1}}}\sum\limits_{j=1}^k \frac{1}{|y-x_j|^{N-2}} \right)^{p+1}\dy\\
         &\leq \frac{C}{\mu^{\frac{(p+1)N}{q+1}}}\int_{\Omega_1 \setminus S'}\frac{1}{|y-x_1|^{(p+1)(N-2)}}+ \frac{k^{(p+1)(1+\theta)}}{|y-x_1|^{(p+1)(N-3-\theta)}}\dy\\
         &= O\left( (\frac{k^{1-\gamma}}{\mu})^{p(N-2)-2}+k^{\gamma(p+1)(1+\theta)}(\frac{k^{1-\gamma}}{\mu})^{p(N-2)-2}\right)=O({\mu^{-\frac{N}{q+1}-\sigma}}).
         \end{aligned}
\end{equation}

Also, since
\[
 \frac{N}{q+1}-(N-4)\tau=\frac{2(p-1)}{p+1}>0
\]
holds for all $N\geq7$, $p\in(1, \frac{N}{N-2})$, we get
\begin{equation}\label{J1_3}
    \begin{aligned}
        \int_{\Omega_1 \setminus S'} v_\epsilon^{p-1}PV^{2} &\leq C\int_{\Omega_1 \setminus S'} \left(\sum\limits_{j=1}^k V_j \right)^{2}\\
        &\leq \frac{C}{\mu^{\frac{2N}{q+1}}}\int_{\Omega_1 \setminus S'}\frac{1}{|y-x_1|^{2N-4}}+ \frac{k^{2(1+\theta)}}{|y-x_1|^{2N-6-2\theta}}\dy\\
        &=O\left( \frac{k^{(1-\gamma)(N-4)}}{\mu^{\frac{2N}{q+1}}}
        +k^{2\gamma(1+\theta)}\frac{k^{(1-\gamma)(N-4)}}{\mu^{\frac{2N}{q+1}}}\right)
        =O({\mu^{-\frac{N}{q+1}-\sigma}}).
    \end{aligned}
\end{equation}

Combining \eqref{J1J2} and \eqref{1-30-12}--\eqref{J1_3}, we arrive at
\begin{equation}\label{J_1_1}
\begin{aligned}
J_1 &\leq C\int_{\Omega_1^i}v_\epsilon PV^p+C\int_{\Omega_1^o}v_\epsilon^{p-1} PV^2+PV^{p+1}\\
&\leq C\int_{S'}v_\epsilon PV^p+C\int_{\Omega_1^i\setminus S'} PV^{p+1}+C\int_{\Omega_1^o}(v_\epsilon^{p-1} PV^2+PV^{p+1})\\
&=O({\mu^{-\frac{N}{q+1}-\sigma}}).
\end{aligned}
\end{equation}

Next, we estimate $J_2$.
\medskip

Recall that $U_1\ge c>0$, $0< \varphi\le C$ and
$
0<\sum_{j=2}^kU_j\le C
$
in $S$. As in \eqref{1-30-12}, we have
\begin{equation}
    \begin{aligned}
        &|u_\epsilon- PU|^{q+1}-PU^{q+1}-u_\epsilon^{q+1} +(q+1)u_\epsilon^q PU
        \\
        &\hspace{4em}=
        \begin{cases}
            -(q+1)u_\epsilon PU^q+O\Big(u_\epsilon^{1+\delta}PU^{q-\delta}+u_\epsilon^{q+1}+u_\epsilon^q PU\Big)\\
            \hspace{4em}=-(q+1)u_\epsilon PU^q+O\Big(u_\epsilon^{1+\delta}U_1^{q-\delta}\Big) , &\text{if }x\in S,\\
            O\big( u_\epsilon^{q-1}PU^2+ PU^{q+1} \big),&\text{if }x\in\Omega_1\setminus S.
        \end{cases}
    \end{aligned}
\end{equation}

Denote $\bar{\eta}=(q-1)(p(N-2)-2)$. Then,
\begin{equation*}
    \begin{aligned}
        \int_{S} U_1^{q-1} \leq \frac{C}{\mu^{\frac{2N}{q+1}}}\int_{B_{\frac{\mu}{k}}(0)} U_{0,1}^{q-1}
        &=
        \begin{cases}
            O(\frac{1}{\mu^{\frac{2N}{q+1}}}) \qquad &\text{if}\quad \bar{\eta}> N\\
            O(\frac{1}{\mu^{\frac{2N}{q+1}}}\ln (\frac{\mu}{k}))\qquad &\text{if}\quad \bar{\eta}= N\\
            O(\frac{1}{\mu^{\frac{N}{q+1}}}\cdot (\frac{k}{\mu})^{q(p(N-2)-2)-N})\qquad &\text{if}\quad \bar{\eta}< N
        \end{cases}\\
        &=O({\mu^{-\frac{N}{q+1}-\sigma}}).
    \end{aligned}
\end{equation*}
Thus, we obtain
\begin{equation}
    \begin{aligned}
        \int_{S} PU^{q} u_\epsilon&=\int_{S} U_1^{q} u_\epsilon+O\left(\int_{S} U_1^{q-1} \right)\\
        &=\frac{u_\epsilon(r)}{\mu^{\frac{N}{q+1}}}\int_{\R^N}U_{0,1}^{q}+O(\mu^{-\frac{N}{q+1}-\sigma}).
    \end{aligned}
\end{equation}

In addition, as in \eqref{energy_31}, we compute
\begin{equation}
    \begin{aligned}
         \int_{S} u_\epsilon^{1+\delta} U_1^{q-\delta}\le \frac{C}{\mu^{\frac{N(1+\delta)}{q+1}}}\int_{S} U_{0,1}^{q-\delta}=O({\mu^{-\frac{N}{q+1}-\sigma}}).
    \end{aligned}
\end{equation}

On the other hand, \eqref{energy_41} and \eqref{J_3_3} give
\[
\int_{\Omega_1 \setminus S} PU^{q+1}+u_\epsilon^{q-1}PU^2 = O\left(\mu^{-\frac{N}{q+1}-\sigma}\right).
\]

As a consequence,
\begin{equation}\label{10-31-12}
J_2 = k\left[-\frac{u_\epsilon(r)}{\mu^{\frac{N}{q+1}}}\int_{\R^N}U_{0,1}^{q} + O\left(\mu^{-\frac{N}{q+1}-\sigma}\right)\right].
\end{equation}

Finally, we estimate $J_3$.
\medskip

In this part, we use the notation $S_j:=B_{\frac{\pi}{2}r_0^{-1}k^{-1}}(x_j)\cap \Omega_j$.
Then, it holds that $U_1\geq C\geq C\mu^{\frac{N}{q+1}}(k\mu^{-1})^{p(N-2)-2}\geq C\big(\sum\limits_{j=2}^k U_j+\varphi\big)$ in $S_1$.

Denote $\alpha_1:=2(p(N-2)-2)$ and $\alpha_2:=2(p(N-3)-2)$. Note that $\alpha_1\leq N$ happens only if $ N=7$. We can compute
\begin{equation}\label{J_31_1}
    \begin{aligned}
        \int_{S_1}PU^2&=\int_{S_1} \left(\sum_{j=1}^k PU_j+\varphi \right)^2\\
        &=\int_{S_1} PU_1^2+2(\sum_{j=2}^k PU_j+\varphi)PU_1+
        O\left((\sum_{j=2}^k PU_j)^2+\varphi^2\right)
    \end{aligned}
\end{equation}
We have
\begin{equation}\label{J_31_2}
    \begin{aligned}
       \int_{S_1} PU_1^2
=\begin{cases}
    \displaystyle\mu^{\frac{2N}{q+1}-N}
        \left(\int_{\R^N} U_{0,1}^2 +o(1)\right)\qquad &\text{if}\quad\alpha_1> N,\\
    \displaystyle O\left(\mu^{\frac{2N}{q+1}-N}\ln (\frac{\mu}{k}) \right)=O(\frac{1}{k^{N-\sigma}})\qquad &\text{if}\quad\alpha_1= N,\\
    \displaystyle O\left(\mu^{\frac{2N}{q+1}-N}(\frac{\mu}{k})^{N-\alpha_1}\right)=O(\frac{1}{k^N})\qquad &\text{if}\quad\alpha_1< N,
\end{cases}
    \end{aligned}
\end{equation}
Since $\sum\limits_{j=2}^k PU_j+\varphi\leq C$ in $S_1$, we have
\begin{align}\label{J_31_3}
    \left|\int_{S_1} (\sum_{j=2}^k PU_j+\varphi)PU_1\right|\leq C\int_{S_1}U_1=O(\mu^{\frac{N}{q+1}-N}(\frac{\mu}{k})^{N-[p(N-2)-2]})
  =O(\frac{1}{k^N}).
\end{align}
and
\begin{align}\label{J_3_4}
    \int_{S_1} O\left((\sum_{j=2}^k PU_j)^2+\varphi^2\right)=O(\text{Vol}(S_1))=O(\frac{1}{k^N}).
\end{align}

On the other hand, we compute that
\begin{equation*}
    \begin{aligned}
        \int_{\Omega\setminus \cup_{j=1}^kS_j} PU^2
    &\le C k \sum_{i=1}^k \int_{\Omega\setminus S_i} \left( \frac{\mu^{\frac{N}{q+1}}}{(\mu|y-x_i|)^{p(N-2)-2}}\right)^2+ \left( \frac{1}{\mu^{\frac{pN}{q+1}}} \frac{k^{p(N-2)-2}}{(k|y-x_i|)^{p(N-3-\theta)-2}}\right)^2\dy\\
    &\leq C k^2 \int_{\Omega\setminus S_1} \left( \frac{\mu^{\frac{N}{q+1}}}{(\mu|y-x_1|)^{p(N-2)-2}}\right)^2+ \left( \frac{1}{\mu^{\frac{pN}{q+1}}} \frac{k^{p(N-2)-2}}{(k|y-x_1|)^{p(N-3-\theta)-2}}\right)^2\dy.
    \end{aligned}
\end{equation*}
Also, we have
\begin{equation*}
    \begin{aligned}
        \int_{\Omega\setminus S_1}\left( \frac{\mu^{\frac{N}{q+1}}}{(\mu|y-x_1|)^{p(N-2)-2}}\right)^2
&= \frac{\mu^{\frac{2N}{q+1}}}{\mu^{2(p(N-2)-2)}}\int_{\Omega\setminus S_1} \left(  \frac{1}{|y-x_1|^{p(N-2)-2}}\right)^2\\
&=\begin{cases}
    \frac{\mu^{\frac{2N}{q+1}}}{\mu^{\alpha_1}}k^{\alpha_1-N}=O(\frac{1}{k^N})\qquad &\text{if}\quad\alpha_1> N,\\
    \frac{\mu^{\frac{2N}{q+1}}}{\mu^{\alpha_1}}\ln k=O(\frac{\ln k}{k^N})\qquad &\text{if}\quad\alpha_1= N,\\
    \frac{\mu^{\frac{2N}{q+1}}}{\mu^{\alpha_1}}=O(\frac{1}{k^{\alpha_1}})\qquad &\text{if}\quad\alpha_1< N,
\end{cases}
    \end{aligned}
\end{equation*}
and
\begin{equation*}
    \begin{aligned}
        \int_{\Omega\setminus S_1}\left( \frac{1}{\mu^{\frac{pN}{q+1}}} \frac{k^{p(N-2)-2}}{(k|y-x_1|)^{p(N-3-\theta)-2}}\right)^2
 &= \int_{\Omega\setminus S_1} \left(  \frac{1}{(k|y-x_1|)^{p(N-3-\theta)-2}}\right)^2\\
 &=\begin{cases}
        O(\frac{k^{\alpha_2-N}}{k^{\alpha_2}})=O(\frac{1}{k^N}) \qquad &\text{if}\quad\alpha_2>N,\nonumber\\
        O(\frac{\ln k}{k^{\alpha_2}})=O(\frac{\ln k}{k^N})\qquad &\text{if}\quad\alpha_2= N,\nonumber\\
        O(\frac{1}{k^{\alpha_2}}) \qquad &\text{if}\quad\alpha_2< N,\nonumber
    \end{cases}
    \\
    \end{aligned}
\end{equation*}

Since $(\min\{\alpha_2,N\}-1)\tau>\frac{N}{q+1}$ holds for all $N\geq 7$, $p\in(1,\,\frac{N}{N-2})$, we have
\begin{equation}\label{J_3_3}
    \int_{\Omega\setminus \cup_{j=1}^kS_j} PU^2=O\Big( k\mu^{-\frac{N}{q+1}-\sigma}\Big).
\end{equation}

Thus, combining \eqref{J_31_1}, \eqref{J_31_2}, \eqref{J_31_3} and \eqref{J_3_3}, we have
\begin{equation}\label{J_3_1}
    \begin{aligned}
        J_3=
        \begin{cases}
            k\Big(\mu^{\frac{2N}{q+1}-N}
        \int_{\R^N} U_{0,1}^2 +O(\mu^{-\frac{N}{q+1}-\sigma}+\mu^{\frac{2N}{q+1}-N-\sigma})\Big),&\text{if}\quad \alpha_1>N,\\
           O\Big( k\mu^{-\frac{N}{q+1}-\sigma}\Big),&\text{if}\quad \alpha_1\le N .
        \end{cases}
    \end{aligned}
\end{equation}

\medskip

Putting \eqref{1-29-12}, \eqref{J_1_1}, \eqref{10-31-12} and \eqref{J_3_1} together, we obtain \eqref{energy_000}. This completes the proof of Proposition~\ref{prop:Iexpan}.

\end{proof}

\section{The invertibility of the linear operator}
Fixing $\tau = \frac{p}{p+1} \in (0,1)$ so that $k \simeq \mu^{\tau}$, we define weighted $L^{\infty}(\R^N)$-norms:
\begin{equation}\label{eq:*-norm}
\begin{cases}
\displaystyle \|u\|_{*,1} := \sup_{y\in\Omega} \left[\sum_{j=1}^{k} \frac{\mu^{\frac{N}{q+1}}}{(1+\mu
|y-x_j|)^{\frac{N}{q+1}+\tau}}\right]^{-1}|u(y)|,\\
\displaystyle \|v\|_{*,2} := \sup_{y\in \Omega} \left[\sum_{j=1}^{k} \frac{\mu^{\frac{N}{p+1}}}{(1+\mu
|y-x_j|)^{\frac{N}{p+1}+\tau}}\right]^{-1}|v(y)|,
\end{cases}
\end{equation}
\begin{equation}\label{eq:**-norm}
\begin{cases}
\displaystyle \|f\|_{**,1} := \sup_{y\in \Omega} \left[\sum_{j=1}^{k} \frac{\mu^{\frac{N}{q+1}+2}}{(1+\mu
|y-x_j|)^{\frac{N}{q+1}+2+\tau}}\right]^{-1}|f(y)|,\\
\displaystyle \|g\|_{**,2} := \sup_{y\in \Omega} \left[\sum_{j=1}^{k} \frac{\mu^{\frac{N}{p+1}+2}}{(1+\mu
|y-x_j|)^{\frac{N}{p+1}+2+\tau}}\right]^{-1}|g(y)|,
\end{cases}
\end{equation}
and
\[\|(u,v)\|_* := \|u\|_{*,1}+\|v\|_{*,2} \quad \text{and} \quad \|(f,g)\|_{**} := \|f\|_{**,1}+\|g\|_{**,2}.\]
Then we set two Banach spaces
\begin{equation}\label{Xs}
    X_s:=\{(u,v)\in  L_s\cap [C(\Omega)\times C(\Omega)]: \|(u,v)\|_{*}<+\infty\},
\end{equation}
and
\begin{equation}\label{Ys}
    Y_s:=\{(f,g)\in  L_s\cap [C(\Omega)\times C(\Omega)]: \|(f,g)\|_{**}<+\infty\},
\end{equation}
Denote
\begin{equation}\label{YZ}
Y_{j,1} := \frac{\partial U_j}{\partial r},
\quad Y_{j,2} := \frac{\partial U_j}{\partial \mu};
Z_{j,1} := \frac{\partial V_j}{\partial r},
\quad Z_{j,2} := \frac{\partial V_j}{\partial \mu}
\end{equation}
for $j = 1,\ldots,k$.

Denote $\widetilde{L}_\ep(u,v):=(-\Delta u,-\Delta v-\ep\beta_2 u)$, from $Y_s$ to $X_s$. Then we can define $(u,v)=(\widetilde{L}_\ep)^{-1}(f,g)$ by Green representation as follows:
\begin{equation*}
\begin{aligned}
\begin{cases}
            u(x)=\int_{\Omega} G(x,y) f(y)dy,\\
    v(x)=\int_{\Omega} G(x,y) \big(g(y)+\ep\beta_2 u(y)\big)dy.
\end{cases}
\end{aligned}
\end{equation*}

Denote
\begin{equation}\label{barYZ}
(\bar{Y}_{j,1},\bar{Z}_{j,1}) := (\widetilde{L}_\ep)^{-1}\left (\frac{\partial (V_j^p)}{\partial r},\frac{\partial (U_j^q)}{\partial r} \right),
\quad (\bar{Y}_{j,2},\bar{Z}_{j,2}) := (\widetilde{L}_\ep)^{-1}\left (\frac{\partial (V_j^p)}{\partial \mu},\frac{\partial (U_j^q)}{\partial \mu} \right),
\end{equation}
for $j = 1,\ldots,k$.

Set
\begin{equation}\label{E}
    E=:\left\{ (u,v)\in X_s:  \left\langle \left(\sum\limits_{j=1}^{k} q U_j^{q-1} Y_{j,l},  \sum\limits_{j=1}^{k} p V_j^{p-1} Z_{j,l} \right), (u,v)  \right\rangle = 0, \;\; l=1,2.\right\}
\end{equation}
and
\begin{equation}\label{F}
    F=:\left\{(f,g)\in Y_s: \left\langle \left( \sum\limits_{j=1}^{k}\bar{Z}_{j,l}, \sum\limits_{j=1}^{k}\bar{Y}_{j,l}\right), (f,g)  \right\rangle = 0, \;\; l=1,2.\right\},
\end{equation}
where
$$\langle (\phi_1,\phi_2),(\psi_1,\psi_2) \rangle:= \langle \phi_1, \psi_1 \rangle + \langle \phi_2, \psi_2 \rangle: = \int_{\Omega} \phi_1\psi_1 + \phi_2 \psi_2.$$

Define
\begin{equation*}
    \begin{aligned}
        L_k (\phi, \psi) &:=\left( L_{1,k} (\phi, \psi), L_{2,k} (\phi, \psi)\right)\\
        &:=\left(-\Delta \phi - p |V_{*}|^{p-1} \psi, -\Delta \psi  -q |U_{*}|^{q-1} \phi -\epsilon\beta_2 \phi\right).
    \end{aligned}
\end{equation*}

Let us consider the following system
\begin{equation}\label{linear-equation}
 \begin{cases}
    \displaystyle L_k(\phi,\psi) = (f, g) + \sum_{l=1}^2 c_l
    \left(  p\sum_{j=1}^k V_j^{p-1}Z_{j,l},\,
                     q\sum_{j=1}^k U_j^{q-1}Y_{j,l}\right)
                                     \hspace{0.5em}\text{in}\hspace{0.5em}\Omega,\\
     \displaystyle (\phi,\psi)\in E.
 \end{cases}
\end{equation}
We have the following priori estimate result for $(\phi, \psi)$.
\begin{proposition}\label{blowup}
    Suppose $(\phi_k, \psi_k)$ solves \eqref{linear-equation} with $(f_k,g_k)$. If $\|(f_k,g_k)\|_{**} \rightarrow 0$, then $\|(\phi_k, \psi_k)\|_{*} \rightarrow 0$.
\end{proposition}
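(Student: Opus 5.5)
We argue by contradiction, following the blow-up scheme of \cite{WY} adapted to systems as in \cite{GKPY}. Suppose that $\|(f_k,g_k)\|_{**}\to0$ but $\|(\phi_k,\psi_k)\|_*\not\to0$. After normalizing we may assume $\|(\phi_k,\psi_k)\|_*=1$. The aim is to show that the weighted sup norm is actually $o(1)$, which gives the contradiction.

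\emph{Step 1: the Lagrange multipliers are small.} Test \eqref{linear-equation} against $(\bar Y_{1,l},\bar Z_{1,l})$, or equivalently against the kernel elements $(Y_{1,l},Z_{1,l})$ localized to $\Omega_1$, and use the symmetry of $L_s$. The left-hand side is $O(\|(\phi_k,\psi_k)\|_*)\cdot o(1)$ times the natural scaling $\mu^{\pm1}$. This follows because $(Y_{1,l},Z_{1,l})$ nearly solves the linearized Lane--Emden system at $(U_1,V_1)$. The corrections come from three sources: the other bubbles, $u_\epsilon$ and $v_\epsilon$ (via $|V_*|^{p-1}-V_1^{p-1}$ and $|U_*|^{q-1}-U_1^{q-1}$), and the term $\epsilon\beta_2\phi$. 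The last one is small because $\mu^{-2}$ beats the weights. The $(f,g)$ contribution is $O(\|(f,g)\|_{**})$. The coefficient matrix $\langle (pV_1^{p-1}Z_{1,l},qU_1^{q-1}Y_{1,l}),(Y_{1,m},Z_{1,m})\rangle$ is diagonally dominant with entries $\sim\mu^{\pm2}$. Hence $c_l\mu^{\pm1}=o(1)$ after the appropriate rescaling.

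\emph{Step 2: the integral representation and the weighted estimate.} Write $\phi=\int G\,(p|V_*|^{p-1}\psi+\dots)$ and $\psi=\int G\,(q|U_*|^{q-1}\phi+\epsilon\beta_2\phi+\dots)$. The singular coupling coefficients are $\epsilon\beta_2$ and the potentials $|V_*|^{p-1}$, $|U_*|^{q-1}$, which are bounded away from the bubbles by $u_\epsilon,v_\epsilon$. To handle them, first invert the operator $L^*$ of \eqref{1-25-10n2}, whose Green's function is studied in the appendix; this absorbs the $\epsilon\beta_2$ coupling and the radial background. Then apply the standard convolution estimates of Appendix D of type
\[
\int\frac{1}{|x-y|^{N-2}}\sum_j\frac{\mu^{a+2}}{(1+\mu|y-x_j|)^{a+2+\tau}}\,dy\le C\sum_j\frac{\mu^{a}}{(1+\mu|x-x_j|)^{a+\tau}},
\]
with $a=\frac N{q+1}$ or $\frac N{p+1}$. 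Together with the bound $|V_*|^{p-1}\lesssim\sum_j V_j^{p-1}+v_\epsilon^{p-1}$, these give
\[
\frac{|\phi(y)|}{\sum_j\mu^{\frac N{q+1}}(1+\mu|y-x_j|)^{-\frac N{q+1}-\tau}}\le C\Big(o(1)+\frac{\sum_j(1+\mu|y-x_j|)^{-\frac N{q+1}-\tau-\theta}}{\sum_j(1+\mu|y-x_j|)^{-\frac N{q+1}-\tau}}\Big),
\]
and an analogous bound holds for $\psi$. The extra decay $\theta>0$ comes from $p>1$, $q>2$ and the ranges in \textbf{(P)}. Consequently the sup defining $\|(\phi,\psi)\|_*$ is attained, up to $o(1)$, in $\cup_j B_{R/\mu}(x_j)$ for some large fixed $R$. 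By symmetry we may take $j=1$.

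\emph{Step 3: the blow-up limit.} Rescale $\tilde\phi(z)=\mu^{-\frac N{q+1}}\phi(\mu^{-1}z+x_1)$ and $\tilde\psi(z)=\mu^{-\frac N{p+1}}\psi(\mu^{-1}z+x_1)$. These are locally bounded, so by elliptic estimates they converge in $C_{loc}$ to a solution of the linearized Lane--Emden system at $(U_{0,1},V_{0,1})$. The limit decays at the rate allowed by the norm. The scaling kills $\epsilon\beta_2$, $u_\epsilon$ and $v_\epsilon$, as well as $c_l$ by Step 1. By the nondegeneracy of \cite{HV,Wang}, the limit is a combination of $\partial_{z_i}$ and the dilation generator. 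Symmetry in $L_s$ eliminates $\partial_{z_i}$ for $i\ge2$. The orthogonality in $E$ then eliminates the remaining $r$- and $\mu$-directions, so the limit is $0$. This contradicts the fact that the sup is attained in $B_R(0)$ with value bounded below.

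\emph{Main obstacle.} The main difficulty is Step 2 for the $\psi$-equation with $p<\frac N{N-2}$. There $|V_*|^{p-1}\psi$ and the coupling $q|U_*|^{q-1}\phi$ must be controlled near the bubbles and in the region where $v_\epsilon$ dominates. The decay of $U_{0,1}\sim|z|^{-(p(N-2)-2)}$ is slower than $|z|^{-(N-2)}$, and this is exactly what forces the use of $L^*$ and the restriction $q>2$. I would first verify the key exponent inequality $q\big(p(N-2)-2\big)-2>\frac N{p+1}+\tau$ needed to gain $\theta$.
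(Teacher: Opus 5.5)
Your overall scheme is the paper's: normalize $\|(\phi_k,\psi_k)\|_*=1$, show the multipliers are small, and represent $(\phi,\psi)$ through the Green matrix of $L^*$ so that the $O(1)$ potentials $pv_\epsilon^{p-1}$, $qu_\epsilon^{q-1}$, $\epsilon\beta_2$ are absorbed via \textbf{(H)}. Then localize the weighted supremum near the bubbles, and blow up using Lemma~\ref{L2} (that nondegeneracy result is not a consequence of the Hulshof--Van der Vorst/Wang uniqueness results), the $L_s$-symmetry and the orthogonality in $E$.

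However, Step~1 is wrong as written. Your matrix $\langle(pV_1^{p-1}Z_{1,l},qU_1^{q-1}Y_{1,l}),(Y_{1,m},Z_{1,m})\rangle$ shows that you test the first equation against $Y_{1,m}$ and the second against $Z_{1,m}$. Up to boundary terms,
\[
\langle L_k(\phi,\psi),(A,B)\rangle=\int_\Omega \phi\,\bigl(-\Delta A-q|PU_*|^{q-1}B-\epsilon\beta_2B\bigr)+\psi\,\bigl(-\Delta B-p|PV_*|^{p-1}A\bigr).
\]
This is small only when $(A,B)$ nearly lies in the kernel of the formal adjoint. That kernel is spanned by $(Z_{1,h},Y_{1,h})$, since $-\Delta Z_{1,h}=qU_1^{q-1}Y_{1,h}$ and $-\Delta Y_{1,h}=pV_1^{p-1}Z_{1,h}$. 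With $(A,B)=(Y_{1,h},Z_{1,h})$ you are left with $\int\phi\,(pV_1^{p-1}-qU_1^{q-1})Z_{1,h}+\psi\,(qU_1^{q-1}-pV_1^{p-1})Y_{1,h}$. This has no small factor, so you get at best $\mu^{n_h}|c_h|=O(1)$, with $n_h$ as in Lemma~\ref{l1-18-2}. Consistently, your matrix equals $\int\nabla Y_{1,l}\cdot\nabla Y_{1,m}+\nabla Z_{1,l}\cdot\nabla Z_{1,m}$, which scales like $\mu^{\pm2+\frac N{p+1}-\frac N{q+1}}$, not $\mu^{\pm2}$. The fix is to swap the test pair as in \eqref{2-18-2}, \eqref{LHS}, and use Lemma~\ref{lemma:D6} and \eqref{po-1}.

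Step~2 is too schematic exactly where the paper has to work. Once $L^*$ has absorbed the background, the perturbing potentials are differences. The first is $|PV_*|^{p-1}-v_\epsilon^{p-1}$, which is at most $C\,PV^{p-1}$ since $p<2$. The second is $|PU_*|^{q-1}-u_\epsilon^{q-1}$, which for $q>2$ contains the non-localized term $u_\epsilon^{q-2}PU$. Your bound $|V_*|^{p-1}\lesssim\sum_jV_j^{p-1}+v_\epsilon^{p-1}$ keeps precisely the $O(1)$ piece that would destroy the $o(1)$ gain. You also never treat $u_\epsilon^{q-2}PU\,\phi$. Controlling it outside $\bigcup_jB_{M\mu^{-1}}(x_j)$ needs the sharper pointwise bound on $PU$ from Lemma~\ref{l1-23-4}; see \eqref{16-1-21}--\eqref{17-1-21}.

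Moreover, the Green matrix of $L^*$ has off-diagonal entries of order $|x-y|^{4-N}$ (\eqref{G2k1}, \eqref{G3k1}). So besides your $|x-y|^{2-N}$ convolution estimate you need its $|x-y|^{4-N}$ analogues. There the boundedness of $\Omega$ is used to trade a $\frac N{p+1}$-type weight for a $\frac N{q+1}$-type one; cf. \eqref{4-24-2}--\eqref{eq:g4}.

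Finally, your closing diagnosis is off. This proposition needs neither $q>2$ nor \textbf{(P)}: the paper's argument covers $q\le2$, where the $u_\epsilon^{q-2}PU$ terms simply drop out, and $q>2$ is imposed because of \eqref{restriction-1}. The use of $L^*$ is forced by the $O(1)$ potentials $pv_\epsilon^{p-1}$, $qu_\epsilon^{q-1}$, $\epsilon\beta_2$, not by the slow decay of $U_{0,1}$.
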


To establish Proposition \ref{blowup}, we derive first the following lemma.

\begin{lemma}\label{l1-18-2}
Assume that $N \ge 7$, $p\in (1,\frac{N}{N-2})$. The numbers $c_1$ and $c_2$ in \eqref{linear-equation} satisfy
\[
\mu|c_1| + \mu^{-1}|c_2| \le C\left( \frac1{\mu^\sigma} \|(\phi, \psi)\|_*+ \|(f, g)\|_{**} \right),
\]
where $\sigma>0$ is a fixed constant.
\end{lemma}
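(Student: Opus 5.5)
We estimate $c_1,c_2$ by testing the equation in \eqref{linear-equation} against the "dual kernel" pairs $(\bar Z_{1,l},\bar Y_{1,l})$, or equivalently against $(Z_{1,l},Y_{1,l})$ localized near $x_1$, and then inverting a $2\times 2$ linear system whose matrix is almost diagonal. Concretely, for $t=1,2$ we multiply the first equation of \eqref{linear-equation} by $Y_{1,t}$ (paired with $\psi$-type terms) and the second by $Z_{1,t}$, or more naturally pair $(L_{1,k},L_{2,k})$ with $(\sum_j Z_{j,t},\sum_j Y_{j,t})$. By the $L_s$ symmetry it suffices to integrate over $\Omega_1$ and multiply by $k$. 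This yields
\begin{equation*}
\sum_{l=1}^2 c_l\, \Big\langle \Big(p\sum_{j} V_j^{p-1}Z_{j,l},\,q\sum_j U_j^{q-1}Y_{j,l}\Big),\Big(\sum_i Z_{i,t},\sum_i Y_{i,t}\Big)\Big\rangle
= \big\langle L_k(\phi,\psi),(\textstyle\sum_i Z_{i,t},\sum_i Y_{i,t})\big\rangle-\big\langle (f,g),(\textstyle\sum_i Z_{i,t},\sum_i Y_{i,t})\big\rangle .
\end{equation*}

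\textbf{Step 1: the matrix.} Using the decay of $Y_{j,l},Z_{j,l}$ (from Appendix A: $|\partial_r U_j|\lesssim \mu U_j$, $|\partial_\mu U_j|\lesssim \mu^{-1}U_j$), scaling gives diagonal entries $k(a_1\mu^2+o(\mu^2))$ for $l=t=1$ and $k(a_2\mu^{-2}+o(\mu^{-2}))$ for $l=t=2$, with $a_1,a_2>0$ given by $\int (pV_{0,1}^{p-1}\partial V\,\partial V+qU_{0,1}^{q-1}\partial U\,\partial U)$. The off-diagonal entries are $o(k)$ by oddness in the radial direction, plus interaction terms between different $j$, controlled by \eqref{est:00}-type sums since $k\sim\mu^\tau$. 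Hence the matrix is invertible, with entries of the stated orders.

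\textbf{Step 2: the right-hand side.} For the $(f,g)$ term, the $\|\cdot\|_{**}$ weights together with the decay of $Z_{1,t},Y_{1,t}$ give $|\langle (f,g),\cdot\rangle|\le Ck\,\mu^{\pm1}\|(f,g)\|_{**}$ (sign $+$ for $t=1$, $-$ for $t=2$), using the standard weighted-sum lemma of Appendix D to absorb $\sum_j(1+\mu|y-x_j|)^{-\ldots}$ over $\Omega_1$. For the $L_k$ term we integrate by parts: since $(Z_{i,t},Y_{i,t})$ solve the linearized Lane–Emden system around $(U_i,V_i)$, we have $-\Delta Y_{i,t}=pV_i^{p-1}Z_{i,t}$ and $-\Delta Z_{i,t}=qU_i^{q-1}Y_{i,t}$, so the pairing reduces to
\begin{equation*}
\int_\Omega p\big(V_i^{p-1}-|V_*|^{p-1}\big)\psi Y_{i,t}\ \text{-type terms} \;+\; q\big(U_i^{q-1}-|U_*|^{q-1}\big)\phi Z_{i,t} \;-\;\epsilon\beta_2\phi Z_{i,t},
\end{equation*}
plus boundary contributions from $\Omega$, which are harmless because the bubbles are concentrated at distance $O(1)$ from $\partial\Omega$ (and can be replaced by their projections at negligible cost, via Appendix A).

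\textbf{Step 3: main obstacle.} The hardest part is bounding $\int (|V_*|^{p-1}-V_1^{p-1})\psi Y_{1,t}$ and its $U$-analogue by $C k\mu^{\pm1-\sigma}\|(\phi,\psi)\|_*$. The difficulty is that $p<\frac{N}{N-2}$, so $p-1$ is small and $V^{p-1}$ is non-Lipschitz; moreover $V_*=v_\epsilon-PV$ contains the radial part $v_\epsilon$ and the tails $\sum_{j\ge2}V_j$. We will split $\Omega_1$ into $S'$ (where $V_1$ dominates) and its complement. In $S'$ we use $|a^{p-1}-b^{p-1}|\le C|a-b|^{p-1}$ and $v_\epsilon+\sum_{j\ge2}V_j\le C\mu^{N/(q+1)}(k/\mu)^{\ldots}$, which gains a power $\mu^{-\sigma}$ after scaling. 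Outside $S'$ we bound both terms crudely by the $*$-weights and the decay of $Y_{1,t}$, applying the algebraic inequalities \eqref{alg:01}, \eqref{tip_1}; the $\epsilon\beta_2$ term is lower order by similar bookkeeping. Finally, dividing by the diagonal entries and solving the $2\times2$ system yields $\mu|c_1|+\mu^{-1}|c_2|\le C(\mu^{-\sigma}\|(\phi,\psi)\|_*+\|(f,g)\|_{**})$.
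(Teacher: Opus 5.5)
Your proposal is correct and follows essentially the paper's argument. You test with $(Z_{1,h},Y_{1,h})$; your symmetrized $\sum_i(Z_{i,t},Y_{i,t})$ gives the same identity up to the factor $k$, by $L_s$-invariance. You then invert the almost diagonal matrix $\delta_{hl}a_h\mu^{2n_h}+o(\mu^{n_h+n_l})$, bound the $(f,g)$-pairing by $C\mu^{n_h}\|(f,g)\|_{**}$, and use $-\Delta Z_{1,h}=qU_1^{q-1}Y_{1,h}$, $-\Delta Y_{1,h}=pV_1^{p-1}Z_{1,h}$ so that the remaining terms gain $\mu^{-\sigma}$. In the paper, Lemma~\ref{lemma:D6} together with the separate bound on $\langle\epsilon\beta_2\phi,Y_{1,h}\rangle$ does exactly this.

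There is one slip to fix. With that pairing, the terms left after integrating by parts are $p(V_1^{p-1}-|V_*|^{p-1})\psi Z_{1,t}$, $q(U_1^{q-1}-|U_*|^{q-1})\phi Y_{1,t}$ and $-\epsilon\beta_2\phi Y_{1,t}$. So $Y$ and $Z$ are interchanged throughout your Steps 2--3. Also, multiplying the first equation by $Y_{1,t}$, as in your first option, would not produce the cancellation at all.
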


\begin{proof}
    In fact, the numbers $c_1$ and $c_2$ in \eqref{linear-equation} are determined by
    \begin{equation}\label{2-18-2}
\begin{aligned}
&\quad \sum_{l=1}^2 c_l \int_{\Omega} \left(p\sum_{j=1}^k V_j^{p-1}Z_{j,l}Z_{1,h} + q\sum_{j=1}^k U_j^{q-1}Y_{j,l}Y_{1,h}\right)\\
&= \int_{\Omega} \left(Z_{1,h} L_{1,k}(\phi,\psi) +Y_{1,h} L_{2,k}(\phi,\psi)\right) - \int_{\Omega} \left(Z_{1,h} f +Y_{1,h} g\right), \quad h= 1,2.
\end{aligned}
\end{equation}

We will estimate each term in \eqref{2-18-2}. Denote $n_h=1$ if $h=1$; $n_h=-1$ if $h=2$.

First, using Lemma \ref{B3}, we have
\begin{align}
   | \left\langle f , Z_{1,h} \right\rangle |
   & \leq C \|f\|_{**,1} \int_{\Omega} \dfrac{\mu^{n_h+ \frac{N}{p+1}}}{(1+\mu|y-x_1|)^{N-2}}\sum\limits_{j=1}^{k} \dfrac{\mu^{\frac{N}{q+1}+2}}{(1+ \mu|y-x_j|)^{\frac{N}{q+1}+ 2+\tau}} \ \dy \nonumber \\
   & \leq  C \mu^{n_h} \|f\|_{**,1}\sum\limits_{j=1}^{k}
   \int_{\Omega}  \dfrac{1}{(1+|z-\mu (x_j - x_1)|)^{\frac{N}{q+1} +2+\tau}}
   \dfrac{1}{(1+ |z|)^{N-2 }}  \ \dz \nonumber \\
   & \leq C \mu^{n_h} \|f\|_{**,1} \left(1 + \sum\limits_{j=2}^{k} \dfrac{1}{(\mu |x_j - x_1|)^{\frac{N}{q+1}+\tau}} \right) \leq C\mu^{n_h} \|f\|_{**,1}.\nonumber
\end{align}
Similarly, we have
\begin{equation}\label{ch2}
    | \left\langle g , Y_{1,h} \right\rangle | \leq C\mu^{n_h} \|g\|_{**,2}.
\end{equation}
By Lemma \ref{B2}, we have
\begin{equation}\label{po-1}
    \begin{aligned}
        &\quad| \left\langle \beta_2 \phi , Y_{1,h} \right\rangle | \\
     & \leq C \|\phi\|_{*,1} \int_{\Omega} \dfrac{\mu^{n_h+ \frac{N}{q+1}}}{(1+\mu|y-x_1|)^{p(N-2)-2}}\sum\limits_{j=1}^{k} \dfrac{\mu^{\frac{N}{q+1}}}{(1+ \mu|y-x_j|)^{\frac{N}{q+1}+\tau}} \ \dy\\
     & \leq C \|\phi\|_{*,1} \int_{\Omega} \dfrac{\mu^{n_h+ \frac{2N}{q+1}}}{(1+\mu|y-x_1|)^{\frac{N}{q+1}+\tau+p(N-2)-2}}\dy
     \\ &\hspace{5em}
        +C \|\phi\|_{*,1}\sum\limits_{j=2}^{k} \dfrac{1}{(\mu |x_j - x_1|)^{\tau}} \int_{\Omega}\dfrac{\mu^{n_h+ \frac{2N}{q+1}}}{(1+\mu|y-x_1|)^{\frac{N}{q+1}+p(N-2)-2}}\dy.
    \end{aligned}
\end{equation}
Note that if $q\leq 2$, $\frac{N}{q+1}+p(N-2)-2=\frac{(p+2)N}{q+1}>N$ and if $q>2$, $\frac{N}{q+1}+p(N-2)-2>N$ may not hold. We can compute that
\begin{equation}
    \begin{aligned}
        &\quad| \left\langle \beta_2 \phi , Y_{1,h} \right\rangle |\\
        &=\begin{cases}
          O\left( \|\phi\|_{*,1} \mu^{n_h+\frac{2N}{q+1}-N}\right)
          =O\left( \frac{\mu^{n_h}\|\phi\|_{*,1}}{\mu^{\sigma}}\right)
          \qquad&\text{if}\quad\frac{N}{q+1}+p(N-2)-2>N,\\
           O\left( \|\phi\|_{*,1} \mu^{n_h+\frac{2N}{q+1}-N}\ln \mu\right)
           =O\left( \frac{\mu^{n_h}\|\phi\|_{*,1}}{\mu^{\sigma}}\right)
          \qquad&\text{if}\quad\frac{N}{q+1}+p(N-2)-2=N,\\
             O\left( \|\phi\|_{*,1} \mu^{n_h+\frac{N}{q+1}-(p(N-2)-2)}\right)
          =O\left( \frac{\mu^{n_h}\|\phi\|_{*,1}}{\mu^{\sigma}}\right) \qquad&\text{if}\quad\frac{N}{q+1}+p(N-2)-2<N.
        \end{cases}
    \end{aligned}
\end{equation}

On the other hand, from Lemma~\ref{lemma:D6}, we have
    \begin{align}\label{estimate_1}
        \left| \left\langle -\Delta Z_{1,h}  - q (PU_*)^{q-1}Y_{1,h}, \phi \right\rangle\right|=O\left(\frac{\mu^{n_h}\|(\phi,\psi)\|_{*}}{\mu^{\sigma}} \right),
    \end{align}
and
    \begin{align}\label{estimate_2}
    \left|\left\langle -\Delta Y_{1,h}  - p (PV_*)^{p-1}Z_{1,h}, \psi \right\rangle \right|=O\left( \frac{\mu^{n_h}\|(\phi,\psi)\|_{*}}{\mu^{\sigma}} \right).
    \end{align}

Besides, we have
\begin{equation}\label{LHS}
    \sum\limits_{j=1}^k \left( p\left\langle V_j^{p-1}Z_{j,l}   , Z_{1,h}   \right\rangle +  q\left\langle U_j^{q-1}Y_{j,l}, Y_{1,h} \right\rangle \right) = \delta_{hl}\mu^{2n_h}a_h+o(\mu^{n_l+n_h-2}),
\end{equation}
for some $a_h>0$. Combining \eqref{2-18-2}, \eqref{ch2}, \eqref{po-1}, \eqref{estimate_1}, \eqref{estimate_2} and \eqref{LHS}, we obtain
\begin{equation}\label{c}
    c_h = \dfrac{1}{\mu^{n_h}}(o(\|(\phi,\psi)\|_{*})+ O(\|(f,g)\|_{**})), \;\; h=1,2.
\end{equation}
\end{proof}

\begin{proof}[Proof of Proposition \ref{blowup}]
    On the contrary, suppose that there exist sequences $\{(\phi_k, \psi_k)\}_{k \in \N} $, $\{(f_k,g_k)\}_{k \in \N} $
and $\{(c_{1,k}, c_{2,k})\}_{k \in \N}$ satisfying \eqref{linear-equation}, such that
$\|(\phi_k, \psi_k)\|_{*}=1$ for all $k \in \N$ and $\|(f_k,g_k)\|_{**}\to 0$ as $k \to +\infty$. 
We write
\[
L_k (\phi, \psi) = L_0(\phi, \psi)- \left(p \left(|PV_*|^{p-1}-v_\epsilon^{p-1}\right)\psi, q\left(|PU_*|^{q-1}-u_\epsilon^{q-1}\right)\phi\right),
\]
where
\[
L_0(\phi,\psi) =\left(-\Delta \phi - p v_\epsilon^{p-1} \psi, -\Delta \psi - q u_\epsilon^{q-1} \phi-\ep\beta_2 \phi\right), \quad (\phi,\psi) \in \mathbf{E}.
\]
For simplicity, we drop the subscript $k$.

Noting that $p\in(1,\frac{N}{N-2})\subset(1,2)$ for $N\geq 4$, we have
\begin{equation*}
    \begin{aligned}
        \left| |PV_*|^{p-1}-v_\epsilon^{p-1}\right|\leq C |PV|^{p-1}
    \end{aligned}
\end{equation*}
and
\begin{equation*}
    \begin{aligned}
        \left\|PU_*|^{q-1}-u_\epsilon^{q-1} \right|\le
        \begin{cases}
            CPU^{q-1}\qquad &q\leq 2,\\
            CPU^{q-1}+Cu_\epsilon^{q-2}PU\qquad &q> 2.
        \end{cases}
    \end{aligned}
\end{equation*}

It follows from \eqref{G1k}--\eqref{G2k1}, \eqref{G3k1}--\eqref{G4k1} and \eqref{10-27-1}--\eqref{11-27-1} that, for $x\in \Omega$,
    \begin{align}
        |\phi(x)|
&\le C\left[\int_{\Omega} \frac{\left(PV^{p-1}|\psi|\right)(y)}{|y-x|^{N-2}} dy + \int_{\Omega} \frac{\left(PU^{q-1} |\phi|\right)(y)}{|y-x|^{N-4}} dy + \int_{\Omega} \frac{\left(u_\epsilon^{q-2}PU |\phi|\right)(y)}{|y-x|^{N-4}} dy\right] \nonumber \\
&\ + C\int_{\Omega} \frac1{|y-x|^{N-2}} \left| \left(\sum_{h=1}^2c_h p\sum_{j=1}^k V_j^{p-1}Z_{j,h}+|f|\right)(y)\right| dy\nonumber\\
&\ +C\int_{\Omega} \frac1{|y-x|^{N-4}} \left|\left(\sum_{h=1}^2c_h q\sum_{j=1}^k U_j^{q-1}Y_{j,h}+|g|\right)(y)\right| dy . \label{1-31-1}
    \end{align}
and
\begin{align}
|\psi(x)| &\le C\left[\int_{\Omega} \frac{\left(PU^{q-1} |\phi|\right)(y)}{|y-x|^{N-2}} dy + \int_{\R^N} \frac{\left(u_\epsilon^{q-2}PU |\phi|\right)(y)}{|y-x|^{N-2}} dy + \int_{\R^N} \frac{\left(PV^{p-1}|\psi|\right)(y)}{|y-x|^{N-4}} dy\right] \nonumber \\
&\ +C\int_{\Omega} \frac1{|y-x|^{N-2}} \left|\left(\sum_{h=1}^2c_h q\sum_{j=1}^k U_j^{q-1}Y_{j,h}+|g|\right)(y)\right| dy \nonumber\\
&\ + C\int_{\Omega} \frac1{|y-x|^{N-4}} \left| \left(\sum_{h=1}^2c_h p\sum_{j=1}^k V_j^{p-1}Z_{j,h}+|f|\right)(y)\right| dy. \label{1-31-13}
\end{align}
In \eqref{1-31-1}--\eqref{1-31-13}, the integrals involving the function $u_\epsilon^{q-2}PU |\phi|$ can be substituted with $0$ provided $q \le 2$.

Next, we will prove that the following function
    \begin{equation}
        \left[\sum_{j=1}^{k}\frac{\mu^{\frac{N}{q+1}}}{(1+\mu |y-x_j|)^{\frac{N}{q+1}+\tau}}\right]^{-1}|\phi(y)| +
        \left[\sum_{j=1}^{k}\frac{\mu^{\frac{N}{p+1}}}{(1+\mu |y-x_j|)^{\frac{N}{p+1}+\tau}}\right]^{-1}|\psi(y)|
    \end{equation}
    can only achieve its maximum in $\cup_{i=1}^m \overline{B(x_i,M\mu^{-1})}$ for some large constant $M>1$. For this purpose, we will estimate the above function when $y \in \Omega\backslash \cup_{i=1}^m {B(x_i,M\mu^{-1})}$.

\textbf{Estimate $\phi$ and $\psi$.}
First, using Lemma \ref{l1-18-2}, \ref{B2} and \ref{B3}, we can prove
\begin{equation}\label{1-24-2}
\begin{medsize}
\displaystyle \int_{\Omega} \frac1{|y-x|^{N-2}} \Bigg|\Bigg(c_h \sum_{j=1}^k V_j^{p-1}Z_{j,h}\Bigg)(y)\Bigg| dy
\le C\left(\frac1{\mu^\sigma}\|(\phi, \psi)\|_* + \|(f, g)\|_{**}\right) \sum_{j=1}^{k}\frac{\mu^{\frac{N}{q+1}}}{(1+\mu |x-x_j|)^{\frac{N}{q+1}+\tau}},
\end{medsize}
\end{equation}
\begin{equation}\label{2-24-2}
\begin{medsize}
\displaystyle \int_{\Omega} \frac1{|y-x|^{N-2}} \Bigg|\Bigg(c_h \sum_{j=1}^k U_j^{q-1}Y_{j,h}\Bigg)(y)\Bigg| dy
\le C\left(\frac1{\mu^\sigma}\|(\phi, \psi)\|_* + \|(f, g)\|_{**}\right) \sum_{j=1}^{k}\frac{\mu^{\frac{N}{p+1}}}{(1+\mu |x-x_j|)^{\frac{N}{p+1}+\tau}},
\end{medsize}
\end{equation}
and
\begin{equation}\label{l20-23-21}
    \int_{\Omega}\frac{|f(y)|}{|y-x|^{N-2}} dy
\le C\|f\|_{**,1} \sum_{j=1}^{k}\frac{\mu^{\frac{N}{q+1}}}{(1+\mu
|x-x_j|)^{\frac{N}{q+1}+\tau}}
\end{equation}

\begin{equation}\label{l20-23-22}
    \int_{\Omega}\frac{|g(y)|}{|y-x|^{N-2}} dy
\le C\|g\|_{**,2} \sum_{j=1}^{k}\frac{\mu^{\frac{N}{p+1}}}{(1+\mu
|x-x_j|)^{\frac{N}{p+1}+\tau}}.
\end{equation}

In addition, assume that $N \ge 7$, $p\in (1,\frac{N}{N-2})$, we get
\begin{align}
&\quad \int_{\Omega} \frac1{|y-x|^{N-4}} \Bigg|\Bigg(c_h \sum_{j=1}^k V_j^{p-1}Z_{j,h}\Bigg)(y)\Bigg| dy\nonumber
\\
&\le C\left(\frac1{\mu^\sigma}\|(\phi, \psi)\|_* + \|(f, g)\|_{**}\right)
\sum_{j=1}^{k}\frac{\mu^{\frac{pN}{p+1}-4}}{(1+\mu |x-x_j|)^{N-4}} \nonumber\\
&\le C\left(\frac1{\mu^\sigma}\|(\phi, \psi)\|_* + \|(f, g)\|_{**}\right) \sum_{j=1}^{k}\frac{\mu^{\frac{N}{p+1}}}{(1+\mu|x-x_j|)^{\frac{N}{p+1}+\tau}}.\label{4-24-2}
\end{align}
The last inequality follows from
\begin{multline*}
    \frac{\mu^{\frac{pN}{p+1}-4}}{(1+\mu |x-x_j|)^{N-4}}\leq \frac{C\mu^{\frac{N}{p+1}}}{(1+\mu|x-x_j|)^{\frac{N}{p+1}+\tau}},\, j=1,\cdots,k, \text{ in a bounded domain }\Omega.
\end{multline*}

Similarly, we obtain
\begin{equation}\label{3-24-2}
\begin{medsize}
\displaystyle \int_{\Omega} \frac1{|y-x|^{N-4}} \Bigg|\Bigg(c_h \sum_{j=1}^k U_j^{q-1}Y_{j,h}\Bigg)(y)\Bigg| dy \\
\le C\left(\frac1{\mu^\sigma}\|(\phi, \psi)\|_* + \|(f, g)\|_{**}\right) \sum_{j=1}^{k}\frac{\mu^{\frac{N}{q+1}}}{(1+\mu |x-x_j|)^{\frac{N}{q+1}+\tau}},
\end{medsize}
\end{equation}

\begin{equation}\label{eq:f4}
\int_{\Omega} \frac{|f(y)|}{|y-x|^{N-4}} dy \le C \|f\|_{**,1}\sum_{j=1}^{k}\frac{\mu^{\frac{N}{p+1}}}{(1+\mu |x-x_j|)^{\frac{N}{p+1}+\tau}},
\end{equation}
and
\begin{equation}\label{eq:g4}
\int_{\Omega} \frac{|g(y)|}{|y-x|^{N-4}} dy \le C \|g\|_{**,2}\sum_{j=1}^{k}\frac{\mu^{\frac{N}{q+1}}}{(1+\mu |x-x_j|)^{\frac{N}{q+1}+\tau}}.
\end{equation}

Now we split the first integral on the right-hand side of \eqref{1-31-1} into several parts:
\begin{align*}
    &\int_{\Omega} \dfrac{1}{|y-z|^{N-2}}PV^{p-1}(z)|\psi(z)| dz\\
    &\leq C\|\psi\|_{*,2}\left(\int_{\Omega\backslash \cup_{i=1}^k {B(x_i,M\mu^{-1})}}+\sum_{i=1}^k \int_{B(x_i,M\mu^{-1})} \right)\frac{\mu^{\frac{pN}{p+1}}}{|y-z|^{N-2}}\\
    &\quad\times\left[\sum_{j=1}^{k}\frac{1}{(1+\mu|z-x_j|)^{N-2}}\right]^{p-1} \sum_{j=1}^{k}\frac{\dz}{(1+\mu|z-x_j|)^{\frac{N}{p+1}+\tau}}
    :=I_0+\sum_{i=1}^mI_i.
\end{align*}
Note that $N-2>\frac{N}{p+1}+\tau$ when $p\in(1,\frac{N}{N-2})$ and $N\ge5$. Therefore, for some small constant $\sigma>0$, we have
    \begin{align}
        I_0
        &\leq \frac {C\|\psi\|_{*,2}}{M^{\sigma}}\int_{\Omega\backslash \cup_{i=1}^k
        {B(x_i,M\mu^{-1})}} \frac{\mu^{\frac{pN}{p+1}}}{|y-z|^{N-2}} \left[\sum_{j=1}^{k}\frac{1}{(1+\mu|z-x_j|)^{\frac{N}{p+1}+\tau}}\right]^p \ \dz \nonumber\\
        &\leq \frac {C\|\psi\|_{*,2}}{M^{\sigma}}\int_{\Omega\backslash \cup_{i=1}^k {B(x_i,M\mu^{-1})}}\frac{\mu^{\frac{pN}{p+1}}}{|y-z|^{N-2}}\sum_{j=1}^{k}\frac{1}{(1+\mu|z-x_j|)^{\frac{pN}{p+1}+\tau}}\ \dz \label{I0}\\
        &\leq \frac {C\|\psi\|_{*,2}}{M^{\sigma}}\sum_{j=1}^{k}\frac{\mu^{\frac{N}{q+1}}}{(1+\mu|y-x_j|)^{\frac{N}{q+1}+\tau}}, \nonumber
    \end{align}
 where the second inequality follows from Lemma \ref{B1-1} and the last one follows from Lemma \ref{B3} and $\frac{pN}{p+1}=2+\frac{N}{q+1}$.

Next, we estimate $I_i$ for $i=1,2,\cdots,k$. Note that for any $\alpha\geq \tau$ and $z \in {B(x_k,M\mu^{-1})}$, it holds that
\begin{equation}\label{Ik-0}
    \sum_{j\neq k}\frac{1}{(1+\mu|z-x_j|)^{\alpha}}\leq C\leq \frac{C}{(1+\mu|z-x_k|)^{\alpha}}.
\end{equation}
Therefore, by Lemma \ref{B3} and \eqref{Ik-0}, for $y\in \Omega \backslash \cup_{i=1}^m {B(x_i,M\mu^{-1})}$,
\begin{equation}\label{Ik}
    \begin{split}
        I_i&\leq C\|\psi\|_{*,2}\int_{ {B(x_i,M\mu^{-1})}} \frac{1}{|y-z|^{N-2}}\frac{\mu^{\frac{pN}{p+1}}}{(1+\mu|z-x_i|)^{(N-2)(p-1)+\frac{N}{p+1}+\tau}}\ \dz\\
        &\leq C\|\psi\|_{*,2} \frac{\mu^{\frac{N}{q+1}}}{(1+\mu|y-x_i|)^{\min\{(N-2)(p-1)+\frac{N}{p+1}+\tau-2,N-2-\theta\}}}\\
        &\leq \frac {C\|\psi\|_{*,2}}{M^{\sigma}}\frac{\mu^{\frac{N}{q+1}}}{(1+\mu|y-x_i|)^{\frac{N}{q+1}+\tau}}.
    \end{split}
\end{equation}
Employing \eqref{I0} and \eqref{Ik} yields that for any $y\in \Omega\backslash \cup_{i=1}^k {B(x_i,M\mu^{-1})}$,
\begin{align}\label{omega1-1}
    \int_{\Omega} \dfrac{1}{|y-z|^{N-2}}PV^{p-1}(z)|\psi(z)| dz\leq \frac {C\|\psi\|_{*,2}}{M^{\sigma}}\sum_{j=1}^{k}\frac{\mu^{\frac{N}{q+1}}}{(1+\mu|y-x_j|)^{\frac{N}{q+1}+\tau}}.
\end{align}

Combining Lemma~\ref{l1-23-4} and a similar argument in \eqref{4-24-2}, we can obtain, for $x \in \Omega \setminus \cup_{i=1}^k {B(x_i,M\mu^{-1})}$,
\begin{equation}\label{15-1-2}
\int_{\Omega} \frac{\left(PU^{q-1} |\phi|\right)(y)}{|y-x|^{N-4}} dy
\le \frac{C}{M^{\sigma}} \|\phi\|_{*,1} \sum_{j=1}^k \frac{ \mu^{\frac{N}{q+1}} }{ (1+\mu |x-x_j| )^{ \frac{N}{q+1}+\tau }}.
\end{equation}

If $q > 2$, it holds that for $x \in \Omega \setminus \cup_{i=1}^k {B(x_i,M\mu^{-1})}$,
\begin{equation}\label{16-1-21}
\begin{aligned}
&\quad \int_{\Omega} \frac{\left(u_\epsilon^{q-2}PU |\phi|\right)(y)}{|y-x|^{N-4}} dy \\
&\le \frac{C}{M^{\sigma}} \|\phi\|_{*,1} \int_{\Omega \setminus \cup_{j=1}^k B_{M\mu^{-1}}(x_j)} \frac1{|y-x|^{N-4}} \left[\sum_{j=1}^{k}\frac{\mu^{\frac{N}{q+1}}}{(1+\mu
|y-x_j|)^{\frac{N}{q+1}+\tau}}\right]^2 dy \\
&\ + C \|\phi\|_{*,1} \sum_{j=1}^k \int_{B_{M\mu^{-1}}(x_j)} \frac1{|y-x|^{N-4}} \frac{\mu^{\frac{2N}{q+1}}}{(1+\mu
|y-x_j|)^{N-2+\frac{N}{q+1}+\tau}} dy.
\end{aligned}
\end{equation}
Since $2 < \frac{N}{q+1}+\tau < \frac{N}{2}$ for $N \ge 6$, the first term is bounded by
\begin{equation}\label{17-1-21}
\begin{aligned}
&\quad \frac{C}{M^{\sigma}} \|\phi\|_{*,1} \int_{\Omega \setminus \cup_{j=1}^k B_{M\mu^{-1}}(x_j)} \frac1{|y-x|^{N-4}} \cdot k\sum_{j=1}^{k}\frac{\mu^{\frac{2N}{q+1}} }{(1+\mu
|y-x_j|)^{2(\frac{N}{q+1}+\tau)}} dy \\
&\le \frac{C}{M^{\sigma}} \|\phi\|_{*,1} \sum_{j=1}^{k}\frac{k\mu^{\frac{2N}{q+1}-4}}{(1+\mu
|x-x_j|)^{2(\frac{N}{q+1}+\tau-2)}} \\
&\le \frac{C}{M^{\sigma}} \|\phi\|_{*,1} \sum_{j=1}^k \frac{ \mu^{\frac{N}{q+1}} }{ (1+\mu |x-x_j| )^{ \frac{N}{q+1}+\tau }}.
\end{aligned}
\end{equation}
Besides, the second term is bounded by
\[
C \|\phi\|_{*,1} \sum_{j=1}^k \frac{\mu^{\frac{2N}{q+1}-4}}{(1+\mu|x-x_j|)^{N-4}} \le \frac{C}{M^{\sigma}} \|\phi\|_{*,1} \sum_{j=1}^k \frac{ \mu^{\frac{N}{q+1}} }{ (1+\mu |x-x_j| )^{ \frac{N}{q+1}+\tau }}.
\]

Consequently, the above estimates gives that for $y \in \Omega \setminus \cup_{i=1}^k B_{M\mu^{-1}}(x_i)$,
\begin{equation}\label{21-1-6-21}
    \left[\sum_{j=1}^{k}\frac{\mu^{\frac{N}{q+1}}}{(1+\mu |y-x_j|)^{\frac{N}{q+1}+\tau}}\right]^{-1}|\phi_k(y)| \le C\left(\frac{1}{M^{\sigma}}+o(1)\right)\|(\phi_k, \psi_k)\|_{*} + C\|(f_k, g_k)\|_{**}.
\end{equation}

Analogously, by employing \eqref{1-31-13}, we can prove that for $y \in \Omega \setminus \cup_{i=1}^k B_{M\mu^{-1}}(x_i)$,
\begin{equation}\label{n21-1-6-21}
    \left[\sum_{j=1}^{k}\frac{\mu^{\frac{N}{p+1}}}{(1+\mu |y-x_j|)^{\frac{N}{p+1}+\tau}}\right]^{-1}|\psi_k(y)| \le C\left( \frac{1}{M^{\sigma}}+o(1)\right)\|(\phi_k, \psi_k)\|_{*} + C\|(f_k, g_k)\|_{**}.
\end{equation}

\textbf{Derive a contradiction.}
Since $\|(\phi_k, \psi_k)\|_{*} = 1$, we can deduce from \eqref{21-1-6-21} and \eqref{n21-1-6-21}  that there are $R>0$ and $c_0 > 0$ such that
\[
    \|\mu^{-\frac{N}{q+1}}\phi_k\|_{L^\infty(B_{R/\mu}(x_i))} + \|\mu^{-\frac{N}{p+1}}\psi_k\|_{L^\infty(B_{R/\mu}(x_i))} \geq c_0 > 0,
\]
for some $i$. Define
\[
(\bar{\phi}_k(y), \bar{\psi}_k(y))=(\mu^{-\frac{N}{q+1}}\phi_k(\mu^{-1}y+x_i), \mu^{-\frac{N}{p+1}}\psi_k(\mu^{-1}y+x_i)).
\]
Then $(\bar{\phi}_k, \bar{\psi}_k)$ converges uniformly in any compact set of $\Omega$ to a solution $(\Phi, \Psi)$ of
\begin{equation}\label{linear}
   \begin{cases}
   -\Delta \Phi = p V_{0,\Lambda}^{p-1} \Psi \;\;\;  \hbox{in } \R^N,\\
   -\Delta \Psi = q U_{0,\Lambda}^{q-1} \Phi  \;\;\;  \hbox{in } \R^N,
   \end{cases}
\end{equation}
for some $\Lambda \in [\lambda_{00},\lambda_{00}^{-1}]$. However, since $(\phi_k, \psi_k) \in E$, $(\Phi, \Psi)$ is perpendicular to the kernel of equation \eqref{linear}. Hence $\Phi=0$ and $\Psi=0$, which is a contradiction.
\end{proof}

The following lemma is essential to the reduction argument.
\begin{lemma}\label{prop1}
    Suppose $Y_s$ is defined as \eqref{Ys} and $F$ is defined as \eqref{F}. Then $F$ and
    \[
        \overline{F}:=\text{span}\left\{ \left( \sum\limits_{j=1}^{k} pV_{j}^{p-1} Z_{j,l}, \sum\limits_{j=1}^{k} qU_{j}^{q-1} Y_{j,l} \right),\ l=1,2.\right\}
    \]
    are topological complements of each other, and $Y_s=F\oplus \bar{F}$. Moreover, define the projection \textbf{P} from $Y_s$ to $F$ as follows:
    $$
        \textbf{P}(f,g)= \left(f-\sum\limits_{l=1}^{2} c_l\sum\limits_{j=1}^{k} pV_{j}^{p-1} Z_{j,l},g-\sum\limits_{l=1}^{2} c_l \sum\limits_{j=1}^{k} qU_{j}^{q-1} Y_{j,l} \right),\,\, (f,g)\in Y_s
    $$
    where $\{c_l\}$ are chosen so that $\textbf{P}(f,g)\in F$.
    Then \textbf{P} is a linear bounded operator from $Y_s$ to $F$.
    \end{lemma}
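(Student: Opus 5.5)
The plan is to reduce the statement to the invertibility of a $2\times 2$ Gram-type matrix. Set
\[
W_l:=\Big(\sum_{j=1}^k pV_j^{p-1}Z_{j,l},\ \sum_{j=1}^k qU_j^{q-1}Y_{j,l}\Big),\qquad \Theta_h:=\Big(\sum_{j=1}^k\bar Z_{j,h},\ \sum_{j=1}^k\bar Y_{j,h}\Big),
\]
for $l,h=1,2$, so that $F=\{(f,g)\in Y_s:\langle\Theta_h,(f,g)\rangle=0,\ h=1,2\}$ and $\overline F=\mathrm{span}\{W_1,W_2\}$. Requiring $\mathbf P(f,g)\in F$ means the coefficients solve
\[
\sum_{l=1}^2 c_l\langle \Theta_h,W_l\rangle=\langle\Theta_h,(f,g)\rangle,\qquad h=1,2.
\]
If the matrix $M_{hl}:=\langle\Theta_h,W_l\rangle$ is invertible, then $c=(c_1,c_2)$ is uniquely determined. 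This gives $F\cap\overline F=\{0\}$: any element $\sum c_lW_l\in F$ satisfies $Mc=0$, so $c=0$. It also gives $Y_s=F+\overline F$, via the decomposition $(f,g)=\mathbf P(f,g)+\sum c_lW_l$. For this we also check that $W_l\in Y_s$. By the symmetry of $U_j,V_j$ under $\Phi_j,\Psi_h$ the $W_l$ lie in $L_s$. The finiteness of $\|W_l\|_{**}$ follows from the pointwise bounds on $Z_{j,l},Y_{j,l}$ of Lemma~\ref{A1}, with decay $(1+\mu|y-x_j|)^{-(N+2)}$ or better times $\mu^{n_l}$, which is dominated by the $**$ weights.

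Next, compute $M$. Since $\widetilde L_\ep$ is formally self-adjoint in the duality used here, $\langle\Theta_h,(f,g)\rangle$ can be rewritten. Indeed,
\[
\int \bar Z_{j,h} f+\bar Y_{j,h} g=\int \Big(\partial_h (U_j^q)\,\tilde u+\partial_h (V_j^p)\,\tilde v\Big)
\]
up to $\ep$-terms, where $(\tilde u,\tilde v)$ is obtained from $(f,g)$ by applying the Green operator with the roles of the components swapped. The leading term of $\langle\Theta_h,W_l\rangle$ is obtained by noting that $(\bar Y_{j,h},\bar Z_{j,h})$ is close to $(Y_{j,h},Z_{j,h})$: the latter solve $-\Delta Y=\partial(V^p)$ and $-\Delta Z=\partial(U^q)$ in $\R^N$, and the differences come only from the projection and from $\ep\beta_2$. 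So $M_{hl}$ equals
\[
\sum_j\Big(p\langle V_j^{p-1}Z_{j,l},Z_{1,h}\rangle+q\langle U_j^{q-1}Y_{j,l},Y_{1,h}\rangle\Big)
\]
multiplied by $k$ (by symmetry), plus errors. By \eqref{LHS} this is $k\big(\delta_{hl}\mu^{2n_h}a_h+o(\mu^{n_l+n_h})\big)$. After rescaling by $\mathrm{diag}(\mu^{-n_h})$ on both sides, $M$ becomes a small perturbation of $k\,\mathrm{diag}(a_1,a_2)$ with $a_h>0$, so it is invertible for $k$ large.

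Finally, for boundedness I would show $|\langle\Theta_h,(f,g)\rangle|\le Ck\mu^{n_h}\|(f,g)\|_{**}$. The argument follows the estimate of $\langle f,Z_{1,h}\rangle$ in Lemma~\ref{l1-18-2}, using Lemma~\ref{B3} and the decay $\bar Z_{j,h}\lesssim \mu^{n_h}\mu^{N/(p+1)}(1+\mu|y-x_j|)^{-(N-2)}$. Together with the inverse-matrix bound this yields $|c_l|\le C\mu^{-n_l}\|(f,g)\|_{**}$. Since $\|W_l\|_{**}\le C\mu^{n_l}$, we get $\|\mathbf P(f,g)\|_{**}\le C\|(f,g)\|_{**}$, and linearity is clear. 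This also shows $F$ is closed, since it is the kernel of continuous functionals, so the sum is topological.

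The main obstacle is the pointwise control of $\bar Y_{j,h},\bar Z_{j,h}$, defined through $\widetilde L_\ep^{-1}$ on the ball. These must satisfy decay bounds comparable to those of $Y_{j,h},Z_{j,h}$, with errors from the boundary projection and the $\ep\beta_2$ coupling that are small relative to $\mu^{n_h}$ in the pairing. I would handle this with the Green representation: subtract $Y_{j,h}$ and bound the harmonic correction by the boundary values, of size $\mu^{n_h-N/(q+1)}$ or similar and hence negligible. The $\ep\beta_2 u$ term is estimated by convolving with $|x-y|^{2-N}$, as in \eqref{1-24-2}.
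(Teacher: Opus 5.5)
Your proposal is correct and follows the paper's proof. It uses the same $2\times 2$ system for $c_l$, the leading-order matrix $k\,\delta_{hl}\mu^{2n_h}a_h$ from \eqref{LHS}, the bound $|\langle \Theta_h,(f,g)\rangle|\le Ck\mu^{n_h}\|(f,g)\|_{**}$ via the decay of $\bar Z_{j,h}$ as in \eqref{tilde_v}, and $\|W_l\|_{**}\le C\mu^{n_l}$. Your treatment of $\bar Y_{j,h}-Y_{j,h}$ as a small harmonic correction is in fact more accurate than the paper's assertion $\tilde u_j=0$, since $Y_{j,h}\neq 0$ on $\partial\Omega$.

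One slip: the first component $pV_j^{p-1}Z_{j,l}$ of $W_l$ decays only like $\mu^{n_l}\mu^{\frac{pN}{p+1}}(1+\mu|y-x_j|)^{-p(N-2)}$ (one power better for $l=1$), not like $(1+\mu|y-x_j|)^{-(N+2)}$. Also, Lemma~\ref{A1} is a lattice-sum estimate, not a pointwise bound. The conclusion $\|W_l\|_{**}\le C\mu^{n_l}$ still holds because $p(N-2)>\frac{N}{q+1}+2+\tau=\frac{p(N+1)}{p+1}$, i.e.\ $(N-2)p>3$; the paper verifies this comparison with an extra margin $\tau$.
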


    \begin{proof}
    It is sufficient to show that for any $(f,g)\in Y_s$, there exist $c_1, c_2$ and a unique pair $(f_0,g_0)\in F$, such that
    $$(f,g)=(f_0,g_0)+\sum\limits_{l=1}^{2}c_l\left(\sum\limits_{j=1}^{k} pV_j^{p-1} Z_{j,l} , \sum\limits_{j=1}^{k} qU_j^{q-1} Y_{j,l}\right),$$
    which is equivalent to solving the following equations involved $c_l$:
    \begin{equation}\label{c_l_0}
        \begin{gathered}
            \sum\limits_{l=1}^{2}c_l \Big\langle \big(\sum\limits_{j=1}^{k} p V_j^{p-1} Z_{j,l},  \sum\limits_{j=1}^{k} q U_j^{q-1} Y_{j,l} \big), \big(\sum\limits_{j=1}^{k}  \bar{Z}_{j,h},  \sum\limits_{j=1}^{k} \bar{Y}_{j,h} \big)  \Big\rangle
            = \Big\langle (f,g) , \big(\sum\limits_{j=1}^{k}  \bar{Z}_{j,h},  \sum\limits_{j=1}^{k} \bar{Y}_{j,h} \big)\Big\rangle,
            \\  h=1,2.
        \end{gathered}
    \end{equation}
     Recall that
     \begin{equation*}
\widetilde{L}_\ep(\bar{Y}_{j,h},\bar{Z}_{j,h}) =\left (\frac{\partial (V_j^p)}{\partial \Box_h},\frac{\partial (U_j^q)}{\partial \Box_h} \right)
\end{equation*}
for $j = 1,\ldots,k$, where $\Box_h$ denotes $r$ if $h=1$, $\mu$ if $h=2$. Denote
\[
(\tilde{u}_j,\tilde{v}_j)=(\bar{Y}_{j,h}-Y_{j,h},\bar{Z}_{j,h}-Z_{j,h}).
\]
Then, we have
\[
\widetilde{L}_\ep(\tilde{u}_j,\tilde{v}_j)=(0,\ep \beta_2 Y_{j,h}),
\]
which implies that for any $x\in\Omega$,
\begin{align}\label{bar_uj}
    \tilde{u}_j(x)=0,
\end{align}
and
\begin{align}\label{bar_vj}
    |\tilde{v}_j(x)|\leq \int_{\Omega}\frac{C}{|x-y|^{N-2}}Y_{j,h}\leq
\frac{\mu^{n_h+\frac{N}{q+1}-2}}{(1+\mu|x-x_j|)^{p(N-2)-4}}.
\end{align}

We employ \eqref{bar_uj}, \eqref{bar_vj} and Lemma~\ref{B2} to obtain
\begin{equation}
    \begin{aligned}
       \Big\langle \big(  \sum\limits_{j=1}^{k} p V_j^{p-1} Z_{j,l},\sum\limits_{j=1}^{k} q U_j^{q-1} Y_{j,l}\big), \big(\sum\limits_{j=1}^{k}\tilde{v}_j,\sum\limits_{j=1}^{k}\tilde{u}_j \big) \Big\rangle =o(k\mu^{n_l+n_h}).
    \end{aligned}
\end{equation}

Moreover, it is known that
\begin{equation}
    \begin{aligned}
        \Big\langle \big(  \sum\limits_{j=1}^{k} p V_j^{p-1} Z_{j,l},\sum\limits_{j=1}^{k} q U_j^{q-1} Y_{j,l}\big), \big(\sum\limits_{j=1}^{k}{Z}_{j,h},\sum\limits_{j=1}^{k}{Y}_{j,h} \big) \Big\rangle=\delta_{hl}k\mu^{2n_h}a_h+o(k\mu^{n_l+n_h}),
    \end{aligned}
\end{equation}
for some $a_h>0$, $h=1,2$.

Thus,
\begin{equation}\label{LHS_1}
    \begin{aligned}
        \Big\langle \big(\sum\limits_{j=1}^{k} p V_j^{p-1} Z_{j,l},  \sum\limits_{j=1}^{k} q U_j^{q-1} Y_{j,l} \big), \big(\sum\limits_{j=1}^{k}  \bar{Z}_{j,h},  \sum\limits_{j=1}^{k} \bar{Y}_{j,h} \big)  \Big\rangle=\delta_{hl}k\mu^{2n_h}a_h+o(k\mu^{n_l+n_h}).
    \end{aligned}
\end{equation}

On the other hand, since $\Omega$ is bounded, it holds that $\frac{1}{\mu}
\leq
\frac{C}{1+\mu|y-x_j|}.$
Thus,
\begin{equation}\label{tilde_v}
    |\tilde v_j(y)|
\leq
C\mu^{n_h}
\frac{\mu^{\frac{N}{p+1}}}
{(1+\mu|y-x_j|)^{
p(N-2)-2+\frac{N}{p+1}-\frac{N}{q+1}
}}.
\end{equation}
Consequently,
\begin{align}
\left|\int_{\Omega}\sum_{j=1}^{k} \tilde v_j\cdot f \right|
&\leq
Ck\mu^{n_h}\|f\|_{**,1}
\int_{\Omega}
\frac{\mu^{\frac{N}{p+1}}}
{(1+\mu|y-x_j|)^{
p(N-2)-2+\frac{N}{p+1}-\frac{N}{q+1}
}}
\nonumber \\
&\qquad\qquad\qquad\qquad\times
\sum_{i=1}^{k}
\frac{\mu^{\frac{N}{q+1}+2}}
{(1+\mu|y-x_i|)^{\frac{N}{q+1}+2+\tau}}
\,dy \nonumber \\
&\leq
Ck\mu^{n_h}\|f\|_{**,1},
\end{align}
where we have used
$
p(N-2)+\frac{N}{p+1}+\tau>N,
$
for
$
N\geq 7,\, p\in\left(1,\frac{N}{N-2}\right).
$

Computing as \eqref{tilde_v}, we get
\begin{align}\label{RHS_1}
\left|
\Big\langle
(f,g),
\left(
\sum_{j=1}^{k}\bar Z_{j,h},
\sum_{j=1}^{k}\bar Y_{j,h}
\right)
\Big\rangle
\right|
&\leq
Ck\mu^{n_h}
\left(
\|f\|_{**,1}+\|g\|_{**,2}
\right) \nonumber \\
&\leq
Ck\mu^{n_h}\|(f,g)\|_{**}.
\end{align}

    Inserting \eqref{LHS_1} and \eqref{RHS_1} into \eqref{c_l_0}, we conclude that \eqref{c_l_0} is solvable and
   $$c_h=O\left(\mu^{-n_h}\|(f,g)\|_{**}\right).$$

    \textbf{Prove that P is bounded from $Y_s$ to $F$.}
         For any $y \in \Omega_i$, and $\zeta \geq \tau$, we have
        \begin{equation}\label{I_32_2}
            \begin{aligned}
                \sum\limits_{j=1}^m \frac{1}{(1+\mu|y-x_j|)^{\zeta}}
                &\leq \frac{1}{(1+\mu|y-x_1|)^{\zeta}}+\frac{1}{(1+\mu|y-x_1|)^{\zeta-\tau}}\sum\limits_{j=2}^m \frac{1}{(\mu|x_1-x_j|)^{\tau}}\\
                &\leq  \frac{C}{(1+\mu|y-x_1|)^{\zeta-\tau}}.
            \end{aligned}
        \end{equation}
       Hence,
\begin{align*}
    \left|\left|\sum\limits_{j=1}^{k} pV_j^{p-1} Z_{j,l}\right|\right|_{**,1} &\leq
                C\mu^{n_l}\sup\limits_{y\in \Omega}\left(\sum\limits_{j=1}^{k}\frac{1}{(1+\mu|y-x_{j}|)^{\frac{N}{q+1}+2+\tau}}\right)^{-1}\cdot
                \sum\limits_{j=1}^{k}\frac{1}{(1+\mu|y-x_{j}|)^{(N-2)p}}\\
                &\leq C\mu^{n_l}\max\limits_{1\leq i\leq m}\sup\limits_{y\in \Omega_i} \left(\sum\limits_{j=1}^{k}\frac{(1+\mu|y-x_{i}|)^{(N-2)p-\tau}}{(1+\mu|y-x_{j}|)^{\frac{N}{q+1}+2+\tau}}\right)^{-1} \leq C \mu^{n_l},
\end{align*}
        where we have used
        $p(N-2)-\tau-\left(\frac{N}{q+1}+2+\tau\right)=
\frac{p}{p+1}\big((N-2)p-4\big)>0$,
for
$
N\geq 7,\, p\in\left(1,\frac{N}{N-2}\right).
$

Similarly, we get
\begin{align}
\left\|
\sum_{j=1}^{k}qU_j^{q-1}Y_{j,l}
\right\|_{**,2}
&\leq
C\mu^{n_l}\max\limits_{1\leq i\leq m}\sup\limits_{y\in \Omega_i} \left(\sum\limits_{j=1}^{k}\frac{(1+\mu|y-x_{i}|)^{q(p(N-2)-2)-\tau}}{(1+\mu|y-x_{j}|)^{\frac{N}{p+1}+2+\tau}}\right)^{-1} \nonumber \\
&\leq C\mu^{n_l}.
\end{align}

        As a result, we obtain
        \begin{equation*}
            \begin{aligned}
                 \left|\left|\sum\limits_{l=1}^{2} c_l\big(\sum\limits_{j=1}^{k} pV_j^{p-1} Z_{j,l},  \sum\limits_{j=1}^{k} qU_j^{q-1} Y_{j,l}\big)\right|\right|_{**} &\leq \sum\limits_{l=1}^{2} c_l \left(\left| \left| \sum\limits_{j=1}^{k} pV_j^{p-1} Z_{j,l} \right| \right|_{**,1} + \left| \left|\sum\limits_{j=1}^{k} qU_j^{q-1} Y_{j,l} \right| \right|_{**,2}\right)\\
                 &\leq O(\|(f,g)\|_{**}),
            \end{aligned}
        \end{equation*}
        which implies that
        $$\|\textbf{P}(f,g)\|_{**}\leq C \|(f,g)\|_{**}.$$

    \end{proof}

    Define the operator
    $$T(\phi,\psi):=(\widetilde{L}_\ep)^{-1}\cdot \textbf{P}(p(PV_*)^{p-1}\psi,q(PU_*)^{q-1}\phi).$$
It follows from Lemma \ref{blowup} and \ref{prop1} that $T$ is a bounded linear operator from $E$ to itself. Moreover, we can derive from the Arzelà-Ascoli Theorem that $T$ is compact. Hence, Lemma~\ref{blowup} and Fredholm alternative give

\begin{lemma}\label{existence1}
    There is a $k_0>0$ such that if $k>k_0$, $\mu \in [\lambda_{00}^{-1}k^{\frac{p+1}{p}}, \lambda_{00}k^{\frac{p+1}{p}}]$, \eqref{linear-equation} has a unique solution $(\phi,\psi):=\textbf{L}_k(f,g)$ satisfying
    \begin{align}\label{phi_psi_c}
         \|(\phi,\psi)\|_{*} \leq C\|f,g\|_{**}, \;\; |c_l| \leq C\mu^{-n_l}\|f,g\|_{**}.
    \end{align}
\end{lemma}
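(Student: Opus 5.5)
We recast \eqref{linear-equation} as a fixed-point problem in $E$ and apply the Fredholm alternative to $\mathrm{Id}-T$. Apply $\textbf{P}$ (Lemma~\ref{prop1}) to the equation. The added terms $\sum_l c_l(\cdot,\cdot)$ lie in $\overline{F}$, and $\textbf{P}$ annihilates $\overline{F}$. Since $L_k=\widetilde L_\ep-(p(PV_*)^{p-1}\psi,\,q(PU_*)^{q-1}\phi)$ componentwise, the system is equivalent to
\[
(\phi,\psi)=T(\phi,\psi)+(\widetilde L_\ep)^{-1}\textbf{P}(f,g),\qquad (\phi,\psi)\in E,
\]
with the $c_l$ recovered afterwards from the $\overline F$-component. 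Here I will verify that $(\widetilde L_\ep)^{-1}$ maps $F$ into $E$. This comes from the duality built into the definitions \eqref{E}, \eqref{F}: for $(f,g)\in F$ and $(u,v)=(\widetilde L_\ep)^{-1}(f,g)$, Green's representation and symmetry of $G$ turn $\langle (qU_j^{q-1}Y_{j,l},\,pV_j^{p-1}Z_{j,l}),(u,v)\rangle$ into the pairing $\langle(\bar Z_{j,l},\bar Y_{j,l}),(f,g)\rangle$. One must check that the roles of the two components match $\widetilde L_\ep^{*}$, which is where $\bar Y,\bar Z$ were defined. I will also check the boundedness $\|(\widetilde L_\ep)^{-1}(f,g)\|_*\le C\|(f,g)\|_{**}$. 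This follows from \eqref{l20-23-21}--\eqref{l20-23-22} together with the $\ep\beta_2 u$ term, handled as in \eqref{eq:f4}.

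Next, $T:E\to E$ is bounded because $\|(p(PV_*)^{p-1}\psi,\,q(PU_*)^{q-1}\phi)\|_{**}\le C\|(\phi,\psi)\|_*$. This uses the decay estimates of the type in \eqref{I0}, \eqref{Ik}, \eqref{15-1-2}, plus the identity $\frac{pN}{p+1}=2+\frac{N}{q+1}$. For compactness, take a bounded sequence in $E$. Green's representation gives equicontinuity on $\Omega$, so Arzelà--Ascoli yields a locally uniformly convergent subsequence. The weighted tails are uniformly small because the $**$-weights gain a factor $(1+\mu|y-x_j|)^{-\sigma}$ relative to the $*$-weights. Hence convergence holds in $\|\cdot\|_*$ for fixed $k$.

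Injectivity of $\mathrm{Id}-T$ comes from Proposition~\ref{blowup}, via the a priori bound
\[
\|(\phi,\psi)\|_*\le C\|(f,g)\|_{**}
\]
for all large $k$ and all $\mu$ in the stated range. I will argue this by contradiction: normalize $\|(\phi_k,\psi_k)\|_*=1$ with $\|(f_k,g_k)\|_{**}\to0$, which contradicts Proposition~\ref{blowup}. In particular $(f,g)=0$ forces $(\phi,\psi)=0$ for $k>k_0$. The Fredholm alternative then gives existence and uniqueness for every $(f,g)\in Y_s$. The bound on the $c_l$ follows from Lemma~\ref{l1-18-2} combined with the bound on $\|(\phi,\psi)\|_*$, giving $|c_l|\le C\mu^{-n_l}\|(f,g)\|_{**}$. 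The same estimate also follows directly from the solvability of \eqref{c_l_0} in Lemma~\ref{prop1}.

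The main obstacle is the uniformity of the constant $C$ in $k$. Proposition~\ref{blowup} is stated only as a sequential statement, so I must apply it to arbitrary sequences $k\to\infty$ with arbitrary $r,\mu$ in $\mcp$; the blow-up limit argument does allow $\Lambda$ and the centre to vary along the sequence. The secondary delicate point is the adjoint-compatibility between $E$ and $F$ needed for $T(E)\subset E$. If the pairing does not match exactly, I would instead define $T$ with a projection onto $E$ and absorb the $o(1)$ discrepancy, which is controlled by \eqref{LHS_1}.
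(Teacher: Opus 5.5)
Your proposal is correct and follows the paper's route. The paper likewise defines $T=(\widetilde L_\ep)^{-1}\textbf{P}\bigl(p|PV_*|^{p-1}\psi,\,q|PU_*|^{q-1}\phi\bigr)$ on $E$, asserts boundedness and Arzel\`a--Ascoli compactness, and concludes by the Fredholm alternative together with the a priori bound from Proposition~\ref{blowup}; your write-up supplies details the paper leaves implicit. On the point you flagged, integrating by parts gives exactly $\int_\Omega\bigl(qU_j^{q-1}Y_{j,l}u+pV_j^{p-1}Z_{j,l}v\bigr)=\int_\Omega\bigl(\bar Z_{j,l}f+\bar Y_{j,l}g\bigr)$ for $(u,v)=(\widetilde L_\ep)^{-1}(f,g)$, so the pairing matches with no discrepancy and your fallback is unnecessary; the tail argument is likewise superfluous, since on the bounded domain $\Omega$ the $*$-norm is equivalent to the sup norm for fixed $k$.
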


\section{Lyapunov-Schmidt reduction }\label{sec:L-S}
Now we consider the following non-linear problem:
\begin{equation}\label{linear-equation2}
   \begin{cases}
     \displaystyle L(\phi,\psi) = (l_1,l_2)+ \big(N_1(\psi),N_2(\phi)\big) + \sum_{l=1}^2 c_l
    \big(  p\sum_{j=1}^k V_j^{p-1}Z_{j,l},\,
                     q\sum_{j=1}^k U_j^{q-1}Y_{j,l}\big)\;\;\; \hbox{in} \;\;\; \Omega,\\
    \displaystyle  (\phi, \psi) \in E ,
   \end{cases}
\end{equation}
where
\begin{equation}
\begin{aligned}
\begin{cases}
\displaystyle l_1 := \Delta PU_*+ |PV_*|^{p-1}PV_*,\\
\displaystyle l_2 := \Delta PV_*+ |PU_*|^{q-1}PU_*+\epsilon\beta_2 PU_*\\
\end{cases}
\end{aligned}
\end{equation}
and
\begin{equation}
\begin{aligned}
\begin{cases}
\displaystyle N_1(\psi) := |PV_*+\psi|^{p-1}( PV_*+\psi) -|PV_*|^{p-1} PV_*-p|PV_*|^{p-1}\psi, \\
\displaystyle N_2(\phi) := |PU_*+\phi|^{q-1}( PU_*+\phi) -|PU_*|^{q-1} PU_*-q|PU_*|^{q-1}\phi.
\end{cases}
\end{aligned}
\end{equation}

We estimate the quantities $\|(l_1,l_2)\|_{**}$ and $\|\big(N_1(\psi),N_2(\phi)\big)\|_{**}$.

\begin{lemma}\label{N}
    Suppose $N\geq 7$, $p\in(1, \frac{N}{N-2})$ and $(p,q)$ satisfies condition \eqref{c-hyperbola}, then for $\|(\phi,\psi)\|_*$ small enough, it holds that
    \[
        \|N(\phi,\psi)\|_{**} \leq C\|(\phi,\psi)\|_{*}^{p},
    \]
where $N(\phi,\psi)=\big(N_1(\psi),N_2(\phi)\big)$.
\end{lemma}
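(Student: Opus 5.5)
The plan is to estimate the two components separately, using elementary inequalities for the nonlinearities and the pointwise control built into the weighted norms. Since $1<p<2$, we have the standard bound
\[
|N_1(\psi)|\le C|\psi|^{p}.
\]
This holds regardless of the size of $PV_*$: when $|\psi|\ge \tfrac12|PV_*|$ it follows directly, and otherwise it follows from the Hölder continuity of $t\mapsto |t|^{p-1}$ with exponent $p-1$.

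For $N_2(\phi)$ we have $q>2$ in the regime of interest. Here I will use
\[
|N_2(\phi)|\le C\big(|PU_*|^{q-2}\phi^2+|\phi|^q\big),
\]
and bound $|PU_*|\le C\big(u_\epsilon+\sum_j U_j\big)$. When $q\le 2$ one simply has $|N_2(\phi)|\le C|\phi|^q$.

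\textbf{The $N_1$ term.} Using $|\psi(y)|\le \|\psi\|_{*,2}\sum_j \mu^{\frac{N}{p+1}}(1+\mu|y-x_j|)^{-\frac{N}{p+1}-\tau}$ together with Hölder's inequality for the sum, as in Lemma~\ref{B1-1},
\[
\Big(\sum_j a_j\Big)^p\le \Big(\sum_j(1+\mu|y-x_j|)^{-\tau'}\Big)^{p-1}\sum_j a_j^p(1+\mu|y-x_j|)^{\tau'(p-1)}.
\]
Apply this with $\tau'$ slightly larger than $\tau$, so that the first factor is bounded by a constant thanks to $k\simeq\mu^\tau$ (compare \eqref{I_32_2}). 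This yields
\[
|N_1(\psi)|\le C\|\psi\|_{*,2}^p\sum_j\frac{\mu^{\frac{pN}{p+1}}}{(1+\mu|y-x_j|)^{\frac{pN}{p+1}+\tau+\theta}}
\]
for some small $\theta\ge0$. By the hyperbola relation $\frac{pN}{p+1}=\frac{N}{q+1}+2$, this is exactly the $\|\cdot\|_{**,1}$ weight, possibly with extra decay. This is the computation already used in \eqref{I0}.

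\textbf{The $N_2$ term.} The pure power gives
\[
|\phi|^q\le C\|\phi\|_{*,1}^q\sum_j\mu^{\frac{qN}{q+1}}(1+\mu|y-x_j|)^{-\frac{qN}{q+1}-\tau},
\]
and $\frac{qN}{q+1}=\frac{N}{p+1}+2$ matches the $**,2$ weight. For the bubble part of $|PU_*|^{q-2}\phi^2$, I will use $U_j\le C\mu^{\frac{N}{q+1}}(1+\mu|y-x_j|)^{-(p(N-2)-2)}$, i.e.\ the decay rate of $U_{0,1}$, which exceeds $\frac{N}{q+1}+\tau$. Then $\sum_j U_j^{q-2}\phi^2$ is bounded by the same weight times $\|\phi\|_{*,1}^2$.

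\textbf{Main obstacle.} The hardest part is the contribution $u_\epsilon^{q-2}\phi^2$, since $u_\epsilon$ is $O(1)$ and carries no factor of $\mu$. I would bound it by
\[
C\|\phi\|^2_{*,1}\Big(\sum_j\mu^{\frac N{q+1}}(1+\mu|y-x_j|)^{-\frac N{q+1}-\tau}\Big)^2,
\]
and compare it with $\sum_j\mu^{\frac{qN}{q+1}}(1+\mu|y-x_j|)^{-\frac{qN}{q+1}-\tau}$. Near each $x_j$ the ratio is of order $\mu^{\frac{(2-q)N}{q+1}}\le1$ because $q>2$. Away from the $x_j$ we need
\[
\frac{2N}{q+1}+2\tau \ge \frac{qN}{q+1}+\tau - \text{(room)},
\]
which should be checked case by case. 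I expect to absorb any deficit by the constant $k\simeq\mu^\tau$ bookkeeping, as in \eqref{17-1-21}. Since $\|\phi\|_*$ is small and $p\le 2\le q$, we have $\|\phi\|^2+\|\phi\|^q\le C\|\phi\|^p$. Summing the two components then gives $\|N(\phi,\psi)\|_{**}\le C\|(\phi,\psi)\|_*^p$.
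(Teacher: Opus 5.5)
You use the same decomposition as the paper: $|N_1(\psi)|\le C|\psi|^p$, $|N_2(\phi)|\le C(|PU_*|^{q-2}\phi^2+|\phi|^q)$, and Lemma~\ref{B1-1} to turn powers of the $*$-weights into $**$-weights. However, you leave unfinished the step you call the main obstacle, the term $u_\epsilon^{q-2}\phi^2$ (``should be checked case by case \dots\ I expect to absorb any deficit''). So the proof is incomplete exactly there.

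The deficit is real. Write
\[
W_1(y):=\sum_{j=1}^k\frac{\mu^{\frac{N}{q+1}}}{(1+\mu|y-x_j|)^{\frac{N}{q+1}+\tau}} .
\]
The decay rate $\frac{2N}{q+1}+2\tau$ of $W_1^2$ is smaller than the rate $\frac{qN}{q+1}+\tau$ of the $\|\cdot\|_{**,2}$ weight whenever $\frac{(q-2)N}{q+1}>\tau$ (e.g.\ $N=8$, $p=1.2$). The $k\simeq\mu^\tau$ bookkeeping of \eqref{17-1-21}, which concerns a potential estimate rather than a pointwise one, is not what absorbs this; the boundedness of $\Omega$ is.

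The paper uses this through the one-line observation \eqref{l-1}. Since $|y-x_j|\le 2$ on $\Omega$, each term of $W_1$ is at least $c\mu^{-\tau}$. There are $k\simeq\mu^\tau$ terms, so $W_1\ge c>0$ on $\Omega$, and hence $u_\epsilon\le C\le CW_1$. Combined with Lemma~\ref{l1-23-4} ($PU\le CW_1$), this gives $|PU_*|^{q-2}\phi^2\le C\|\phi\|_{*,1}^2W_1^{q}$. The second inequality of Lemma~\ref{B1-1} then finishes with no case analysis.

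Your direct comparison can also be completed. Cauchy--Schwarz, as in Lemma~\ref{B1-1}, gives $W_1^2\le C\sum_j\mu^{\frac{2N}{q+1}}(1+\mu|y-x_j|)^{-\frac{2N}{q+1}-\tau}$. Then $1+\mu|y-x_j|\le 3\mu$ on $\Omega$ offsets the prefactor $\mu^{\frac{(2-q)N}{q+1}}$ term by term.

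There are two smaller slips.
\begin{enumerate}
\item In the $N_1$ step, taking $\tau'>\tau$ in your H\"older splitting gives the decay exponent $\frac{pN}{p+1}+\tau-(p-1)(\tau'-\tau)$. That is slightly \emph{less} decay than the $\|\cdot\|_{**,1}$ weight requires, not extra decay. Take $\tau'=\tau$ instead: $\sum_j(1+\mu|y-x_j|)^{-\tau}\le C$ already holds because $\tau<1$ and $k\simeq\mu^\tau$, and this is exactly Lemma~\ref{B1-1}.
\item The bound $|PU_*|\le C(u_\epsilon+\sum_jU_j)$ is asserted without proof. $PU$ contains the interaction term $\varphi$, which lives on the scale $k^{-1}$ (the second term in Lemma~\ref{lemma:U}) and is not controlled by $\sum_jU_j$. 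What you actually need is $|PU_*|\le u_\epsilon+PU\le CW_1$, which follows from \eqref{l-1} and Lemma~\ref{l1-23-4}.
\end{enumerate}
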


\begin{proof}
   Recall that $N_2(\phi) = |PU_*+\phi|^{q-1}( PU_*+\phi) -|PU_*|^{q-1} PU_*-q|PU_*|^{q-1}\phi$. Thus, we have
\[ |N_2(\phi)| \leq
\begin{cases}
   C|\phi|^q, \;\;\; \hbox{if}\;\; q \leq 2 ;\\
   C|PU_*|^{q-2}\phi^2 + C|\phi|^q \;\; \hbox{if}\;\; q > 2.
\end{cases}
\]
First, we can derive from Lemma \ref{B1-1} that
\[
\begin{split}
    |\phi|^{q}\leq \mu^{\frac{qN}{q+1}} \|\phi\|_{*,1}^{q} \left(\sum\limits_{j=1}^{k} \dfrac{1}{(1+\mu|y-x_j|)^{\frac{N}{q+1}+\tau}}\right)^{q}
    \leq C \|\phi\|_{*,1}^{q} \sum\limits_{j=1}^{k}\dfrac{\mu^{\frac{N}{p+1}+2}}{(1+\mu|y-x_j|)^{\frac{N}{p+1}+2+\tau}}.
\end{split}
\]
Note that
\begin{align}\label{l-1}
    u_\epsilon(y)\leq C\leq C\sum\limits_{j=1}^{k} \dfrac{\mu^{\frac{N}{q+1}}}{(1+\mu|y-x_j|)^{\frac{N}{q+1}+\tau}},\qquad y\in \Omega.
\end{align}
If $q>2$, from \eqref{l-1}, Lemma \ref{B1-1} and Lemma \ref{l1-23-4}, we estimate that
\[
\begin{split}
    \big|PU_*\big|^{q-2}\phi^2&\leq C\|\phi\|_{*,1}^{2}
    \left( u_\epsilon^{q-2}+PU^{q-2}\right)\left(\sum\limits_{j=1}^{k} \dfrac{\mu^{\frac{N}{q+1}}}{(1+\mu|y-x_j|)^{\frac{N}{q+1}+\tau}}\right)^2\\
    &\leq C\|\phi\|_{*,1}^{2}\left(\sum\limits_{j=1}^{k} \dfrac{\mu^{\frac{N}{q+1}}}{(1+\mu|y-x_j|)^{\frac{N}{q+1}+\tau}}\right)^q\\
    &\leq  C\|\phi\|_{*,1}^{2} \sum\limits_{j=1}^{k}\dfrac{\mu^{\frac{N}{p+1}+2}}{(1+\mu|y-x_j|)^{\frac{N}{p+1}+2+\tau}},\qquad y\in\Omega.
\end{split}
\]
Hence, we have
$$\big|N_2(\phi)\big|\leq C\|\phi\|_{*,1}^{\min \{q,2\}} \sum\limits_{j=1}^{k}\dfrac{\mu^{\frac{N}{p+1}+2}}{(1+\mu|y-x_j|)^{\frac{N}{p+1}+2+\tau}}.$$
Similarly, since $p\in(1, \frac{N}{N-2})\subset(1,2)$ when $N>4$, we have that
$$
    |N_1(\psi)| \leq C|\psi|^p .
$$
Thus, we have
$$\big|N_1(\psi)\big|\leq C\|\psi\|_{*,2}^{p} \sum\limits_{j=1}^{k}\dfrac{\mu^{\frac{N}{q+1}+2}}{(1+\mu|y-x_j|)^{\frac{N}{q+1}+2+\tau}}.$$
Since $\min\{q,2 \} > p$ and $\|(\phi, \psi)\|_{*}$ is small enough, we have
\[
   \|N(\phi, \psi)\|_{**} \leq C\|(\phi,\psi)\|_{*}^{p}.
\]
\end{proof}

\begin{lemma}\label{l}
  If $N\geq 8$, $p\in(1, \frac{N}{N-2})$ or $N=7$, $p\in (\frac{11+\sqrt{57}}{16},\frac{7}{5})$ and $(p,q)$ satisfies condition \eqref{c-hyperbola}, then we have
   \[
       \|(l_1, l_2)\|_{**} \leq C\mu^{-\frac{N}{2(q+1)}-\sigma},
   \]
where $\sigma > 0$ is a fixed small constant.
\end{lemma}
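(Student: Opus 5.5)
We first rewrite the two error components by subtracting the equations satisfied by each building block. Since $(u_\ep,v_\ep)$ solves \eqref{mainsystem} with $\alpha=\beta_1=0$, $-\Delta PU=PV^p$ by \eqref{eq:PU}, and $-\Delta PV_j=U_j^q$ by \eqref{projection}, we get
\begin{align*}
l_1&=|v_\ep-PV|^{p-1}(v_\ep-PV)-v_\ep^p+PV^p,\\
l_2&=|u_\ep-PU|^{q-1}(u_\ep-PU)-u_\ep^q+\sum_{j=1}^k U_j^q-\ep\beta_2 PU .
\end{align*}
We then estimate these pointwise against the weights in \eqref{eq:**-norm}. By symmetry (membership in $L_s$) it suffices to work on $\Omega_1$. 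We split $\Omega_1$ into the core $S'=\Omega_1\cap B_{k^{\gamma-1}}(x_1)$, where the bubble dominates, and the outer region $\Omega_1\setminus S'$, where $u_\ep,v_\ep$ dominate, exactly as in the estimate of $J_1$ in the proof of Proposition~\ref{prop:Iexpan}.

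For $l_1$ we use the elementary inequality
\[
\bigl||a-b|^{p-1}(a-b)-a^p+b^p\bigr|\le C\min\{a^{p-1}b,\;a\,b^{p-1}\},\qquad 1<p<2 .
\]
This gives the bound $v_\ep PV^{p-1}$ in $S'$ and $PV$ in $\Omega_1\setminus S'$. Using $PV\le C\sum_j V_j$, the decay $V_j\sim\mu^{N/(p+1)}(1+\mu|y-x_j|)^{-(N-2)}$ (Lemma~\ref{A1}), and the summation device \eqref{I_32_2}/\eqref{tip_1}, we compare with $\mu^{N/(q+1)+2}(1+\mu|y-x_1|)^{-(N/(q+1)+2+\tau)}$.
\begin{itemize}
\item In the outer region the ratio is of size $\mu^{N/(p+1)-N/(q+1)-2}(\mu|y-x_1|)^{\frac{N}{q+1}+\tau-N+4}$. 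Since $N-4>\frac N{q+1}+\tau$, this is largest at $|y-x_1|\sim k^{\gamma-1}$, which yields a power $\mu^{-N/(2(q+1))-\sigma}$ provided $\gamma$ is small. This inequality is the one appearing in the proof of \eqref{J1_3}.
\item In the core the bound is $\mu^{(p-1)N/(p+1)}$ times a weight that is controlled as long as $(p-1)(N-2)\ge \frac N{q+1}+2+\tau$ near $x_1$. Otherwise we use the smallness $\mu^{-\frac{(p-1)N}{p+1}}$ relative to $\mu^{\frac{N}{q+1}+2}$.
\end{itemize}

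For $l_2$ we split in the same way. In $S$ (or $S'$) we write $u_\ep-PU=-(PU-u_\ep)$ with $PU=PU_1+\sum_{j\ge2}PU_j+\varphi$ as in \eqref{pui}. Expanding then gives
\[
l_2=-\bigl(PU^q-\textstyle\sum_j U_j^q\bigr)+O\bigl(u_\ep PU^{q-1}+u_\ep^q\bigr)-\ep\beta_2 PU .
\]
The term $PU^q-U_1^q$ is controlled by $U_1^{q-1}\bigl(\sum_{j\ge2}U_j+\varphi+|PU_1-U_1|\bigr)$, using Lemma~\ref{lemma:U} and Lemma~\ref{nnl2-19-1} for $\varphi$. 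The terms $u_\ep PU^{q-1}$ and $\ep\beta_2 PU$ are handled with the bound $\|\cdot\|_{**,2}$, whose weight has the exponent $\frac N{p+1}+2+\tau$. In the outer region we bound $l_2$ by $u_\ep^{q-1}PU+PU^q+\ep\beta_2PU$, and we need $PU\lesssim \mu^{N/(q+1)}(\mu|y-x_1|)^{-(p(N-2)-2)}$ plus the $\varphi$-tail of Lemma~\ref{lemma:U}.

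The main obstacle is the linear terms $u_\ep^{q-1}PU$ and $\ep\beta_2PU$ far from the bubbles, together with the slowly decaying $\varphi$-part of $PU$. Compared with the weight $\mu^{N/(p+1)+2}(1+\mu|y-x_1|)^{-(N/(p+1)+2+\tau)}$, these terms require $p(N-2)-2$ versus $\frac N{p+1}+2+\tau$ bookkeeping. With $\mu\sim k^{(p+1)/p}$, they yield exponent conditions that are polynomial inequalities in $p$. For $N\ge8$ and $p<\frac N{N-2}$ we expect these to hold automatically. For $N=7$ the inequality (arising from the analogue of $\eta_3>N$ failing, cf.\ \eqref{alg:02}) should reduce to a quadratic like $8p^2-11p+2>0$, i.e.\ $p>\frac{11+\sqrt{57}}{16}$. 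We would treat this case by first extracting the worst term, then verifying the resulting quadratic in $p$ separately.
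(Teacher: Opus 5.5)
Your treatment of $l_2$ matches the paper's. In $S$ one has $|l_2|\lesssim U_1^{q-1}$. Outside $S$ one has $PU\le C$ and $|l_2|\lesssim PU$, and the $\varphi$-tail of Lemma~\ref{lemma:U}, compared with the $\|\cdot\|_{**,2}$ weight, gives the exponent $\frac{pN}{q+1}-(1+p)\tau$. For $N=7$ this yields $8p^2-11p+2>0$, exactly as you guessed.

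The gap is in $l_1$ away from the core. You split at $|y-x_1|=k^{\gamma-1}\simeq\mu^{-\tau(1-\gamma)}$ and use the outer bound $|l_1|\lesssim v_\ep^{p-1}PV\lesssim\sum_j V_j$. Compared termwise with the $\|\cdot\|_{**,1}$ weight, this only gains $\mu^{-t_1}$ with $t_1=N-\frac{2N}{p+1}+(1-\tau+\tau\gamma)\bigl(\frac{N}{p+1}-2-\tau\bigr)$. This is increasing in $\gamma$, so taking $\gamma$ small is the worst choice, not a harmless one. At small $\gamma$ it falls below $\frac{N}{2(q+1)}$ in part of the admissible range. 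For $N=8$ and $p$ close to $1$ one has $\tau\approx\frac12$, $\frac{N}{q+1}\approx 2$ and $t_1\approx\frac34(1+\gamma)$, while $1+\sigma$ is required, so you would need $\gamma>\frac13$. Concretely, $N=8$, $p=1.1$, $\gamma\to0$ gives $t_1\approx 0.99$ against $\frac{N}{2(q+1)}\approx 1.10$. The appeal to \eqref{J1_3} does not help. That is an integrated bound, where the volume factor makes the weaker condition $\frac{N}{q+1}>(N-4)\tau$ suffice; the pointwise weighted bound is more demanding.

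What is missing is the paper's intermediate scale $\mu^{-\mathcal{B}_0}$, strictly between $k^{-1}\simeq\mu^{-\tau}$ and $1$.
\begin{itemize}
\item Inside $B_{\mu^{-\beta}}(x_1)\setminus S$, use $v_\ep PV^{p-1}$ together with the multi-bubble bound $PV\lesssim\mu^{-\frac{N}{q+1}}\bigl(|y-x_1|^{2-N}+k^{1+\theta}|y-x_1|^{-(N-3-\theta)}\bigr)$. The $k^{1+\theta}$-term from the other bubbles dominates near the edge of the core. It gives $t_2(\beta)=\frac{(p-1)N}{q+1}-p\tau+\beta\bigl(\frac{N}{q+1}+2+\tau-(p-1)(N-3)\bigr)$, which is increasing in $\beta$.
\item Your core paragraph never does this computation. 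Note that $(p-1)(N-2)<2$, so your first alternative never occurs.
\item Outside, use $v_\ep^{p-1}PV$, whose exponent $t_1(\beta)$ is decreasing in $\beta$.
\end{itemize}
You need some $\beta$ for which both exponents exceed $\frac{N}{2(q+1)}$, i.e.\ $\mathcal{B}_2<\mathcal{B}_1$. This is the polynomial inequality of Lemma~\ref{lemma:alg_1}. It holds for $N\ge 8$, but the window is thin: for $N=8$ both thresholds tend to $\frac13$ as $p\downarrow 1$. For $N=7$ it holds only when $p>p_*$.

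Your inequality $\bigl||a-b|^{p-1}(a-b)-a^p+b^p\bigr|\le C\min\{a^{p-1}b,\,ab^{p-1}\}$ is correct. It makes the paper's four-way split by the sign of $v_\ep-PV$ unnecessary. But until you place the split radius in this window and prove the window is nonempty, the bound on $\|l_1\|_{**,1}$ is not established.
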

\begin{proof}
   Recall that
    \begin{equation}
\begin{aligned}
      l_1 &= \Delta PU_*+ |PV_*|^{p-1}PV_*\\
     &=|PV_*|^{p-1}PV_*-|v_\epsilon|^{p-1}v_\epsilon+PV^p:=H_1,
\end{aligned}
\end{equation}
and
\begin{equation}
    \begin{aligned}
         l_2 &= \Delta PV_*+ |PU_*|^{q-1}PU_*+\ep\beta_2 PU_*\\
&=\left\{|PU_*|^{q-1}PU_*-|u_\epsilon|^{q-1}u_\epsilon+\sum_{j=1}^k U_j^q\right\}+\ep\beta_2 PU:=H_2+K_1.
    \end{aligned}
\end{equation}

We will estiamte $H_1$, $H_2$, $K_1$ respectively. Without loss of generality, we may assume $y \in \Omega_1$. Then, we have $|y-x_j| \geq |y-x_1|$. Let $S = B_{\frac{\pi}{2}r_0^{-1}k^{-1}}(x_1) \subset \Omega_1$.

\textbf{Estimate $H_1$ in $S$.}

It holds that
\begin{align}\label{est:01}
     \sum_{j=2}^k PV_j \le C\sum_{j=2}^k V_j&\le C \sum_{j=2}^k\frac{\mu^{\frac{N}{p+1}}}{|\mu(x_1-x_j)|^{N-2}} \nonumber\\
     &\approx  C\mu^{\frac{N}{p+1}}\cdot (\frac{k}{\mu})^{N-2} \le C \frac{\mu^{\frac{N}{p+1}}}{(1+\mu|x_1-x_j|)^{N-2}}\le C V_1.
\end{align}
Therefore,
\[|H_1| \le C + \left\|PV_*|^{p-1}PV_*+PV^{p}\right| \le CV_1^{p-1} \le C\frac{\mu^{\frac{(p-1)N}{p+1}}}{(1+\mu|y-x_1|)^{(p-1)(N-2)}},\quad y\in S.\]
We shall show that there exists $t > \frac{N}{2(q+1)}$ such that
\[\frac{\mu^{\frac{(p-1)N}{p+1}}}{(1+\mu|y-x_1|)^{(p-1)(N-2)}} \le \frac{C}{\mu^t}
\frac{\mu^{\frac{N}{q+1}+2}}{(1+\mu|y-x_1|)^{\frac{N}{q+1}+2+\tau}}, \quad y \in S,\]
which is equivalent to
\begin{equation}\label{eq:error1}
(1+\mu|y-x_1|)^{\frac{N}{q+1}+2+\tau-(p-1)(N-2)} \le C \mu^{\frac{N}{q+1}+2-\frac{(p-1)N}{p+1}-t}, \quad y \in S.
\end{equation}

Note that $\frac{N}{q+1}+2+\tau-(p-1)(N-2)> 0$, then \eqref{eq:error1} is valid provided
\[
(\frac{\mu}{k})^{\frac{pN}{p+1}+\tau-(p-1)(N-2)}\le C \mu^{\frac{N}{p+1}-t}.
\]
So we can take
\[t = \frac{N}{p+1} - (1-\tau)\left[\frac{pN}{p+1}+\tau-(p-1)(N-2)\right].\]
Then $t>\frac N{2(q+1)}$ holds for $N \ge 3$ and $p \in (1,\frac{N}{N-2})$.

\textbf{Estimate $H_1$ in $\Omega_1 \setminus S$.}

\textbf{Step 1.} We estimate $v_\epsilon^{p-1} PV$. Denote
$$S_{\rho}=B_\rho(x_1)\cap \Omega_1.$$

We determine $t_1>0$ such that
\begin{equation}\label{1-13-2-0}
    \begin{aligned}
        v_\epsilon^{p-1}(y) PV(y)\leq C\sum_{j=1}^k V_j(y)\leq \frac{C}{\mu^{t_1}}
\frac{\mu^{\frac{N}{q+1}+2}}{(1+\mu|y-x_j|)^{\frac{N}{q+1}+2+\tau}}, \quad y \in \Omega_1\setminus S_{\mu^{-\beta}},
    \end{aligned}
\end{equation}
where $0<\beta<\tau$.
It suffices to prove the above inequality by showing
\begin{equation}\label{1-13-2}
(1+\mu|y-x_j|)^{-\frac{N}{p+1}+2+\tau}\le C
\mu^{N-\frac{2N}{p+1}-t_1}, \quad y \in \Omega_1\setminus S_{\mu^{-\beta}} .
\end{equation}

Note that
$$\frac{N}{p+1}-2-\tau>0,\quad \text{if}\quad N \ge 7,\,p \in (1,\frac{N}{N-2}).$$
In addition, in $\Omega_1\setminus S_{\mu^{-\beta}}$, it holds
\[\mu|y-x_j|\ge c \mu^{1-\beta}\],
for some $c > 0$. Thus, \eqref{1-13-2} will follow if
\[
\mu^{(1-\beta)(-\frac{N}{p+1}+2+\tau)}\le
C\mu^{N-\frac{2N}{p+1}-t_1}.
\]
So we can choose
\[t_1(\beta) =N-\frac{2N}{p+1}+(1-\beta)\left(\frac{N}{p+1}-2-\tau\right).\]
Define
\begin{align}\label{def:B1}
    \mathcal{B}_1:=\sup \{\beta\in(0,\tau): \eqref{1-13-2-0} \text{ with } t_1(\beta)>\frac{N}{2(q+1)}\text{ holds for }N \ge 7,\,p \in (1,\frac{N}{N-2}).\}
\end{align}

Since
$$t_1(0)=N-2-\frac{N}{p+1}-\tau>\frac{N}{2(q+1)},$$
we have $\mathcal{B}_1\in (0,\tau)$ exists.

\textbf{Step 2.} We estimate $v_\epsilon PV^{p-1}$.

We follows from \eqref{pvi}, \eqref{V10est} and \eqref{tip_1} to obtain
\[
PV(y)\leq C \sum_{j=1}^k V_j(y)\leq \frac{1}{\mu^{\frac{N}{q+1}}}\left( \frac{1}{|y-x_1|^{N-2}}+ \frac{k^{1+\theta}}{|y-x_1|^{N-3-\theta}}\right),\qquad y\in \Omega_1\setminus S.
\]

We determine $t_2>0$ such that
\begin{equation}\label{1-13-2-01}
    \begin{aligned}
        PV^{p-1}(y)&\leq  \frac{C}{\mu^{\frac{(p-1)N}{q+1}}}\left( \frac{1}{|y-x_1|^{(p-1)(N-2)}}+ \frac{k^{(p-1)(1+\theta)}}{|y-x_1|^{(p-1)(N-3-\theta)}}\right)\\
        &\leq \frac{C}{\mu^{t_2}}
\frac{\mu^{\frac{N}{q+1}+2}}{(1+\mu|y-x_1|)^{\frac{N}{q+1}+2+\tau}}, \quad y \in S_{\mu^{-\beta}}\setminus S.
    \end{aligned}
\end{equation}
It suffices to prove the above inequality by showing
\begin{align*}
    |y-x_1|^{\frac{N}{q+1}+2+\tau-(p-1)(N-2)}\leq C \mu^{\frac{(p-1)N}{q+1}-\tau-t}
\end{align*}
and
\begin{align}\label{1-13-2-1}
    |y-x_1|^{\frac{N}{q+1}+2+\tau-(p-1)(N-3-\theta)}\leq C \mu^{\frac{(p-1)N}{q+1}-(1+(p-1)(1+\theta))\tau-t}
\end{align}
where
\[
\frac{N}{q+1}+2+\tau-(p-1)(N-3-\theta)>\frac{N}{q+1}+2+\tau-(p-1)(N-2)>0.
\]
Thus, we can choose
\begin{align*}
    t_2(\beta)&=\min \Big\{\frac{(p-1)N}{q+1}-p\tau+\beta\big(\frac{N}{q+1}+2+\tau-(p-1)(N-3)\big),\\
    &\hspace{10em}\frac{(p-1)N}{q+1}-\tau+\beta\big(\frac{N}{q+1}+2+\tau-(p-1)(N-2)\Big\}\\
    &=\frac{(p-1)N}{q+1}-p\tau+\beta\big(\frac{N}{q+1}+2+\tau-(p-1)(N-3)\big).
\end{align*}
Define
\begin{equation}\label{def:B2}
    \mathcal{B}_2:=\inf \{\beta\in(0,\tau): \eqref{1-13-2-01} \text{ with } t_2(\beta)>\frac{N}{2(q+1)}\text{ holds for }N \ge 7,\,p \in (1,\frac{N}{N-2}).\}
\end{equation}

Since
$$t_2(\tau)-\frac{N}{2(q+1)}=\frac{(N-2)p^2-(N-2)p+6}{2(p+1)^2}=\frac{(N-2)p(p-1)+6}{2(p+1)^2}>0,$$
we have $\mathcal{B}_2\in (0,\tau)$ exists.

\textbf{Step 3.}
From Lemma \ref{lemma:alg_1}, we can choose a $\mathcal{B}_0\in(\mathcal{B}_2,\mathcal{B}_1)$. Then we can divide $\Omega_1\setminus S$ as follows.
\begin{align*}
    \Omega_1\setminus S&=\Big\{\big\{v_\epsilon\leq PV\big\}\cap \big\{S_{\mu^{-\mathcal{B}_0}}\setminus S\big\} \Big\}
    \cup
    \Big\{\big\{v_\epsilon\leq PV\big\}\cap \big\{\Omega_1\setminus S_{\mu^{-\mathcal{B}_0}} \big\} \Big\}\\
    &\hspace{5em}\cup
    \Big\{\big\{v_\epsilon> PV\big\}\cap \big\{S_{\mu^{-\mathcal{B}_0}}\setminus S\big\} \Big\}
    \cup
    \Big\{\big\{v_\epsilon> PV\big\}\cap \big\{\Omega_1\setminus S_{\mu^{-\mathcal{B}_0}}\big\} \Big\}\\
    &:=\Omega^{11}\cup\Omega^{12}\cup \Omega^{21}\cup \Omega^{22}.
\end{align*}

By direct computation, we have
\begin{equation}
    \begin{aligned}
        |H_1| &\leq
        \begin{cases}
            \displaystyle C(p)v_\epsilon PV^{p-1}\leq
            \frac{C(p)}{\mu^{t_2(\mathcal{B}_0)}}
\sum_{j=1}^k\frac{\mu^{\frac{N}{q+1}+2}}{(1+\mu|y-x_j|)^{\frac{N}{q+1}+2+\tau}}
&\quad \text{in }\Omega^{11}\\
            \displaystyle C(p)v_\epsilon^{p-1} PV\leq
            \frac{C(p)}{\mu^{t_1(\mathcal{B}_0)}}
\sum_{j=1}^k\frac{\mu^{\frac{N}{q+1}+2}}{(1+\mu|y-x_j|)^{\frac{N}{q+1}+2+\tau}}
            &\quad \text{in }\Omega^{22}
        \end{cases}\\
        &\leq \frac{C(p)}{\mu^{\frac{N}{2(q+1)}+\sigma}}\sum_{j=1}^k\frac{\mu^{\frac{N}{q+1}+2}}{(1+\mu|y-x_j|)^{\frac{N}{q+1}+2+\tau}} \qquad \text{in }\Omega^{11}\cup\Omega^{22}.
    \end{aligned}
\end{equation}

On the other hand, due to $p-1\in (0,1)$, we have the following inequality
\[
ab^{p-1}\le a^{p-1}b , \quad b\ge a > 0.
\]
Thus, we obtain
\begin{equation}
    \begin{aligned}
        |H_1| &\leq
        \begin{cases}
            \displaystyle C(p)v_\epsilon PV^{p-1}\leq
            C(p)v_\epsilon^{p-1} PV
            \leq \frac{C(p)}{\mu^{t_1(\mathcal{B}_0)}}
\sum_{j=1}^k\frac{\mu^{\frac{N}{q+1}+2}}{(1+\mu|y-x_j|)^{\frac{N}{q+1}+2+\tau}}
&\quad \text{in }\Omega^{12}\\
            \displaystyle C(p)v_\epsilon^{p-1} PV
            \leq C(p)v_\epsilon PV^{p-1} \leq
            \frac{C(p)}{\mu^{t_2(\mathcal{B}_0)}}
\sum_{j=1}^k\frac{\mu^{\frac{N}{q+1}+2}}{(1+\mu|y-x_j|)^{\frac{N}{q+1}+2+\tau}}
            &\quad \text{in }\Omega^{21}
        \end{cases}\\
        &\leq \frac{C(p)}{\mu^{\frac{N}{2(q+1)}+\sigma}}\sum_{j=1}^k\frac{\mu^{\frac{N}{q+1}+2}}{(1+\mu|y-x_j|)^{\frac{N}{q+1}+2+\tau}} \qquad \text{in }\Omega^{12}\cup\Omega^{21}.
    \end{aligned}
\end{equation}

As a consequence, we have
$$|H_1|\leq \frac{C}{\mu^{\frac{N}{2(q+1)}+\sigma}}\sum_{j=1}^k\frac{\mu^{\frac{N}{q+1}+2}}{(1+\mu|y-x_j|)^{\frac{N}{q+1}+2+\tau}} \qquad \text{in }\Omega_1\setminus S.$$

\textbf{Estimate $H_2$ in $S$.}

It holds that $U_1\ge c>0$ and $\sum_{j=2}^k  PU_j + \varphi\le C$ in $S$. Then, we have
\[\|PU_*|^{q-1}PU_*+ PU_{1}^q| \le C PU_1^{q-1} \left( u_\epsilon +\sum_{j=2}^k  PU_j\right) \le C U_1^{q-1}\]
and
\[
\sum_{j=2}^k  U^q_j\le C U_1^{q-1} \sum_{j=2}^k  U_j \le C U_1^{q-1}.
\]
Hence,
\[
|l_2|\le CU_1^{q-1} \quad \text{in } S.
\]
Now we determine $t>0$ such that
\begin{align}\label{eq:H2_S}
    U_1^{q-1}(y) \le \frac C{\mu^t}\frac{\mu^{\frac{N}{p+1}+2}}{(1+\mu
|y-x_{1}|)^{\frac{N}{p+1}+2+\tau}}, \quad y \in S.
\end{align}
This is equivalent to
\begin{equation}\label{10-9-3}
(1+\mu|y-x_{1}|)^{\frac{N}{p+1}+2+\tau-(q-1)(p(N-2)-2)}\le
C\mu^{\frac{N}{p+1}+2-\frac{(q-1)N}{q+1}-t}, \quad y \in S.
\end{equation}
If $\frac{N}{p+1}+2+\tau-(q-1)(p(N-2)-2)\le 0$, then the left-hand side of \eqref{10-9-3} is bounded. In this case, we can take
\[
t=\frac{N}{p+1}+2-\frac{(q-1)N}{q+1}=\frac{N}{q+1}>\frac{N}{2(q+1)}.
\]

We deal with the case $\frac{N}{p+1}+2+\tau-(q-1)(p(N-2)-2)> 0$. In $S$, we have $\mu
|y-x_{1}|\le C\mu^{1-\tau}$. So, \eqref{10-9-3} holds if
\[
\mu
^{(1-\tau) [\frac{N}{p+1}+2+\tau-(q-1)(p(N-2)-2)]}\le
C\mu^{\frac{N}{p+1}+2-\frac{(q-1)N}{q+1}-t},
\]
and we can choose
\[
t =\frac{N}{q+1}-(1-\tau) \left[\frac{qN}{q+1}+\tau-(q-1)(p(N-2)-2)\right].
\]
One can check that $t>\frac N{2(q+1)}$ which holds for all $N \ge 7$ and $p \in (1,\frac{N}{N-2})$.

\textbf{Estimate $H_2$ in $\Omega_1 \setminus S$.}

First, combining
$$|y-x_j|>Ck^{-1}\quad\text{in }\Omega_1 \setminus S,\quad\text{for }j=1,\cdots,k$$
and $k\approx \mu^\tau$, we can show that
\begin{equation*}
    \begin{aligned}
        \sum_{i=1}^k \frac{\mu^{\frac{N}{q+1}}}{(1+\mu|y-x_i|)^{p(N-2)-2}}
        &\le C \frac{\mu^{\frac{N}{q+1}}}{(1+\mu|y-x_1|)^{p(N-2)-2}}+C \sum_{i=2}^k \frac{\mu^{\frac{N}{q+1}}}{(\mu|x_1-x_i|)^{p(N-2)-2}}\\
        &\le C\mu^{\frac{N}{q+1}}\cdot (\frac{k}{\mu})^{p(N-2)-2}\le C,
    \end{aligned}
\end{equation*}
and
\begin{equation*}
    \begin{aligned}
        &\quad \frac{1}{\mu^{\frac{pN}{q+1}}} \sum_{i=1}^k \frac{k^{p(N-2)-2}}{(1+k|y-x_i|)^{p(N-3-\theta)-2}}\\
        &\le C\mu^{\frac{N}{q+1}}\cdot (\frac{k}{\mu})^{p(N-2)-2}\left(\frac{1}{(1+k|y-x_1|)^{p(N-3-\theta)-2}} + \sum_{i=2}^k \frac{1}{(k|x_1-x_i|)^{p(N-3-\theta)-2}}\right)
        \le C.
    \end{aligned}
\end{equation*}
Thus, it follows from Lemma \ref{lemma:U} that $PU\le C$ in $\Omega_1\setminus S$, which yields
\begin{align}\label{H-2-0}
   |H_2|&\leq \left\|PU_*|^{q-1}PU_*-|u_\epsilon|^{q-1}u_\epsilon\right|+\sum_{j=1}^k U_j^q\nonumber\\
   &\leq C\left( PU+ U_1^{q-1}\sum_{j=1}^k U_j\right)\leq C PU  \quad \text{in } \Omega_1\setminus S.
\end{align}
Now we determine $t>0$ such that
\begin{equation}
    \begin{aligned}
        PU(y) &\le C \sum_{j=1}^k \frac{\mu^{\frac{N}{q+1}}}{(1+\mu|y-x_j|)^{p(N-2)-2}} + \frac{C}{\mu^{\frac{pN}{q+1}}} \sum_{j=1}^k \frac{k^{p(N-2)-2}}{(1+k|y-x_j|)^{p(N-3-\theta)-2}}\\
&\le \frac{C}{\mu^{\frac{pN}{q+1}}} \sum_{j=1}^k \frac{k^{p(N-2)-2}}{(k|y-x_j|)^{p(N-3-\theta)-2}}\le \frac C{\mu^t}\sum_{j=1}^k\frac{\mu^{\frac{N}{p+1}+2}}{(1+\mu
|y-x_{j}|)^{\frac{N}{p+1}+2+\tau}}, \quad y \in \Omega_1\setminus S.
    \end{aligned}
\end{equation}
It is sufficient to show that
\begin{align}\label{H-2-0-1}
    |y-x_j|^{\frac{N}{p+1}+2+\tau-(p(N-3-\theta)-2)}\leq Ck^{-p(1+\theta)}\mu^{\frac{pN}{q+1}-\tau-t}, \quad y \in \Omega_1\setminus S.
\end{align}

If $\frac{N}{p+1}+2+\tau-(p(N-3)-2)>0$, then \eqref{H-2-0-1} is valid with
$$t=\frac{pN}{q+1}-\tau-p(1+\theta)\tau$$
where $t>\frac{N}{2(q+1)}$ holds for
\begin{equation*}
    \begin{aligned}
        p\in \begin{cases}
            (\frac{11+\sqrt{57}}{16},\frac{7}{5}),\qquad &N=7,\\
           (1,\frac{N}{N-2}) ,\qquad &N\geq 8.
        \end{cases}
    \end{aligned}
\end{equation*}

Note that when $N=7$, it holds that $p_*<\frac{11+\sqrt{57}}{16}$.

If $\frac{N}{p+1}+2+\tau-(p(N-3)-2)\leq0$, then \eqref{H-2-0-1} is valid with
$$t=\frac{pN}{q+1}-\tau-\tau[p(N-2)-2-\frac{N}{p+1}-2-\tau]$$
where $t>\frac{N}{2(q+1)}$ for $N \ge 7$ and $p \in (1,\frac{N}{N-2})$.

\textbf{Estimate $K_1$.}

From \eqref{H-2-0}, we only need to estimate $PU$ in $S$, that is to determine $t>0$ such that
\[
PU(y)\leq C\cdot U_1(y) \le \frac C{\mu^t}\frac{\mu^{\frac{N}{p+1}+2}}{(1+\mu
|y-x_{1}|)^{\frac{N}{p+1}+2+\tau}}, \quad y \in S.
\]
This is equivalent to
\begin{equation}
(1+\mu|y-x_{1}|)^{\frac{N}{p+1}+2+\tau-p(N-2)+2}\le
C\mu^{\frac{N}{p+1}+2-\frac{N}{q+1}-t}, \quad y \in S.
\end{equation}
Then, one can check that
$$t=\frac{(q-1)N}{q+1}-(1-\tau)(\frac{N}{p+1}+2+\tau-p(N-2)+2)>\frac{N}{2(q+1)}$$
holds for $N \ge 7$ and $p \in (1,\frac{N}{N-2})$.

We also note that assumption $q>2$ and \eqref{eq:H2_S} also gives
$$PU(y)\leq C U_1(y)\leq C U_1^{q-1}(y)\le C{\mu^{-\frac{N}{2(q+1)}-\sigma}}\sum_{j=1}^k\frac{\mu^{\frac{N}{p+1}+2}}{(1+\mu
|y-x_{1}|)^{\frac{N}{p+1}+2+\tau}}.$$

\end{proof}

The rest of this section is devoted to proving the following proposition by the contraction mapping theorem.
\begin{proposition}\label{existence2}
    There is a $k_0>0$ such that if $k>k_0$, $\mu \in [\lambda_{00}^{-1}k^\frac{p+1}{p}, \lambda_{00}k^\frac{p+1}{p}]$, \eqref{linear-equation2} has a unique solution $(\phi,\psi)=(\phi_{r,\mu}, \psi_{r,\mu})$ that satisfies
    \begin{align}\label{phi_psi_c_2}
         \|(\phi,\psi)\|_{*} \leq C\mu^{-\frac{N}{2(q+1)}-\sigma}, \;\; |c_l| \leq C\mu^{-n_l-\frac{N}{2(q+1)}-\sigma},
    \end{align}
\end{proposition}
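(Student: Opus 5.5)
We recast \eqref{linear-equation2} as a fixed point problem. By Lemma~\ref{existence1}, for $k>k_0$ and $\mu$ in the stated range, the operator $\textbf{L}_k$ sending $(f,g)\in Y_s$ to the unique solution $(\phi,\psi)\in E$ of \eqref{linear-equation} is well defined, linear, and bounded: $\|\textbf{L}_k(f,g)\|_*\le C\|(f,g)\|_{**}$, and the associated multipliers satisfy $|c_l|\le C\mu^{-n_l}\|(f,g)\|_{**}$. Thus $(\phi,\psi)$ solves \eqref{linear-equation2} exactly when
\[
(\phi,\psi)=\mathcal{A}(\phi,\psi):=\textbf{L}_k\big((l_1,l_2)+N(\phi,\psi)\big).
\]
We work on the closed set
\[
\mathcal{D}:=\Big\{(\phi,\psi)\in E:\ \|(\phi,\psi)\|_*\le \mu^{-\frac{N}{2(q+1)}}\Big\},
\]
which is complete in the $\|\cdot\|_*$ norm. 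A preliminary check is that $(l_1,l_2)$ and $N(\phi,\psi)$ lie in $Y_s$: both are continuous, and both inherit the symmetries of $L_s$ from $PU_*,PV_*$ and from $(\phi,\psi)\in X_s$.

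\emph{Self-map.} For $(\phi,\psi)\in\mathcal{D}$, Lemma~\ref{N} and Lemma~\ref{l} give
\[
\|\mathcal{A}(\phi,\psi)\|_*\le C\big(\|(l_1,l_2)\|_{**}+\|(\phi,\psi)\|_*^{p}\big)\le C\mu^{-\frac{N}{2(q+1)}-\sigma}+C\mu^{-\frac{pN}{2(q+1)}}.
\]
Since $p>1$, the right-hand side is at most $\mu^{-\frac{N}{2(q+1)}}$ once $k$, and hence $\mu$, is large. So $\mathcal{A}$ maps $\mathcal{D}$ into itself.

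\emph{Contraction.} This is the main obstacle, because Lemma~\ref{N} bounds $N$ itself but not its Lipschitz constant. We redo its pointwise argument for differences. For $N_1$, the mean value theorem gives
\[
|N_1(\psi_1)-N_1(\psi_2)|\le C\big(|PV_*+\theta\psi_1+(1-\theta)\psi_2|^{p-1}-|PV_*|^{p-1}\big)|\psi_1-\psi_2|.
\]
Since $0<p-1<1$, the bracket is bounded by $C(|\psi_1|+|\psi_2|)^{p-1}$, so
\[
|N_1(\psi_1)-N_1(\psi_2)|\le C(|\psi_1|+|\psi_2|)^{p-1}|\psi_1-\psi_2|.
\]
For $N_2$ we obtain $C(|\phi_1|+|\phi_2|)^{q-1}|\phi_1-\phi_2|$ when $q\le2$. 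When $q>2$ there is the additional term $C|PU_*|^{q-2}(|\phi_1|+|\phi_2|)|\phi_1-\phi_2|$, which we control with \eqref{l-1} and Lemma~\ref{l1-23-4} exactly as in the proof of Lemma~\ref{N}. Inserting the weighted bounds and applying Lemma~\ref{B1-1} to the products of weights, we get
\[
\|N(\phi_1,\psi_1)-N(\phi_2,\psi_2)\|_{**}\le C\big(\|(\phi_1,\psi_1)\|_*+\|(\phi_2,\psi_2)\|_*\big)^{p-1}\|(\phi_1,\psi_1)-(\phi_2,\psi_2)\|_*.
\]
On $\mathcal{D}$ the factor $C(\cdots)^{p-1}$ is $o(1)$ as $\mu\to\infty$, so
\[
\|\mathcal{A}(\phi_1,\psi_1)-\mathcal{A}(\phi_2,\psi_2)\|_*\le\tfrac12\|(\phi_1,\psi_1)-(\phi_2,\psi_2)\|_*.
\]

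\emph{Conclusion.} The contraction mapping theorem gives a unique fixed point $(\phi,\psi)$ in $\mathcal{D}$. Feeding the fixed point back into the self-map estimate improves the bound to
\[
\|(\phi,\psi)\|_*\le C\mu^{-\frac{N}{2(q+1)}-\sigma}+C\mu^{-\frac{pN}{2(q+1)}}\le C\mu^{-\frac{N}{2(q+1)}-\sigma'},\qquad \sigma'=\min\Big\{\sigma,\tfrac{(p-1)N}{2(q+1)}\Big\},
\]
and we rename $\sigma'$ as $\sigma$. Finally, the bound $|c_l|\le C\mu^{-n_l}\|(l_1,l_2)+N(\phi,\psi)\|_{**}$ from Lemma~\ref{existence1}, combined with the estimates above, gives
\[
|c_l|\le C\mu^{-n_l-\frac{N}{2(q+1)}-\sigma},
\]
which is \eqref{phi_psi_c_2}.
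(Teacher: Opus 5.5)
Your proposal is correct and follows essentially the same route as the paper: a contraction-mapping argument for $\textbf{L}_k\big((l_1,l_2)+N(\phi,\psi)\big)$ on a small ball in $E$. Like the paper, you use Lemmas~\ref{existence1}, \ref{N}, \ref{l}, together with a pointwise Lipschitz bound for $N_1,N_2$ (including the $|PU_*|^{q-2}$ term when $q>2$) and Lemma~\ref{B1-1}. The only difference is cosmetic: you take radius $\mu^{-\frac{N}{2(q+1)}}$ and recover the extra $\mu^{-\sigma}$ by feeding the fixed point back into the self-map bound, whereas the paper builds the gain into the radius $\mu^{-\frac{N}{2(q+1)}-\sigma_0}$ from the start.
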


\begin{proof}

   Define
  $$ \begin{array}{ll}
      \bar{E}=\displaystyle\left\{ (\phi,\psi) \in E \cap X\ | \ \ \|(\phi,\psi)\|_{*} \leq \mu^{-\frac{N}{2(q+1)}-\sigma_0}\right\},
   \end{array}$$
   where $\sigma_0 > 0$ is slightly less than $\sigma$, which appears in Lemma \ref{l}.

   We will find a solution of (\ref{linear-equation2}) in $\bar{E}$ which is equivalent to
\begin{align}
    (\phi, \psi) = A(\phi,\psi) : = \textbf{L}_k(N_1(\psi),N_2(\phi)) + \textbf{L}_k(l_1,l_2), \;\; \hbox{for} \;\;  (\phi,\psi) \in \bar{E}.\label{eq:bf_L}
\end{align}

  First, we prove that $A$ maps $ \bar{E}$ into $ \bar{E}$. For any $(\phi,\psi) \in  \bar{E}$, by Lemma \ref{existence1}, Lemma \ref{N} and Lemma \ref{l}, we have
   \begin{align}
      \|A(\phi,\psi)\|_{*}
       &\leq C \|(N_1(\psi),N_2(\phi))\|_{**} + C \|(l_1,l_2)\|_{**} \nonumber \\
      & \leq C \|(\phi,\psi)\|_{*}^{p} + C \|(l_1,l_2)\|_{**}
        \leq \mu^{-\frac{N}{2(q+1)}-\sigma_0}   , \label{A}
     \end{align}
  Here we choose $k$ large enough such that $A(\phi,\psi) \in \bar{E}$.

   Secondly, we prove $A$ is a contraction map. For any $(\omega_1, \omega_2)$ and $(\phi_1,\phi_2) \in \bar{E}$, we have
   \[
      \begin{split}
         \|A(\omega_1,\omega_2) - A(\phi_1,\phi_2)\|_* & = \|\textbf{L}_k(N_1(\omega_2),N_2(\omega_1)) - \textbf{L}_k(N_1(\phi_2),N_2(\phi_1))\|_*\\
         & \leq C\|(N_1(\omega_2) - N_1(\phi_2) ,  N_2(\omega_1) - N_2(\phi_1))\|_{**} .
      \end{split}
   \]

It holds that
   \[
      |N_2^{\prime}(t)| \leq \begin{cases}
          C |t|^{q-1}, \;\;\; \hbox{if}\;\; q \leq 2 ;\\
   C(PU_*)^{q-2}t + C|t|^{q-1}, \;\; \hbox{if}\;\; q > 2.
      \end{cases}
   \]
   We only give the argument for the case $q>2$, since the other case is similar. Similar to the proof of Lemma \ref{N}, we can derive from Lemma \ref{l}, Lemma \ref{B1-1} and Lemma \ref{l1-23-4} that
   \begin{align*}
           |N_2(\omega_1) - N_2(\phi_1)|   &\leq C \left( (PU_*)^{q-2}(| \omega_1 | + | \phi_1 | )  + |\omega_1|^{q-1} + |\phi_1|^{q-1}\right)|\omega_1 - \phi_1| \\
         & \leq C (\|\omega_1\|_{*,1}^{q-1}+\|\phi_1\|_{*,1}^{q-1}) \|\omega_1 - \phi_1\|_{*,1} \left( \sum\limits_{j=1}^{k} \dfrac{\mu^{\frac{N}{q+1}}}{(1+\mu|y-x_j|)^{\frac{N}{q+1}+\tau}}  \right)^{q} \\
         & \quad+ C (\|\omega_1\|_{*,1}+\|\phi_1\|_{*,1}) \|\omega_1 - \phi_1\|_{*,1}\left(  \sum\limits_{j=1}^{k} \dfrac{\mu^{\frac{N}{q+1}}}{(1+\mu|y-x_j|)^{\frac{N}{q+1}+\tau+\theta}} \right)^{q} \\
         & \leq C \mu^{-\sigma} \|\omega_1 - \phi_1\|_{*,1} \sum\limits_{j=1}^m \dfrac{\mu^{\frac{N}{p+1}+2}}{(1+\mu|y-x_j|)^{\frac{N}{p+1}+2+\tau}}.
     \end{align*}
   where $\sigma>0$ is a small constant.

  Hence, we have
   \[
      \|N_2(\omega_1) - N_2(\phi_1)\|_{**,2} \leq C \mu^{-\sigma}\|\omega_1-\phi_1\|_{*,1}.
   \]
   Similarly, we also have
   \[
      \|N_1(\omega_2) - N_1(\phi_2)\|_{**,1} \leq \mu^{-\sigma}\|\omega_2-\phi_2\|_{*,2}.
   \]
   Thus, we obtain
   \[
   \begin{split}
      \|A(\omega_1,\omega_2) - A(\phi_1,\phi_2)\|_* & \leq C\|(N_1(\omega_2) - N_1(\phi_2) ,  N_2(\omega_1) - N_2(\phi_1))\|_{**} \\
      & \leq C\mu^{-\sigma} \|(\omega_1 - \phi_1, \omega_2-\phi_2)\|_{*} \leq \dfrac{1}{2} \|(\omega_1, \omega_2) - (\phi_1, \phi_2)\|_{*}.
   \end{split}
   \]
   The last inequality holds if we choose $k$ large enough.

   As a result, there is a $k_0>0$ such that for any $k \geq k_0$, $A$ is a contraction map from $\bar{E}$ to $\bar{E}$ and it follows that there is a unique solution $(\phi,\psi) \in \bar{E}$ of equation (\ref{linear-equation2}). Moreover, by (\ref{A}), we have
   \[
   \|(\phi,\psi)\|_{*} = \|A(\phi,\psi)\|_{*} \leq  C\mu^{-\frac{N}{2(q+1)}-\sigma}.
   \]
   Finally, the estimate of $c_l$ follows from Lemma \ref{existence1}.
\end{proof}

\section{Existence of the solution}

Given $k \in \N$ large enough, we set a functional
\begin{equation}\label{eq:redene}
K_\ep(r,\lambda) := I_\ep(PU_*[r,\mu]+\phi[r,\mu], PV_*[r,\mu]+\psi[r,\mu]),
\end{equation}
where $(PU_*,PV_*)$ is defined by \eqref{projection}--\eqref{def:approx}, $(\phi,\psi)$ is built in Proposition~\ref{existence2}, and $\mu = \lambda k^{\frac{p+1}{p}}$.

By the standard reduction argument, if we prove the existence of a critical point for $K_\ep$, then we will obtain a solution for system \eqref{mainsystem} of the form : $$(u,v) = (PU_*+\phi, PV_*+\psi).$$

\begin{proof}[Proof of Theorem~\ref{Thm2}]
By exploiting \eqref{def:I_ep}, \eqref{N}, \eqref{l}, \eqref{eq:bf_L} and $(\phi,\psi) \in  E$, we see
\begin{align}
K_\ep(r,\lambda) - I_\ep( PU_*, PV_* )
&= O\left(\int_{\Omega} |\psi|^{p+1} + \int_{\Omega} (|l_1|+|N_1(\psi)|)|\psi|\right) \label{eq:diff}
\\
&\ + O\left(\int_{\Omega} |PU_*|^{q-2} \phi^3 + \int_{\Omega} |\phi|^{q+1} + \int_{\Omega} (|l_2|+|N_2(\phi)|)|\phi|\right). \nonumber
\end{align}
Furthermore, arguing as in the proof of \cite[Proposition 3.1]{WY}, we obtain
\begin{equation}\label{eq:diff2}
\int_{\Omega} |\psi|^{p+1} + \int_{\Omega} |PU_*|^{q-2} \phi^3 + \int_{\Omega} |\phi|^{q+1} \le Ck^{1+\theta} \|(\phi, \psi)\|_*^{\min\{p+1,\, 3,\, q+1\}},
\end{equation}
for some $\theta > 0$ arbitrarily small, and
\begin{equation}\label{eq:diff3}\begin{array}{ll}
&\displaystyle \int_{\Omega} (|l_1|+|N_1(\psi)|)|\psi| + \int_{\Omega} (|l_2|+|N_2(\phi)|)|\phi| \\
&\le \displaystyle Ck(\|l\|_{**}+\|N(\phi,\psi)\|_{**})\|(\phi, \psi)\|_*.\end{array}
\end{equation}

From Proposition~\ref{existence2}, Lemma~\ref{N}, Lemmas~\ref{l}, we obtain the expansion
\begin{align*}
K_\ep(r,\lambda) &= I_\ep( PU_*, PV_* ) + O\left(k\|(\phi, \psi)\|_*^2 + k(\|l\|_{**}+\|N(\phi,\psi)\|_{**})\|(\phi, \psi)\|_*\right) \\
&= I_\ep( PU_*, PV_* ) + O\left(k\mu^{-\frac N{q+1}-\sigma}\right)
\end{align*}
for some $\sigma > 0$ small. Hence, by Proposition~\ref{prop:Iexpan}, we have that
\begin{align}\label{61-24-2}
K_\ep(r,\lambda) &= I_\epsilon(u_\epsilon,v_\epsilon)+ kA
+ k\left[-\frac{(B_1+B_2 \widetilde{H}(\tilde x_{1})) k^{p(N-2)-2}}{r^{p(N-2)-2} \mu^{p(N-2)-2}} + \frac{B_4 u_\epsilon(r)}{\mu^{\frac{N}{q+1}}}+O\left({\mu^{-\frac{N}{q+1}-\sigma}}\right)\right]\nonumber\\
&\hspace{8em}-\chi_{\{2(p(N-2)-2)>N\}}\cdot k\left[\frac{\epsilon \beta_2 B_3}{\mu^{N-\frac{2N}{q+1}}}+O\left({\mu^{\frac{2N}{q+1}-N-\sigma}}\right)\right].
\end{align}

Define
\begin{align}\label{bf_K}
    \textbf{K}_\ep(r,\lambda)=-\frac{(B_1+B_2 \widetilde{H}(\tilde x_{1})) }{(r\lambda)^{p(N-2)-2} } + \frac{B_4 u_\epsilon(r)}{\lambda^{\frac{N}{q+1}}}
\end{align}

To view $K_\epsilon$ as a perturbation of $\textbf{K}_\ep$, we impose that
\begin{align}
    \mu^{-\frac{N}{q+1}}> \mu^{-N+\frac{2N}{q+1}}.\label{restriction-1}
\end{align}
Equivalently, $q>2$. By \eqref{c-hyperbola}, this is the same as $p<\frac{N+6}{2(N-3)}$.

Therefore, one can check that \( \textbf{K}_\ep \) has a maximum point at the point \((r_0,\lambda_0)\) where \( r_0 \) maximizes the function \( r \to r^{\frac{N}{q+1}} u_\ep(r) \) and
\begin{equation}\label{lambda_0}
\lambda_0 := \left( \frac{(p+1)\left(B_1+B_2 \widetilde{H}(\tilde x_{1})\right)}{B_4 u_\ep(r_0) r_0^{p(N-2)-2}} \right)^{\frac{q+1}{pN}},
\end{equation}
which is stable under \( C^0 \)-perturbation. Therefore, the reduced energy \( K_\ep \) has a critical point \((\lambda_k, r_k)\), which produces the solution $(PU_* + \phi ,PV_*+\psi)$ to the problem \eqref{mainsystem}.

\end{proof}

\appendix

\section{Properties of the bubbles and the projection }

In this section, we recall some essential properties of the ground state $(U_{0,1}, V_{0,1})$ of \eqref{lane-embden} and its projection used in the previous sections.

First, for reader's convenience, we list the asymptotic behavior and the non-degeneracy of the ground state $(U_{0,1}, V_{0,1})$ of \eqref{lane-embden}.

\begin{lemma}[\cite{HV}] \label{L1}
 Assume that $p\leq \frac{N+2}{N-2} \leq q.$ There exist positive constants $a=a_{N,p}$ and $b=b_{N,p}$ depending only on $N$ and $p$ such that
$$ \lim\limits_{r\to \infty} r^{N-2} V_{0,1}(r) =b ;$$
while
\begin{equation*}
\begin{cases}
    \lim\limits_{r\to\infty} r^{(N-2)p-2}U_{0,1}(r) =a,  \;\; &\hbox{if } p<\frac{N}{N-2},\\
    \lim\limits_{r\to\infty} \frac{r^{N-2}}{\ln r}U_{0,1}(r) =a,  \;\; &\hbox{if } p=\frac{N}{N-2},\\
    \lim\limits_{r\to\infty} r^{N-2}U_{0,1}(r) =a,  \;\; &\hbox{if } p>\frac{N}{N-2}.
\end{cases}
\end{equation*}
Furthermore, in the last case, we have $b^p=a( (N-2)p-2  )(N-(N-2)p).$
\end{lemma}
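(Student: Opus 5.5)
The plan is to reduce the system to radial ODEs, get crude decay from the energy space, improve it by a bootstrap, and read off the exact asymptotics from the radial Newtonian representation. By the results quoted before the statement (\cite{Lions2,Wang,HV}), $(U_{0,1},V_{0,1})$ is positive, radial and decreasing. Writing $U=U_{0,1}$, $V=V_{0,1}$ and using that both tend to $0$ at infinity (they lie in $\dot W^{2,\cdot}$), we have for $r>0$
\[
V(r)=\int_r^\infty s^{1-N}\int_0^s t^{N-1}U^q(t)\,dt\,ds,\qquad
U(r)=\int_r^\infty s^{1-N}\int_0^s t^{N-1}V^p(t)\,dt\,ds .
\]
Everything will be read off from these two formulas once we have enough decay.

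\textbf{Step 1 (crude decay).} By Sobolev embedding, $U\in L^{q+1}$ and $V\in L^{p+1}$. Monotonicity then gives
\[
r^N U(r)^{q+1}\le C\int_{B_r}U^{q+1},
\]
so $U(r)\le Cr^{-N/(q+1)}$, and likewise $V(r)\le Cr^{-N/(p+1)}$.

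\textbf{Step 2 (bootstrap).} This is the step I expect to be the main obstacle: upgrading to $V\le Cr^{2-N}$, which requires $U^q\in L^1(\R^N)$. Suppose $U\le Cr^{-\alpha}$. Feeding this into the formula for $V$ gives $V\le C r^{-\min\{q\alpha-2,\,N-2\}}$, with an extra logarithm in the borderline case. Feeding that into the formula for $U$ gives $U\le Cr^{-\alpha'}$ with $\alpha'=\min\{p(q\alpha-2)-2,\,\ldots\}$. The critical hyperbola makes $\alpha=N/(q+1)$ exactly the fixed point of the linear map $\alpha\mapsto pq\alpha-2p-2$. The crude Step 1 bound therefore does not improve by itself; one needs an $\epsilon$ of extra decay.

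I would obtain that $\epsilon$ with a Brezis--Kato/Moser-type argument (or the Kelvin transform adapted to the system, as in \cite{HV}), showing that $U\in L^{s}$ for some $s<q+1$. Monotonicity then yields $U\le Cr^{-N/s}$ with $N/s>N/(q+1)$. Since $pq>1$, the iteration is expanding above the fixed point, and after finitely many steps it gives $q\alpha>N$. Hence $U^q\in L^1$ and $V\le Cr^{2-N}$.

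\textbf{Step 3 (asymptotics of $V$).} With $U^q\in L^1$, the inner integral in the formula for $V$ converges to $m:=\int_0^\infty t^{N-1}U^q$. The formula then gives
\[
r^{N-2}V(r)\to \frac{m}{N-2}=:b>0 .
\]

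\textbf{Step 4 (asymptotics of $U$).} Insert $V^p=b^pr^{-p(N-2)}(1+o(1))$ into the formula for $U$. If $p(N-2)>N$, i.e. $p>\frac{N}{N-2}$, then $V^p\in L^1$ and $r^{N-2}U\to a:=\frac{1}{N-2}\int_0^\infty t^{N-1}V^p$. If $p=\frac{N}{N-2}$, the inner integral grows like $b^p\ln s$, so $U\sim \frac{b^p}{N-2}\,r^{2-N}\ln r$.

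If $p<\frac{N}{N-2}$, set $s_0=(N-2)p-2$; then $s_0\in(0,N-2)$ because $p>1$. The inner integral behaves like $\frac{b^p}{N-p(N-2)}s^{N-p(N-2)}$, so
\[
U(r)\sim \frac{b^p}{(N-p(N-2))\,s_0}\,r^{-s_0}.
\]
Equivalently, one checks directly that $ar^{-s_0}$ solves $-\Delta(ar^{-s_0})=as_0(N-2-s_0)r^{-s_0-2}$. Since $N-2-s_0=N-(N-2)p$, matching with $b^pr^{-p(N-2)}$ gives the stated relation
\[
b^p=a\big((N-2)p-2\big)\big(N-(N-2)p\big),
\]
which is therefore the relation attached to the regime $p<\frac{N}{N-2}$, the one used in the paper. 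The error terms $o(1)$ are handled by splitting the inner integral at $\delta s$ and using dominated convergence. This is routine once Step 2 is in hand.
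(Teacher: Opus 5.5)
The paper gives no proof of this lemma; it simply quotes \cite{HV}, so your outline has to stand on its own. Steps 1, 3 and 4 are fine. You are also right that $b^p=a((N-2)p-2)(N-(N-2)p)$ belongs to the case $p<\frac{N}{N-2}$: for $p>\frac{N}{N-2}$ its right-hand side is negative, so ``in the last case'' in the statement is a misprint. (Minor point: $s_0=(N-2)p-2>0$ holds because $s_0=\frac{(p+1)N}{q+1}$ on the critical hyperbola, not because $p>1$.)

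The gap is Step 2, which is the only non-routine step, and the tool you name does not supply it. Brezis--Kato/Moser iteration raises integrability, giving local $L^s$ bounds for large $s$; it does not give $U\in L^s$ for some $s<q+1$. To turn it into decay you would have to apply it to the Kelvin transforms $\hat U=|x|^{2-N}U(x/|x|^2)$, $\hat V=|x|^{2-N}V(x/|x|^2)$. These solve $-\Delta\hat U=|x|^{-4}V^{p-1}(x/|x|^2)\,\hat V$ and $-\Delta\hat V=|x|^{-4}U^{q-1}(x/|x|^2)\,\hat U$. With only the Step 1 bounds, these coefficients are exactly scale-critical at the origin (their product is $O(|x|^{-4})$). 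For coefficients of that size a Brezis--Kato argument only runs if they are small, and your proposal never says where smallness comes from.

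The missing idea is that Step 1 already provides it. Your own fixed-point observation shows that what is needed is a smaller amplitude, not a better exponent. Monotonicity on annuli gives $U(r)^{q+1}|B_r\setminus B_{r/2}|\le\int_{B_r\setminus B_{r/2}}U^{q+1}\to0$. Hence $U^{q-1}\le\eta(R)\,r^{-(q-1)\alpha_0}$ on $\{|x|\ge R\}$ with $\eta(R)\to0$, where $\alpha_0=\frac{N}{q+1}$ and $\beta_0=\frac{N}{p+1}$. Since $p\beta_0=\alpha_0+2$ and $q\alpha_0=\beta_0+2$, setting $\gamma_1=\alpha_0+\delta$, $\gamma_2=\beta_0+\delta$ with $|\delta|$ small gives
\[
-\Delta(Ar^{-\gamma_1})=A\gamma_1(N-2-\gamma_1)r^{-\gamma_1-2},\qquad V^{p-1}Br^{-\gamma_2}\le CBr^{-\gamma_1-2},\qquad U^{q-1}Ar^{-\gamma_1}\le\eta(R)Ar^{-\gamma_2-2},
\]
and likewise $-\Delta(Br^{-\gamma_2})=B\gamma_2(N-2-\gamma_2)r^{-\gamma_2-2}$. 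The powers match exactly. So $(Ar^{-\gamma_1},Br^{-\gamma_2})$ is a positive strict supersolution of the cooperative system $-\Delta\phi=V^{p-1}\psi$, $-\Delta\psi=U^{q-1}\phi$ on $\{|x|>R\}$, once $A/B$ is a large fixed constant and then $R$ is large.

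Now compare $(U,V)$ on annuli $R<|x|<R'$ with the following sum: the $\delta>0$ pair (with $B$ large, so it dominates on $|x|=R$) plus $\kappa$ times the $-\delta$ pair (which dominates $(U,V)$ on $|x|=R'$ once $R'$ is large). Use the maximum principle for cooperative systems admitting a positive strict supersolution. Letting $R'\to\infty$ and then $\kappa\to0$ gives $U\le Cr^{-\alpha_0-\delta}$ and $V\le Cr^{-\beta_0-\delta}$. This is exactly the extra decay your iteration needs, after which Steps 2--4 go through.

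Equivalently, use Emden--Fowler variables $X=r^{\alpha_0}U$, $Y=r^{\beta_0}V$, $t=\ln r$. The system becomes autonomous, with a hyperbolic equilibrium at the origin (eigenvalues $\pm\alpha_0,\pm\beta_0$). Once the orbit is shown to tend to the origin, the convergence is automatically exponential.
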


\begin{lemma}[\cite{KM}]\label{L3}
There exists a constant $C > 0$ depending only on $N$ and $p$ such that
\begin{equation}\label{V10est}
    \left|V_{0,1}(r) - \frac{b_{N,p}}{r^{N-2}}\right| \le \frac{C}{r^N}.
\end{equation}
Besides,
\begin{equation}\label{U10est}
\begin{cases}
    \displaystyle \left|U_{0,1}(r) - \frac{a_{N,p}}{r^{N-2}}\right| \le \frac{C}{r^{N-2+\kappa_0}} &\text{if } p \in (\frac{N}{N-2}, \frac{N+2}{N-2}], \\
    \displaystyle \left|U_{0,1}(r) - \frac{a_{N,p} \log r}{r^{N-2}}\right| \le \frac{C}{r^{N-2}} &\text{if } p = \frac{N}{N-2}, \\
    \displaystyle \left|U_{0,1}(r) - \frac{a_{N,p}}{r^{p(N-2)-2}}\right| \le \frac{C}{r^{p(N-2)-2+\kappa_1}} &\text{if } p \in (\frac{2}{N-2}, \frac{N}{N-2}),
\end{cases}
\end{equation}
where $\kappa_0 := p(N-2)-N > 0$ and $\kappa_1$ is any number in $(0, \min\{N-p(N-2),2(p+1)\})$.
\end{lemma}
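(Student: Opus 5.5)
The plan is to work entirely with the radial representation formula. For a radial $f$ with enough decay, the decaying radial solution of $-\Delta w=f$ in $\R^N$ is
\[
w(r)=\frac{r^{2-N}}{N-2}\int_0^r s^{N-1}f(s)\,ds+\frac{1}{N-2}\int_r^\infty s f(s)\,ds .
\]
Since $(U_{0,1},V_{0,1})$ is radial, decreasing and decays at infinity, this formula applies to both $-\Delta V_{0,1}=U_{0,1}^q$ and $-\Delta U_{0,1}=V_{0,1}^p$. The crude decay rates from Lemma~\ref{L1} will serve as input. The estimate for $V_{0,1}$ comes first. It is then fed into the estimate for $U_{0,1}$, and this order is precisely what produces the exponent $2(p+1)$ in $\kappa_1$.

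\textbf{Step 1 (estimate for $V_{0,1}$).} Set $b_{N,p}=\frac{1}{N-2}\int_0^\infty s^{N-1}U_{0,1}^q\,ds$. Then
\[
V_{0,1}(r)-b_{N,p}r^{2-N}=-\frac{r^{2-N}}{N-2}\int_r^\infty s^{N-1}U_{0,1}^q\,ds+\frac{1}{N-2}\int_r^\infty sU_{0,1}^q\,ds .
\]
If $U_{0,1}^q\le Cs^{-\gamma}$ with $\gamma>N$, both terms are $O(r^{2-\gamma})$.

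In the case $p<\frac{N}{N-2}$, Lemma~\ref{L1} gives $\gamma=q(p(N-2)-2)$. Using $p(N-2)-2=\frac{N(p+1)}{q+1}$ together with \eqref{c-hyperbola}, one computes $\gamma=N(p+1)-(p+1)\big(N-2-\frac{N}{p+1}\big)=N+2+2p$. Hence
\[
\bigl|V_{0,1}-b_{N,p}r^{2-N}\bigr|\le Cr^{-N-2p}.
\]
This is stronger than \eqref{V10est}, and it is the key gain.

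In the cases $p\ge \frac{N}{N-2}$ we have $U_{0,1}\lesssim r^{2-N}\log r$ and $q\ge\frac{N+2}{N-2}$, so $\gamma=q(N-2)\ge N+2$. In the borderline case $q=\frac{N+2}{N-2}$ there is no logarithm, so again $O(r^{-N})$.

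\textbf{Step 2 (estimate for $U_{0,1}$ when $\frac{2}{N-2}<p<\frac{N}{N-2}$).} By Step 1,
\[
V_{0,1}^p=b^p r^{-p(N-2)}\bigl(1+O(r^{-2-2p})\bigr)=b^p r^{-p(N-2)}+g,\qquad |g|\le Cr^{-p(N-2)-2-2p}\quad (r\ge1).
\]
The function $w_0=a_{N,p}r^{2-p(N-2)}$ with $a_{N,p}=\frac{b^p}{(p(N-2)-2)(N-p(N-2))}$ solves $-\Delta w_0=b^pr^{-p(N-2)}$ for $r>0$. Here $0<p(N-2)-2$ and $p(N-2)<N$ guarantee $a_{N,p}>0$ and local integrability. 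Therefore $U_{0,1}-w_0$ is, for $r\ge1$, a radial solution of $-\Delta(U_{0,1}-w_0)=g$ plus a multiple of $r^{2-N}$ together with a constant. The constant vanishes because both $U_{0,1}$ and $w_0$ decay.

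Applying the representation formula to $g$ on $[1,\infty)$:
\begin{itemize}
\item The tail term $\int_r^\infty s|g|\,ds$ is $O(r^{-p(N-2)-2p})=O(r^{2-p(N-2)-2(p+1)})$.
\item The term $r^{2-N}\int_1^r s^{N-1}|g|\,ds$ is $O(r^{2-p(N-2)-2(p+1)})$ if $p(N-2)+2+2p<N$. It is $O(r^{2-N}\log r)$ in the equality case, and $O(r^{2-N})$ otherwise.
\item The homogeneous part contributes $O(r^{2-N})=O(r^{2-p(N-2)-(N-p(N-2))})$.
\end{itemize}
Collecting these, $|U_{0,1}-a_{N,p}r^{2-p(N-2)}|\le Cr^{2-p(N-2)-\kappa_1}$ for every $\kappa_1<\min\{N-p(N-2),2(p+1)\}$. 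The open interval absorbs the logarithm in the equality case.

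\textbf{Step 3 (remaining cases).} For $p\in(\frac{N}{N-2},\frac{N+2}{N-2}]$, $V_{0,1}^p$ is integrable with $V_{0,1}^p=O(r^{-p(N-2)})$. The representation gives $U_{0,1}=a r^{2-N}+O(r^{2-p(N-2)})$, that is, an error of order $r^{2-N-\kappa_0}$. For $p=\frac{N}{N-2}$ we have $s^{N-1}V^p\sim b^p s^{-1}$, which produces $a\log r\, r^{2-N}$ with an $O(r^{2-N})$ remainder. Consistency of $a_{N,p}$ with Lemma~\ref{L1} is immediate from the explicit constants.

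\textbf{Main obstacle.} The delicate point is Step 2. A naive use of $V_{0,1}=br^{2-N}+O(r^{-N})$ would only give $\kappa_1<\min\{N-p(N-2),2\}$. The sharper range with $2(p+1)$ requires the improved decay $O(r^{-N-2p})$ from Step 1, and that improvement hinges on the identity $q(p(N-2)-2)=N+2+2p$ coming from the critical hyperbola. So the first thing to verify carefully is this identity, followed by the bookkeeping of the three competing exponents in Step 2.
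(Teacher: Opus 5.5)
The paper contains no proof of this lemma; it is quoted from \cite{KM}, so there is no internal argument to compare with. Your proposal is a correct, self-contained derivation from the crude decay rates of Lemma~\ref{L1}. The radial Newton formula is the right tool. The identity $q\bigl(p(N-2)-2\bigr)=N+2+2p$ follows from $p(N-2)-2=\frac{N(p+1)}{q+1}$, and it does give $V_{0,1}-b_{N,p}r^{2-N}=O(r^{-N-2p})$ for $p<\frac{N}{N-2}$. Your bookkeeping of the three competing exponents in Step~2 is also right. In fact you obtain $\kappa_1=\min\{N-p(N-2),2(p+1)\}$ itself, except in the equality case $p=\frac{N-2}{N}$, where the logarithm forces strict inequality.

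Three small remarks. First, all your bounds are derived for $r\ge 1$. For $0<r<1$ the inequalities are trivial, since $U_{0,1}$ and $V_{0,1}$ are bounded while the right-hand sides blow up.

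Second, the ``main obstacle'' you identify only matters when $p<1$. For $p\ge 1$ one has $N-p(N-2)\le 2$, so $\min\{N-p(N-2),2(p+1)\}=\min\{N-p(N-2),2\}=N-p(N-2)$. Hence the naive input $V_{0,1}=b_{N,p}r^{2-N}+O(r^{-N})$ already suffices. In the range $p\in(1,\frac{N}{N-2})$ used throughout the paper, the refinement through $O(r^{-N-2p})$ is not needed.

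Third, your constant $a_{N,p}=\frac{b_{N,p}^p}{(p(N-2)-2)(N-p(N-2))}$ is exactly the relation that Lemma~\ref{L1} attaches to ``the last case''. Since $N-(N-2)p<0$ for $p>\frac{N}{N-2}$, that relation only makes sense for $p<\frac{N}{N-2}$. The sentence in Lemma~\ref{L1} should therefore refer to the first case, and your consistency check is with that corrected reading.
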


\begin{lemma}[\cite{FKP21}]\label{L2}
Set
   $$(\Psi_{0,1}^0,\Phi_{0,1}^0) = \left(  y \cdot \nabla U_{0,1} +\frac{NU_{0,1}}{q+1},\; y \cdot \nabla V_{0,1}+\frac{NV_{0,1}}{p+1}  \right)$$
and
   $$ (\Psi_{0,1}^l, \Phi_{0,1}^l) = (\partial_l U_{0,1}, \partial_l V_{0,1} ),\;\; \hbox{for }\ \ l=1,\cdots,N. $$
Then the space of solutions to the linear system
   \begin{equation}\label{8}
   \begin{cases}
   -\Delta \Psi =pV_{0,1}^{p-1} \Phi,\;\;\; \hbox{in } \mathbb R^N,\\
   -\Delta \Phi =qU_{0,1}^{q-1} \Psi,\;\;\; \hbox{in } \mathbb R^N,\\
   (\Psi,\Phi)\in \dot{W}^{2,\frac{p+1}{p}}(\mathbb R^N) \times \dot{W}^{2,\frac{q+1}{q}}(\mathbb R^N),
   \end{cases}
   \end{equation}
 is spanned by
   $$ \left\{  (\Psi_{0,1}^0,\Phi_{0,1}^0), (\Psi_{0,1}^1,\Phi_{0,1}^1)  ,\cdots, (\Psi_{0,1}^N,\Phi_{0,1}^N)    \right\} .$$
   \end{lemma}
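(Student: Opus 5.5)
Lemma~\ref{L2} is the nondegeneracy statement for the ground state $(U_{0,1},V_{0,1})$ of \eqref{lane-embden}. My plan is to reduce the system to a scalar fourth-order problem, decompose in spherical harmonics, and analyze the resulting radial ODE systems mode by mode.

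\textbf{Step 1 (the listed functions are solutions).} Differentiate the family \eqref{bubble} with respect to $x_l$ and with respect to $\mu$ at $(x,\mu)=(0,1)$. Every member of the family solves \eqref{lane-embden}, so the derivatives solve \eqref{8}. These derivatives are exactly $(\partial_l U_{0,1},\partial_l V_{0,1})$ and the dilation pair $(y\cdot\nabla U_{0,1}+\frac{N}{q+1}U_{0,1},\,y\cdot\nabla V_{0,1}+\frac{N}{p+1}V_{0,1})$. The decay rates in Lemma~\ref{L1} put them in the required homogeneous Sobolev spaces.

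\textbf{Step 2 (harmonic decomposition).} Take an arbitrary solution $(\Psi,\Phi)$ of \eqref{8} and expand it in spherical harmonics:
\[
\Psi=\sum_{k,i}\psi_{k,i}(r)Y_{k,i}(\theta),\qquad \Phi=\sum_{k,i}\phi_{k,i}(r)Y_{k,i}(\theta).
\]
Since the potentials $pV_{0,1}^{p-1}$ and $qU_{0,1}^{q-1}$ are radial, each mode decouples into the ODE system
\[
-\Delta_k\psi=pV^{p-1}\phi,\qquad -\Delta_k\phi=qU^{q-1}\psi,\qquad \Delta_k=\partial_{rr}+\tfrac{N-1}{r}\partial_r-\tfrac{k(N+k-2)}{r^2}.
\]

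\textbf{Step 3 (modes $k=0$ and $k=1$).} For $k=0$, I will use the ODE uniqueness theory for radial solutions of the Lane--Emden system, in the manner of Hulshof--Van der Vorst. A radial solution is fixed by its values at the origin, which gives a two-dimensional solution space. Regularity at infinity, enforced through the integrability condition, then cuts this to one dimension spanned by the dilation mode. For $k=1$, the pair $(U',V')$ solves the system with negative components. I will run a Sturm comparison, or equivalently a positivity argument via the maximum principle for cooperative systems (valid because $p,q>0$ makes the coupling cooperative). This shows every decaying solution is a multiple of $(U',V')$.

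\textbf{Step 4 (modes $k\ge2$) --- the main obstacle.} Here I must prove there are no nontrivial decaying solutions. The plan is to compare with $k=1$. Since $(U',V')$ is a one-signed solution with eigenvalue $N-1$, the first eigenvalue of the weighted system problem is attained there, and the larger angular eigenvalue $k(N+k-2)>N-1$ should force triviality. The difficulty is that the system is not variational in the usual positive-definite sense. So I would rely on the cooperative structure: rewrite the system as an eigenvalue problem for the positive compact operator $\psi\mapsto(-\Delta_k)^{-1}pV^{p-1}(-\Delta_k)^{-1}qU^{q-1}\psi$. Krein--Rutman then gives monotonicity of the principal eigenvalue in $k$, which excludes solutions for $k\ge2$. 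Controlling the weighted function spaces needed for compactness, using the decay rates of Lemma~\ref{L1}, is the technical heart of the argument. In practice this is the content of \cite{FKP21}, which the paper cites, so I would finally invoke that result.
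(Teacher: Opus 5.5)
The paper gives no proof of this lemma; it is quoted from \cite{FKP21}, so your closing move of invoking that reference is exactly what the paper does. As a stand-alone argument, however, your sketch has a genuine gap. The gap is in the radial mode, not in the modes $k\ge 2$ that you call the main obstacle. For $k\ge 1$ your positivity argument is the standard one, and it goes through once the weighted spaces are fixed:
\begin{itemize}
\item The Green kernel of $-\Delta_k$ with respect to $s^{N-1}\,ds$ is $\frac{1}{N-2+2k}\min(r,s)^k\max(r,s)^{2-N-k}$. It is positive and strictly decreasing in $k$.
\item Hence $T_k$ is positivity improving, and for $k\ge 2$ its kernel lies strictly below that of $T_1$.
\item Since $-U_{0,1}'>0$ is a fixed point of $T_1$, Krein--Rutman gives that $1=r(T_1)$ is a simple eigenvalue and that $r(T_k)<1$ for $k\ge 2$.
\end{itemize}

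The case $k=0$ is where the content of the lemma lies. It is exactly what distinguishes the system from the scalar critical equation. There the radial linearization is second order, so regularity at the origin alone already yields a one-dimensional space. Here the regular radial solutions of the linearized system form a two-dimensional space. Your claim that the integrability condition at infinity cuts this space down to the dilation mode is the radial non-degeneracy itself, not a consequence of the ODE set-up. Nor does it follow from uniqueness of the ground state \cite{HV,Wang}.

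To see why, set $t=\ln r$, $X=r^{\frac{N}{q+1}}U_{0,1}$, $Y=r^{\frac{N}{p+1}}V_{0,1}$. The system becomes autonomous in $\mathbb{R}^4$, and the ground state is a homoclinic orbit to a hyperbolic rest point. The stable and unstable manifolds of that point are two-dimensional and lie in the three-dimensional zero level set of the Pohozaev-type first integral. Regular radial kernel elements are tangent to the unstable manifold, decaying ones are tangent to the stable manifold, and the flow direction corresponds to the dilation mode. Radial non-degeneracy says these two tangent planes meet only along the flow direction, i.e.\ that the intersection is transversal inside the level set. Uniqueness says only that the manifolds meet along a single orbit, not that they meet transversally.

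What is missing is an argument that the second regular solution does not decay. One candidate is the derivative in $b$, at the ground-state value, of the shooting family with $(U(0),V(0))=(1,b)$. Alternatively, a sign or Wronskian argument could rule out a second decaying regular solution. Without such an argument your proposal establishes non-degeneracy only in the non-radial sectors, and the lemma still rests on \cite{FKP21}.
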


   Next, we state some properties of the projection of the bubbles used in the previous sections.

   For each $j=1,\cdots,k$, we recall that $$\big(U_j(y),V_j(y)\big)=\left(\mu^{\frac{N}{q+1}} U_{0,1}\big(\mu(y - x_j)\big), \ \mu^{\frac{N}{p+1}} V_{0,1}\big(\mu(y - x_j)\big)\right).$$
   Moreover, the pair $(PU_j,PV_j)$ is the unique solution of \eqref{projection}.

   We also set the following
\begin{equation*}
\begin{aligned}
\big(\Psi_{j,0}(y),\Phi_{j,0}(y)\big)
&=\left(
\mu^{\frac{N}{q+1}-1}
\Psi^{0}_{0,1}\big(\mu(y-x_j)\big),
\mu^{\frac{N}{p+1}-1}
\Phi^{0}_{0,1}\big(\mu(y-x_j)\big)
\right),
\end{aligned}
\end{equation*}
and
\begin{equation*}
\begin{aligned}
\big(\Psi_{j,l}(y),\Phi_{j,l}(y)\big)
&=\left(
\mu^{\frac{N}{q+1}+1}
\Psi^{l}_{0,1}\big(\mu(y-x_j)\big),
\mu^{\frac{N}{p+1}+1}
\Phi^{l}_{0,1}\big(\mu(y-x_j)\big)
\right),
\end{aligned}
\end{equation*}
for $l=1,\ldots,N$. Let the pair $(P\Psi_{j,l},P\Phi_{j,l})$ be the unique smooth solution of the system
\begin{equation*}
\begin{aligned}
\begin{cases}
-\Delta P\Psi_{j,l}=pV_j^{p-1}\Phi_{j,l}, & \text{in } \Omega,\\
-\Delta P\Phi_{j,l}=qU_j^{q-1}\Psi_{j,l}, & \text{in } \Omega,\\
P\Psi_{j,l}=P\Phi_{j,l}=0, & \text{on } \partial\Omega,
\end{cases}
\end{aligned}
\end{equation*}
for $l=0,1,\ldots,N$.

Then, we have

\begin{lemma}[Lemma 2.9 in \cite{KP21}]
Let $H : \Omega \times \Omega \to \mathbb{R}$ be a smooth function such that
\begin{equation*}
\begin{cases}
-\Delta_x  H(x,y) = 0, & \text{for } x \in \Omega, \\[0.3em]
 H(x,y) = \dfrac{\gamma_N}{|x-y|^{N-2}}, & \text{for } x \in \partial\Omega,
\end{cases}
\end{equation*}
and $\widehat H : \Omega \times \Omega \to \mathbb{R}$ be a smooth function such that

\begin{equation*}
\begin{cases}
-\Delta_x \widehat H(x,y) = 0, & \text{for } x \in \Omega, \\[0.3em]
\widehat H(x,y) = \dfrac{1}{|x-y|^{(N-2)p-2}}, & \text{for } x \in \partial\Omega,
\end{cases}
\end{equation*}

given any $y \in \Omega$. Here, $\gamma_N := (N-2)^{-1}|\mathbb{S}^{N-1}|^{-1} > 0$.
If $i = 1, \cdots, k$, then we have

\begin{equation}\label{pui}
\begin{aligned}
P U_i(x)
&= U_i(x)
- a_{N,p}{\mu^{-\frac{N p}{q+1}}}\,\widehat H(x,\xi_i)
+ o\!\left({\mu^{-\frac{N p}{q+1}}}\right).
\end{aligned}
\end{equation}

and

\begin{equation}\label{pvi}
\begin{aligned}
P V_i(x)
&= V_i(x)
- \left(\frac{b_{N,p}}{\gamma_N}\right)
{\mu^{-\frac{N}{q+1}}}\,H(x,\xi_i)
+ o\!\left({\mu^{-\frac{N}{q+1}}}\right).
\end{aligned}
\end{equation}

for $x \in \Omega$.

\end{lemma}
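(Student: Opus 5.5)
The plan is to write the correction terms $PU_i-U_i$ and $PV_i-V_i$ as harmonic functions with explicit boundary data, and then to compare these with the regular parts $\widehat H$ and $H$ through the maximum principle.

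We begin with $PV_i$. By \eqref{projection}, $-\Delta(V_i-PV_i)=-\Delta V_i-U_i^q=0$ in $\Omega$, because $(U_i,V_i)$ solves \eqref{lane-embden}. On $\partial\Omega$ we have $V_i-PV_i=V_i$. Hence $V_i-PV_i$ is the harmonic extension of $V_i|_{\partial\Omega}$. The centres $x_i$ stay at distance at least $1-r_0-l_{00}>0$ from $\partial\Omega$, so for $x\in\partial\Omega$ we have $\mu|x-x_i|\to\infty$. Lemma~\ref{L3} (estimate \eqref{V10est}) then gives
\[
V_i(x)=\mu^{\frac{N}{p+1}}\Big(\frac{b_{N,p}}{(\mu|x-x_i|)^{N-2}}+O\big((\mu|x-x_i|)^{-N}\big)\Big)
=\frac{b_{N,p}}{\gamma_N}\,\mu^{\frac{N}{p+1}-(N-2)}\,\frac{\gamma_N}{|x-x_i|^{N-2}}+O\big(\mu^{\frac{N}{p+1}-N}\big).
\]
The hyperbola \eqref{c-hyperbola} gives $\frac{N}{p+1}-(N-2)=-\frac{N}{q+1}$, so the boundary datum equals $\frac{b_{N,p}}{\gamma_N}\mu^{-\frac{N}{q+1}}H(x,x_i)+O(\mu^{-\frac{N}{q+1}-2})$ on $\partial\Omega$. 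The difference between $V_i-PV_i$ and $\frac{b_{N,p}}{\gamma_N}\mu^{-\frac{N}{q+1}}H(\cdot,x_i)$ is harmonic, so by the maximum principle it is bounded in $\Omega$ by its supremum on $\partial\Omega$, which is $O(\mu^{-\frac{N}{q+1}-2})=o(\mu^{-\frac{N}{q+1}})$. This proves \eqref{pvi}, with $\xi_i=x_i$.

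For $PU_i$ the argument is the same. We have $-\Delta(U_i-PU_i)=V_i^p-V_i^p=0$, and on $\partial\Omega$ the function equals $U_i$. We now use the third case of \eqref{U10est}, which applies because $p\in(1,\frac{N}{N-2})\subset(\frac{2}{N-2},\frac{N}{N-2})$:
\[
U_i(x)=\mu^{\frac{N}{q+1}}\Big(\frac{a_{N,p}}{(\mu|x-x_i|)^{p(N-2)-2}}+O\big((\mu|x-x_i|)^{-p(N-2)+2-\kappa_1}\big)\Big).
\]
Next we check the exponent. From $\frac{N}{q+1}=\frac{N-2}{N}\cdot N-\frac{N}{p+1}=N-2-\frac{N}{p+1}$ we get
\[
\frac{N}{q+1}-(p(N-2)-2)=N-p(N-2)-\frac{N}{p+1}=-\frac{Np}{q+1},
\]
where the last equality is the identity $(p+1)(N-2)-N=\frac{N(p+1)}{q+1}$, already used in the paper. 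So the boundary datum is $a_{N,p}\mu^{-\frac{Np}{q+1}}\widehat H(x,x_i)+O(\mu^{-\frac{Np}{q+1}-\kappa_1})$, uniformly on $\partial\Omega$, since $|x-x_i|$ is bounded below there. The maximum principle applied to the harmonic difference then gives \eqref{pui}.

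The only delicate point is uniformity: all error terms must be uniform in $x\in\Omega$ and in the parameters $(r,\lambda)\in\mcp$. This holds because $\operatorname{dist}(x_i,\partial\Omega)$ is bounded below uniformly on $\mcp$ and the constants in Lemma~\ref{L3} depend only on $N$ and $p$. With this in place, the $o(\cdot)$ terms are in fact of order $O(\mu^{-\cdot-\kappa})$ for some $\kappa>0$.
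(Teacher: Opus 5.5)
Your proof is correct. The paper gives no argument of its own and simply imports the lemma from \cite{KP21}; your route is the standard one for this expansion, and I would expect it to be the argument behind the cited result. The functions $U_i-PU_i$ and $V_i-PV_i$ are harmonic in $\Omega$, with boundary values $U_i$ and $V_i$. The decay asymptotics of Lemma~\ref{L3}, the critical-hyperbola exponent identities, and the maximum principle then give the expansions, in fact with the sharper remainders $O(\mu^{-\frac{N}{q+1}-2})$ and $O(\mu^{-\frac{Np}{q+1}-\kappa_1})$.
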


\begin{lemma}[Lemma 2.10 in \cite{KP21}]
    \begin{equation*}
\begin{aligned}
P\Psi_{j,l}(x)
=
\begin{cases}
\Psi_{j,l}(x)
+\dfrac{Np}{q+1}a_{N,p}\mu^{-\frac{Np}{q+1}-1}
\widehat H(x,x_j)
+o\left(\mu^{-\frac{Np}{q+1}-1}\right),
& \text{for } l=0,\\[0.6em]
\Psi_{j,l}(x)
+a_{N,p}\mu^{-\frac{Np}{q+1}}
\partial_{x_j,l}\widehat H(x,x_j)
+o\left(\mu^{-\frac{Np}{q+1}}\right),
& \text{for } l=1,\ldots,N,
\end{cases}
\end{aligned}
\end{equation*}
and
\begin{equation*}
\begin{aligned}
P\Phi_{j,l}(x)
=
\begin{cases}
\Phi_{j,l}(x)
+\left(\dfrac{N}{q+1}\dfrac{b_{N,p}}{\gamma_N}\right)
\mu^{-\frac{N}{q+1}-1}H(x,x_j)
+o\left(\mu^{-\frac{N}{q+1}-1}\right),
& \text{for } l=0,\\[0.6em]
\Phi_{j,l}(x)
+\left(\dfrac{b_{N,p}}{\gamma_N}\right)
\mu^{-\frac{N}{q+1}}
\partial_{x_j,l}H(x,x_j)
+o\left(\mu^{-\frac{N}{q+1}}\right),
& \text{for } l=1,\ldots,N,
\end{cases}
\end{aligned}
\end{equation*}
for $x\in\Omega$. Here, $\partial_{x_j,l}\widehat H(x,x_j)$ and
$\partial_{x_j,l}H(x,x_j)$ stand for the $l$-th components of
$\nabla_{x_j}\widehat H(x,x_j)$ and $\nabla_{x_j}H(x,x_j)$, respectively.
\end{lemma}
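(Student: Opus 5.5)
The plan is to reduce both expansions to a boundary-value problem for a harmonic function, and then read off its boundary data from the asymptotics of the kernel elements at infinity.

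\textbf{Reduction to a harmonic problem.} By Lemma~\ref{L2} and scaling, the kernel elements $(\Psi_{j,l},\Phi_{j,l})$ solve the linearized system in all of $\R^N$:
\[
-\Delta\Psi_{j,l}=pV_j^{p-1}\Phi_{j,l},\qquad -\Delta\Phi_{j,l}=qU_j^{q-1}\Psi_{j,l}\qquad\text{in }\R^N .
\]
For $0\le l\le N$ this follows by differentiating \eqref{lane-embden} along the family \eqref{bubble}, in $\mu$ for $l=0$ and in $x_j$ for $l\ge1$. Hence $P\Psi_{j,l}-\Psi_{j,l}$ and $P\Phi_{j,l}-\Phi_{j,l}$ are harmonic in $\Omega$, with boundary values $-\Psi_{j,l}|_{\partial\Omega}$ and $-\Phi_{j,l}|_{\partial\Omega}$. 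By the maximum principle, it suffices to expand $\Psi_{j,l}$ and $\Phi_{j,l}$ uniformly on $\partial\Omega$ up to an error $o(\mu^{-\frac{Np}{q+1}-1})$ (resp.\ $o(\mu^{-\frac{Np}{q+1}})$, $o(\mu^{-\frac{N}{q+1}-1})$, $o(\mu^{-\frac{N}{q+1}})$). On $\partial\Omega$ we have $|x-x_j|\ge c>0$, since $x_j$ stays in a compact subset of $\Omega$. So only the behavior of $\Psi^l_{0,1},\Phi^l_{0,1}$ at infinity matters.

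\textbf{Asymptotics at infinity.} Write $\gamma:=p(N-2)-2=\frac{N(p+1)}{q+1}$, using \eqref{c-hyperbola}. I need the derivative asymptotics
\[
U_{0,1}'(r)=-\gamma a_{N,p}r^{-\gamma-1}\big(1+O(r^{-\kappa_1})\big),\qquad V_{0,1}'(r)=-(N-2)b_{N,p}r^{1-N}\big(1+O(r^{-2})\big).
\]
I would obtain these by a rescaling argument. Set $w_R(y):=R^{\gamma}U_{0,1}(Ry)-a_{N,p}|y|^{-\gamma}$ on the annulus $\{1/2<|y|<2\}$. By \eqref{U10est}, $w_R=O(R^{-\kappa_1})$ there. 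Its Laplacian equals $-R^{\gamma+2}V_{0,1}^p(Ry)+a_{N,p}\gamma(N-2-\gamma)|y|^{-\gamma-2}$, which is $O(R^{-2})$ by \eqref{V10est} and the identity $p(N-2)=\gamma+2$. Interior gradient estimates then give $|\nabla w_R|=O(R^{-\kappa_1})$. The same argument works for $V_{0,1}$, using \eqref{V10est}.

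\textbf{The case $l=0$.} Since $y\cdot\nabla U_{0,1}=rU_{0,1}'$, we get
\[
\Psi^0_{0,1}=a_{N,p}\Big(\frac{N}{q+1}-\gamma\Big)r^{-\gamma}+O(r^{-\gamma-\kappa_1})=-\frac{Np}{q+1}a_{N,p}r^{-\gamma}+O(r^{-\gamma-\kappa_1}).
\]
Rescaling gives
\[
\Psi_{j,0}(x)=-\frac{Np}{q+1}a_{N,p}\mu^{\frac{N}{q+1}-1-\gamma}|x-x_j|^{-\gamma}+o(\mu^{-\frac{Np}{q+1}-1}),
\]
and $\frac{N}{q+1}-\gamma=-\frac{Np}{q+1}$. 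The harmonic correction therefore equals $+\frac{Np}{q+1}a_{N,p}\mu^{-\frac{Np}{q+1}-1}\widehat H(x,x_j)+o(\cdot)$, which is the claim.

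For $\Phi$, the same computation with $\frac{N}{p+1}-(N-2)=-\frac{N}{q+1}$ gives the boundary datum $-\frac{N}{q+1}b_{N,p}\mu^{-\frac{N}{q+1}-1}|x-x_j|^{2-N}$. Writing $|x-x_j|^{2-N}=\gamma_N^{-1}\cdot\gamma_N|x-x_j|^{2-N}$ produces the factor $\frac{b_{N,p}}{\gamma_N}H(x,x_j)$.

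\textbf{The case $l\ge1$.} Here
\[
\Psi_{j,l}(x)=\mu^{\frac{N}{q+1}+1}\,\partial_lU_{0,1}(\mu(x-x_j))=a_{N,p}\mu^{\frac{N}{q+1}-\gamma}\,\partial_{x_l}|x-x_j|^{-\gamma}+o(\mu^{-\frac{Np}{q+1}}).
\]
Since $\partial_{x_l}|x-x_j|^{-\gamma}=-\partial_{x_j,l}|x-x_j|^{-\gamma}$, and $\partial_{x_j,l}$ commutes with taking harmonic extensions in $x$, the correction is $+a_{N,p}\mu^{-\frac{Np}{q+1}}\partial_{x_j,l}\widehat H(x,x_j)$. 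The $\Phi$ case is analogous.

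\textbf{Main obstacle.} The step I expect to be hardest is upgrading the pointwise expansion \eqref{U10est} to the derivative bound with a quantitative $o(1)$ error. It requires checking that the Laplacian of $w_R$ really is small, which uses $p(N-2)=\gamma+2$ and the decay rate in \eqref{V10est}. Everything after that is the maximum principle applied to explicit harmonic functions.
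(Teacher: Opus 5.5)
The paper gives no proof of this lemma; it quotes it from \cite{KP21}. Your argument is the standard mechanism behind such projection expansions, the same one that gives \eqref{pui}--\eqref{pvi}. The differences $P\Psi_{j,l}-\Psi_{j,l}$ and $P\Phi_{j,l}-\Phi_{j,l}$ are harmonic in $\Omega$. Their boundary values are read off from the decay of the kernel elements, because $|x-x_j|\ge c>0$ on $\partial\Omega$. The maximum principle then carries the $o(\cdot)$ errors inside. Your proposal is correct. The exponents, signs and constants match the statement: $\frac{N}{q+1}-\gamma=-\frac{Np}{q+1}$, $\frac{N}{p+1}-(N-2)=-\frac{N}{q+1}$, and $\partial_{x_l}=-\partial_{x_j,l}$ acting on $|x-x_j|^{-\gamma}$.

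One step needs an extra line. In your rescaling argument you actually get
\[
\Delta w_R=\big(a_{N,p}\gamma(N-2-\gamma)-b_{N,p}^p\big)|y|^{-\gamma-2}+O(R^{-2}),
\]
so $\Delta w_R=O(R^{-2})$ also requires the constant identity
\[
b_{N,p}^p=a_{N,p}\gamma(N-2-\gamma)=a_{N,p}\big((N-2)p-2\big)\big(N-(N-2)p\big).
\]
The relation $p(N-2)=\gamma+2$ only matches the powers of $|y|$. This identity is the one recorded at the end of Lemma~\ref{L1}. There it is attached to the case $p>\frac{N}{N-2}$, but it can only hold for $p<\frac{N}{N-2}$, since otherwise its right-hand side is negative. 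It is also forced by your own data: $w_R\to0$ uniformly on the annulus, so $\Delta w_R\to0$ in the sense of distributions.

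You can also avoid the elliptic estimates, which you flagged as the main obstacle, because $U_{0,1}$ and $V_{0,1}$ are radial. Integrating $-(r^{N-1}U_{0,1}')'=r^{N-1}V_{0,1}^p$ and using \eqref{V10est} with $p(N-2)<N$ gives
\[
r^{N-1}U_{0,1}'(r)=-\int_0^r s^{N-1}V_{0,1}^p(s)\,ds=-\frac{b_{N,p}^p}{N-p(N-2)}\,r^{N-p(N-2)}\big(1+o(1)\big).
\]
Hence $U_{0,1}'(r)=-\frac{b_{N,p}^p}{N-2-\gamma}\,r^{-\gamma-1}(1+o(1))$. Integrating once more and comparing with \eqref{U10est} identifies $\frac{b_{N,p}^p}{N-2-\gamma}=\gamma a_{N,p}$. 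Likewise $r^{N-1}V_{0,1}'(r)=-\int_0^r s^{N-1}U_{0,1}^q\,ds\to-(N-2)b_{N,p}$, where the integral converges because $q\gamma>N$ (equivalently $pq>1$). These relative $o(1)$ bounds are all the lemma's $o(\cdot)$ remainders require.
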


\section{Estimate of the approximate solution (PU,PV)}\label{sec:est_PU}
In this section, we estimate $PU$, the unique solution of \eqref{eq:PU}. 

Denote
\begin{equation}
    \varphi(y)=PU(y)-\sum_{j=1}^kPU_j(y),
\end{equation}
we first derive the expansion of $\varphi$.

Let $G_{R}$ be the Green’s function of the Laplacian $-\Delta$ in $B_{R}(0)$ with the Dirichlet boundary condition.
Let also $H = H_R : B_{R}(0) \times B_{R}(0) \to \mathbb{R}$ be its regular part,
which solves
\begin{equation*}
\begin{cases}
-\Delta_x H_R(x,y) = 0, & \text{for } x \in B_{R}(0), \\[0.3em]
H_R(x,y) = \dfrac{\gamma_N}{|x-y|^{N-2}}, & \text{for } x \in \partial B_{R}(0),
\end{cases}
\end{equation*}
for each $y \in B_{R}(0)$ with
$\gamma_N := (N-2)^{-1}|\mathbb{S}^{N-1}|^{-1} > 0$.
Then we have
\[
H_R(y,x)=H_R(x,y)=\frac{R^{N-2}}{|x|^{N-2}}\frac{\gamma_N}{|y-x^{R*}|^{N-2}},
\]
where $x^{R*}=\frac{R^2x}{|x|^2}$, and
\begin{equation}\label{green}
\begin{aligned}
0 < G_R(x,y) = G_R(y,x)
= \frac{\gamma_N}{|x-y|^{N-2}} - H_R(x,y)
< \frac{\gamma_N}{|x-y|^{N-2}} .
\end{aligned}
\end{equation}
for $(x,y) \in B_{R}(0) \times B_{R}(0)$ such that $x \neq y$.

\begin{lemma}%[Lemma 3.2,\cite{GKPY}]
\label{nnl2-19-1}
Assume that $N \ge 7$, $p\in(1,\frac{N}{N-2})$. For $k \in \N$ large enough, there is a positive constant $B_0>0$, such that
\begin{equation}\label{nn1-19-1}
    \varphi(y)=\frac {k^{p(N-2)-2}}{r^{p(N-2)-2}\mu^{\frac {Np}{q+1}}}\widetilde{H}(r^{-1} k y)+O\Bigl(   \frac{k^{p(N-2)-2}}{\mu^{\frac{pN}{q+1}+\sigma}}\Bigr), \quad y \in \Omega
\end{equation}
uniformly in $\mathcal{P}$, where $k\simeq\mu^ {\tau}$, $\tau=\frac{p}{p+1}$, $\sigma>0$ is a sufficiently small number and $\widetilde{H}$ is defined by
\begin{equation}\label{H}
\widetilde{H}(y) := \int_{(r^{-1}k)\Omega} G_{r^{-1}k}(y,z)\left[\left(\sum_{j=1}^k \frac{b_{N,p}}{|z-\tilde{x}_j|^{N-2}}\right)^p - \sum_{j=1}^k \frac{b_{N,p}^p}{|z-\tilde{x}_j|^{p(N-2)}}\right] dz,
\end{equation}
for $y \in B_{r^{-1}k}(0)$, $\tilde x_j= (k \cos\frac{2(j-1)\pi }k, k\sin\frac{2(j-1)\pi }k, 0)$ and $b_{N,p} > 0$ is the constant in \eqref{pvi}.

\end{lemma}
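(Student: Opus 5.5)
The plan is to write $\varphi$ through the Green representation and compare its source term with the model function $\widetilde H$ after rescaling. Since $-\Delta PU=PV^p$ and $-\Delta PU_j=V_j^p$, the correction $\varphi=PU-\sum_j PU_j$ solves
\[
-\Delta\varphi=\Big(\sum_{j=1}^k PV_j\Big)^p-\sum_{j=1}^k V_j^p\ \text{ in }\Omega,\qquad \varphi=0\ \text{ on }\partial\Omega,
\]
so that $\varphi(y)=\int_\Omega G_1(y,z)\,F(z)\,dz$ with $F$ the right-hand side above.

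\textbf{Step 1: replace $PV_j$ by its far-field profile.} By \eqref{pvi}, $PV_j=V_j+O(\mu^{-\frac{N}{q+1}})$. By \eqref{V10est}, $V_j(z)=\mu^{-\frac{N}{q+1}}b_{N,p}|z-x_j|^{2-N}\big(1+O((\mu|z-x_j|)^{-2})\big)$ away from the cores.

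Hence, outside $\cup_j B_{k^{\gamma-1}}(x_j)$,
\[
F(z)=\mu^{-\frac{pN}{q+1}}\Big[\Big(\sum_j\tfrac{b_{N,p}}{|z-x_j|^{N-2}}\Big)^p-\sum_j\tfrac{b_{N,p}^p}{|z-x_j|^{p(N-2)}}\Big]\,(1+o(1)),
\]
where the $o(1)$ is a small power of $\mu^{-1}$. The error comes from three sources: the $H$ terms (bounded, and dominated by $\sum_j|z-x_j|^{2-N}\gtrsim 1$), the $r^{-N}$ correction, and the fact that $(a+b)^p-a^p-b^p$ is controlled by $a^{p-1}b$ or $b^{p-1}a$.

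Near each $x_j$, use $(a+b)^p-a^p\le C a^{p-1}b$ with $a=V_j$ and $b$ the neighbours' contribution. The neighbours contribute $b=O(\mu^{-\frac N{q+1}}k^{N-2})$, and $\int_{B}V_j^{p-1}$ is small because $p-1<\frac{2}{N-2}$. This contribution, after convolution with $|y-z|^{2-N}$, should be $O(k^{p(N-2)-2}\mu^{-\frac{pN}{q+1}-\sigma})$.

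\textbf{Step 2: rescale.} Set $z=r k^{-1}w$ and $y=rk^{-1}\tilde y$. Then $x_j\mapsto\tilde x_j$ and $|z-x_j|=rk^{-1}|w-\tilde x_j|$. By scaling, $G_1(y,z)=(k/r)^{N-2}G_{r^{-1}k}(\tilde y,w)$ and $dz=(r/k)^N dw$. The bracket scales by $(k/r)^{p(N-2)}$. The total power is therefore
\[
(k/r)^{N-2-N+p(N-2)}=(k/r)^{p(N-2)-2},
\]
which gives exactly the leading term $\frac{k^{p(N-2)-2}}{r^{p(N-2)-2}\mu^{pN/(q+1)}}\widetilde H(r^{-1}ky)$ in \eqref{nn1-19-1}.

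For this to make sense, one must check that the integral defining $\widetilde H$ converges uniformly in $k$ and is bounded uniformly in $y$. The local singularity near $\tilde x_j$ behaves like $|w-\tilde x_j|^{-(p-1)(N-2)}$. Since $p<\frac N{N-2}$, the exponent satisfies $(p-1)(N-2)<2<N$, so it is integrable. Away from the cores, the bracket is of order $|w-\tilde x_j|^{-(p-1)(N-2)}\sum_{i\ne j}|w-\tilde x_i|^{2-N}$. The sum over the $k$ points on the circle of radius $k$ converges after convolution with $G$ by the same mechanism as \eqref{tip_1}.

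\textbf{Main obstacle.} The hard part is the uniform-in-$y$ control of the far-field error. Roughly $k$ bubble contributions accumulate, and the remainder has to be shown to be a genuine $\mu^{-\sigma}$ fraction of the main term everywhere in $\Omega$, not only near the polygon.

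My first attempt will be to bound $|F-F_{\text{model}}|$ by $\mu^{-\frac{pN}{q+1}-\sigma}$ times the model envelope, using \eqref{tip_1}-type sums. I will then apply the potential estimates of Lemma B3 type after rescaling. If needed, I will sacrifice a small power $\theta$ in the exponents, absorbing it into $\sigma$ via $k\simeq\mu^\tau$.
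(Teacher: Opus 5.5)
Your architecture (Green representation of $\varphi$, excision of small balls around the $x_j$, far-field replacement of $PV_j$, rescaling by $k/r$) matches the paper's, and your scaling count in Step~2 is correct. However, two steps fail as written.

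\textbf{Excision radius.} Let $B_\rho(x_j)$ be the excised balls with $\rho\le ck^{-1}$. In $B_\rho(x_j)$ the neighbours give $b\lesssim\mu^{-\frac{N}{q+1}}k^{N-2}$, and the core contributes
\[
\mu^{-\frac{pN}{q+1}}k^{N-2}\int_{B_\rho(x_j)}\frac{dz}{|y-z|^{N-2}|z-x_j|^{(p-1)(N-2)}}\le C\mu^{-\frac{pN}{q+1}}k^{N-2}\rho^{N-p(N-2)}.
\]
This is $(k\rho)^{N-p(N-2)}$ times the main scale $\mu^{-\frac{pN}{q+1}}k^{p(N-2)-2}$. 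So the cores are negligible because $k\rho\to0$ at a polynomial rate. The fact $(p-1)(N-2)<2$ only gives integrability, not smallness.

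This is why the paper takes $\rho=\mu^{-\kappa}$ with $\tau<\kappa<1$. Then $\mu\rho\to\infty$, so the far-field expansion of $V_j$ holds outside the balls, and $k\rho\simeq\mu^{\tau-\kappa}\to0$.

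With your $\rho=k^{\gamma-1}$ and $\gamma>0$ (as in $S'$), the factor is $k^{\gamma(N-p(N-2))}\to\infty$. Each such ball also contains about $k^{\gamma}$ other centres, so $b=O(\mu^{-\frac N{q+1}}k^{N-2})$ is false there. In rescaled variables you would be deleting balls of radius $\sim k^{\gamma}$ around every $\tilde x_j$ from the integral defining $\widetilde H$, which is an $O(1)$ fraction of the leading term.

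\textbf{Far-field error.} Your claim $F=\mu^{-\frac{pN}{q+1}}[\cdots](1+o(1))$ is false away from the polygon. So is the planned pointwise bound of $|F-F_{\mathrm{model}}|$ by $\mu^{-\sigma}$ times the model. The regular parts in \eqref{pvi} must be summed over $j$. Since $0\le V_j-PV_j\le C\mu^{-\frac N{q+1}}$, the sum $\sum_j(V_j-PV_j)$ is of order $k\mu^{-\frac N{q+1}}$. That is the same order as $\sum_jV_j$ at distance $O(1)$ from the polygon, not dominated by it. Near $\partial\Omega$, where $PV=0$, one has $F\approx-\sum_jV_j^p<0$, while the model bracket is of size $(k\mu^{-\frac N{q+1}})^p>0$.

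The $H$-part therefore has to be an additive remainder that becomes small only after integration. By the mean value theorem,
\[
\Bigl|\Bigl(\sum_jPV_j\Bigr)^p-\Bigl(\sum_jV_j\Bigr)^p\Bigr|\le Ck\,\mu^{-\frac{pN}{q+1}}\Bigl(\sum_i|z-x_i|^{2-N}\Bigr)^{p-1}.
\]
Let $\Gamma_r$ be the circle through the $x_i$. Outside $\cup_iB_{c/k}(x_i)$ one has $\sum_i|z-x_i|^{2-N}\lesssim k\,\mathrm{dist}(z,\Gamma_r)^{3-N}$, and $(p-1)(N-3)<2$. Hence the Green potential of $(\sum_i|z-x_i|^{2-N})^{p-1}$ is $O(k^{p-1})$ uniformly in $y$, the cores contributing less.

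This remainder is therefore $O(\mu^{-\frac{pN}{q+1}}k^{p})$, i.e.\ $k^{2-p(N-3)}$ times the main scale. This is what the paper's $\mathcal R_1$ estimate does in rescaled variables, where \eqref{R1_11} gives $\sum_iH_{rk^{-1}}(z,\tilde x_i)\le Ck^{3-N}$ against a Green potential that grows only polynomially in $k$.
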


\begin{proof}

Using the representation and the condition $p(N-2)<N$,
one can derive the following estimate for  $\varphi$:
\begin{align}
    \varphi(y)&=\sum_{j=1}^k\int_{B_{\mu^{-\kappa}}(x_j)} G(y,z) \left[PV^p- \sum_{j=1}^k V_j^p\right](z)\dz\nonumber \\
    &\qquad+\quad \int_{\Omega\setminus \cup_{j=1}^k B_{\mu^{-\kappa}}(x_j)} G(y,z) \left[PV^p- \sum_{j=1}^k V_j^p\right](z)\dz\nonumber\\
    &:=\sum_{j=1}^k\bar{I}_{1j}+ \bar{I}_{2}.\nonumber
\end{align}
where $\kappa \in (\tau,1)$ is a fixed number.

Let us first estimate $\bar{I}_{1j}$ for $j=1,\cdots, k$. By symmetry, it suffices to consider the case $y\in \Omega_1$. From \eqref{est:01} and \eqref{pvi}, we can obtain that
\begin{equation*}
    \begin{aligned}
        |\bar{I}_{11}|&\le C \int_{B_{\mu^{-\kappa}}(x_1)} \frac{1}{|y-z|^{N-2}} \left[ V_1^{p-1}(z)\sum_{j=2}^k V_j(z)+\sum_{j=2}^k V_j^p(z)\right]\dz\\
        &\le C\frac{k^{N-2}}{\mu^{\frac{pN}{q+1}}} \int_{B_{\mu^{-\kappa}}(x_1)} \frac{1}{|y-z|^{N-2}} \frac{1}{|z-x_1|^{(p-1)(N-2)}} \dz\\
        &=O(\frac{k^{N-2}}{\mu^{\frac{pN}{q+1}}}\cdot(\mu^{-\kappa})^{N-{p(N-2)}})
        =O(\frac{k^{p(N-2)-2}}{\mu^{\frac{pN}{q+1}+\sigma}}),
    \end{aligned}
\end{equation*}
where the last step is due to $\kappa>\tau$.

  On the other hand,
\begin{equation*}
    \begin{aligned}
        |\sum_{j=2}^k\bar{I}_{1j}|&\le C \sum_{j=2}^k\int_{B_{\mu^{-\kappa}}(x_j)} \frac{1}{|y-z|^{N-2}} \left[ V_j^{p-1}(z)\sum_{i\neq j} V_i(z)\right]\dz\\
        &\leq C \frac{1}{\mu^{\frac{pN}{q+1}}}\sum_{j=2}^k\int_{B_{\mu^{-\kappa}}(x_j)}\frac{1}{|x_1-x_j|^{N-2}} \cdot\frac{1}{|z-x_j|^{(p-1)(N-2)}}\cdot
        \sum_{i\neq j}\frac{1}{|x_i-x_j|^{N-2}}\dz\\
        &\leq C \frac{k^{N-2}}{\mu^{\frac{pN}{q+1}}}\cdot k^{N-2}\cdot (\mu^{-\kappa})^{N-{(p-1)(N-2)}}
        =O(\frac{k^{p(N-2)-2}}{\mu^{\frac{pN}{q+1}+\sigma}}).
    \end{aligned}
\end{equation*}
where the second inequality holds since $y\in \Omega_1$ and $z\in B_{\mu^{-\kappa}}(x_j)$. In addition, we use the condition $\kappa>\tau$ in the last step.

We next evaluate $\bar{I}_2$. According to \eqref{green}, \eqref{V10est} and \eqref{pvi}, we estimate
\begin{equation*}
\begin{aligned}
&\bar{I}_2
= \mu^{-\frac{Np}{q+1}}
\int_{\Omega\setminus \bigcup_{j=1}^k B(x_j,\mu^{-\kappa})}
G(y,z)
\Bigg[
\left(\sum_{i=1}^k \frac{b_{N,p}}{|z-x_i|^{N-2}}\right)^p
- \sum_{i=1}^k
\frac{b_{N,p}^p}{|z-x_i|^{p(N-2)}} \\
&\qquad\qquad
+ O\!\left(
 \Big(\sum_{i=1}^k\frac{1}{|z-x_i|^{N-2}}\Big)^{p-1}
 \Big( \sum_{i=1}^k H(z,x_i)+\sum_{i=1}^k \frac{1}{\mu |z-x_i|^{N-1}}\Big)
\right)\Bigg]
\,dz \\
&=\mu^{-\frac{Np}{q+1}}k^{p(N-2)-2}
\left[\widetilde{H}(r^{-1}ky)+o(1)+ \mathcal{R}_1\right]
\end{aligned}
\end{equation*}
where
\begin{align}
   \mathcal{R}_1:&= O\Big(\int_{r^{-1}k\left(\Omega\setminus \bigcup_{j=1}^k B(x_j,\mu^{-\kappa})\right)}\frac{1}{|z-r^{-1}ky|^{N-2}}\times\nonumber\\
&\hspace{5em}\big(\sum_{i=1}^k\frac{1}{|z-\tilde{x}_i|^{N-2}}\big)^{p-1}\big( \sum_{i=1}^k H_{rk^{-1}}(z,\tilde{x}_i)+\sum_{i=1}^k \frac{1}{\mu |z-\tilde{x}_i|^{N-1}}\big)\dz\Big).
\end{align}

We can compute
\begin{equation}\label{R1_11}
    \begin{aligned}
        \sum_{i=1}^k H_{rk^{-1}}(z,\tilde{x}_i)&\leq C\sum_{i=1}^k\frac{1}{|z-r^{-2}\tilde{x}_i|^{N-2}}
        \leq  \frac{Ck}{\text{dist}(r^{-2}\tilde{x}_i,B_{r^{-1}k}(0))^{N-2}}\\
        &\leq Ck^{3-N},\qquad \text{for any }z\in B_{r^{-1}k}(0)=r^{-1}k\cdot \Omega.
    \end{aligned}
\end{equation}
Let $\Gamma_r=\{(r\cos \theta,r\sin \theta,\textbf{0}):\theta\in [0,2\pi),\textbf{0}\in \R^{N-2}\}\subset \R^2\times \R^{N-2}$, then we can derive from \eqref{tip_1} and the fact $p(N-2)<2$ that
\begin{equation}\label{R1_12}
    \begin{aligned}
        &\quad\int_{r^{-1}k\left(\Omega\setminus \bigcup_{j=1}^k B(x_j,\mu^{-\kappa})\right)}\frac{1}{|z-r^{-1}ky|^{N-2}}\big(\sum_{i=1}^k\frac{1}{|z-\tilde{x}_i|^{N-2}}\big)^{p-1}\\
        &\leq \sum_{j=1}^k \int_{r^{-1}k\cdot\Omega_j}\frac{1}{|z-r^{-1}ky|^{N-2}}\left(\frac{1}{|z-\tilde{x}_j|^{(p-1)(N-2)}}+\frac{1}{|z-\tilde{x}_j|^{(p-1)(N-3-\sigma)}}\right)dz\\
        &\leq C\sum_{j=1}^k\int_{\Omega_j}\frac{1}{|z-y|^{N-2}}\left(\frac{k^{2-(p-1)(N-2)}}{\text{dist}(z,\Gamma_r)^{(p-1)(N-2)}}+ \frac{k^{2-(p-1)(N-3-\sigma)}}{\text{dist}(z,\Gamma_r)^{(p-1)(N-3-\sigma)}}\right)\dz\\
        &\leq C(k^{2-(p-1)(N-2)}+k^{2-(p-1)(N-3-\sigma)})=O(k^{2-(p-1)(N-2)}).
    \end{aligned}
\end{equation}

Thus, combining \eqref{R1_11}, \eqref{R1_12} and a similar computation in Lemma~\ref{lemma:D6} gives
$$\mathcal{R}_1=o(1).$$
This completes the proof.

\end{proof}

\begin{lemma}
Assume that $N \ge 6$ and $p \in (1,\frac{N}{N-2})$. Then the function $\widetilde{H}$ is well-defined in $B_{r^{-1}k}(0)$. Furthermore, the following properties hold: Let $\delta,\, \theta > 0$ be sufficiently small numbers.
\begin{itemize}
\item [(a)] For $y\in \cup_{j=1}^k B_\delta(\tilde x_j)$, it holds that
    \[
    0<\widetilde{H}(y)\le C
    \]
for some constant $C > 0$ depending only on $N$ and $p$.
\item [(b)] For $y\in B_{r^{-1}k}(0)\setminus \cup_{j=1}^k B_\delta(\tilde x_j)$, it  holds that
\[|\widetilde{H}(y)|\le C\sum_{j=1}^k \left[\frac 1{|y-\tilde x_j|^{p(N-2)-3-\theta}} + \frac 1{|y-\tilde x_j|^{p(N-3-\theta)-2}}\right].\]
\end{itemize}
\end{lemma}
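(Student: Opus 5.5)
Write $f(z):=\big(\sum_{j} b_{N,p}|z-\tilde x_j|^{2-N}\big)^p-\sum_j b_{N,p}^p|z-\tilde x_j|^{-p(N-2)}$, so that $\widetilde H(y)=\int G_{r^{-1}k}(y,z)f(z)\,dz$. I will bound $f$ pointwise and then convolve against the Newtonian kernel, using $0<G_{r^{-1}k}(y,z)<\gamma_N|y-z|^{2-N}$ from \eqref{green}. Since the points $\tilde x_j$ lie on a circle of radius $k$ with consecutive distance about $2\pi$, we have $f\ge 0$ by superadditivity of $t\mapsto t^p$ ($p>1$). Together with positivity of the Green function, this gives $\widetilde H>0$ wherever it is finite. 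So the real work is the upper bounds, which also yield well-definedness.

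\textbf{Pointwise bound for $f$.} Fix $z$ in the sector $r^{-1}k\,\Omega_j$ nearest $\tilde x_j$, so that $|z-\tilde x_i|\ge c|z-\tilde x_j|$ for all $i$. By the mean value theorem,
\[
f(z)\le C\,|z-\tilde x_j|^{-(p-1)(N-2)}\sum_{i\ne j}|z-\tilde x_i|^{2-N}+C\Big(\sum_{i\ne j}|z-\tilde x_i|^{2-N}\Big)^p .
\]
The cross sum is handled exactly as in \eqref{tip_1}. Near $\tilde x_j$ it is bounded by a constant, since $\sum_{i\ne j}|\tilde x_i-\tilde x_j|^{2-N}\le C$ because $N-2>1$. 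Away from $\tilde x_j$, borrowing $1+\theta$ powers gives
\[
\sum_{i\ne j}|z-\tilde x_i|^{2-N}\le C\,|z-\tilde x_j|^{-(N-3-\theta)} .
\]
Hence
\[
f(z)\le C\big(|z-\tilde x_j|^{-(p-1)(N-2)}\mathbf 1_{|z-\tilde x_j|\le1}+|z-\tilde x_j|^{-(p-1)(N-2)-(N-3-\theta)}+|z-\tilde x_j|^{-p(N-3-\theta)}\big).
\]
The singularity $(p-1)(N-2)<2$ is locally integrable, and even $\Delta$-integrable in the sense that the Newtonian potential of it is bounded.

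\textbf{Convolution step, part (b).} Use the standard Lemma~\ref{B3}-type estimate: for $a<N$ and $a>2$,
\[
\int|y-z|^{2-N}|z-x|^{-a}\,dz\le C|y-x|^{2-a} .
\]
Each of the $k$ sectors then contributes $|y-\tilde x_j|^{-(p(N-2)-3-\theta)}$ from the middle term, with exponent $2-\big((p-1)(N-2)+N-3-\theta\big)$ after adjusting $\theta$. The last term contributes $|y-\tilde x_j|^{-(p(N-3-\theta)-2)}$. These are exactly the two terms in (b). The local bounded piece contributes $O(1)\le C|y-\tilde x_j|^{-(\cdot)}$ on the relevant range. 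Summing over $j$ gives (b). The decay exponents must exceed what is needed for the convolution lemma, and $N\ge6$ is used here: it ensures $p(N-3)>2$ and $(p-1)(N-2)+N-3>2$. For convergence at infinity of the sector-by-sector sum (each integrand decays like $|z|^{-p(N-3-\theta)}$ with $p(N-3)>N-3$), the domain is bounded, so only local integrability and the count of sectors matter.

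\textbf{Part (a) and the main obstacle.} For $y\in B_\delta(\tilde x_j)$ we must show boundedness uniform in $k$. The local singular piece gives a bounded potential, since $(p-1)(N-2)<2$. The far contributions from other sectors are controlled by (b)-type sums $\sum_{i\ne j}|\tilde x_i-\tilde x_j|^{-(p(N-3-\theta)-2)}$. I expect this to be the main obstacle: the sum is uniformly bounded only if the exponent $p(N-3-\theta)-2>1$, which holds for $N\ge6$, $p>1$ and small $\theta$, since $p(N-3)-2>N-5\ge1$. The middle exponent $p(N-2)-3-\theta>1$ similarly holds for $N\ge6$. Uniformity in $k$ hinges on these one-dimensional lattice sums along the circle converging, so I would verify both exponents carefully and, if needed, refine the borrowing in \eqref{tip_1} by choosing $\theta$ smaller.
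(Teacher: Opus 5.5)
Your proposal is correct and follows essentially the paper's argument: in each sector you bound the integrand by $C(a^{p-1}b+b^p)$, apply the borrowing trick of \eqref{tip_1} to get $|z-\tilde x_j|^{-(p(N-2)-1-\theta)}$ and $|z-\tilde x_j|^{-p(N-3-\theta)}$, and then use a Lemma~\ref{B3}-type Riesz convolution summed over the $k$ sectors. Your treatment of (a), which the paper dismisses as easy to check, also works, with one proviso: in the sector of $\tilde x_j$ itself, the last two terms of your pointwise bound must carry the indicator $\mathbf 1_{\{|z-\tilde x_j|\ge 1\}}$, since otherwise their potential diverges at $\tilde x_j$; the other sectors are then controlled by the lattice sums with exponents $p(N-2)-3-\theta>1$ and $p(N-3-\theta)-2>1$.
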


\begin{proof}It is easy to check that property (a) holds. We only need to show property (b).

We can derive that $\big|\widetilde{H}(y)\big|$ is bounded by
\begin{equation}
    \begin{aligned}
     \sum_{j=1}^k\int_{\Omega_j}
        \frac{C}{|y-z|^{N-2}}\left[\frac{1}{|z-\tilde{x}_1|^{(p-1)(N-2)}}\sum_{j=2}^k \frac{1}{|z-\tilde{x}_j|^{N-2}} +\left(\sum_{j=2}^k \frac{1}{|z-\tilde{x}_j|^{N-2}}\right)^p\right]\dz
    \end{aligned}
\end{equation}

From Lemma~\ref{B2} and \eqref{tip_1}, let $\theta$ be some small number, we have for any $z\in \Omega_1$,
\begin{equation*}
    \begin{aligned}
        \frac{1}{|z-\tilde{x}_1|^{(p-1)(N-2)}}\sum_{j=2}^k \frac{1}{|z-\tilde{x}_j|^{N-2}}
        \leq \frac{C}{|z-\tilde{x}_1|^{p(N-2)-1-\theta}}.
    \end{aligned}
\end{equation*}
and
\begin{align}
    \sum_{j=2}^k\frac{1}{|z-\tilde{x}_j|^{N-2}}
    \leq \frac{C}{|z-\tilde{x}_1|^{N-3-\theta}}.
\end{align}
Then from Lemma \ref{B3}, we have
\begin{equation}\label{equ:411}
    \begin{aligned}
        &\quad\int_{\Omega_1}
        \frac{C}{|y-z|^{N-2}}\left[\frac{1}{|z-\tilde{x}_1|^{(p-1)(N-2)}}\sum_{j=2}^k \frac{1}{|z-\tilde{x}_j|^{N-2}} +\left(\sum_{j=2}^k \frac{1}{|z-\tilde{x}_j|^{N-2}}\right)^p\right]\dz\\
        &\leq \int_{\Omega_1} \frac{C}{|y-z|^{N-2}}\left( \frac{1}{|z-\tilde{x}_1|^{p(N-2)-1-\theta}}+\frac{1}{|z-\tilde{x}_1|^{p(N-3-\theta)}}\right)\dz
        \leq \frac{C}{|y-\tilde{x}_1|^{p(N-3-\theta)-2}}.
    \end{aligned}
\end{equation}

By symmetry, we obtain property (b).

\end{proof}

Next, we give a pointwise upper bound for $PU$.

\begin{lemma}\label{lemma:U}
Assume that $N \ge 7$ and $p \in (1,\frac{N}{N-2})$. Then it holds that
\[
PU(y) \le C \sum_{i=1}^k \frac{\mu^{\frac{N}{q+1}}}{(1+\mu|y-x_i|)^{p(N-2)-2}} + \frac{C}{\mu^{\frac{pN}{q+1}}} \sum_{i=1}^k \frac{k^{p(N-2)-2}}{(1+k|y-x_i|)^{p(N-3-\theta)-2}}
\]
for $y \in \Omega$, where $\theta \in (0,1)$ is small.
\end{lemma}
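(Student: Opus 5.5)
The plan is to use the decomposition $PU=\sum_{i=1}^k PU_i+\varphi$, where $\varphi$ is the function of Lemma~\ref{nnl2-19-1}, and bound the two pieces separately.

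\emph{Bubble part.} By the maximum principle, $0<PU_i\le U_i$, since $U_i-PU_i$ is harmonic in $\Omega$ and positive on $\partial\Omega$. By Lemma~\ref{L3}, with $p<\frac{N}{N-2}$, we have $U_{0,1}(z)\le C(1+|z|)^{-(p(N-2)-2)}$. Therefore
\[
PU_i\le \frac{C\mu^{\frac{N}{q+1}}}{(1+\mu|y-x_i|)^{p(N-2)-2}},
\]
which is the first sum in the statement.

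\emph{Interaction part.} We have $\varphi\ge0$ (it is the Dirichlet solution with source $PV^p-\sum_j PV_j^p\ge0$). It is given by
\[
\varphi(y)=\int_\Omega G(y,z)\Big[PV^p-\sum_j PV_j^p\Big](z)\,dz .
\]
We use $G(y,z)\le \gamma_N|y-z|^{2-N}$ and $PV_j\le V_j\le C\mu^{-\frac{N}{q+1}}|z-x_j|^{2-N}$; the latter holds uniformly, and near $x_j$ we use $V_j\le C\mu^{\frac N{p+1}}(1+\mu|z-x_j|)^{2-N}$. By symmetry we may take $z\in\Omega_1$. Using $(a+b)^p-a^p\le C(a^{p-1}b+b^p)$, the source is bounded by
\[
C\Big(V_1^{p-1}\sum_{j\ge2}V_j+\Big(\sum_{j\ge2}V_j\Big)^p\Big).
\]
Applying \eqref{tip_1} (with exponent $N-2$ in place of $p(N-2)-2$) gives
\[
\sum_{j\ge 2}V_j\le C\mu^{-\frac N{q+1}}k^{1+\theta}|z-x_1|^{-(N-3-\theta)}
\]
on $\Omega_1$, and also $\sum_{j\ge2}V_j\le C\mu^{-\frac N{q+1}}k^{N-2}$. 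Rescaling $z=x_1+w/k$, the source becomes
\[
C\mu^{-\frac{pN}{q+1}}k^{p(N-2)}\Big[(1+k|z-x_1|)^{-(p(N-2)-1-\theta)}+(1+k|z-x_1|)^{-p(N-3-\theta)}\Big],
\]
up to the contribution of the region $|z-x_1|\lesssim\mu^{-1}$. This is the same structure as in the proof of property (b) of $\widetilde H$ above, and there the bubble factor $V_1^{p-1}$ is integrable: it is bounded by $\mu^{\frac{(p-1)N}{p+1}}(1+\mu|z-x_1|)^{-(p-1)(N-2)}$, and since $(p-1)(N-2)<2<N$ its integral against $|y-z|^{2-N}$ is harmless. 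Convolving with $|y-z|^{2-N}$ via Lemma~\ref{B3} (exponents $p(N-3-\theta)>2$ since $N\ge7$) gains $k^{-2}$. Summing over the $k$ sectors yields
\[
\varphi(y)\le \frac{C}{\mu^{\frac{pN}{q+1}}}\sum_{i=1}^k\frac{k^{p(N-2)-2}}{(1+k|y-x_i|)^{p(N-3-\theta)-2}},
\]
where the worse exponent $p(N-3-\theta)-2$ dominates $p(N-2)-3-\theta$. Alternatively, one can read this off from \eqref{nn1-19-1} combined with property (b) of $\widetilde H$ and property (a) near the $\tilde x_j$; the error term $O(k^{p(N-2)-2}\mu^{-\frac{pN}{q+1}-\sigma})$ is dominated because $(1+k|y-x_i|)\le Ck$ on bounded $\Omega$, so that
\[
\frac{k^{p(N-2)-2}}{(1+k|y-x_i|)^{p(N-3-\theta)-2}}\ \ge\ c\,k^{p(1+\theta)}\ \ge\ c .
\]
Combining the bubble part and the interaction part gives the lemma.

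\emph{Main obstacle.} The delicate step is the interaction convolution near $x_1$ at scale between $\mu^{-1}$ and $k^{-1}$, where $V_1^{p-1}\sum_{j\ge2}V_j$ must be checked not to produce a term larger than the claimed $k$-scale profile. I would handle it by splitting at $|z-x_1|=\mu^{-\kappa}$ exactly as in the estimate of $\bar I_{11}$, which showed that this contribution is $O(k^{p(N-2)-2}\mu^{-\frac{pN}{q+1}-\sigma})$, hence absorbed by the constant lower bound noted above.
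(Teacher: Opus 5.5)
Your bubble part and the far-field convolution via Lemma~\ref{B3} are fine. The gap is in the step you single out as the main obstacle: the argument you propose there does not close it. The $\bar I_{11}$-type bound is uniform in $y$, of size $k^{p(N-2)-2}\mu^{-\frac{pN}{q+1}-\sigma}$ with $\sigma$ small, and it cannot be absorbed into the right-hand side of the lemma. At points $y$ at distance of order one from the ring, the right-hand side is only of order $\mu^{-\frac{pN}{q+1}}k^{1+p(1+\theta)}$; the first sum is even smaller, of order $k\mu^{-\frac{pN}{q+1}}$. On the other hand, $k^{p(N-2)-2}\mu^{-\sigma}\gg k^{1+p(1+\theta)}$, because $k\simeq\mu^{\tau}$, $p(N-3-\theta)-3>0$ and $\sigma$ is small. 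Your lower bound $\frac{k^{p(N-2)-2}}{(1+k|y-x_i|)^{p(N-3-\theta)-2}}\ge ck^{p(1+\theta)}$ is true but beside the point: the error carries the full factor $k^{p(N-2)-2}$, which $\mu^{-\sigma}$ does not compensate. The same objection rules out your alternative of reading the bound off \eqref{nn1-19-1}. Note also that the delicate region for the cross term $V_1^{p-1}\sum_{j\ge2}V_j$ is all of $|z-x_1|\lesssim k^{-1}$, not only $|z-x_1|\lesssim\mu^{-1}$. In the variable $w=k(z-x_1)$ this term carries the factor $|w|^{-(p-1)(N-2)}$, which is unbounded for $|w|\le 1$, so your rescaled source formula is wrong there.

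To repair your route, keep the $y$-dependence of the near-field integral. On $B_{1/k}(x_1)$, use $\sum_{j\ge2}V_j\le C\mu^{-\frac{N}{q+1}}k^{N-2}$ and $V_1^{p-1}\le C\mu^{-\frac{(p-1)N}{q+1}}|z-x_1|^{-(p-1)(N-2)}$. Since $(p-1)(N-2)<2$, this region contributes at most $C\mu^{-\frac{pN}{q+1}}k^{p(N-2)-2}\min\{1,(k|y-x_1|)^{2-N}\}$, which is dominated by the second sum because $N-2>p(N-3-\theta)-2$.

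The paper avoids the cross term altogether. It does not split $PU=\sum_iPU_i+\varphi$; it writes $PU=\int_\Omega G\,PV^p$ and uses on each $\Omega_i$ the bound $(\sum_jV_j)^p\le C\big(V_i^p+(\sum_{j\ne i}V_j)^p\big)$. The first piece integrates to at most $CU_i(y)$, the second is rescaled by $k$ exactly as in your far-field computation, and no term $V_i^{p-1}\sum_{j\ne i}V_j$ ever appears.

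Finally, a small slip: since $-\Delta PU_j=V_j^p$, the source of $\varphi$ is $PV^p-\sum_jV_j^p$, not $PV^p-\sum_jPV_j^p$. So your identity for $\varphi$, and the sign claim you draw from it, are not justified. Neither is needed, because $\varphi\le\int_\Omega G\,[PV^p-\sum_jPV_j^p]$ suffices for an upper bound.
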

\begin{proof}
The representation formula for $PU$ yields
\begin{align*}
0 < PU(y) &= \int_{\Omega} G(y,z)\bigg(\sum_{j=1}^k PV_j\bigg)^p(z)\, dz \\
&\leq C\sum_{i=1}^k \left[\int_{\Omega_i} \frac{1}{|y-z|^{N-2}} \bigg(\sum_{j=1}V_j\bigg)^p(z)\, dz\right], \quad y \in \Omega.
\end{align*}

For $i=1,\cdots, k$, we have
\[
\int_{\Omega_i} \frac{1}{|y-z|^{N-2}} V_i^p(z)\, dz \le C
U_i(y) \le \frac{C\mu^{\frac{N}{q+1}}}{(1+\mu|y-x_i|)^{p(N-2)-2}}, \quad y \in \Omega.
\]

On the other hand, there exists $c>0$ such that for $j\ne i$,
\[
|z-k x_j|\ge c (1+ |z-k x_i|),\quad z\in\Omega_i.
\]
Thus, similar to \eqref{equ:411} we find
\begin{align*}
&\quad\int_{\Omega_i} \frac{1}{|y-z|^{N-2}} \bigg(\sum_{j \ne i} V_j\bigg)^p(z)\, dz \\
&\le \frac{Ck^{p(N-2)-2}}{\mu^{\frac{pN}{q+1}}} \int_{k\Omega_i} \frac{1}{|ky-z|^{N-2}} \left(\sum_{j \ne i} \frac{1}{|z-kx_j|^{N-2}}\right)^p dz \\
&\le \frac{Ck^{p(N-2)-2}}{\mu^{\frac{pN}{q+1}}} \int_{k\Omega_i } \frac{1}{|ky-z|^{N-2}} \frac{dz}{(1+|z-kx_i|)^{p(N-3-\theta)}} \\
&\le \frac{C}{\mu^{\frac{pN}{q+1}}}  \frac{k^{p(N-2)-2}}{(1+k|y-x_i|)^{p(N-3-\theta)-2}}.
\end{align*}
for $q$ satisfying \eqref{c-hyperbola} and $\theta \in (0,1)$ small. Thus the result follows.
\end{proof}

Although Lemmas \ref{nnl2-19-1} and \ref{lemma:U} give precise estimates for $PU$, in some cases we use the following simpler estimate.
\begin{lemma}\label{l1-23-4}
Suppose that $N \geq 7$, $p \in (1,\frac{N}{N-2})$ and \eqref{c-hyperbola} hold. For any $y \in \Omega$, we have
\begin{equation}\label{10-23-4}
 PU(y)\le C\sum_{j=1}^k \frac{ \mu^{\frac{N}{q+1}}  }{ (1+\mu |y-x_j|)^{  \frac{N}{q+1}+\tau +\theta} }.
\end{equation}
\end{lemma}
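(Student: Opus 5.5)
The plan is to deduce \eqref{10-23-4} directly from the sharper pointwise bound of Lemma~\ref{lemma:U}, by comparing each of its two sums with the weight $\sum_{j}\mu^{\frac{N}{q+1}}(1+\mu|y-x_j|)^{-(\frac{N}{q+1}+\tau+\theta)}$ term by term. Nothing new about $PU$ is needed beyond Lemma~\ref{lemma:U}; the work is purely comparison of decay rates under $k\simeq\mu^\tau$, $\tau=\frac{p}{p+1}$, and $\mu^{\frac{N}{q+1}}(k/\mu)^{p(N-2)-2}\simeq 1$.

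\emph{First sum.} Since $p(N-2)-2=\frac{(p+1)N}{q+1}$, we have
\[
p(N-2)-2-\Big(\frac{N}{q+1}+\tau+\theta\Big)=\frac{pN}{q+1}-\tau-\theta>0
\]
for $N\ge7$ and $\theta$ small. Hence each term $\mu^{\frac{N}{q+1}}(1+\mu|y-x_i|)^{-(p(N-2)-2)}$ is bounded by the corresponding weight term with constant $1$.

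\emph{Second sum.} Here I rewrite
\[
\frac{k^{p(N-2)-2}}{\mu^{\frac{pN}{q+1}}}=\mu^{\frac{N}{q+1}}\Big(\frac{k}{\mu}\Big)^{p(N-2)-2}.
\]
It then suffices to show, for each $i$ and all $y\in\Omega$, that
\[
\Big(\frac{k}{\mu}\Big)^{p(N-2)-2}(1+k|y-x_i|)^{-a}\le C(1+\mu|y-x_i|)^{-b},
\]
where $a=p(N-3-\theta)-2$ and $b=\frac{N}{q+1}+\tau+\theta$. I split into two regions.
\begin{itemize}
\item If $|y-x_i|\le k^{-1}$, the left side is at most $(k/\mu)^{p(N-2)-2}$, while the right side is at least $c(\mu/k)^{-b}$. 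So the inequality holds because $p(N-2)-2>b$.
\item If $|y-x_i|\ge k^{-1}$, write $t=\mu|y-x_i|\ge \mu/k$. The left side is comparable to $(k/\mu)^{p(N-2)-2-a}t^{-a}$.
\begin{itemize}
\item When $a\ge b$, we have $t^{-a}\le t^{-b}$, and $p(N-2)-2-a\ge0$ gives the bound.
\item When $a<b$, use $t^{-a}=t^{-b}t^{b-a}$ together with $t\le C\mu$. The required condition becomes $(k/\mu)^{p(N-2)-2-a}\mu^{b-a}\le C$, that is,
\[
(1-\tau)\big(p(1+\theta)\big)\ge b-a,
\]
to be checked with $k\simeq\mu^\tau$.
\end{itemize}
\end{itemize}

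This last exponent check is the main obstacle. Since $a=p(N-3)-2-p\theta$ is close to $p(N-2)-2-p=\frac{(p+1)N}{q+1}-p$, we have $b-a\approx\tau+p-\frac{pN}{q+1}$. The question is whether this is $\le\frac{p}{p+1}$ in the range $N\ge7$, $p<\frac{N}{N-2}$; I expect $\frac{pN}{q+1}\ge p$ to make $b-a<\tau$ comfortably, with small $\theta$ absorbed.

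If the crude region split loses too much, I would fall back on the form actually used in Lemma~\ref{lemma:U}, namely $\sum_{j\ne i}V_j\lesssim k^{1+\theta}|z-x_i|^{-(N-3-\theta)}$. I would also note that $\Omega$ is bounded, so $\mu|y-x_i|\le 2\mu$, which caps the loss.

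Summing over $i$ then gives \eqref{10-23-4}.
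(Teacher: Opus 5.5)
Your proposal is correct and follows essentially the paper's route: both derive the bound from Lemma~\ref{lemma:U} by comparing exponents, using $k\simeq\mu^\tau$ and $p(N-2)-2=\frac{(p+1)N}{q+1}$, with a near/far split at distance $k^{-1}$. The paper differs only cosmetically, using $0\le\varphi\le C$ in $S$ where you use the second term of Lemma~\ref{lemma:U}, which gives the same bound. Your ``obstacle'' subcase $a<b$ never occurs, because, as the paper notes, $p(N-3)-2>\frac{N}{q+1}+\tau$ for $N\ge7$ and $p\in(1,\frac{N}{N-2})$; hence $a>b$ once $\theta$ is small, where the smallness depends on $p$ when $N=7$.
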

\begin{proof}
By symmetry, it suffices to verify \eqref{10-23-4} for $y \in \Omega_1$.

\medskip
First, if $y \in S$, then $0\le \varphi(y) \le C$, and
\begin{equation}\label{11-23-4}
PU_j(y)\le \frac{C \mu^{\frac{N}{q+1}}  }{ (1+\mu |y-x_j|)^{  \frac{N}{q+1}+\tau } },\qquad j=1,2,\cdots , k.
\end{equation}
since $p(N-2)-2>\frac{N}{q+1}+\tau$ holds for $N \geq 7$, $p \in (1,\frac{N}{N-2})$. Therefore, \eqref{10-23-4} holds.

\medskip
Suppose that $y\in \Omega_1\setminus S$, from Lemma~\ref{lemma:U} and \eqref{11-23-4}, we just need to  prove that
\begin{equation}\label{12-23-4}
 \frac{1}{\mu^{\frac{pN}{q+1}}} \frac{k^{p(N-2)-2}}{(k|y-x_j|)^{p(N-3-\theta)-2}}\le
 \frac{C \mu^{\frac{N}{q+1}}  }{ (\mu |y-x_j|)^{  \frac{N}{q+1}+\tau } },\quad y\in \Omega_1\setminus S.
\end{equation}
Thus, \eqref{12-23-4} holds if
\begin{equation}\label{13-23-4}
|y-x_j|^{\frac{N}{q+1}+\tau-p(N-3-\theta)+2}\le C\mu^{\frac{pN}{q+1}-\tau(1+p(1+\theta))},\quad y\in \Omega_1\setminus S.
\end{equation}
We also note
\[ p(N-3)-2>\frac N{q+1}+\tau\]
for all $N \ge 7$ and $p \in (1,\frac N{N-2})$.
On the other hand, since $p(N-2)-2 = \frac{(p+1)N}{q+1}$, we have
\[\frac{pN}{q+1}-\tau -\tau\left[p(N-2)-2- \frac{N}{q+1}-\tau  \right] = (1-\tau)\left(\frac{pN}{q+1}-\tau\right)\]
whose right-hand side is positive thanks to $\tau \in (0,1)$ and $\frac{pN}{q+1}>\tau$. Therefore, \eqref{13-23-4} is valid.
\end{proof}

\section{Green's function}\label{sec:Green}

Throughout this appendix, we assume that $N \ge 7$ and $(p,q)$ satisfies $p \in (1,\frac{N}{N-2})$ and \eqref{c-hyperbola}.

\medskip
Given a function $w$, the rotation operator $\Phi_j$
 and the reflection operator $\Psi_h$ defined in \eqref{Aj} and \eqref{Bi}
 respectively, let
\[
\hat w(y) := \frac1 k\sum_{j=1}^k w(\Phi_j y)
\]
and
\begin{equation}\label{wstar}
w^*(y) := \frac1{2(N-1)}\sum_{h=2}^N \big(\hat w(y)+ \hat w(\Psi_h y)\big)
\end{equation}
for $y \in \R^N$. We call $w^*$ the symmetrization of $w$. If $(u,v)\in
C(\Omega)\times C(\Omega)$,  then $(u^*,v^*) \in {L}_s$,
where  ${L}_s$ is the function space in \eqref{Ls}.

Next we introduce an operator
\begin{multline*}
L(u,v) := \left(-\Delta u - p v_\epsilon^{p-1} v-\ep\alpha u -\ep\beta_1 v, -\Delta v  -q U_{0,\mu_0}^{q-1} u-\ep\alpha v -\ep\beta_2 u\right),\\
(u,v) \in {L}_s \cap {X}_{p,q}
\end{multline*}
and its formal dual operator
\begin{multline}\label{1-25-10n2}
L^*(u, v) := \left(-\Delta u - q u_\epsilon^{q-1} v-\ep\alpha u-\ep\beta_2 v, -\Delta v - p v_\epsilon^{p-1} u-\ep\alpha v-\ep\beta_1 u\right),\\
(u,v) \in {L}_s \cap {X}_{q,p}.
\end{multline}
Note that in our settings, the operator $L$ is non-degenerate, so does the operator $L^*$, which means
\begin{align}\label{kernel}
    \text{the kernel of }L^*=\{0\}.
\end{align}

In this appendix, we will investigate the Green's function of $L^*$ tailored to our setting. Define
$$\delta_x^* := \frac1{2(N-1)k} \sum_{h=2}^N \sum_{j=1}^k \left(\delta_{\Phi_j x} + \delta_{\Psi_h \Phi_j x}\right). $$

We consider the following problem:
\begin{equation}\label{2-25-10n}
\begin{cases}
    L^* (u, v) =  (\delta_x^*, 0)&\quad \text{in } \Omega\\
    (u, v) =  0 &\quad \text{on } \partial\Omega
\end{cases}
\end{equation}
and
\begin{equation}\label{nn2-25-10n}
\begin{cases}
    L^* (u, v) =  (0, \delta_x^*)&\quad \text{in } \Omega\\
    (u, v) =  0 &\quad \text{on } \partial\Omega
\end{cases}
\end{equation}
for each $x \in \Omega$.

\begin{proposition}\label{p1-25-10n}
Given a smooth bounded domain, there exists a function $(G_{1,k}, G_{2,k})$ such that $(G_{1,k}(\cdot, x), G_{2,k}(\cdot, x)) \in {L}_s$ solves \eqref{2-25-10n} for each $x \in \Omega$.

Moreover, there exists a constant $C>0$ depending only on $N$, $p$ and $(u_\epsilon,v_\epsilon))$ such that
 for all $x,\ y \in \Omega$, it holds
\begin{equation}\label{G1k}
|G_{1,k}(y, x)| \le \frac{C}{k} \sum_{h=2}^N\sum_{j=1}^k \left(\frac1{|y-\Phi_j x|^{N-2}} + \frac1{|y-\Psi_h\Phi_j x|^{N-2}}\right)
\end{equation}
and
\begin{equation}\label{G2k1}
|G_{2,k}(y, x)| \le \displaystyle \frac{C}{k} \sum_{h=2}^N\sum_{j=1}^k \left(\frac{1}{|y-\Phi_j x|^{N-4}} + \frac{1}{|y-\Psi_h\Phi_j x|^{N-4}}\right).
\end{equation}
\end{proposition}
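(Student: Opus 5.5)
The existence part will follow from the non-degeneracy assumption \textbf{(H)} via the Fredholm alternative in the symmetric class $L_s$. The pointwise bounds \eqref{G1k}--\eqref{G2k1} will come from splitting off the singular part and bootstrapping with the Riesz potential.

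Let $\Gamma(y)=\gamma_N|y|^{2-N}$. I first build explicit singular parts. Set
\[
g_1(y,x):=\int_\Omega G(y,z)\,\textup{d}\delta_x^*(z),
\]
that is, the Dirichlet Green function averaged over the symmetrized points $\Phi_jx$ and $\Psi_h\Phi_jx$. Then $-\Delta g_1=\delta_x^*$, and the Green function bound \eqref{green} gives
\[
0\le g_1\le \frac{C}{k}\sum_{h,j}\Bigl(|y-\Phi_jx|^{2-N}+|y-\Psi_h\Phi_jx|^{2-N}\Bigr).
\]
Next I let $g_2:=\int_\Omega G(\cdot,z)\,p v_\epsilon^{p-1}(z)\,g_1(z,x)\,\textup{d}z$. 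Since $v_\epsilon^{p-1}$ is bounded (as $p>1$ and $v_\epsilon$ is smooth), convolving $|y-z|^{2-N}$ with $|z-\Phi_jx|^{2-N}$ gives $g_2\le \frac Ck\sum\bigl(|y-\Phi_jx|^{4-N}+|y-\Psi_h\Phi_jx|^{4-N}\bigr)$. This is exactly the $N-4$ behaviour claimed in \eqref{G2k1}, and it needs $N\ge 5$, which holds since $N\ge7$. I then seek $(G_{1,k},G_{2,k})=(g_1+w_1,\,g_2+w_2)$. The remainder solves
\[
L^*(w_1,w_2)=\bigl(q u_\epsilon^{q-1}g_2+\epsilon\beta_2 g_2,\;0\bigr)
\]
(with $\alpha=\beta_1=0$), after the terms $-\Delta g_1=\delta^*_x$ and $-\Delta g_2-pv_\epsilon^{p-1}g_1=0$ cancel. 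The right-hand side is $O\bigl(k^{-1}\sum|y-\cdot|^{4-N}\bigr)$, which lies in $L^s$ for $s<N/(N-4)$, and it is symmetric, so it lies in $L_s$.

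Existence of $(w_1,w_2)$ then follows from the Fredholm alternative. On $L_s\cap X_{q,p}$, write $L^*$ as $(-\Delta,-\Delta)$ plus a compact perturbation given by multiplication by bounded potentials composed with $(-\Delta)^{-1}$. By \eqref{kernel} the kernel is trivial, so $L^*$ is invertible, and the solution stays in $L_s$ because both the operator and the data commute with $\Phi_j$ and $\Psi_h$.

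To get pointwise bounds on $(w_1,w_2)$ I bootstrap with the Riesz potential. The bounded inverse gives a first $L^s$ bound with constant $C/k$ times the norm of the data; the factor $1/k$ survives linearly and is uniform in $x$ because the data's $L^s$ norm is uniformly controlled. I then iterate: each application of $(-\Delta)^{-1}$ improves integrability, from $L^s$ to $L^{s'}$ with $1/s'=1/s-2/N$, until after finitely many steps $w_1,w_2$ are bounded by $C/k\cdot\sum(\cdots)$. At each stage I compare with the kernel sums using the convolution inequality $\int|y-z|^{2-N}|z-a|^{-\gamma}\,\textup{d}z\le C|y-a|^{2-\gamma}$ for $2<\gamma<N$, and $\le C$ once $\gamma<2$; on the bounded domain $\Omega$ a bounded term is dominated by $|y-a|^{4-N}$ and $|y-a|^{2-N}$.

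The main obstacle is keeping the constant uniform in $k$ while retaining the factor $1/k$. The $L^s$ norm of $\sum_{j}|y-\Phi_jx|^{4-N}$ is of order $k$, which combines with the prefactor $1/k$ to give $O(1)$. The point is that the inverse $(L^*)^{-1}$ on $L_s$ must have a norm bounded independently of $k$, even though $L_s$ itself depends on $k$. I would get this from the fact that $L^*$ does not depend on $k$: it is a fixed invertible operator on the whole space, by \textbf{(H)}, restricted to an invariant subspace, so its inverse has the same bound. Finally I would check that the final remainder bound $\frac Ck\sum(\cdot)^{4-N}$ for $w_1$ can be absorbed into the $N-2$ kernel of \eqref{G1k}, which holds because $\Omega$ is bounded.
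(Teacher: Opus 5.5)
Your argument is correct in outline, but it is organized differently from the paper's.

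\textbf{The paper's route.} It builds a \emph{single-point} parametrix from the free fundamental solution $(\gamma_N|\cdot-x|^{2-N},0)$. It corrects this explicitly $l_0\ge\frac{N+3}{2}$ times, each step gaining two powers of $|y-x|$, until the leftover error $(\bar f_1,\bar f_2)$ is bounded. It then inverts $L^*$ once, on $L^\infty$ data, using \eqref{kernel} and $L^p$ estimates, and symmetrizes at the end.

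\textbf{Your route.} You apply the Dirichlet Green function to the symmetrized measure, stop after one correction, invert $L^*$ on singular data in $L^s$ with $s<\frac{N}{N-4}$, and recover pointwise bounds afterwards. This buys two things. The boundary condition is automatic: the paper's $u_1$ does not vanish on $\partial\Omega$, while all its later corrections have zero boundary data, so its $G_{1,k}$ does not literally satisfy the Dirichlet condition. Symmetry is also present from the start. Your $k$-uniformity argument is right: a fixed, $k$-independent inverse restricted to an invariant subspace, applied to data whose $L^s$-norm is $O(1)$.

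\textbf{What to make explicit.} The price is the post-inversion bootstrap. Integrability gains alone do not give kernel-sum bounds, so use the Green representation to write
$w_2=\sum_{m=0}^{M-1}T^m\bigl(G[pv_\epsilon^{p-1}G[F]]\bigr)+T^Mw_2$,
where $G[\cdot]$ is the Dirichlet potential and $T=G[pv_\epsilon^{p-1}G[(qu_\epsilon^{q-1}+\epsilon\beta_2)\,\cdot\,]]$. The explicit terms are controlled by your convolution inequality, and $T^Mw_2\in L^\infty$ with norm $O(1)$ once $M$ is large. Then $w_1=G[F]+G[(qu_\epsilon^{q-1}+\epsilon\beta_2)w_2]$ follows. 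This is exactly the paper's iteration, done after rather than before the inversion.

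The bounded remainder is $O(1)$, not $O(1/k)$. It is absorbed because $\frac1k\sum_{j=1}^k|y-\Phi_jx|^{4-N}\ge 2^{4-N}$ on the unit ball. So the remark that ``the factor $1/k$ survives'' should be dropped.

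\textbf{A caveat you share with the paper.} As defined, $\delta_x^*$ gives mass $\frac{1}{2k}$ to each $\Phi_jx$ but only $\frac{1}{2(N-1)k}$ to each $\Psi_h\Phi_jx$. So for generic $x$ it is not invariant under the reflections; for example, $\Psi_3$ sends the mass at $\Phi_jx$ to $\Psi_3\Phi_jx$. Since $L^*$ commutes with every $\Phi_j$ and $\Psi_h$, no solution in $L_s$ can exist for this literal right-hand side. Your step ``it is symmetric, so it lies in $L_s$'' therefore fails as stated, as does the paper's claim that $w^*\in L_s$.

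The fix is to average $\delta_x$ over the full finite group generated by the $\Phi_j$ and all the $\Psi_h$, whose order is a dimensional constant times $k$. Your argument then goes through unchanged, with the sums in \eqref{G1k}--\eqref{G2k1} taken over that orbit.
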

\begin{proof}
We set $(u_1,v_1)=(\frac{\gamma_N}{|\,\cdot -x\,|^{N-2}},0)$. Clearly,
\begin{equation*}
\begin{aligned}
\begin{cases}
-\Delta u_1-q u_\epsilon^{q-1}v_1=\delta_x & \text{in }\Omega,\\
-\Delta v_1-(p v_\epsilon^{p-1}+\epsilon\beta_2)u_1=-(p v_\epsilon^{p-1}+\epsilon\beta_2)\dfrac{\gamma_N}{|\cdot-x|^{N-2}} & \text{in }\Omega.
\end{cases}
\end{aligned}
\end{equation*}
Let $\bar{v}_2$ be the solution of
\begin{equation*}
\begin{aligned}
\begin{cases}
-\Delta v=-(p v_\epsilon^{p-1}+\epsilon\beta_2)\dfrac{\gamma_N}{|\cdot-x|^{N-2}} & \text{in }\Omega,\\
v=0 & \text{on }\partial\Omega.
\end{cases}
\end{aligned}
\end{equation*}
Then,
\begin{equation*}
\begin{aligned}
|\bar{v}_2(y)|
\le C\int_{\Omega}\frac{1}{|y-z|^{N-2}}\frac{1}{|z-x|^{N-2}}\,dz
\le \frac{C}{|y-x|^{N-4}}
\end{aligned}
\end{equation*}
for $y\in \Omega$. Moreover, $(u_2,v_2):=(u_1,v_1-\bar{v}_2)$
satisfies
\begin{equation*}
\begin{aligned}
\begin{cases}
-\Delta u_2-q u_\epsilon^{q-1}v_2=\delta_x+q u_\epsilon^{q-1}\bar{v}_2 & \text{in }\Omega,\\
-\Delta v_2-p v_\epsilon^{p-1}u_2=0 & \text{in }\Omega.
\end{cases}
\end{aligned}
\end{equation*}
Let $\bar{u}_3$ be the solution of
\begin{equation*}
\begin{aligned}
\begin{cases}
-\Delta u=q u_\epsilon^{q-1}\bar{v}_2 & \text{in }\Omega,\\
u=0 & \text{on }\partial\Omega.
\end{cases}
\end{aligned}
\end{equation*}
Then,
\begin{equation*}
\begin{aligned}
|\bar{u}_3(y)|
\le \dfrac{C}{|y-x|^{N-6}}
\end{aligned}
\end{equation*}
for $y\in \Omega$, and $(u_3,v_3):=(u_2-\bar{u}_3,v_2)$ satisfies
\begin{equation*}
\begin{aligned}
\begin{cases}
-\Delta u_3-q u_\epsilon^{q-1}v_3=\delta_x & \text{in }\Omega,\\
-\Delta v_3-(p v_\epsilon^{p-1}+\epsilon\beta_2)u_3=(p v_\epsilon^{p-1}+\epsilon\beta_2)\bar{u}_3 & \text{in }\Omega.
\end{cases}
\end{aligned}
\end{equation*}
We can continue this process to build $(\bar{u}_l,\bar{v}_l)$ for an integer $l\ge 4$, and find
\begin{equation*}
\begin{aligned}
|\bar{u}_l(y)|+|\bar{v}_l(y)|
\le
\begin{cases}
\dfrac{C}{|y-x|^{N-(2l-2)}} & \text{if } N\ge 2l-1,\\[1ex]
\log\dfrac{C}{|y-x|} & \text{if } N=2l-2,\\[1ex]
C & \text{if } 5\le N\le 2l-3
\end{cases}
\end{aligned}
\end{equation*}
for $y\in \Omega$ and $l\ge 2$.

We select an integer $l_0\ge \dfrac{N+3}{2}$ so that $|\bar{u}_l(y)|+|\bar{v}_l(y)|\le C$
for $y\in \Omega$. Then the associated pair $(u_{l_0},v_{l_0})$ satisfies
\begin{equation*}
\begin{aligned}
\begin{cases}
\displaystyle -\Delta u_{l_0}-q u_\epsilon^{q-1}v_{l_0}=\delta_x+\bar{f}_1 & \text{in }\Omega,\\
\displaystyle -\Delta v_{l_0}-(p v_\epsilon^{p-1}+\epsilon\beta_2)u_{l_0}=\bar{f}_2 & \text{in }\Omega
\end{cases}
\end{aligned}
\end{equation*}
for some $(\bar{f}_1,\bar{f}_2)$ such that $\|\bar{f}_1\|_{L^\infty(\Omega)}+\|\bar{f}_2\|_{L^\infty(\Omega)}\le C.$

Then we consider
\begin{equation}\label{eq:green_1}
    \begin{aligned}
        \begin{cases}
L^*(w_1,w_2)=(\bar{f}_1,\bar{f}_2) & \text{in }\Omega,\\
(w_1,w_2)=(0,0) & \text{on }\partial\Omega.
\end{cases}
    \end{aligned}
\end{equation}
By \eqref{kernel}, \eqref{eq:green_1} has a unique solution $(w_1,w_2)\in {L}_s \cap {X}_{q,p}$. In addition, the standard elliptic $L^p$ estimate yields
\begin{align}\label{est:w12}
    \|w_1\|_{L^\infty(\Omega)}+\|w_2\|_{L^\infty(\Omega)}\le C\|\bar{f}_1\|_{L^\infty(\Omega)}+\|\bar{f}_2\|_{L^\infty(\Omega)}.
\end{align}

Thus, let $(u^*_{l_0},v^*_{l_0})$ be the symmetrization of $(u_{l_0},v_{l_0})$ defined by \eqref{wstar}, the pair $(G_{1,k}(\cdot, x), G_{2,k}(\cdot, x)):=(u^*_{l_0}-w_1,v^*_{l_0}-w_2)\in {L}_s $ solves \eqref{2-25-10n}.

Furthermore, together with \eqref{est:w12}, we can obtain $u^*_{l_0}-w_1$ and $v^*_{l_0}-w_2$ satisfy \eqref{G1k} and \eqref{G2k1}.

\end{proof}

Similar to Proposition \ref{p1-25-10n}, we have the following results for the solution of \eqref{nn2-25-10n}.
\begin{proposition}
Given a smooth bounded domain, there exists a function\\ $(G_{3,k}(\cdot, x), G_{4,k}(\cdot, x))$ such that $(G_{3,k}(\cdot, x), G_{4,k}(\cdot, x)) \in {L}_s$ solves \eqref{nn2-25-10n} for each $x \in \Omega$.

Moreover, there exists a constant $C>0$ depending only on $N$, $p$ and $(u_\epsilon,v_\epsilon))$ such that
 for all $x,\ y \in \Omega$, it holds
\begin{equation}\label{G3k1}
|G_{3,k}(y, x)| \le \frac{C}{k} \sum_{h=2}^N\sum_{j=1}^k \left(\frac1{|y-\Phi_j x|^{N-4}} + \frac1{|y-\Psi_h\Phi_j x|^{N-4}}\right)
\end{equation}
and
\begin{equation}\label{G4k1}
|G_{4,k}(y, x)| \le \displaystyle \frac{C}{k} \sum_{h=2}^N\sum_{j=1}^k \left(\frac{1}{|y-\Phi_j x|^{N-2}} + \frac{1}{|y-\Psi_h\Phi_j x|^{N-2}}\right).
\end{equation}
\end{proposition}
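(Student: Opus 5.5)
The approach is to mirror the construction in Proposition~\ref{p1-25-10n}, swapping the roles of the two components. The data is now $(0,\delta_x)$ rather than $(\delta_x,0)$, so the singular leading term sits in the second component.

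\emph{Step 1: parametrix.} Start with
\[
(u_1,v_1)=\Bigl(0,\frac{\gamma_N}{|\cdot-x|^{N-2}}\Bigr).
\]
Then $-\Delta v_1-(pv_\epsilon^{p-1}+\epsilon\beta_2)u_1=\delta_x$, while the first equation leaves the error $-qu_\epsilon^{q-1}\gamma_N|\cdot-x|^{2-N}$. Let $\bar u_2$ solve $-\Delta \bar u_2=-qu_\epsilon^{q-1}\gamma_N|\cdot-x|^{2-N}$ with zero boundary data. Since $u_\epsilon$ is bounded on $\Omega$, the standard convolution estimate
\[
\int_\Omega |y-z|^{2-N}|z-x|^{2-N}\,dz\le C|y-x|^{4-N}
\]
gives $|\bar u_2(y)|\le C|y-x|^{4-N}$.

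\emph{Step 2: iteration.} Correct $(u_2,v_2)=(u_1-\bar u_2,v_1)$. This now solves the first equation exactly. The second equation acquires the error $(pv_\epsilon^{p-1}+\epsilon\beta_2)\bar u_2$, and since $v_\epsilon$ is bounded, this error is $O(|y-x|^{4-N})$. Solve for $\bar v_3$ with $|\bar v_3|\le C|y-x|^{6-N}$, and continue alternating. Each step gains two powers, with the same trichotomy as before: a power law, then a logarithm, then a bounded function. Stop at an index $l_0\ge \frac{N+3}{2}$, where the residual errors $(\bar f_1,\bar f_2)$ are in $L^\infty(\Omega)$. At that stage the first component is a sum of terms bounded by $C|y-x|^{4-N}$ (plus lower-order singularities, which are dominated on a bounded domain). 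The second component is $\gamma_N|y-x|^{2-N}$ plus terms bounded by $C|y-x|^{6-N}$.

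\emph{Step 3: remainder and symmetrization.} Use \eqref{kernel}, the invertibility of $L^*$ on ${L}_s\cap X_{q,p}$, to solve $L^*(w_1,w_2)=(\bar f_1,\bar f_2)$ with zero boundary data. Elliptic $L^p$ estimates bounded into $L^\infty$ give $\|w_1\|_\infty+\|w_2\|_\infty\le C$. Set
\[
(G_{3,k},G_{4,k})=(u^*_{l_0}-w_1,\;v^*_{l_0}-w_2),
\]
using the symmetrization \eqref{wstar}. Since $u_\epsilon,v_\epsilon$ are radial, $L^*$ commutes with the rotations $\Phi_j$ and reflections $\Psi_h$. Symmetrizing the right-hand side $\delta_x$ therefore produces $\delta_x^*$, so the pair solves \eqref{nn2-25-10n} and lies in ${L}_s$. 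The averaging over $j,h$ converts the pointwise bounds from Step 2 into \eqref{G3k1} for $G_{3,k}$ (exponent $N-4$) and \eqref{G4k1} for $G_{4,k}$ (exponent $N-2$). The bounded parts $w_i$ are absorbed into these sums because each term is at least $c>0$ on the bounded domain, and the prefactor $1/k$ is offset by having $2(N-1)k$ terms.

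\emph{Main obstacle.} The delicate point is Step 3. First, one must make sure that the remainder $(\bar f_1,\bar f_2)$ is symmetrized consistently with the parametrix, so that the exact equation holds with the symmetrized measure $\delta_x^*$ and $w$ lies in ${L}_s$. Second, one must check that the constant $C$ is independent of $k$. This holds because the non-degeneracy in \textbf{(H)} concerns $L^*$ on the full space, which does not depend on $k$, and the invertibility restricts to symmetric functions. So the $L^\infty$ bound for $w$ is uniform in $k$.
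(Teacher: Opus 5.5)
Your construction is the argument the paper intends. The paper gives no separate proof here, only that the result follows as in Proposition~\ref{p1-25-10n}, and your steps match it: the swapped parametrix, a gain of two powers per correction, an $L^\infty$ remainder inverted via \eqref{kernel}, symmetrization, and absorption of the bounded part using the $2(N-1)k$ terms. Your remark on uniformity is correct and cleaner than the paper's phrasing. Invertibility of $L^*$ on the full, unsymmetrized space does not depend on $k$. So if you solve $L^*(w_1,w_2)=(\bar f_1^*,\bar f_2^*)$, which is what consistency with the symmetrized parametrix requires, then $w$ inherits the symmetry of the data by uniqueness. Moreover $\|w\|_\infty\le C\|\bar f^*\|_\infty\le C\|\bar f\|_\infty$, uniformly in $k$ and $x$.

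There is one genuine defect, which you inherit from the paper's proof of Proposition~\ref{p1-25-10n}: the boundary condition.

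\emph{The problem.} Your $v_1=\gamma_N|\cdot-x|^{2-N}$ is the free-space fundamental solution, which is strictly positive on $\partial\Omega$. Every corrector $\bar u_l,\bar v_l$ and the remainder $(w_1,w_2)$ have zero Dirichlet data. Hence $G_{4,k}=v^*_{l_0}-w_2$ coincides on $\partial\Omega$ with the symmetrization of $\gamma_N|\cdot-x|^{2-N}$, which is positive there. So the pair does not solve \eqref{nn2-25-10n}; only $G_{3,k}$ vanishes on the boundary, because $u_1=0$.

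\emph{Why it matters.} The representation formula \eqref{11-27-1} is obtained by pairing $L(u,v)$ with $(G_{3,k},G_{4,k})$ and integrating by parts. For $v=0$ on $\partial\Omega$ this leaves the extra term $-\int_{\partial\Omega}G_{4,k}\,\partial_\nu v$ unless $G_{4,k}=0$ there.

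\emph{The fix.} Start instead from $v_1=G(\cdot,x)$, the Dirichlet Green function of $-\Delta$ in $\Omega$. Then $-\Delta v_1=\delta_x$, $v_1=0$ on $\partial\Omega$, and $0<G(y,x)<\gamma_N|y-x|^{2-N}$ by \eqref{green}. All your estimates then go through verbatim and uniformly in $x$. Every piece of $(G_{3,k},G_{4,k})$ now has zero boundary values, and symmetrization preserves this because the ball is invariant under the $\Phi_j$ and $\Psi_h$.

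A minor, harmless slip, also copied from the paper: by \eqref{1-25-10n2} the $\epsilon\beta_2$ coupling sits in the first component of $L^*$. So your first-equation error is $-(qu_\epsilon^{q-1}+\epsilon\beta_2)v_1$ and the second-equation error is $pv_\epsilon^{p-1}\bar u_2$. This only changes bounded coefficients.
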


Thus, we can derive a representation formula for any $(u,v) \in {E}$ satisfying $L(u, v)= (f,g)$.
\begin{proposition}
If $(u,v) \in {E}$ satisfies $L(u, v)= (f,g)$, it holds that
\begin{equation}\label{10-27-1}
u(x)=\int_{\Omega} G_{1,k}(y, x) f(y)\,dy+  \int_{\Omega} G_{2,k}(y, x) g(y)\,dy
\end{equation}
and
\begin{equation}\label{11-27-1}
v(x)=\int_{\Omega} G_{3,k}(y, x) f(y)\,dy+  \int_{\Omega} G_{4,k}(y, x) g(y)\,dy
\end{equation}
for all $x \in \Omega$. Here, the functions $G_{1,k}$, $G_{2,k}$, $G_{3,k}$   and $ G_{4,k}$ satisfy \eqref{G1k}, \eqref{G2k1}, \eqref{G3k1} and \eqref{G4k1}.
\end{proposition}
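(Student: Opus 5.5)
The plan is a duality (Green's identity) argument: pair the equation $L(u,v)=(f,g)$ with the Green's pairs constructed in Proposition~\ref{p1-25-10n} and in the proposition that follows it, and then use the symmetry of $(u,v)$ to identify the resulting point evaluations. Throughout I read the operator $L$ with $u_\epsilon^{q-1}$ in place of $U_{0,\mu_0}^{q-1}$, and take $\alpha=\beta_1=0$, since this is the operator whose formal dual is $L^*$ in \eqref{1-25-10n2}.

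\textbf{Step 1: the algebraic identity.} For smooth $(u,v)$ and $(a,b)$ vanishing on $\partial\Omega$, integrate by parts twice:
\[
\int_\Omega \Big[(-\Delta u - p v_\epsilon^{p-1}v)\,a + (-\Delta v - q u_\epsilon^{q-1}u - \epsilon\beta_2 u)\,b\Big]
= \int_\Omega \Big[u\,(-\Delta a - q u_\epsilon^{q-1} b - \epsilon\beta_2 b) + v\,(-\Delta b - p v_\epsilon^{p-1} a)\Big].
\]
The right-hand side is exactly $\langle (u,v), L^*(a,b)\rangle$, so $\langle L(u,v),(a,b)\rangle=\langle (u,v),L^*(a,b)\rangle$.

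\textbf{Step 2: apply it with the Green's pairs.}
\begin{itemize}
\item Take $(a,b)=(G_{1,k}(\cdot,x),G_{2,k}(\cdot,x))$. Since $L^*(a,b)=(\delta_x^*,0)$, Step 1 gives
\[
\int_\Omega f\,G_{1,k}(y,x)\,dy+\int_\Omega g\,G_{2,k}(y,x)\,dy=\langle u,\delta_x^*\rangle .
\]
\item Because $(u,v)\in E\subset L_s$, $u$ is invariant under every $\Phi_j$ and $\Psi_h$. Hence
\[
\langle u,\delta_x^*\rangle=\frac{1}{2(N-1)k}\sum_{h,j}\big(u(\Phi_jx)+u(\Psi_h\Phi_jx)\big)=u(x),
\]
which is \eqref{10-27-1}.
\item Using $(G_{3,k},G_{4,k})$, which solves \eqref{nn2-25-10n}, the same computation gives $\langle v,\delta_x^*\rangle=v(x)$, which is \eqref{11-27-1}.
\end{itemize}

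\textbf{Step 3: justify the integration by parts.} This is the main obstacle, because $G_{1,k}$ and $G_{4,k}$ blow up like $|y-\Phi_jx|^{2-N}$ at the finitely many points $\Phi_jx$ and $\Psi_h\Phi_jx$.
\begin{itemize}
\item I would first try excision. Remove balls $B_\rho$ around these points and apply Green's formula on the complement. The bounds \eqref{G1k}--\eqref{G4k1}, together with the local expansions from the construction in Proposition~\ref{p1-25-10n} (the leading term is $\gamma_N|y-x|^{2-N}$ and every correction is less singular), control the small spheres. The usual computation gives: $\int_{\partial B_\rho} a\,\partial_\nu u\to0$, while $\int_{\partial B_\rho} u\,\partial_\nu a$ tends to the point value with the right weight. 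Continuity of $u$ and $v$ is available because $E\subset X_s\subset C(\Omega)^2$.
\item On $\partial\Omega$ both pairs vanish, so no boundary terms appear there.
\item All volume integrals converge: $f,g$ are bounded because $(f,g)\in Y_s$, and the kernels are locally integrable.
\item If $u$ is not smooth enough to apply Green's formula directly, I would instead mollify $\delta_x^*$. That is, solve $L^*(a_\eta,b_\eta)=(\delta_{x,\eta}^*,0)$ with a symmetric mollifier, which is uniquely solvable by \eqref{kernel}. The identity then holds in the weak $W^{2,s}$ sense, and I would pass to $\eta\to0$ using the uniform kernel bounds and dominated convergence.
\end{itemize}
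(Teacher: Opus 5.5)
Your argument is correct and is exactly the reasoning the paper leaves implicit. The paper gives no separate proof: it presents the formula as an immediate consequence of the Green's pairs in Proposition~\ref{p1-25-10n} and the proposition after it, i.e.\ pairing $L(u,v)=(f,g)$ against $(G_{1,k},G_{2,k})(\cdot,x)$ and $(G_{3,k},G_{4,k})(\cdot,x)$, and using the $L_s$-invariance of $(u,v)$ to turn $\langle u,\delta_x^*\rangle$ into $u(x)$. Your reading of $L$ with $u_\epsilon^{q-1}$ in place of $U_{0,\mu_0}^{q-1}$ (and $\alpha=\beta_1=0$) is also the right one, since that is the operator $L_0$ to which the formula is applied in the proof of Proposition~\ref{blowup} and whose formal adjoint is \eqref{1-25-10n2}.
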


\section{Some technical estimates}
In this section, we present some technical estimates used in the previous sections. The proof of Lemma \ref{A1}--\ref{B3} could be found in \cite{WY}.
\begin{lemma}\label{A1}
    It holds that
    \begin{equation*}
    \sum\limits_{j=2}^{k} \dfrac{1}{|x_j-x_1|^{\alpha}} =
        \begin{cases}
            O(k^{\alpha}/r^{\alpha}), \;\; \alpha>1; \\
            O(k^{\alpha} \log m/r^{\alpha}), \;\; \alpha=1; \\
            O(k/r^{\alpha}), \;\; 0<\alpha<1; \\
        \end{cases}
    \end{equation*}
\end{lemma}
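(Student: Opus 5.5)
The plan is to compute the distances explicitly and reduce the sum to a one-dimensional harmonic-type series. Since the points $x_j$ lie on the circle of radius $r$ in the $(y_1,y_2)$-plane at angles $\frac{2(j-1)\pi}{k}$, we have
\[
|x_j-x_1| = 2r\sin\Big(\frac{(j-1)\pi}{k}\Big), \qquad j=2,\dots,k.
\]
The sine is symmetric under $j-1\mapsto k-(j-1)$, so the sum is at most twice the sum over $2\le j\le \lfloor k/2\rfloor+1$. For these indices the angle $t=\frac{(j-1)\pi}{k}$ lies in $(0,\pi/2]$. There we use Jordan's inequality $\sin t\ge \frac{2}{\pi}t$, which gives
\[
|x_j-x_1|\ge \frac{4r(j-1)}{k}.
\]

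Substituting this lower bound, we obtain
\[
\sum_{j=2}^k \frac{1}{|x_j-x_1|^{\alpha}} \le 2\Big(\frac{k}{4r}\Big)^{\alpha}\sum_{m=1}^{\lfloor k/2\rfloor}\frac{1}{m^{\alpha}}.
\]
It remains to bound the harmonic-type sum $\sum_{m=1}^{M} m^{-\alpha}$ with $M\approx k/2$, comparing with $\int_1^M s^{-\alpha}\,ds$:
\begin{itemize}
\item for $\alpha>1$ the series converges, so the sum is $O(1)$ and the total is $O(k^\alpha/r^\alpha)$;
\item for $\alpha=1$ the sum is $O(\log k)$, giving $O(k\log k/r)$, which is the stated bound with $m$ read as $k$;
\item for $0<\alpha<1$ the sum is $O(M^{1-\alpha})=O(k^{1-\alpha})$, so the total is $O(k^{\alpha}\cdot k^{1-\alpha}/r^\alpha)=O(k/r^\alpha)$.
\end{itemize}
In every case the implied constants depend only on $\alpha$. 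Because $r$ stays in the compact range $(r_0-l_{00},r_0+l_{00})\subset(0,1)$ fixed in \eqref{mcp}, these bounds are uniform in the parameters.

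There is no real obstacle here; the only point needing care is the symmetrization step. The lower bound $\sin t\ge 2t/\pi$ fails near $t=\pi$, so we must fold the indices with $j-1>k/2$ onto $k-(j-1)$ before applying it. When $k$ is even, the antipodal index $j-1=k/2$ is counted at most twice, which is harmless. The same computation, keeping the leading term of $\sin t\sim t$, also yields the precise asymptotics used in \eqref{est:00} for $\alpha=p(N-2)-2>1$.
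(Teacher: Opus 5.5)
Your proof is correct: $|x_j-x_1|=2r\sin\frac{(j-1)\pi}{k}$, folding the indices via $m\mapsto k-m$, and Jordan's inequality reduce the sum to $\left(\frac{k}{r}\right)^{\alpha}\sum_{m\le k/2}m^{-\alpha}$, and the three cases follow, with $\log m$ read as $\log k$. The paper gives no proof of its own and only cites Wei--Yan \cite{WY}, where this same chord-length computation is the standard argument, so your proposal matches the intended approach.
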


\begin{lemma}\label{B1}
   For any $\alpha > 0$, we have
   \[
      \sum\limits_{j=1}^{k} \dfrac{1}{(1+|y-x_j|)^{\alpha}} \leq C\left( 1 + \sum\limits_{j=2}^{k}\dfrac{1}{|x_1 - x_j|^{\alpha}} \right).
   \]
   Here, the constant $C>0$ does not depend on $k$.
\end{lemma}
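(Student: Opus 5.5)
The estimate is a discrete counterpart of $\int$ of a radial kernel over points on a circle. I will reduce it to the elementary sum in Lemma~\ref{A1}. The plan is to isolate the single term that can be of order one, and to bound all the others uniformly in $y$ by their value at $y=x_1$, up to a constant.

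First, fix $y\in\R^N$. Since the constant $C$ may not depend on $k$, I pick an index $j_0$ with $|y-x_{j_0}|=\min_j|y-x_j|$. By the rotational symmetry of the configuration $\{x_j\}$ I may relabel so that $j_0=1$; this changes neither the left-hand side nor the right-hand side, because the sum $\sum_{j\ge2}|x_1-x_j|^{-\alpha}$ is the same for every base point. The term $j=1$ is bounded by $1$.

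Next, for $j\neq 1$ I use the triangle inequality and the minimality of $x_1$:
\[
|x_1-x_j|\le |y-x_1|+|y-x_j|\le 2|y-x_j|.
\]
Hence $1+|y-x_j|\ge |y-x_j|\ge \tfrac12|x_1-x_j|$, so each such term is at most $2^{\alpha}|x_1-x_j|^{-\alpha}$. Summing over $j=2,\dots,k$ gives the claim with $C=\max\{1,2^\alpha\}$, which is independent of $k$ (and also of $r$ and $\mu$).

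The only point needing care is the relabeling step. The configuration is invariant under the rotations $\Phi_j$, which permute the $x_j$ cyclically. Therefore $\sum_{j\neq i}|x_i-x_j|^{-\alpha}=\sum_{j\ge2}|x_1-x_j|^{-\alpha}$ for every $i$, and the nearest-point choice can always be transported to $x_1$. No obstacle beyond this is expected; the lemma is stated for all $\alpha>0$ and the argument never uses summability, which is handled separately by Lemma~\ref{A1}.
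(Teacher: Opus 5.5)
Your proof is correct. It coincides with the standard argument that the paper defers to \cite{WY} and itself uses in \eqref{I_32_2}: by symmetry one may take $y\in\Omega_1$, the sector in which $x_1$ is the nearest vertex, and then $|y-x_j|\ge\frac12|x_1-x_j|$ for $j\ne1$, which is exactly your nearest-point relabeling (and your opening reference to Lemma~\ref{A1} is unnecessary, since the bound never uses it).
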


\begin{lemma}\label{B2}
 Suppose $\alpha > 1$ and $\beta > 1$ and $i \neq j$. Then, for any $\sigma \in [0, min (\alpha, \beta)]$, we have
   \[
      \dfrac{1}{(1+|y-x_i|)^{\alpha}}\dfrac{1}{(1+|y-x_j|)^{\beta}} \leq \dfrac{C}{|x_i - x_j|^{\sigma}} \left( \dfrac{1}{(1+|y-x_i|)^{\alpha+\beta-\sigma}} + \dfrac{1}{(1+|y-x_j|)^{\alpha+\beta-\sigma}} \right),
   \]
   where $C$ is a positive constant.
\end{lemma}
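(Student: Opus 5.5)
The statement to prove is Lemma~\ref{B2}: for $\alpha,\beta>1$, $i\neq j$ and $\sigma\in[0,\min\{\alpha,\beta\}]$,
\[
\frac{1}{(1+|y-x_i|)^{\alpha}}\frac{1}{(1+|y-x_j|)^{\beta}}\le \frac{C}{|x_i-x_j|^{\sigma}}\left(\frac{1}{(1+|y-x_i|)^{\alpha+\beta-\sigma}}+\frac{1}{(1+|y-x_j|)^{\alpha+\beta-\sigma}}\right).
\]
I would prove it by a pointwise case distinction according to which of the two centers $y$ is closer to. The only geometric input is the triangle inequality. Set $d:=|x_i-x_j|>0$. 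By symmetry in $(i,\alpha)\leftrightarrow(j,\beta)$, it suffices to treat $|y-x_i|\le|y-x_j|$. In that case the triangle inequality gives $|y-x_j|\ge d/2$, hence $1+|y-x_j|\ge \tfrac12(1+d)\ge \tfrac12 d$. Also $1+|y-x_j|\ge 1+|y-x_i|$.

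The key step is to split the exponent $\beta$ as $\beta=\sigma+(\beta-\sigma)$. This is legitimate because $0\le\sigma\le\beta$, so both parts are nonnegative. Then
\[
\frac{1}{(1+|y-x_j|)^{\beta}}=\frac{1}{(1+|y-x_j|)^{\sigma}}\cdot\frac{1}{(1+|y-x_j|)^{\beta-\sigma}}\le \frac{2^{\sigma}}{d^{\sigma}}\cdot\frac{1}{(1+|y-x_i|)^{\beta-\sigma}}.
\]
In the first factor I use $1+|y-x_j|\ge d/2$. In the second I use $1+|y-x_j|\ge 1+|y-x_i|$ together with $\beta-\sigma\ge0$.

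Multiplying by $(1+|y-x_i|)^{-\alpha}$ gives the bound $C d^{-\sigma}(1+|y-x_i|)^{-(\alpha+\beta-\sigma)}$, with $C=2^{\sigma}\le 2^{\min\{\alpha,\beta\}}$. The opposite case $|y-x_j|<|y-x_i|$ is identical: split $\alpha=\sigma+(\alpha-\sigma)$, which is allowed since $\sigma\le\alpha$, and obtain the term centered at $x_j$. Adding the two cases yields the claim, with a constant independent of $i,j,k,y$.

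There is no real obstacle here. The one point to watch is that the hypothesis $\sigma\le\min\{\alpha,\beta\}$ is exactly what keeps the leftover exponents nonnegative, and this is what allows the monotone comparison of $1+|y-x_j|$ with $1+|y-x_i|$. The assumption $\alpha,\beta>1$ plays no role in the pointwise inequality itself; it only matters when the lemma is later summed over $j$ (via Lemma~\ref{A1}).
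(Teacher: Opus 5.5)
Your proof is correct and is essentially the standard argument: the paper does not prove this lemma itself but defers to \cite{WY}, where the same triangle-inequality case split is used together with the exponent splitting $\beta=\sigma+(\beta-\sigma)$ (respectively $\alpha=\sigma+(\alpha-\sigma)$). The only cosmetic difference is that \cite{WY} uses the three regions $B_{|x_i-x_j|/2}(x_i)$, $B_{|x_i-x_j|/2}(x_j)$ and their complement, whereas your two ``closer-to-$x_i$ / closer-to-$x_j$'' regions do the same job in one step.
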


\begin{lemma}\label{B3}
   If $\sigma \in (0,N-2)$, we have
   \[
      \int_{\R^N} \dfrac{1}{|y-z|^{N-2}} \dfrac{1}{(1+|z|)^{2+\sigma}} dz \leq \dfrac{C}{(1+|y|)^{\sigma}}.
   \]
   If $\sigma > N-2$, we have
   \[
      \int_{\R^N} \dfrac{1}{|y-z|^{N-2}} \dfrac{1}{(1+|z|)^{2+\sigma}} dz \leq \dfrac{C}{(1+|y|)^{N-2}}.
   \]
\end{lemma}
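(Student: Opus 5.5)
This is the first statement of Lemma~\ref{B3}: for $\sigma\in(0,N-2)$ we want
$\int_{\R^N}|y-z|^{2-N}(1+|z|)^{-2-\sigma}\,dz\le C(1+|y|)^{-\sigma}$, and for $\sigma>N-2$ the bound $C(1+|y|)^{2-N}$. The approach is a direct decomposition of $\R^N$ into regions where one of the two factors is essentially constant.

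First treat $|y|\le 2$. Split the integral at $|z-y|\le 1$ and $|z-y|>1$. On $|z-y|\le 1$ the second factor is at most $1$, and $\int_{|w|\le1}|w|^{2-N}dw<\infty$. On $|z-y|>1$ we have $|y-z|\ge c(1+|z|)$ (using $|y|\le2$), so the integrand is bounded by $C(1+|z|)^{-N-\sigma}$, which is integrable. Hence the integral is bounded by a constant, which is comparable to either right-hand side.

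For $|y|\ge 2$, set $d=|y|$ and split $\R^N$ into three regions.
\begin{itemize}
\item $A_1=\{|z-y|\le d/2\}$. Here $1+|z|\sim d$, so this piece is at most $Cd^{-2-\sigma}\int_{|w|\le d/2}|w|^{2-N}dw=Cd^{-\sigma}$.
\item $A_2=\{|z|\le d/2\}$. Here $|y-z|\sim d$, so this piece is at most $Cd^{2-N}\int_{|z|\le d/2}(1+|z|)^{-2-\sigma}dz$. The remaining integral equals $Cd^{N-2-\sigma}$ if $\sigma<N-2$, and is bounded by a constant if $\sigma>N-2$. This gives $Cd^{-\sigma}$ or $Cd^{2-N}$ respectively.
\item $A_3$, the rest. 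Here $|y-z|\ge d/2$, $|z|\ge d/2$, and $|y-z|\le 3|z|$, so $|y-z|\ge c|z|$. The piece is at most $C\int_{|z|\ge d/2}|z|^{-N-\sigma}dz=Cd^{-\sigma}$.
\end{itemize}

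Collecting the three pieces, the total is $O(d^{-\sigma})$ when $\sigma<N-2$. When $\sigma>N-2$ the total is $O(d^{-\sigma}+d^{2-N})=O(d^{2-N})$, since $d^{-\sigma}\le d^{2-N}$ in that case. This finishes both cases.

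No step is a real obstacle. The one place needing care is the threshold $\sigma=N-2$ in the region $A_2$, which would produce a logarithm. That case is excluded by the hypotheses, and this exclusion is exactly what separates the two regimes of the statement.
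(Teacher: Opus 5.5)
Your proof is correct. It is essentially the standard three-region argument (the balls $B_{|y|/2}(y)$ and $B_{|y|/2}(0)$ and their complement) from \cite{WY}, to which the paper defers rather than giving its own proof. One slip in $A_3$ needs fixing: the inequality $|y-z|\le 3|z|$ points the wrong way. What you need is $|z|\le |z-y|+|y|<3|z-y|$, using $|y|<2|z-y|$ on $A_3$, which gives $|y-z|\ge |z|/3$ as claimed.
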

\begin{lemma}\label{B1-1}
Suppose that $p > 1$ and \eqref{c-hyperbola} holds. Let $\tau > 0$ be a number such that $k \simeq \mu^{\tau}$. If $\tau' \ge \tau$, then we have that for any $y \in \R^N$,
\begin{equation}\label{0-6-2}
\left[\sum_{j=1}^k \frac{1}{(1+\mu|y-x_j|)^{\frac{N}{p+1}+\tau'}}\right]^{p}
\le C\sum_{j=1}^k \frac{ 1 }{ (1+\mu |y-x_j| )^{  \frac{N}{q+1}+2+\tau' } }
\end{equation}
and
\[\left[\sum_{j=1}^k \frac{1}{(1+\mu|y-x_j|)^{\frac{N}{q+1}+\tau'}}\right]^{q}
\le C\sum_{j=1}^k \frac{ 1 }{ (1+\mu |y-x_j| )^{  \frac{N}{p+1}+2+\tau' } }.\]
\end{lemma}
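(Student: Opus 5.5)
The plan is to prove both inequalities with a single H\"older splitting. The key input is an exponent identity coming from \eqref{c-hyperbola}. Namely,
\[
\frac{pN}{p+1}-\frac{N}{q+1}=N\Big(1-\frac1{p+1}-\frac1{q+1}\Big)=2,
\qquad\text{so}\qquad
p\cdot\frac{N}{p+1}=\frac{N}{q+1}+2 .
\]
Symmetrically, $q\cdot\frac{N}{q+1}=\frac{N}{p+1}+2$. Write $a_j:=1+\mu|y-x_j|\ge 1$ and $\alpha:=\frac{N}{p+1}+\tau'$. The only difference between $p\alpha$ and the target exponent $\frac{N}{q+1}+2+\tau'$ is that $\tau'$ gets multiplied by $p$. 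I will transfer the excess $(p-1)\tau'$ to a factor whose sum is bounded uniformly in $k$.

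First, set $s:=\frac{(p-1)\tau'}{p}$ and apply H\"older's inequality with exponents $p$ and $p'=\frac{p}{p-1}$:
\[
\sum_{j=1}^k a_j^{-\alpha}=\sum_{j=1}^k a_j^{-(\alpha-s)}a_j^{-s}
\le\Big(\sum_{j=1}^k a_j^{-p(\alpha-s)}\Big)^{1/p}\Big(\sum_{j=1}^k a_j^{-sp'}\Big)^{1/p'} .
\]
By the choice of $s$ we have $p(\alpha-s)=p\alpha-(p-1)\tau'=\frac{N}{q+1}+2+\tau'$ and $sp'=\tau'$. Raising to the $p$-th power gives
\[
\Big[\sum_{j=1}^k a_j^{-\alpha}\Big]^p\le\Big(\sum_{j=1}^k a_j^{-\tau'}\Big)^{p-1}\sum_{j=1}^k a_j^{-(\frac{N}{q+1}+2+\tau')} .
\]
So the whole lemma reduces to the uniform bound $\sum_{j=1}^k(1+\mu|y-x_j|)^{-\tau'}\le C$ for all $y\in\R^N$ and $k$ large.

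Second, I will prove that uniform bound, which is the only step where $k\simeq\mu^\tau$ and $\tau'\ge\tau$ enter. Since $a_j\ge1$ and $\tau'\ge\tau$, it suffices to take $\tau'=\tau$. By the rotational symmetry of the points $x_j$, assume without loss of generality that $x_1$ is the point nearest to $y$, i.e.\ $y$ lies in the sector $\Omega_1$ (extended to $\R^N$). Then $|y-x_j|\ge\frac12|x_1-x_j|$ for $j\ge2$, because $|x_1-x_j|\le|y-x_1|+|y-x_j|\le 2|y-x_j|$. The $j=1$ term is at most $1$, so
\[
\sum_{j=1}^k a_j^{-\tau}\le 1+C\sum_{j=2}^k\frac{1}{(\mu|x_1-x_j|)^{\tau}} .
\]
Next use $|x_1-x_j|\ge c\,r\,\min\{j-1,k-j+1\}/k$ together with Lemma~\ref{A1} in the case $0<\tau<1$. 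This bounds the last sum by $C\mu^{-\tau}k^{\tau}\cdot k^{1-\tau}=Ck\mu^{-\tau}$, which is $\le C$ because $k\simeq\mu^\tau$ and $r$ stays in a compact subinterval of $(0,1)$. This is exactly the bound already used in \eqref{I_32_2}. I expect this counting to be the only real obstacle. The point to check carefully is that the choice $s=(p-1)\tau'/p$ produces precisely $sp'=\tau'$ and not something smaller, since for any exponent below $\tau$ the sum would grow with $k$.

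Third, the second inequality follows verbatim with $p$ and $q$ interchanged. Take $\alpha=\frac{N}{q+1}+\tau'$ and $s=\frac{(q-1)\tau'}{q}$, and use $q\frac{N}{q+1}=\frac{N}{p+1}+2$. This requires $q>1$, which holds since $q\ge\frac{N+2}{N-2}$. All constants depend only on $N$, $p$, $r_0$ and the comparability constant in $k\simeq\mu^\tau$, not on $k$ or $y$.
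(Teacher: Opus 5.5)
Your proposal is correct and is essentially the paper's proof: the same H\"older splitting with exponents $p$ and $\frac{p}{p-1}$, moving the excess $\frac{(p-1)\tau'}{p}$ onto the conjugate factor via $\frac{pN}{p+1}=\frac{N}{q+1}+2$, so that everything reduces to the uniform bound $\sum_{j=1}^k(1+\mu|y-x_j|)^{-\tau'}\le C$. The only difference is that you verify this bound from the spacing $|x_1-x_j|\gtrsim r\min\{j-1,k-j+1\}/k$ and $k\simeq\mu^{\tau}$ with $\tau=\frac{p}{p+1}<1$, whereas the paper just asserts it.
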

\begin{proof}
We just prove \eqref{0-6-2}. By H\"older's inequality and \eqref{c-hyperbola},
\begin{align*}
\left[\sum_{j=1}^k\frac{1}{(1+\mu|y-x_j|)^{\frac{N}{p+1}+\tau'}}\right]^{p}
&= \left[\sum_{j=1}^k \frac{1}{(1+\mu |y-x_j|)^{(\frac{N}{q+1}+2+\tau')\frac{1}{p}+\frac{N}{p+1}+\tau' - (\frac{N}{q+1}+2+\tau') \frac{1}{p}}}  \right]^{p} \\
&\le C \sum_{j=1}^k \frac{1}{(1+\mu |y-x_j|)^{\frac{N}{q+1}+2+\tau'}}
\left[\sum_{j=1}^k \frac{1}{(1+\mu |y-x_j|)^{\tau'}}\right]^{p-1}  \\
&\le C \sum_{j=1}^k \frac{1}{(1+\mu |y-x_j|)^{\frac{N}{q+1}+2+\tau'}},
\end{align*}
where we used $\tau' \ge \tau$ to get $\sum_{j=1}^k (1+\mu |y-x_j|)^{-\tau'} \le C$.
\end{proof}

\begin{lemma}\label{lemma:D6}
    Assume $N\geq 7$, $p\in (1,\frac{N}{N-2})$, $(p,q)$ satisfies \eqref{c-hyperbola}, it holds that
    \begin{align}\label{estimate_11}
        \left| \left\langle -\Delta Z_{1,h}  - q (PU_*)^{q-1}Y_{1,h}, \phi \right\rangle\right|=O\left(\frac{\mu^{n_h}\|(\phi,\psi)\|_{*}}{\mu^{\sigma}} \right),
    \end{align}
and
    \begin{align}\label{estimate_22}
    \left|\left\langle -\Delta Y_{1,h}  - p (PV_*)^{p-1}Z_{1,h}, \psi \right\rangle \right|=O\left( \frac{\mu^{n_h}\|(\phi,\psi)\|_{*}}{\mu^{\sigma}} \right).
    \end{align}
where $(\phi,\psi)$ is the same as \eqref{estimate_1}.
\end{lemma}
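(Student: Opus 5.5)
We estimate \eqref{estimate_11} by splitting the integrand $-\Delta Z_{1,h}-q(PU_*)^{q-1}Y_{1,h}$ into a linearized-bubble part and an interaction part. Since $(U_1,V_1)$ solves \eqref{lane-embden} and $(Y_{1,h},Z_{1,h})$ are derivatives in $r$ or $\mu$, we have $-\Delta Z_{1,h}=qU_1^{q-1}Y_{1,h}$ exactly. The expression therefore reduces to
\[
q\big(U_1^{q-1}-|PU_*|^{q-1}\big)Y_{1,h}.
\]
Similarly, \eqref{estimate_22} reduces to $p\big(V_1^{p-1}-|PV_*|^{p-1}\big)Z_{1,h}$.

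Using $|Y_{1,h}|\le C\mu^{n_h}U_1$ (up to the bounded factor coming from $\partial_r$ or $\mu\partial_\mu$), Lemma~\ref{L1}, and $|\phi|\le\|\phi\|_{*,1}\sum_j\mu^{\frac{N}{q+1}}(1+\mu|y-x_j|)^{-\frac{N}{q+1}-\tau}$, it suffices to show
\[
\int_\Omega \big|U_1^{q-1}-|PU_*|^{q-1}\big|\,U_1\sum_j\frac{\mu^{\frac{N}{q+1}}}{(1+\mu|y-x_j|)^{\frac{N}{q+1}+\tau}}\le C\mu^{-\sigma}.
\]
The analogous bound is needed with $V$ and the $\frac{N}{p+1}$ weights.

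We split $\Omega$ into $S$ and $\Omega\setminus S$. In $S$ we have $PU_*=U_1-\big(u_\epsilon-\sum_{j\ge2}PU_j-\varphi\big)+(PU_1-U_1)$, and by \eqref{pui}, Lemma~\ref{nnl2-19-1} and \eqref{est:01} the correction is $O(1)$ while $U_1\ge c$. Hence
\[
\big|U_1^{q-1}-|PU_*|^{q-1}\big|\le CU_1^{q-2}\quad (q\ge2),\qquad \le C\quad(q<2).
\]
After scaling $z=\mu(y-x_1)$, the integral becomes $\mu^{-\frac{2N}{q+1}}$ times a power-weighted integral of $U_{0,1}$ over $B_{\mu/k}$. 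It is estimated case-wise exactly as the $\bar\eta$ computation in the proof of Proposition~\ref{prop:Iexpan}, giving $O(\mu^{-\sigma})$. Terms with $j\ne1$ in the weight are absorbed by Lemma~\ref{B2} and \eqref{I_32_2}.

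In $\Omega\setminus S$, which by symmetry we reduce to $\Omega_1\setminus S$ and its rotations, we bound crudely by $U_1^{q}+|PU_*|^{q-1}U_1$. We use $PU\le C$ from the argument around \eqref{H-2-0}, together with $u_\epsilon\le C$. With Lemma~\ref{l1-23-4} and $U_1\le C\mu^{\frac{N}{q+1}}(\mu|y-x_1|)^{-(p(N-2)-2)}$, the integral is dominated by
\[
\mu^{\frac{2N}{q+1}-N}\int_{|z|\ge \mu/k}(1+|z|)^{-(p(N-2)-2)-\frac{N}{q+1}-\tau}dz
\]
plus an $L^1$ tail. Using $p(N-2)-2=\frac{(p+1)N}{q+1}$ and $k\simeq\mu^\tau$, this gives a negative power of $\mu$ whenever the exponent exceeds $N$. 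Otherwise it gives $\mu^{\frac{N}{q+1}-(p(N-2)-2)}$, as in the three-case computation following \eqref{po-1}.

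For \eqref{estimate_22} the same scheme applies with $p-1\in(0,1)$, so $\big|V_1^{p-1}-|PV_*|^{p-1}\big|\le C\big|V_1-PV_*\big|^{p-1}$. In $S$ the difference $V_1-PV_*$ is $O(1)$ by \eqref{pvi} and \eqref{est:01}. Outside $S$ we use the bound on $\sum_jV_j$ from \eqref{tip_1}, with $|Z_{1,h}|\le C\mu^{n_h}V_1$.

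The main obstacle is the $\Omega\setminus S$ region for the $U$-part when $q>2$ and $N=7$: the decay $p(N-2)-2$ of $U_1$ is weak, and the weight sum over $j$ contributes a factor $k$. There we would first try Lemma~\ref{B2} with a small $\theta$-loss, as in \eqref{tip_1}, to trade $\sum_{j\ge2}$ for $k^{1+\theta}$ times a slower decay. We would then check the resulting exponent against $\tau=\frac{p}{p+1}$ using the restrictions in \textbf{(P)}.
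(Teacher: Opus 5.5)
Your scheme is essentially the paper's. You use the identity $-\Delta Z_{1,h}=qU_1^{q-1}Y_{1,h}$, split into $S$ and its complement, apply a mean-value bound on $S$ (where $U_1\ge c$ and $u_\epsilon-\sum_{j\ge2}PU_j-\varphi=O(1)$), and use crude bounds elsewhere. Comparing $U_1^{q-1}$ with $|PU_*|^{q-1}$ directly, rather than through $PU^{q-1}$, is harmless, and the $U$-part on $S$ is fine. (With the weight $W:=\sum_j\mu^{\frac{N}{q+1}}(1+\mu|y-x_j|)^{-\frac{N}{q+1}-\tau}$ included, the prefactor is $\mu^{-\frac{N}{q+1}}$, and the integral converges since $p(q-1)>1$ when $q\ge2$.)

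The complement of $S$ is not handled correctly, however. The reduction ``by symmetry'' is unavailable because $U_1$ and $Y_{1,h}$ are centred at $x_1$. Moreover, the rotated sets $\Omega_j\setminus S_j$ miss the balls $S_j$, $j\ge2$, where $|PU_*|\approx U_j$ reaches size $\mu^{\frac{N}{q+1}}$; there the bound $PU\le C$ from \eqref{H-2-0} is false. On $S_j$ you need $|U_1^{q-1}-|PU_*|^{q-1}|\le C(1+U_j^{q-1})$ together with $\sup_{S_j}U_1\le C\mu^{-\frac{pN}{q+1}}|x_1-x_j|^{-(p(N-2)-2)}$, whose sum over $j\ge2$ is $O(1)$. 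By rotation invariance of $W$, this reduces those pieces to $C\int_S(1+U_1^{q-1})W=O(\mu^{-\sigma})$; this is what the paper's sum over $\Omega_i$, $i\ge2$, is for.

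You also leave the region you call the main obstacle open, and appealing to \textbf{(P)} would not prove the lemma, whose only hypotheses are $N\ge7$ and $p\in(1,\frac{N}{N-2})$. No such restriction is needed. By \eqref{I_32_2}, $W\le C\mu^{\frac{N}{q+1}}(1+\mu|y-x_i|)^{-\frac{N}{q+1}}$ on $\Omega_i$, so no factor $k$ appears, and
\[
\int_{\Omega_1\setminus S}U_1W\le C\mu^{\frac{2N}{q+1}-N}\int_{c\mu/k\le|z|\le C\mu}(1+|z|)^{-\frac{(p+2)N}{q+1}}\,dz .
\]
This is $O(\mu^{\frac{2N}{q+1}-N})$ if $q<p+1$ and $O(\mu^{-\frac{pN}{q+1}}\log\mu)$ otherwise. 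On $\Omega_j\setminus S_j$, the same computation, using $|y-x_1|\ge c(|y-x_j|+|x_1-x_j|)$ and summing over $j$, again gives a negative power of $\mu$.

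For \eqref{estimate_22}, your assertion that $V_1-|PV_*|=O(1)$ on $S$ is false. \eqref{est:01} only gives $\sum_{j\ge2}PV_j\le CV_1$, and in fact on $S$
\[
\sum_{j\ge2}V_j\simeq\mu^{-\frac{N}{q+1}}\sum_{j\ge2}|x_1-x_j|^{2-N}\simeq\mu^{-\frac{N}{q+1}}k^{N-2}\simeq\mu^{\frac{2}{p+1}}\to\infty ,
\]
which is comparable to $V_1$ near $\partial S$. The scaling $k\simeq\mu^\tau$ makes the $U$-interaction $O(1)$ on $S$, but not the $V$-interaction, which decays only like $|y|^{2-N}$.

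The estimate still closes. Let $\widetilde W:=\sum_j\mu^{\frac{N}{p+1}}(1+\mu|y-x_j|)^{-\frac{N}{p+1}-\tau}$. On $S$ one has $|V_1^{p-1}-|PV_*|^{p-1}|\le|V_1-|PV_*||^{p-1}\le C\mu^{\frac{2(p-1)}{p+1}}$. Also $\int_S V_1\widetilde W\le C\mu^{\frac{2N}{p+1}-N}$ because $\frac{N}{p+1}>2$. Together these give $O(\mu^{-\frac{(N-2)(p-1)}{p+1}})$.

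On the balls $S_j$, $j\ge2$, the same issue reappears, since now $\sum_{j\ge2}\sup_{S_j}V_1\simeq\mu^{\frac{2}{p+1}}$ is unbounded rather than $O(1)$. You must check that $\mu^{\frac{2}{p+1}}\int_S V_1^{p-1}\widetilde W$ is still a negative power of $\mu$. It is, for $N\ge7$ and $p\in(1,\frac{N}{N-2})$, but your proposal does not address it.
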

\begin{proof}
We only estimate \eqref{estimate_11}, \eqref{estimate_22} can be treated similarly.

We derive
    \begin{equation}\label{eq:d6_1}
\begin{aligned}
&\quad
\left|
\int_\Omega
\left(-\Delta Z_{1,h}-q(PU_*)^{q-1}Y_{1,h}\right)\phi
\right|\\
&\leq
\left|
\int_{\Omega}
q\left(U_1^{q-1}-PU^{q-1}\right)Y_{1,h}\phi
\right|
+
\left|
\int_{\Omega}
q\left(|PU_*|^{q-1}-PU^{q-1}\right)Y_{1,h}\phi
\right|.
\end{aligned}
\end{equation}

On the one hand, we get
\begin{equation*}
    \begin{aligned}
        \int_{S}(PU^{q-1}-U_1^{q-1})Y_{1,h}\phi
        \leq \int_S U_1^{q-2}(u_\ep+\sum_{j=2}^k U_j)Y_{1,h}\phi
        =O\big(\mu^{n_h-\sigma}\|(\phi,\psi)\|_{*} \big).
    \end{aligned}
\end{equation*}

On the other hand, it follows from Lemma~\ref{lemma:U} that
\begin{equation}
    \begin{aligned}
        &\quad\int_{\Omega\setminus\Omega_1}(PU^{q-1}-U_1^{q-1})Y_{1,h}\phi\\
        &\leq \int_{\Omega\setminus\Omega_1}\Big(\sum_{j=1}^k \frac{\mu^{\frac{N}{q+1}}}{(1+\mu|y-x_j|)^{p(N-2)-2}}+\sum_{j=1}^k \frac{\mu^{-\frac{pN}{q+1}}k^{p(N-2)-2}}{(1+k|y-x_j|)^{p(N-3-\theta)-2}}\Big)^{q-1}Y_{1,h}\phi.
    \end{aligned}
\end{equation}

In $\Omega_i$, applying \eqref{tip_1}, we have
\begin{align*}
            \sum_{j\neq i} \frac{\mu^{\frac{N}{q+1}}}{(1+\mu|y-x_j|)^{\frac{N}{q+1}+\tau}}
            \leq \frac{1}{|y-x_i|^{\frac{N}{q+1}}},
\end{align*}
and
\begin{align*}
    \sum_{j\neq i} \frac{\mu^{\frac{N}{q+1}}}{(1+\mu|y-x_j|)^{p(N-2)-2}}\leq \frac{\mu^{-\frac{pN}{q+1}}k^{1+\theta}}{|y-x_i|^{p(N-2)-3-\theta}}.
\end{align*}

Then, we obtain
    \begin{equation*}
        \begin{aligned}
            &\quad\int_{\Omega_1\setminus S}
            \frac{\mu^{\frac{N}{q+1}}}{(1+\mu|y-x_1|)^{p(N-2)-2}}
            \\
            &\hspace{5em}\times\Big( \sum_{j=1}^k \frac{\mu^{\frac{N}{q+1}}}{(1+\mu|y-x_j|)^{p(N-2)-2}}\Big)^{q-1}\sum_{j=1}^k \frac{\mu^{\frac{N}{q+1}}}{(1+\mu|y-x_j|)^{\frac{N}{q+1}+\tau}}\dy\\
            &\leq \mu^{-\frac{pqN}{q+1}}
            \int_{\Omega_1\setminus S}\Big(\frac{1}{|y-x_1|^{(p(N-2)-2)(q-1)}}+\frac{k^{(1+\theta)(q-1)}}{|y-x_1|^{(p(N-2)-3-\theta)(q-1)}} \Big) \frac{1}{|y-x_1|^{p(N-2)-2+\frac{N}{q+1}}}\\
            &\leq k^{q(p(N-2)-2)+\frac{N}{q+1}-N}\mu^{-\frac{pqN}{q+1}}=O(\mu^{-\sigma}),
        \end{aligned}
    \end{equation*}

    and

    \begin{equation*}
        \begin{aligned}
            &\quad\sum_{i=2}^k\int_{\Omega_i} 
            \frac{\mu^{\frac{N}{q+1}}}{(1+\mu|y-x_1|)^{p(N-2)-2}}
            \\
            &\hspace{5em}\times\Big( \sum_{j=1}^k \frac{\mu^{\frac{N}{q+1}}}{(1+\mu|y-x_j|)^{p(N-2)-2}}\Big)^{q-1}\sum_{j=1}^k \frac{\mu^{\frac{N}{q+1}}}{(1+\mu|y-x_j|)^{\frac{N}{q+1}+\tau}}\dy\\
            &\leq \mu^{-\frac{pqN}{q+1}}\sum_{i=2}^k
            \int_{\Omega_i}
            \Big(\frac{1}{|y-x_i|^{p(N-2)-2}}+\frac{k^{1+\theta}}{|y-x_i|^{p(N-2)-3-\theta}}\Big)^{q-1}\frac{1}{|y-x_1|^{p(N-2)-2}}\frac{1}{|y-x_i|^{\frac{N}{q+1}}}\dy\\
            &\leq \mu^{-\frac{pqN}{q+1}}\Big(\sum_{i=2}^k
            \frac{1}{|x_1-x_i|^{\frac{pqN}{q+1}}}+\sum_{i=2}^k
            \frac{k^{(q-1)(1+\theta)}}{|x_1-x_i|^{\frac{pqN}{q+1}-(q-1)(1+\theta)}}\Big)=O(\mu^{-\sigma}).
        \end{aligned}
    \end{equation*}
    where we have used $q(p(N-2)-2)+\frac{N}{q+1}>q(p(N-2)-2)-(q-1)(1+\theta)+\frac{N}{q+1}>N$ holds for $N\geq7,\,p\in (1,\frac{N}{N-2})$.

Similarly, we have
\begin{equation*}
    \begin{aligned}
        &\quad\int_{\Omega\setminus \Omega_1}
            \frac{\mu^{\frac{N}{q+1}}}{(1+\mu|y-x_1|)^{p(N-2)-2}}
            \\
            &\hspace{5em}\times\Big( \sum_{j=1}^k \frac{\mu^{-\frac{pN}{q+1}}k^{p(N-2)-2}}{(1+k|y-x_j|)^{p(N-3-\theta)-2}}\Big)^{q-1}\sum_{j=1}^k \frac{\mu^{\frac{N}{q+1}}}{(1+\mu|y-x_j|)^{\frac{N}{q+1}+\tau}}\\
            &=O(\mu^{-\sigma}).
    \end{aligned}
\end{equation*}

Combining the above estimates, we have
\begin{equation}
    \begin{aligned}
        \int_{\Omega}(PU^{q-1}-U_1^{q-1})Y_{1,h}\phi=O\big(\mu^{n_h-\sigma}\|(\phi,\psi)\|_{*} \big),
    \end{aligned}
\end{equation}

A similar computation yields
$$\left|
\int_{\Omega}
q\left(|PU_*|^{q-1}-PU^{q-1}\right)Y_{1,h}u
\right|=O\big(\mu^{n_h-\sigma}\|(\phi,\psi)\|_{*} \big).$$
Hence, we obtain \eqref{estimate_11}.

\end{proof}

\begin{lemma}\label{lemma:H}
    Under the same assumption in Theorem A, there exists a positive radial solution $(u_\epsilon,v_\epsilon)$ to \eqref{mainsystem} which is non-degenerate.
\end{lemma}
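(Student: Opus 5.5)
Our plan is to derive Lemma~\ref{lemma:H} from Theorem~A in the case $\Omega=B_1(0)$, using a uniqueness argument to get radial symmetry. In the unit ball the regular part of the Green's function is radial, so the Robin-type function $\tau(x)$ of Theorem~A is radial as well, $\tau(x)=\tilde\tau(|x|)$. It blows up as $|x|\to1$ and is strictly monotone in $|x|$; for the ball this follows from the explicit formula $H(x,x)=\gamma_N(1-|x|^2)^{2-N}$ and the analogous formula for $\widehat H$. Hence $x=0$ is its unique critical point, and we will check that it is nondegenerate: the Hessian of $\tau$ at $0$ is a positive multiple of the identity, which we obtain by differentiating the explicit Kelvin-type expression for $H$ (and the corresponding harmonic extension for $\widehat H$) twice at the origin. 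Theorem~A with $\beta_1=\alpha=0$, $\beta_2>0$, $\xi_0=0$ then gives, for $\epsilon\in(0,\tilde\epsilon_0)$, a solution $(u_\epsilon,v_\epsilon)$ that blows up at $0$ and is non-degenerate. Positivity comes from the construction in \cite{KP21}: the solution equals a projected bubble plus a small correction in a weighted norm, so it is positive; alternatively, we apply the maximum principle to the cooperative system once $\epsilon\beta_2$ is small.

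The main obstacle is radial symmetry, since Theorem~A does not state it. Our first approach is to rerun the reduction of \cite{KP21} inside the subspace of radial functions. The problem is $O(N)$-invariant, the approximate solution centred at $\xi_0=0$ is radial, and the fixed-point argument is unique in a small ball of the weighted space. The correction is therefore radial, and the reduced problem only involves the scaling parameter, since translations are excluded by symmetry. This produces a radial solution. By local uniqueness of the solution given by Theorem~A near the bubble, it coincides with that solution, so it is also non-degenerate. As a fallback, we can argue by uniqueness directly: for any rotation $R$, the pair $(u_\epsilon\circ R,v_\epsilon\circ R)$ is again a solution blowing up at $0$ with the same profile. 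Non-degeneracy together with the implicit function theorem then forces it to equal $(u_\epsilon,v_\epsilon)$, because the reduction gives a unique solution in that neighbourhood. Hence $(u_\epsilon,v_\epsilon)$ is radial.

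Finally, non-degeneracy in the sense of \textbf{(H)}, meaning only the trivial kernel in $(H_0^1)^2$, is exactly the conclusion of \cite{GPH}. No further work is needed beyond checking that their function space and ours agree, which follows by elliptic regularity.
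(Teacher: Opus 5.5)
\paragraph{The gap: the wrong function $\tau$.} The gap is in the one step that carries real content. You need to show that $0$ is a \emph{non-degenerate} critical point of $\tau$, because that is exactly the hypothesis required to invoke the non-degeneracy half of Theorem A.

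You treat $\tau$ as if it were built from $H(x,x)=\gamma_N(1-|x|^2)^{2-N}$ and $\widehat H(x,x)$. That is not the function here. The Kim--Pistoia ansatz takes $PU_{d,\xi}$ to solve $-\Delta PU_{d,\xi}=PV_{d,\xi}^p$, so the relevant function is $\tau(x)=\widetilde H(x,x)$. This is the diagonal of the regular part of the iterated Green function $\widetilde G(x,y)=\int_\Omega G(x,z)G^p(z,y)\,dz$. Up to sign,
$\tau(x)=\int_{\R^N}S^{p+1}(z,x)\,dz-\int_\Omega G^{p+1}(z,x)\,dz$, a difference of two integrals that diverge separately at $z=x$.

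This $\tau$ depends nonlinearly on $G$ and has no Kelvin-type closed form, even in the ball. The same is true of $\widehat H$, since $|\cdot-y|^{-(p(N-2)-2)}$ is not harmonic. So neither your monotonicity claim nor your Hessian computation applies to it.

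Radial symmetry alone already gives $\nabla\tau(0)=0$ and $D^2\tau(0)=c\,I$. The whole issue is to prove $c\neq 0$, and your plan computes $c$ for the wrong function. Uniqueness of the critical point is not needed at all.

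The paper settles this by writing $\tau=\tau_1+\tau_2$, with
$\tau_1(x)=\int_\Omega\big(S^{p+1}-G^{p+1}\big)(z,x)\,dz$ and $\tau_2(x)=\int_{\R^N\setminus\Omega}S^{p+1}(z,x)\,dz$, and expanding each to second order at $x=0$.
\begin{itemize}
\item The $|x|^2$-coefficient of $\tau_2$ is positive because $x\mapsto|z-x|^{-(p+1)(N-2)}$ is subharmonic, as $(p+1)(N-2)>N-2$.
\item The $|x|^2$-coefficient of $\tau_1$ is shown to be positive as well.
\end{itemize}
Together these give $\tau(x)=\tau(0)+(c_{12}+c_{22})|x|^2+O(|x|^4)$.

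\paragraph{Symmetry and positivity.} Once non-degeneracy is available, your fallback symmetry argument is sound, and in its infinitesimal form it is cleaner than the paper's. Since $u_\epsilon=v_\epsilon=0$ on $\partial B_1$ and $x_i\partial_j-x_j\partial_i$ is tangent there, the pair $(x_i\partial_j-x_j\partial_i)(u_\epsilon,v_\epsilon)$ lies in $(H_0^1(\Omega))^2$ and solves the linearized system. Non-degeneracy forces it to vanish, so the solution is radial.

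The paper instead uses the $O(N)$-invariance of the reduced energy $J_\epsilon(d,\xi)$ to get $\nabla_\xi J_\epsilon(d,0)=0$, which is close to your first approach. In either route the non-degeneracy rests on the missing computation above, and in your route so does the symmetry.

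A smaller point: your positivity argument does not work as stated.
\begin{itemize}
\item The maximum principle cannot be applied to $-\Delta u=|v|^{p-1}v$ without already knowing the sign of $v$.
\item Smallness of the correction in the reduction norm does not give pointwise positivity near $\partial\Omega$.
\end{itemize}
The usual remedy is to construct the solution for the problem with nonlinearities $v_+^p$ and $u_+^q$. You then apply the maximum principle first to the $u$-equation, which gives $u\ge0$, and then to the $v$-equation, which gives $v\ge0$.
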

 \begin{proof}

    Let $G$ be the Green's function of the Laplacian $-\Delta$ in $\Omega$ with
Dirichlet boundary condition. Let $H$ be its regular part, then
$$G(x,y)=S(x,y)-H(x,y)$$ with
$S(x,y)=\dfrac{\gamma_N}{|x-y|^{N-2}}$, where
$\gamma_N=\dfrac{1}{(N-2)|\mathbb{S}^{N-1}|}$. 

Then set
$$\widetilde{G}(x,y)=\int_{\Omega}G(x,z)G^p(z,y)\,\dz,$$
$$\widetilde{H}(x,y)=\int_{\Omega}G(x,z)G^p(y,z)\,\dz
-\int_{\mathbb{R}^N}S(x,z)S^p(z,y)\,\dz$$ and
$$\Tau(x)=\widetilde{H}(x,x).$$

 Consider the configuration space
 $$\Lambda=\{(d,\xi):d\in(\delta_1,\delta_1^{-1}),\xi\in\Omega, \text{dist}(\xi,\partial\Omega)>\delta_2\},$$
 for suitable $\delta_1\in(0,1)$ and $\delta_2>0$.

 Let $\bar{\mu}>0$ be small. Define $PV_{\xi,d}=PV_{\xi,\bar{\mu}^{-1}d}$ and $PU_{\xi,d}$ as the unique solution of
 \begin{equation}
\begin{cases}
    -\Delta PU_{d,\xi} =PV_{d,\xi}^p, & \text{in } \Omega,\\
    PU_{d,\xi}=0, & \text{on } \partial \Omega.
\end{cases}
\end{equation}
The authors in \cite{KP21} constructed $(u_\ep,v_\ep)$ with the form
$$(PU_{d,\xi}+\Psi_{d,\xi},PV_{d,\xi}+\Phi_{d,\xi})$$
where $(\Psi_{d,\xi},\Phi_{d,\xi})$ is uniquely determined by a reduction procedure (see Section 4 in \cite{KP21}), $\bar{\mu}\to 0$, \(d_{\epsilon} \to d_{\beta_1, \beta_2, \alpha} > 0\) (\(d_{\beta_1, \beta_2, \alpha, N}\) represents a constant related to \(\beta_1, \beta_2, \alpha, N\)), \(\xi_\ep \to \xi_0(\xi_0 \in \Omega \text{ and } \nabla \tau(\xi_0) = 0)\) as $\ep\to 0$.

They prove $(u_\ep,v_\ep)$ is a solution to \eqref{mainsystem} by showing that $(d,\xi)\in \text{int}(\Lambda)$ is a critical point of the reduced energy
$$J_\ep(d,\xi)=I_\ep(PU_{d,\xi}+\Psi_{d,\xi},PV_{d,\xi}+\Phi_{d,\xi}).$$

 When $\Omega$ is a unit ball, $J_\ep$ is invariant under the action of $\mathcal{O}(N)$, that is
 $$J_\ep(d,Q\xi)=J_\ep(d,\xi),\qquad\text{for any }Q\in \mathcal{O}(N).$$
Thus, we have
$$\nabla_{\xi}J_\ep (d,0)=0$$
which gives single bubble solutions to \eqref{mainsystem} centered at $\xi_\ep=0$, and hence positive radial solutions to \eqref{mainsystem}.

Next, we prove the non-degeneracy of such radial solutions.

We have
     \begin{align*}
         \Tau(x)&=\int_{\R^N}S^{p+1}(z,x)\dz-\int_{\Omega}G^{p+1}(z,x)\dz\\
         &=\int_{\Omega}\big[S^{p+1}(z,x)-G^{p+1}(z,x)\big]\dz
         +\int_{\R^N\setminus\Omega}S^{p+1}(z,x)\dz\\
         :&=\Tau_1(x)+\Tau_2(x).
     \end{align*}

Direct computation gives
\begin{equation*}
    \begin{aligned}
        \Tau_1(x)=\lim_{\epsilon\to 0}\int_{\Omega\setminus B_{\epsilon}(0)}S^{p+1}(z,x)-G^{p+1}(z,x)\dz
        =\tau_1(0)+c_{12} |x|^2 +O(|x|^4),
    \end{aligned}
\end{equation*}
where
$$\begin{aligned}
    c_{12}&=\frac{p(p+1)(N-2)^2}{2N}\gamma_N^{p+1}\times\\
    &\quad\lim_{\epsilon\to 0}\int_{\Omega\setminus B_{\epsilon}(0)}
\Bigl(
|z|^{-(N-2)(p-1)-2N}
-\bigl(|z|^{2-N}-1\bigr)^{p-1}\bigl(|z|^{-N}-1\bigr)^2
\Bigr)
|z|^2\,dz>0.
\end{aligned}$$

 On the other hand, denote $\alpha_0=(p+1)(N-2)$, by symmetry, we have
\begin{equation*}
    \begin{aligned}
        \Tau_2(x)&=\Tau_2(0)\\
&\hspace{1em}+\gamma_N^{p+1}\int_{\R^N\setminus\Omega}\Big(\frac{\alpha_0(\alpha_0+2)}{2}|z|^{-\alpha_0-4}(z\cdot x)^2
-\frac{\alpha_0}{2}|z|^{-\alpha_0-2}|x|^2\Big)
\dz+O(|x|^4)\\
&=\Tau_2(0)+
c_{22}|x|^2+O(|x|^4)
    \end{aligned}
\end{equation*}
where $c_{22}=\gamma_N^{p+1}\frac{\alpha_0}{2}
(\frac{\alpha_0+2}{N}-1)
\int_{\mathbb R^N\setminus \Omega}|z|^{-\alpha_0-2}\,\dz>0$.

Thus, we obtain
$$\Tau(x)=\Tau(0)+(c_{12}+c_{22})|x|^2+O(|x|^4),$$
which implies $0$ is a non-degenerate critical point of $\Tau$. We complete this proof.
 \end{proof}

\begin{lemma}\label{lemma:alg_1}
    Assume $N\geq 8$, $p\in (1,\frac{N}{N-2})$ or $N=7$, $p\in (p_*,\frac{7}{5})$ where $p_*$ is the unique root of $36p^3-129p^2+117p-22=0$ in the interval $(1,\frac{7}{5})$ we have
    $$\mathcal{B}_2<\mathcal{B}_1$$
    where $\mathcal{B}_1$ and $\mathcal{B}_2$ are defined in \eqref{def:B1} and \eqref{def:B2}.
\end{lemma}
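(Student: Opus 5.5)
The plan is to make $\mathcal{B}_1$ and $\mathcal{B}_2$ explicit, since both $t_1$ and $t_2$ are affine in $\beta$. Throughout, set $T:=\frac{N}{2(q+1)}$.

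First I eliminate $q$ and $\tau$ using \eqref{c-hyperbola}:
\[
\frac{N}{q+1}=N-2-\frac{N}{p+1}=\frac{p(N-2)-2}{p+1},\qquad \tau=\frac{p}{p+1}.
\]
Then every quantity entering $t_1$, $t_2$ and $T$ is a rational function of $p$ with denominator a power of $(p+1)$, and with coefficients polynomial in $N$.

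Next I determine the two endpoints.
\begin{itemize}
\item The slope of $t_1$ in $\beta$ is $-\big(\frac{N}{p+1}-2-\tau\big)$. This is negative, as already noted in the text. So $t_1$ is decreasing, $t_1(0)>T$ was shown, and hence
\[
\mathcal{B}_1=\min\{\tau,\beta_1^*\},\qquad t_1(\beta_1^*)=T.
\]
\item The slope of $t_2$ is $s_2:=\frac{N}{q+1}+2+\tau-(p-1)(N-3)$. This is positive, because it exceeds $\frac{N}{q+1}+2+\tau-(p-1)(N-2)>0$. So $t_2$ is increasing, $t_2(\tau)>T$ was shown, and hence
\[
\mathcal{B}_2=\max\{0,\beta_2^*\},\qquad t_2(\beta_2^*)=T.
\]
\end{itemize}

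Consequently $\mathcal{B}_2<\mathcal{B}_1$ splits into four inequalities:
\begin{itemize}
\item $\beta_2^*<\tau$, which is already known from $t_2(\tau)>T$;
\item $0<\beta_1^*$, which is already known from $t_1(0)>T$;
\item $0<\tau$, which is trivial;
\item the genuine condition $\beta_2^*<\beta_1^*$.
\end{itemize}
By monotonicity, $\beta_2^*<\beta_1^*$ is equivalent to $t_1(\beta_2^*)>T$. Writing $s_1:=\frac{N}{p+1}-2-\tau>0$ and clearing the positive factors $s_1s_2$, it is also equivalent to
\[
\big(t_1(0)-T\big)\,s_2\;>\;\big(T-t_2(0)\big)\,s_1 .
\]
Substituting the formulas above and multiplying by a suitable power of $(p+1)$, this becomes $P_N(p)>0$ for an explicit polynomial $P_N$ in $p$ whose coefficients are polynomial in $N$.

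The final step is the sign analysis of $P_N$ on $(1,\frac{N}{N-2})$, and I expect it to be the main obstacle.
\begin{itemize}
\item For $N=7$, I expect $P_7(p)$ to factor as a positive factor times $36p^3-129p^2+117p-22$. That cubic has a unique root $p_*$ in $(1,\frac75)$ and is positive on $(p_*,\frac75)$; I will check this through its values at $p=1$ and $p=\frac75$ together with the sign of its derivative.
\item For $N\ge8$, I will view $P_N(p)$ as a polynomial in $N$ (likely of low degree) with coefficients depending on $p$. Using $p-1<\frac{2}{N-2}$, I will bound the terms that could be negative, and show that the leading coefficient in $N$ is positive on the range while $P_8>0$ there.
\end{itemize}
If this direct route is messy, my fallback is to substitute $p=1+\frac{2s}{N-2}$ with $s\in(0,1)$ and prove positivity of the resulting expression in $(s,N)$ for $N\ge8$ by checking monotonicity in $N$.
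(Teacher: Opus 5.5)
Your route is the paper's: it also makes both roots explicit and reduces the claim to the sign of one cubic. With $N-3p-2=(p+1)s_1$ and $K(p)=(N-3)+(N+1)p-(N-3)p^2=(p+1)s_2$, one gets $\beta_1^*=\frac{p(N-4)-2}{2(N-3p-2)}$ and $\beta_2^*=\frac{-2(N-3)p^2+(3N-2)p-6}{2K(p)}$. Hence $\beta_1^*-\beta_2^*=-F_N(p)/\bigl(2(N-3p-2)K(p)\bigr)$, with $F_N(p)=(N^2-N-6)p^3-(3N^2-2N-4)p^2+(2N^2+N+12)p-(4N-6)$. So your $P_N$ is a positive multiple of $-F_N$.

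Your bookkeeping $\mathcal B_1=\min\{\tau,\beta_1^*\}$, $\mathcal B_2=\max\{0,\beta_2^*\}$ is more careful than the paper. The paper tacitly identifies $\mathcal B_1$ with $\beta_1^*$, although $\beta_1^*>\tau$ as soon as $(N+2)p(p-1)>2$.

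Two points in your sign analysis need fixing.

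\emph{The case $N=7$.} Here $F_7(p)=36p^3-129p^2+117p-22$ exactly, so $P_7$ is a \emph{negative} multiple of the cubic. Moreover the cubic is \emph{negative} on $(p_*,\frac75)$, not positive. It equals $2$ at $p=1$, equals $-\frac{1532}{125}$ at $p=\frac75$, and is strictly decreasing, so it is positive on $(1,p_*)$. Your two sign slips cancel, but the endpoint-plus-derivative check you describe would contradict your claim as you stated it.

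\emph{The case $N\ge8$.} Positivity of the leading coefficient in $N$ together with $P_8>0$ does not by itself give $P_N>0$ for $N>8$. You also need $\partial_N P_N\ge0$ for $N\ge8$. This is true here, since the coefficient of $N$ in $-F_N$, namely $p^3-2p^2-p+4$, is positive on $(1,\frac43)$. In addition, $P_8(1)=0$, so proving $P_8>0$ on $(1,\frac43)$ needs a monotonicity-in-$p$ argument anyway.

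The paper's argument avoids both issues and is uniform in $N$. $F_N'$ is a convex quadratic in $p$ that is negative at $p=1$ (value $-N^2+2N+2$) and at $p=\frac N{N-2}$. So $F_N$ is strictly decreasing on the interval, and $F_N(p)<F_N(1)=16-2N\le0$ for $N\ge8$. The same monotonicity settles $N=7$, where $F_7(1)=2>0>F_7(\frac75)$.
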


\begin{proof}
A direct computation yields
\[
\beta_1-\beta_2
=
-\frac{F_N(p)}{2(N-3p-2)K(p)},
\]
where
\[
K(p):=(N-3)+(N+1)p-(N-3)p^2.
\]
and
\[
F_N(p)
:=
(N^2-N-6)p^3-(3N^2-2N-4)p^2+2N^2p+(N+12)p-4N+6.
\]
It is easy to check if \(1<p<\frac{N}{N-2}\) and \(N\ge 8\), we have
\[
N-3p-2>0.
\]
and
\[
\min K(p)=\min\{K(1),K(\frac{N}{N-2})\}>0.
\]
Therefore, it suffices to prove that
\[
F_N(p)<0, 
\text{for all }p\in\left(1,\frac{N}{N-2}\right).
\]

We compute
\[
F_N'(p)
=
3(N^2-N-6)p^2-(6N^2-4N-8)p+(2N^2+N+12),
\]
and
\[
\max F_N'(p)=\max\{F_N'(1),F_N'(\frac{N}{N-2})\}<0.
\]
Hence \(F_N\) is strictly decreasing on this interval. We conclude that
\[
F_N(p)<F_N(1)\le 0,
\]
namely,
\[
F_N(p)<0,
\text{for all }p\in\left(1,\frac{N}{N-2}\right).
\]

As a consequence, we obtain
\[
\mathcal{B}_2<\mathcal{B}_1.
\]
\end{proof}

\section*{Statements and Declarations}

The authors confirm that there are no relevant financial or non-financial competing interests to report.
	
\section*{Data Availability Statements}

All data generated or analyzed during this study are included in this article.

\end{document}